\documentclass[11pt]{amsart}
\usepackage{graphicx}

\usepackage{amsmath}
\usepackage{amsthm}
\usepackage{amsfonts}
\usepackage{amssymb}
\usepackage[usenames,dvipsnames]{color}
\usepackage{verbatim}
\usepackage{amscd}
\usepackage[latin1]{inputenc}
\usepackage{latexsym}

\usepackage{mdframed}
\usepackage{enumerate}
\usepackage{array}
\usepackage{commath}
\usepackage{mathtools}
\usepackage{hyperref}
\usepackage{multirow}
\usepackage{parcolumns}
\usepackage{multicol}
\usepackage{mathrsfs}
\usepackage{xfrac}
\usepackage{tabularx}
\usepackage[all]{xy}
\usepackage{tabu}
\usepackage{mathrsfs}
\usepackage{enumitem}
\usepackage{bbm}
\usepackage{appendix}

\usepackage[normalem]{ulem}

\usepackage{thmtools}
\usepackage{thm-restate}
\usepackage{cleveref}

\usepackage{bm}
\usepackage[many]{tcolorbox}
\usepackage[left=1.2in,right=1.2in,top=1.2in,bottom=1.2in]{geometry}

\usepackage{pgf,tikz}
\usetikzlibrary {positioning}
\usetikzlibrary{arrows}
\usetikzlibrary{calc}
\usepackage{capt-of}
\newcommand{\dist}{3}

\usepackage{color}
\usepackage{graphicx}
\usepackage{subfigure}
\usepackage{pdfsync}
\usepackage{centernot}

\usepackage{algorithm}
\usepackage{algpseudocode}

\numberwithin{equation}{section}

\DeclareMathOperator{\spp}{supp}

\DeclareMathOperator{\Id}{Id}

\DeclareMathOperator{\argmin}{argmin}
\DeclareMathOperator*{\argminunder}{argmin}
\DeclareMathOperator{\WL}{WL}

\DeclareMathOperator{\OTC}{OTC}

\DeclareMathOperator{\OGJ}{OGJ}

\DeclareMathOperator{\inte}{int}

\DeclareMathOperator{\dst}{dist}

\DeclareMathOperator{\pr}{pr}

\def\R{\mathbb{R}}

\newtheorem{theorem}{Theorem}
\numberwithin{theorem}{section}

\newtheorem{lemma}[theorem]{Lemma}

\newtheorem{remark}[theorem]{Remark}

\newtheorem*{theorem*}{Theorem}

\theoremstyle{definition}
\newtheorem{definition}[theorem]{Definition}
\newtheorem{example}[theorem]{Example}

\newtheorem*{assumptions*}{Standing Assumptions}

\usepackage{fancyhdr}
\fancyhf{}
\pagestyle{fancy}
\fancyhead[RE]{\centering \textsc{Ho\`ang, McGoff, Nobel, Xiang, \and Yi }}
\fancyhead[LE]{\thepage}
\fancyhead[RO]{\centering \textsc{OGJ with Applications to Isomorphism Detection and Identification}}
\fancyhead[LO]{\thepage}

\definecolor{CharlotteGreen}{RGB}{0,80,53}
\definecolor{NinerGold}{RGB}{164,150,101}
\definecolor{PineGreen}{RGB}{137,144,100}

\newcommand\firstauthor{\thanks{*First author designation.}}

\newcommand{\phuong}[1]{{\color{cyan}#1}}
\newcommand{\km}[1]{{\color{blue}#1}}

\newcommand{\abn}[1]{{\color{red}#1}}

\begin{document}
\title{Optimal Graph Joining with Applications to \\ Isomorphism Detection and Identification}

\author{Phuong N. Ho\`ang* \and Kevin McGoff \and \\ Andrew B. Nobel \and Yang Xiang \and Bongsoo Yi}

\thanks{PNH and KM gratefully acknowledges support from the National Science Foundation grants DMS-2113676 and DMS-2413929. KM also gratefully acknowledges support from the National Science Foundation grant DMS-1847144. ABN, YX, and BY gratefully acknowledge support from the National Science Foundation grants DMS-2113676 and DMS-2413928.}
\firstauthor

\maketitle

\begin{abstract}
We introduce an optimal transport based approach for comparing undirected graphs with
non-negative edge weights and general vertex labels, and we
study connections between the resulting linear program and 
the graph isomorphism problem.
Our approach is based on the notion of a joining of two graphs $G$ and $H$, 
which is a product graph that preserves their marginal structure.
Given $G$ and $H$ and a vertex-based cost function $c$, the optimal graph joining (OGJ) 
problem finds a joining of $G$ and $H$ minimizing degree weighted cost. The OGJ problem
can be written as a linear program with a convex polyhedral solution set.
We establish several basic properties of the OGJ problem, and present theoretical 
results connecting the OGJ problem to the graph isomorphism problem. 
In particular, we examine a variety of conditions on graph families that are sufficient
to ensure that for every pair of graphs $G$ and $H$ in the family
(i) $G$ and $H$ are isomorphic if and only if their optimal joining 
cost is zero, and (ii) if $G$ and $H$ are isomorphic, the the extreme points
of the solution set of the OGJ problem are deterministic joinings corresponding
to the isomorphisms from $G$ to $H$.
\end{abstract}


\section{Introduction} 
\label{Sect:Intro}

Optimal transport provides a flexible and powerful approach to comparing probability measures that has found application in a variety of disciplines.
This paper introduces an optimal transport-based framework 
for comparing graphs and investigates
its application to the problem of detecting and identifying graph
isomorphisms.  We focus on undirected graphs with
non-negative edge weights, including graphs that possess 
vertex labels.
Our investigation is based on the notion of a graph joining, 
which is analogous to the notion of a coupling in the standard optimal transport problem.
Loosely speaking, a joining of graphs $G$ and $H$ 
with vertex sets $U$ and $V$ is a graph $K$ on $U \times V$ from which 
$G$ and $H$ can be recovered via coordinate projections. 
A precise definition
of graph joining is given in Section \ref{Sect:weightjoinings}.
As discussed in Section \ref{Sect:MCs}, the definition of graph joining is
based on probabilistic ideas, specifically reversible Markovian couplings
of random walks on the given graphs $G$ and $H$.

The family of joinings of two graphs $G$ and $H$ provides a means of 
quantifying and studying their potential interactions.  As in the
setting of optimal transport, one may discriminate among graph joinings 
of $G$ and $H$ on the basis of a cost function 
$c: U \times V \to [0,\infty)$ relating their vertices.
The optimal graph joining (OGJ) problem minimizes the degree-weighted average 
of $c(u,v)$ over all possible graph joinings of $G$ and $H$, where the degree of $(u,v)$ is the sum of the graph joining weights on the edges incident to $(u,v)$.
When graphs and their joinings are represented in terms of edge weight functions, the OGJ problem takes the form of a linear program whose solution set is a (non-empty) convex polytope. 

While the OGJ problem is based on probabilistic principles, we show
that it has close connections with the detection and identification 
of graph isomorphisms. Our analysis builds on the elementary fact that 
any isomorphism $f: U \to V$ between graphs $G$ and $H$ corresponds to a 
``bijective'' graph joining whose degree function is supported on 
pairs $(u,f(u))$, that is, the graph of $f$.
In more detail, let $\mathcal{G}$ be a family of graphs together 
with a scheme that assigns labels to the vertices of each graph 
in $\mathcal{G}$. For $G$ and $H$ in $\mathcal{G}$, let $\rho(G,H)$ be the 
minimizing value of the OGJ problem for the pair $(G,H)$ under the $0$-$1$ label-agreement cost 
$c(u,v) = \mathbb{I}(\mbox{label}(u) \neq \mbox{label}(v))$.
We say that OGJ \textit{detects} isomorphism for $\mathcal{G}$ if 
for all $G$ and $H$ in $\mathcal{G}$, we have $\rho(G,H) = 0$ if and only if $G$ and $H$ are isomorphic. 
Assuming that OGJ detects isomorphism for $\mathcal{G}$,  
we say that OGJ \textit{identifies} isomorphism if for all isomorphic graphs $G$ and $H$ in $\mathcal{G}$, 
the extreme points of the OGJ solution set are bijective graph joinings corresponding to the isomorphisms from $G$
to $H$.
Identification is similar to the property of \textit{convex exactness}, which arises in convex relaxations of the graph isomorphism problem 
(see Section \ref{Sect:relatedworks} for further discussion).

The primary theoretical results of the paper provide sufficient 
conditions on graph families $\mathcal{G}$ and their labeling schemes
under which OGJ detects and identifies isomorphism.
A brief summary of these results follows; more detailed  
statements can be found in Section \ref{Sect:theoresults}.
Initially we establish that OGJ detects and identifies isomorphism 
if for each graph in $\mathcal{G}$ its vertex labels 
are invariant under isomorphism, the labels assigned to the neighbors
of every vertex are distinct, and at least one vertex has a unique label.
We then show that positivity of the OGJ cost between two labeled graphs is
unaffected if for both graphs the label at each vertex
is replaced by the label process of the standard random walk 
beginning at that vertex.  
As a corollary, OGJ can detect 
and identify isomorphism for families of graphs with suitable vertex landmarks. 

The sufficient conditions of the detection and identification results above imply that each graph 
in $\mathcal{G}$ is asymmetric (i.e., has trivial automorphism group).
Asymmetry of graphs is assumed in many detection results 
in the literature; see Section \ref{Sect:relatedworks}.
We next consider structured graph families whose elements may be symmetric (i.e., have nontrivial automorphism groups).  
In particular we show that OGJ can detect and identify isomorphism 
for the family of weighted trees with vertex labels. 
We then establish an extension principle showing that 
if OGJ can detect and identify isomorphism for a family $\mathcal{G}$, then it can detect and identify isomorphisms for 
families obtained by gluing together graphs in $\mathcal{G}$ in an 
appropriate manner. 
As an application of the extension principle, we show that OGJ can detect and identify isomorphism for forests and certain graphs with loops. 
Finally, we show that OGJ has at least as much discriminating power as the classical Weisfeiler-Leman test. 

\vskip.1in

The notion of graph joining and the optimal graph joining problem introduced here extend ideas from earlier work concerning optimal transport for Markov chains and directed graphs \cite{o2022optimal, yi2024alignment}. 
Existing approaches to detection and identification of graph isomorphism 
may be roughly categorized 
into three groups: those employing algebraic techniques from group theory, those employing combinatorial techniques, and those based on convex relaxations of graph isomorphism (see Section \ref{Sect:relatedworks} for further discussion of related work).  
By contrast, our approach to graph isomorphism through optimal graph joinings is based on ideas from optimal transport and stochastic processes.  
The OGJ approach automatically integrates local information captured
by the vertex-based cost function (the objective function of the 
OGJ problem) with global information captured by the family of graph joinings 
(the feasible set of the OGJ problem).
The OGJ approach is conceptually straightforward and flexible.  It applies without modification to graphs with non-negative edge weights, and to graphs with vertex labels. 
Our theoretical results guarantee that OGJ can detect and identify isomorphism 
for a variety of graph families, including families with highly symmetric graphs,
for which existing detection and identification methods may not have theoretical guarantees.

\vskip.1in

\noindent
{\bf Organization of the paper.}
The next section introduces graph joinings, including several examples 
and basic properties, and the optimal graph joining problem.
Section \ref{Sect:IsomorphismIntro} is devoted to background and preliminary results concerning graph isomorphism.
The main theoretical results of the paper, connecting the OGJ problem and graph isomorphisms, are presented in Section \ref{Sect:theoresults}.
Section \ref{Sect:relatedworks} provides a discussion of related work. 
Additional properties of the OGJ problem, as well as the proofs of the main 
theoretical results, are given in Sections \ref{Sect:mainweight} - \ref{Sect:graphisomorphisms}.
\section{Graph joinings and the optimal graph joining problem} 
\label{Sect:generaltheory}

This section introduces graph joinings and the optimal graph joining (OGJ) problem. We begin with preliminary definitions and notation.

\subsection{Notation and graphs of interest} 
\label{Sect:background}

Throughout this paper we study finite undirected graphs whose edges may be weighted, and whose vertices may be labeled.   
Graphs may contain self-loops, but multi-edges are excluded.  In what follows the support of a function 
$f: S \to [0,\infty)$ is defined by $\spp(f) = \{s \in S : f(s) > 0 \}$. 

\begin{definition} \label{Def:weightfunctions}
Let $U$ be a finite set.  A function $\alpha: U \times U \to \mathbb{R}$ is a {\em weight function} on $U$ if it satisfies the 
following three properties:
\begin{enumerate}
\item (Non-negativity) $\alpha(u, u') \ge 0$, for all $u, u' \in U$;
\vskip.05in
\item (Symmetry) $\alpha(u, u') = \alpha(u', u)$, for all $u, u' \in U$;
\vskip.05in
\item (Normalization) $\sum_{u, u' \in U} \alpha(u, u') = 1$.
\end{enumerate}
\end{definition}

\vskip.05in

\begin{definition}
\label{Def:weighteddegree}
A weight function $\alpha$ on $U$ has {\it marginal function} $p: U \to \mathbb{R}$ defined by $p(u) = \sum_{u' \in U} \alpha(u, u')$.
\end{definition} 

Note that any marginal function $p$ is non-negative and satisfies $\sum_{u \in U} p(u) = 1$ and $p(u) = \sum_{u' \in U} \alpha(u', u)$.  In particular, $p$ is a probability
mass function on $U$.

\vskip.05in

\begin{definition}
\label{Def:uwlg}
Let $U$ be a finite vertex set and $\mathcal{L}$ be a (possibly infinite) label set.  A weighted undirected labeled graph is
defined by a triple $G = (U,\alpha, \phi_G)$, where $\alpha$ is a weight function on $U$
and $\phi_G: U \to \mathcal{L}$ is referred to as a vertex label function. 
\end{definition} 

Under the definition, the edges in $G$ are described by ordered pairs:  the pair $(u,u')$ represents an edge from $u$ to $u'$,
which is assigned weight $\alpha(u,u')$.  Symmetry of $\alpha$
ensures that the weight assigned to $(u,u')$ is the same as the weight assigned to $(u',u)$, 
and in this sense the graph $G$ is undirected.  The edge set $E(G) := \mbox{supp}(\alpha)$ of $G$ consists of 
all edges with positive weight. For a vertex $u \in U$, the neighborhood of $u$ is the set $N_G(u) = \{u' : \alpha(u,u') >0\}$, and any element of $N_G(u)$ is referred to as a neighbor of $u$.  The marginal function $p$ of $\alpha$ coincides with the weighted
degree function of $G$; in particular $p(u)$ is the sum of the weights of all edges attached to the vertex $u$.  
Here and throughout we say that a graph $G$ is \textit{fully supported} if its marginal function is strictly positive. 
We also recall that a graph $G = (U,\alpha,\phi_G)$ is \textit{connected} if for each pair of distinct vertices $u$ and $u'$ in $U$, there exists $u_0,\dots,u_n \in U$ such that $u_0 = u$, $u_n = u'$, and $\alpha(u_k,u_{k+1}) >0$ for each $k \in \{0,\dots,n-1\}$.

The map $\phi_G$ appearing in the specification of $G$ will be called the {\em primary label function} of $G$. 
Whereas the weight function $\alpha$ specifies the connectivity between vertices, the primary label function may capture 
features of the vertices. 
Unlabeled graphs are a special case of labeled graphs in which the label space $\mathcal{L}$ may be taken to be a singleton. 
An unweighted graph 
can be represented as a graph with constant edge weights.
The set $\mathcal{L}$ will be referred to as the label space of $G$.  In the sequel, when considering families
of graphs, we will assume that they have a common label space. See Figure \ref{Fig:primarylabels} for two examples.

    \begin{figure}[h!]
    \centering
    
    \begin{tikzpicture}

    \draw[line width=0.5mm] (0,0) -- (1,0) -- (2,0) -- (3,0);
    
    \foreach \y in {0} {
      \foreach \x in {0, 1, 2, 3} {
        \node at (\x,\y) [circle,fill=black,inner sep=0pt,minimum size=6pt, color=black] {};
      }
    }

    \node at (0,-0.4) {$0$};
    \node at (1,-0.4) {$1$};
    \node at (2,-0.4) {$2$};
    \node at (3,-0.4) {$2$};
    
    \node at (0.5,0.3) {$\sfrac{1}{3}$};
    \node at (1.5,0.3) {$\sfrac{1}{12}$};
    \node at (2.5,0.3) {$\sfrac{1}{12}$};
    
    \node at (1.5,-1) {\text{(a)}};
    
    \draw[line width=0.5mm] (7,0) -- (9,0) -- (8,1.8) -- (7,0);
    
    \foreach \y in {0} {
      \foreach \x in {7, 9} {
        \node at (\x,\y) [circle,fill=black,inner sep=0pt,minimum size=6pt, color=black] {};
      }
    }
    
    \node at (8,1.8) [circle,fill=black,inner sep=0pt,minimum size=6pt, color=black] {};
    
    \node at (7,-0.4) {}; 
    \node at (9,-0.4) {}; 
    \node at (8,2.2) {}; 
    
    \node at (8,-0.3) {$\sfrac{1}{8}$};
    \node at (7.2,1) {$\sfrac{1}{8}$};
    \node at (8.8,1) {$\sfrac{1}{4}$};
    
    \node at (8,-1) {\text{(b)}};

    \end{tikzpicture}

    \caption{(a) A labeled graph with vertex labels in $\{0,1,2\}$ and edge weights in $\{\tfrac{1}{3}, \tfrac{1}{12}\}$, 
    and (b) an unlabeled graph with edge weights in $\{\tfrac{1}{4}, \tfrac{1}{8}\}$. Note that the edge weights drawn here are equal to the weight function value $\alpha(u,u')$ for each ordered pair $(u,u')$ corresponding to an edge.}
    \label{Fig:primarylabels}
\end{figure}
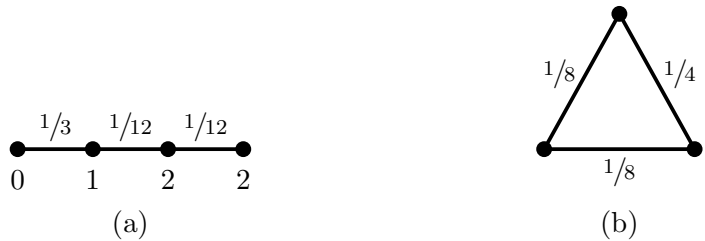

\subsection{Weight joinings and graph joinings} 
\label{Sect:weightjoinings}
In this section we introduce graph joinings and state some of their basic properties. 

\begin{definition}
\label{Def:weightjoinings}
Let $\alpha$ be a weight function on a finite set $U$ with marginal $p$, and let $\beta$ be a weight function on a finite set $V$ with marginal $q$. 
A weight function $\gamma$ on $U \times V$ with marginal function $r_{\gamma} : U \times V \to [0,1]$ defined by
$$r_\gamma (u,v) = \sum_{u' \in U, v' \in V} \gamma((u,v),(u',v'))$$
is a {\it weight joining} of $\alpha$ and $\beta$ if it satisfies the following conditions.
    \begin{enumerate}
    \vskip.07in
        \item (Marginal coupling condition) The marginal function $r_\gamma$ is a coupling of $p$ and $q$, i.e.,
            \begin{align*} 
                \sum_{v \in V} r_\gamma(u,v) & = p(u), \mbox{ for all $u \in U$}, \; \text{and} \\
                \sum_{u \in U} r_\gamma(u,v) & = q(v),  \mbox{ for all $v \in V$}.
            \end{align*}
                \vskip.04in
        \item (Transition coupling condition) For all $u, u' \in U$ and $v, v' \in V$, 
            \begin{align*} 
                p(u) \displaystyle \sum_{\tilde{v} \in V} \gamma((u,v),(u',\tilde{v})) & = \alpha(u,u') \, r_\gamma(u,v), \; \text{and} \\
                q(v) \displaystyle \sum_{\tilde{u} \in U} \gamma((u,v),(\tilde{u},v')) & = \beta(v,v') \, r_\gamma(u,v).
            \end{align*}
    \end{enumerate}
        \vskip.04in
In what follows we will denote the set of all weight joinings of $\alpha$ and $\beta$ by $\mathcal{J}(\alpha, \beta)$. 
\end{definition}

The marginal coupling and transition coupling conditions ensure that the weight functions $\alpha$ and $\beta$ are, in an appropriate sense, marginals of $\gamma$ 
(see Proposition \ref{Prop:realcoupling}). 
The definition of weight joining is based on reversible Markovian couplings of random walks on $G$ and $H$; see Section \ref{Sect:MCs} below for further discussion.
The family $\mathcal{J}(\alpha,\beta)$ of weight joinings is non-empty, as it contains the product weight joining $\gamma = \alpha \otimes \beta$, 
defined for all $u, u' \in U$ and $v, v' \in V$ by 
	\begin{align*} 
		\gamma((u,v),(u',v')) = \alpha(u,u') \beta(v,v').
	\end{align*}

If $|U| = m$ and $|V| = n$, then a weight joining $\gamma$ can be written as an $mn \times mn$ real matrix. 
The following result, whose proof is in Section \ref{Sect:mainweight}, is straightforward. 
Recall that a convex polyhedron is a subset of a Euclidean space that can be written as an intersection of a finite number of closed half-spaces. The proof of this proposition appears in Section \ref{Sect:mainweight}.

\begin{restatable}[]{proposition}{basicproperties}
\label{Prop:basicproperties}
The set $\mathcal{J}(\alpha, \beta)$ is a nonempty, compact, convex polyhedron in $\mathbb{R}^{mn \times mn}$. 
\end{restatable}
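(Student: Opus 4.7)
The plan is to verify each of the four claimed properties (non-emptiness, compactness, convexity, and polyhedral structure) directly from the definition of $\mathcal{J}(\alpha,\beta)$, exploiting the fact that every defining condition is linear in the entries of the candidate joining $\gamma$.

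First, for non-emptiness I would exhibit the product weight joining $\gamma^\star((u,v),(u',v')) = \alpha(u,u')\beta(v,v')$ already written down in the text and verify that it lies in $\mathcal{J}(\alpha,\beta)$. Non-negativity, symmetry, and normalization of $\gamma^\star$ follow immediately from the corresponding properties of $\alpha$ and $\beta$. Its marginal is $r_{\gamma^\star}(u,v) = p(u)q(v)$, which is the independent coupling of $p$ and $q$, so the marginal coupling condition holds. For the transition coupling condition, one computes
\[
p(u)\sum_{\tilde v \in V}\gamma^\star((u,v),(u',\tilde v)) = p(u)\,\alpha(u,u')\sum_{\tilde v}\beta(v,\tilde v) = p(u)\alpha(u,u')q(v) = \alpha(u,u')\,r_{\gamma^\star}(u,v),
\]
and the symmetric identity for $\beta$ is analogous. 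Thus $\gamma^\star \in \mathcal{J}(\alpha,\beta)$.

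Second, I would argue that $\mathcal{J}(\alpha,\beta)$ is a convex polyhedron by checking that each defining condition is either a linear inequality or a linear equality in the entries of $\gamma \in \mathbb{R}^{mn\times mn}$. Non-negativity is clearly a collection of closed half-space constraints. Symmetry and the normalization $\sum \gamma = 1$ are linear equalities. In the marginal coupling condition, both $r_\gamma(u,v) = \sum_{u',v'}\gamma((u,v),(u',v'))$ and the partial sums against $p$ and $q$ are linear functions of $\gamma$, so these conditions cut out affine hyperplanes. Finally, the transition coupling condition, although it multiplies $\gamma$ entries by fixed scalars $p(u)$ and $q(v)$ and subtracts $\alpha(u,u')r_\gamma(u,v)$ (which itself is linear in $\gamma$), remains a linear equation in the variables $\gamma((u,v),(u',v'))$. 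Hence $\mathcal{J}(\alpha,\beta)$ is the intersection of finitely many closed half-spaces and hyperplanes in $\mathbb{R}^{mn\times mn}$, so it is a convex polyhedron; in particular it is convex and closed.

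Third, for compactness, closedness is already established, and boundedness follows because non-negativity together with the normalization $\sum \gamma((u,v),(u',v')) = 1$ forces every entry of $\gamma$ to lie in $[0,1]$. Thus $\mathcal{J}(\alpha,\beta) \subseteq [0,1]^{mn\times mn}$, which is compact, and $\mathcal{J}(\alpha,\beta)$ is a closed subset of a compact set. Combined with non-emptiness from the product joining, this yields the full statement.

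The only step with any subtlety is the verification of the transition coupling condition for $\gamma^\star$; everything else is bookkeeping. The potential pitfall to watch for is the implicit appearance of $r_\gamma$ in the transition coupling condition, which could be mistaken for a nonlinear constraint, but since $r_\gamma$ is linear in $\gamma$ the condition remains affine and the polyhedral structure is preserved.
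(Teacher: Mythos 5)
Your proposal is correct and follows essentially the same route as the paper: exhibit the product joining $\alpha \otimes \beta$ for non-emptiness, observe that all defining conditions are linear in the entries of $\gamma$ so the set is a closed convex polyhedron, and obtain boundedness from non-negativity plus normalization. The only difference is that you spell out the verification of the transition coupling condition for the product joining, which the paper leaves as a straightforward check.
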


\vskip.1in

We now turn to the notion of graph joinings. 

\begin{definition}\label{Def:graphjoinings}
A  {\em graph joining}  of graphs $G = (U, \alpha, \phi_G)$ and $H = (V,\beta,\phi_H)$ is any graph of the form
$K = (U \times V, \gamma, \phi_G \otimes \phi_H)$,
where $\gamma \in \mathcal{J}(\alpha,\beta)$ and 
$(\phi_G \otimes \phi_H)(u,v) = (\phi_G(u), \phi_H(v))$.
\end{definition} 

Informally, a graph joining of $G$ and $H$ is a graph on the product of their vertex sets that admits $G$ and $H$ as marginal factors (see Proposition \ref{Prop:realcoupling} for more details).  
Graph joinings preserve edges in the sense that if $((u,v),(u',v'))$ 
is an edge in $K$, then $(u,u')$ is an edge in $G$ and $(v,v')$ 
is an edge in $H$.
See Figure \ref{Fig:exagraphjoining} for examples.

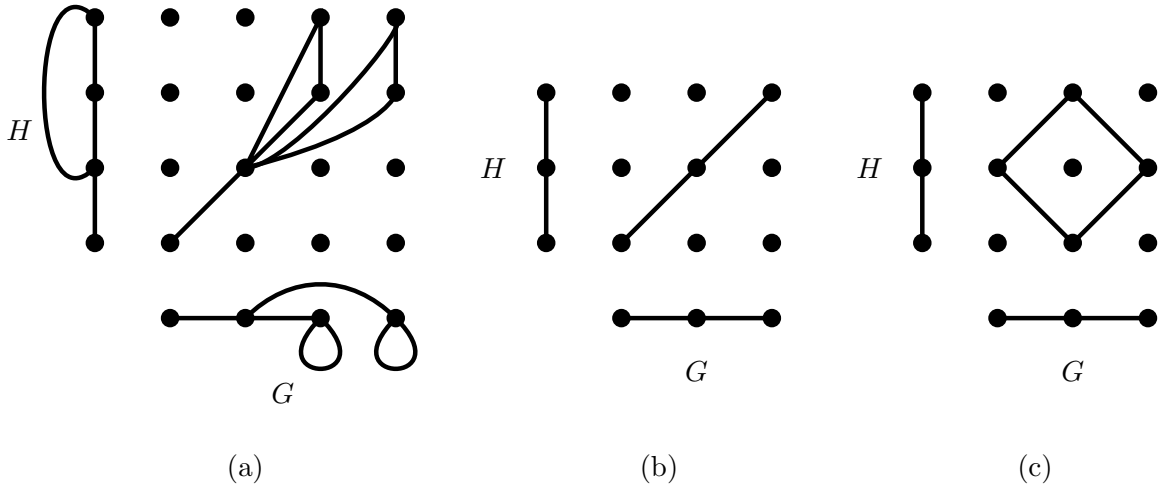
\begin{figure}[h!]
    \centering
    \begin{tikzpicture}
        \draw[line width=0.6mm] (0,0) -- (1,0) -- (2,0);    
        \draw[line width=0.6mm] (3,0) .. controls ++ (0.3*\dist,-0.3*\dist) and ++ (-0.3*\dist,-0.3*\dist) .. (3,0);
        \draw[line width=0.6mm] (2,0) .. controls ++ (0.3*\dist,-0.3*\dist) and ++ (-0.3*\dist,-0.3*\dist) .. (2,0);
        \draw[line width=0.6mm] (1,0) .. controls ++ (0.2*\dist,0.2*\dist) and ++ (-0.2*\dist,0.2*\dist) .. (3,0);

        \draw[line width=0.6mm] (-1,4) -- (-1,3) -- (-1,2) -- (-1,1);
        \draw[line width=0.6mm] (-1,2) .. controls ++ (-0.3*\dist,-0.3*\dist) and ++ (-0.3*\dist,0.3*\dist) .. (-1,4);

        \draw[line width=0.6mm,color=black] (0,1) -- (1,2) -- (2,3) -- (2,4) -- (1,2);
        \draw[line width=0.6mm,color=black] (3,3) -- (3,4);
        \draw[line width=0.6mm,color=black] (1,2) .. controls ++ (0.8*\dist,0.2*\dist) and ++ (0*\dist,0.1*\dist) .. (3,3);
        \draw[line width=0.6mm,color=black] (1,2) .. controls ++ (0.2*\dist,0*\dist) and ++ (0.1*\dist,0.0*\dist) .. (3,4);

        \foreach \x in {0,1,2,3} {
        \foreach \y in {0} {
            \node at (\x,\y) [circle,fill=black,inner sep=0pt,minimum size=7pt, color=black] {};
        }
        }

        \foreach \x in {0, 1, 2, 3} {
        \foreach \y in {1, 2, 3, 4} {
            \node at (\x,\y) [circle,fill=black,inner sep=0pt,minimum size=7pt] {};
        }
        }

        \foreach \y in {1, 2, 3, 4} {
        \foreach \x in {-1} {
            \node at (\x,\y) [circle,fill=black,inner sep=0pt,minimum size=7pt,color=black] {};
        }
        }
 
        \node at (1.5,-1) {$G$};
        \node at (-2,2.5) {$H$}; 
        \node at (1,-2) {\text{(a)}};


        \draw[line width=0.6mm] (6,0) -- (7,0) -- (8,0);
        \draw[line width=0.6mm] (5,1) -- (5,2) -- (5,3);
        \draw[line width=0.6mm, color=black] (6,1) -- (7,2) -- (8,3);

        \foreach \x in {6, 7, 8} {
            \foreach \y in {0} {
            \node at (\x,\y) [circle,fill=black,inner sep=0pt,minimum size=7pt, color=black] {};
            }
        }

        \foreach \x in {6, 7, 8} {
            \foreach \y in {1, 2, 3} {
            \node at (\x,\y) [circle,fill=black,inner sep=0pt,minimum size=7pt] {};
            }
        }

        \foreach \x in {5} {
            \foreach \y in {1, 2, 3} {
            \node at (\x,\y) [circle,fill=black,inner sep=0pt,minimum size=7pt] {};
            }
        }

        \node at (7,-0.7) {$G$};
        \node at (4.3,2) {$H$};


        \draw[line width=0.6mm] (11,0) -- (12,0) -- (13,0);
        \draw[line width=0.6mm] (10,1) -- (10,2) -- (10,3);
        \draw[line width=0.6mm, color=black] (11,2) -- (12,3) -- (13,2) -- (12,1) -- (11,2);

        \foreach \x in {11, 12, 13} {
            \foreach \y in {0} {
            \node at (\x,\y) [circle,fill=black,inner sep=0pt,minimum size=7pt, color=black] {};
            }
        }

        \foreach \x in {10} {
            \foreach \y in {1, 2, 3} {
            \node at (\x,\y) [circle,fill=black,inner sep=0pt,minimum size=7pt] {};
            }
        }

        \foreach \x in {11, 12, 13} {
            \foreach \y in {1, 2, 3} {
            \node at (\x,\y) [circle,fill=black,inner sep=0pt,minimum size=7pt] {};
            }
        }

        \node at (12,-0.7) {$G$};
        \node at (9.3,2) {$H$};
        \node at (6.5,-2) {\text{(b)}};
        
         \node at (11.5,-2) {\text{(c)}};
    \end{tikzpicture}
    \caption{Illustrations of three graph joinings, with the vertex sets of $G$ and $H$ drawn on the horizontal and vertical axes, respectively, and a graph joining drawn on the grid representing the product of their vertex sets. Edge weights and vertex labels are not drawn. Note that (b) and (c) depict two distinct graph joinings of the same pair of marginal graphs.} 
    \label{Fig:exagraphjoining}
\end{figure}


\subsubsection{Connections with Markov chains and ergodic theory} 
\label{Sect:MCs}


Weighted undirected graphs are intimately connected with 
reversible Markov chains.
Let $\alpha$ and $\beta$ be weight functions on finite sets $U$ and $V$ with marginals $p$ and $q$, respectively.
Together $\alpha$ and $p$ determine a unique 
stationary Markov chain $X = X_0, X_1, \ldots$ with state space $U$, transition probability matrix
\begin{align*}
P(u' \mid u) = \left\{ \begin{array}{lll}
         \dfrac{\alpha(u,u')}{p(u)} &\text{ if } p(u) > 0, \\[.1in]
         0 &\text{ otherwise,}
    \end{array} \right.
\end{align*}
and stationary distribution $p$.  The Markov chain $X$ describes 
the standard random walk on $G$, which moves from each vertex $u$ to a neighboring vertex $u'$ with probability proportional
to $\alpha(u,u')$.
The symmetry of $\alpha$ ensures that the process $X$ is reversible.
Let $Y = Y_0, Y_1, \ldots$ be the reversible Markov 
chain on $V$ determined by $\beta$ and $q$,
with transition probability matrix $Q(v' \mid v)$.
In the same fashion, a joining $\gamma \in \mathcal{J}(\alpha,\beta)$ with marginal $r_\gamma$ determines a reversible Markov chain 
$(\widetilde{X},\widetilde{Y}) = (\widetilde{X}_0,\widetilde{Y}_0), (\widetilde{X}_1,\widetilde{Y}_1), \ldots$ taking values in $U \times V$ with transition probability matrix
\begin{align*}
R((u',v') \mid (u,v)) 
= \left\{ \begin{array}{lll}
         \dfrac{\gamma((u,v), (u',v'))}{r_\gamma(u,v)} &\text{ if } r_\gamma(u,v) > 0, \\[.1in]
         0 &\text{ otherwise,}
    \end{array} \right.
\end{align*}
and stationary distribution $r_\gamma$. 
The marginal coupling condition ensures that the stationary distribution of $(\widetilde{X},\widetilde{Y})$ is a coupling of the stationary distributions of $X$ and $Y$. 
Furthermore, the transition coupling condition ensures that if 
$r_{\gamma}(u,v) >0$, then the transition distribution 
$R(\cdot \mid (u,v))$ is a coupling of $P(\cdot \mid u)$ and 
$Q(\cdot \mid v)$. 
In the language of Markov chains \cite{levin2017markov},
$(\widetilde{X},\widetilde{Y})$ is a reversible Markovian coupling of $X$ and $Y$.  In graph terms, the processes $X$, $Y$, and $(\widetilde{X},\widetilde{Y})$ are
the standard random walks on the graphs $(U,\alpha)$, $(V,\beta)$,  and $(U \times V, \gamma)$, respectively (and we note that vertex labels do not play a role).

The term joining was introduced by Furstenberg \cite{furstenberg1967disjointness} to refer to a stationary coupling of stationary processes.  Joinings have subsequently played an important 
role in ergodic theory; we refer the interested reader to 
\cite{de2005introduction, glasner2003ergodic} for more details.
While the random walk associated with a graph joining $K$ of graphs $G$ and $H$ is a stationary coupling of the random walks on $G$ and $H$, our definition of graph joining imposes additional constraints.  In particular not every stationary coupling of the random walks on $G$ and $H$ will correspond to
a graph joining in the sense defined here (since it need not be a reversible Markovian coupling).


\subsection{Optimal graph joining} 
\label{Sect:optimaljoining}

Given a vertex-based cost function, one may seek to minimize the expected value of the cost over the space of graph joinings.  
Given two functions $f, g: A \to \mathbb{R}$ defined on a finite set $A$, let $\langle f,g \rangle = \sum_{x \in A} f(x)g(x)$. 

\vskip.1in

\begin{definition} 
\label{Def:OGJproblem}
Let $G = (U, \alpha, \phi_G)$ and $H = (V, \beta, \phi_H)$ be graphs, and let
$c : U \times V \to \mathbb{R}_{\geq 0}$ be a cost function relating the
vertices of $G$ and $H$. 
The {\it optimal graph joining (OGJ) problem} for $G$ and $H$ is 
\begin{align*}
    &\text{minimize }  \langle c, r_{\gamma} \rangle \\[.04in]
    &\text{subject to } \gamma \in \mathcal{J}(\alpha, \beta),
\end{align*} 
where $r_\gamma$ denotes the marginal function of $\gamma$.
Denote the minimum transport cost by  
\vspace*{.08in}
\[
\rho(G,H) = \min_{\gamma \in \mathcal{J}(\alpha,\beta)} \langle c, r_{\gamma} \rangle,
\]
and the set of optimal solutions by
\vspace*{.04in}
\begin{align*}
    \mathcal{J}^*(\alpha, \beta) = \argminunder_{\gamma \in \mathcal{J}(\alpha,\beta)} \langle c, r_{\gamma} \rangle.
\end{align*}
\end{definition}

The minimum transport cost $\rho(G,H)$ may be viewed as a measure of the difference between $G$ and $H$ with respect to the cost function $c$. 
Any optimal weight joining $\gamma \in \mathcal{J}^*(\alpha, \beta)$ provides a transport plan from $G$ to $H$ that  minimizes the
expected cost over all such plans.  
As the objective function $\gamma \mapsto \langle c, r_\gamma \rangle$ is linear, 
Proposition \ref{Prop:basicproperties} gives the following result. The proof of this proposition can be found in Section \ref{Sect:propertiesOGJ}. 

\begin{restatable}[]{proposition}{linearOGJ}
\label{Prop:linearOGJ}
The OGJ problem is a linear program and always has at least one solution.  As such, a solution of the OGJ problem can be found in polynomial time. 
\end{restatable}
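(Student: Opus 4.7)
The plan is to verify three things in sequence: that the OGJ problem can be formulated as a linear program, that the linear program admits an optimizer, and that it can be solved in polynomial time in the sizes of the underlying vertex sets. I would begin by fixing the representation: treat $\gamma$ as a vector in $\mathbb{R}^{(mn)^2}$ indexed by pairs $((u,v),(u',v'))$, and note that the marginal function $r_\gamma$ is a linear functional of $\gamma$ (it is simply a partial sum). Consequently the objective $\gamma \mapsto \langle c, r_\gamma\rangle$ is a linear function of the entries of $\gamma$.

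Next I would check that every defining condition of $\mathcal{J}(\alpha,\beta)$ is a linear constraint on $\gamma$. Non-negativity gives linear inequalities, while symmetry and the normalization $\sum \gamma = 1$ are linear equalities. Since $r_\gamma$ is itself linear in $\gamma$, the marginal coupling conditions $\sum_v r_\gamma(u,v)=p(u)$ and $\sum_u r_\gamma(u,v)=q(v)$ are linear equalities in $\gamma$. Finally, in each transition coupling equation the quantities $p(u)$, $q(v)$, $\alpha(u,u')$, and $\beta(v,v')$ are constants determined by the input graphs, so both sides $p(u)\sum_{\tilde v}\gamma((u,v),(u',\tilde v))$ and $\alpha(u,u')\, r_\gamma(u,v)$ are linear in $\gamma$. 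Gathering these observations shows that OGJ has the form of a linear program over the polyhedron $\mathcal{J}(\alpha,\beta)$.

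Existence of a minimizer is then immediate: by Proposition \ref{Prop:basicproperties} the feasible set $\mathcal{J}(\alpha,\beta)$ is nonempty and compact, and a continuous (in particular linear) function on a nonempty compact set attains its minimum, so $\mathcal{J}^*(\alpha,\beta)$ is nonempty. For the polynomial-time claim, I would note that the number of decision variables is $O(m^2 n^2)$ and the number of linear constraints (non-negativity, symmetry, normalization, and the two coupling families) is also polynomial in $m$ and $n$, so any standard polynomial-time algorithm for linear programming (for example, the ellipsoid method or an interior point method) produces a solution in time polynomial in the bit-size of the input data.

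There is no real obstacle here; the only point requiring care is the transition coupling constraint, where one must recognize that $p$, $q$, $\alpha$, and $\beta$ are fixed inputs so the equations are linear in the unknown $\gamma$, not bilinear. Once that is checked, the result follows directly from Proposition \ref{Prop:basicproperties} and classical polynomial-time solvability of linear programming.
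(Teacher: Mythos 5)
Your proposal is correct and follows essentially the same route as the paper: the linearity of the objective and of all defining constraints (already packaged in Proposition \ref{Prop:basicproperties}, whose proof notes that $\mathcal{J}(\alpha,\beta)$ is cut out by finitely many linear constraints) gives the linear-program formulation, nonemptiness and compactness of the feasible set give existence of a minimizer, and standard polynomial-time LP solvers give the complexity claim. The extra care you take with the transition coupling constraints — observing that $p$, $q$, $\alpha$, $\beta$ are fixed inputs so the equations are linear rather than bilinear in $\gamma$ — is exactly the point the paper implicitly relies on.
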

As graph joinings can be interpreted in terms of random walks (as in Section \ref{Sect:MCs}), the OGJ problem can be viewed as a constrained optimal transport problem for stochastic processes. 
Indeed, if $X$ and $Y$ are the reversible Markov chains associated
with the graphs $G$ and $H$, respectively, then 
the OGJ problem is to minimize $\mathbb{E} c(\widetilde{X}_0,\widetilde{Y}_0)$ over 
all reversible Markovian couplings $(\widetilde{X},\widetilde{Y})$ of
$X$ and $Y$. In the special case that the cost function takes the form $c(u,v) = \mathbb{I}(\psi_G(u) \neq \psi_H(v))$ for some labeling functions $\psi_G$ and $\psi_H$, the expected cost $\mathbb{E} c (\widetilde{X}_0,\widetilde{Y}_0)$ can be interepreted as the long-run average amount of time that the coupled chains spend in states with different labels.  

We note that the OGJ problem described above differs from the problem of finding an optimal transition coupling of {\em directed} graphs, as studied in \cite{o2022optimal, yi2024alignment}. 
Indeed, the symmetry requirement on weight joinings places nontrivial constraints on the set of feasible couplings, with a substantial impact on the associated optimization problem. 
\section{Connections with graph isomorphism: Background and preliminary results} \label{Sect:IsomorphismIntro}

In this section we provide some background and preliminary results connecting the OGJ problem with the problems of detecting and identifying graph isomorphism. 
To begin, we recall the definition of isomorphism for labeled graphs. 

\begin{definition} \label{Def:graphisomorphism}
Labeled graphs $G = (U,\alpha, \phi_G)$ and $H = (V,\beta, \phi_H)$ are {\it isomorphic},
written $G \cong H$, if there is function $f : U \to V$, called a graph isomorphism, such that
    \begin{enumerate}
        \item $f$ is bijective,
        \item for each pair $(u,u') \in U \times U$, we have $\alpha(u,u') = \beta(f(u),f(u'))$, and
        \item for each $u \in U$, we have $\phi_G(u) = \phi_H(f(u))$.
    \end{enumerate}
In words, $G$ and $H$ are isomorphic if there is a bijective correspondence between their vertices that preserves 
both edge weights and  vertex labels. 
\end{definition}

\subsection{Labeling schemes} 
\label{Sect:labelingsch}
Let $\mathcal{G}$ be a family of labeled graphs. 
A {\it labeling scheme} for $\mathcal{G}$ is an indexed family of functions
$\Upsilon = \{ \eta_G : G \in \mathcal{G} \}$ such that $\eta_G: U \to \mathcal{A}$, where $U$ is the vertex set of $G$,
and $\mathcal{A}$ is a label set that is the same for every $G$.
The {\it primary labeling scheme} of $\mathcal{G}$ is the family $\Phi = \{ \phi_G : G \in \mathcal{G} \}$ 
of primary label functions associated with graphs in $\mathcal{G}$, with label space $\mathcal{L}$. 
In studying connections between optimal graph joinings and graph isomorphisms, we will also allow a secondary label 
function $\phi_G': U \to \mathcal{L}'$ for each graph 
$G \in \mathcal{G}$, giving rise to a secondary labeling scheme 
$\Phi' = \{ \phi_G' : G \in \mathcal{G} \}$ with label space $\mathcal{L}'$.
In general, $\mathcal{L}'$ may be different from $\mathcal{L}$. 

\begin{example} \label{Exa:lblscheme}
Let $\mathcal{G}$ be a family of graphs. 
Below are some examples of labeling schemes on $\mathcal{G}$. 
\begin{enumerate}
      
\vskip.1in
       
\item (Identity labeling scheme) Suppose that for each $G = (U, \alpha, \phi_G) \in \mathcal{G}$, the vertex set $U$ is a subset of $\mathbb{N}$. Then the restriction of the identity function to $U$, denoted $\Id|_{U} : U \to \mathbb{N}$, is a label function. The corresponding labeling scheme 
is referred to as the {\it identity labeling scheme}. 
  
\vskip.1in
     
\item (Discrete degree labeling scheme) For a graph $G$ with vertex set $U$, for each $u \in U$, let $\deg(u)$ denote the discrete degree of $u$, i.e., the number of edges incident to $u$. For each $G = (U, \alpha, \phi_G) \in \mathcal{G}$, define a label function $\phi_G' : U \to \mathbb{N}$ by setting $\phi_G'(u) = \deg(u)$. Then $\Phi' = \{ \phi_G' : G \in \mathcal{G} \}$ is a labeling scheme, called the {\it discrete degree labeling scheme}. 

\vskip.1in
 
\item (Multiweight labeling scheme) For $G = (U, \alpha, \phi_G) \in \mathcal{G}$ define a label function $\phi_G'$ by 
        \begin{align*} 
            \phi_G'(u) = \{ \! \{ \alpha(u,u') : u' \in N_G(u) \} \! \},
        \end{align*}
where $N_G(u) = \{u' \in U : \alpha(u,u')  > 0 \}$ denotes the neighbors of $u$, and the double brackets denote a multiset. 
The labeling scheme
$\Phi'$ is called the multiweight labeling scheme.
    \end{enumerate} 
    See Figure \ref{Fig:secondarylabel1} for an illustration. 
\end{example}

\vskip.1in

\begin{figure}[h!]
    \centering
    
    \begin{tikzpicture}

    \draw[line width=0.5mm] (0,0) -- (1,0) -- (2,0) -- (3,0);
    
    \foreach \y in {0} {
      \foreach \x in {0, 1, 2, 3} {
        \node at (\x,\y) [circle,fill=black,inner sep=0pt,minimum size=6pt, color=black] {};
      }
    }

    \node at (0,-0.4) {$1$};
    \node at (1,-0.4) {$2$};
    \node at (2,-0.4) {$3$};
    \node at (3,-0.4) {$4$};
    
    \node at (0.5,0.3) {$\sfrac{1}{3}$};
    \node at (1.5,0.3) {$\sfrac{1}{12}$};
    \node at (2.5,0.3) {$\sfrac{1}{12}$};

    \end{tikzpicture}

    \caption{A labeled graph with primary labels $1,2,3,4$. The discrete degree label function takes values $1, 2, 2, 1$, and the multiweight label function takes values: $\{ \! \{ \tfrac{1}{3} \} \! \}$, $\{  \! \{ \tfrac{1}{3},\tfrac{1}{12}\} \!  \}$, $\{ \!  \{ \tfrac{1}{12}, \tfrac{1}{12} \} \!  \}$, and $\{ \!  \{ \tfrac{1}{12} \} \!  \}$.}
    \label{Fig:secondarylabel1}
\end{figure}
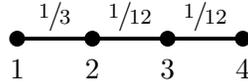

Combining the primary and secondary label functions of a graph $G \in \mathcal{G}$ yields 
an \textit{augmented label function} $\psi_G(u) = (\phi_G(u), \phi_G'(u))$ with values in $\mathcal{L} \times \mathcal{L}'$.
Let 
\[
\Psi =  \{ \psi_G = (\phi_G, \phi_G') : G \in \mathcal{G} \}
\]
be the resulting augmented labeling scheme, and let $\hat{G} = (U, \alpha, \psi_G)$ be the augmented version of $G$.  
It follows from the definitions above that if the augmented graphs $\hat{G}$ and $\hat{H}$ are isomorphic then so too are the original graphs $G$ and $H$ (with only the primary labels). However, the reverse implication may not hold:
there may be isomorphic graphs $G$ and $H$ such that $\hat{G}$ is \textit{not} isomorphic to $\hat{H}$,  
as an isomorphism from $G$ to $H$ may not preserve augmented labels. To address this situation, we make the following definition.

\begin{definition} \label{Def:invariantiso}
A labeling scheme $\{ \eta_G : G \in \mathcal{G} \}$ for $\mathcal{G}$ is {\it isomorphism-invariant} if for all $G$ and $H$ in $\mathcal{G}$ and any isomorphism $f$ from $G$ to $H$ we have $\eta_H \circ f = \eta_G$. 
\end{definition}

\begin{example} \label{Exa:invariantscheme}
The discrete degree and multiweight labeling schemes in Example \ref{Exa:lblscheme} are always isomorphism-invariant, but 
the identity labeling scheme is not isomorphism-invariant in general. 
\end{example}

The definition of isomorphism ensures that the primary labeling scheme $\Phi$ is isomorphism-invariant.  
The next lemma follows readily from the definitions above. The proof this lemma appears in Section \ref{Sect:graphisomorphisms}. 

\begin{restatable}[]{lemma}{secondaug}
\label{Lem:secondaug}
Let $\mathcal{G}$ be a family of graphs with primary, secondary, and augmented labeling schemes as above. Then the following are equivalent.
\begin{enumerate}
    \item The secondary labeling scheme $\Phi'$ is isomorphism-invariant.
    \item The augmented labeling scheme $\Psi$ is isomorphism-invariant.
    \item For all $G = (U,\alpha,\phi_G)$ and $H = (V,\beta,\phi_H)$ in $\mathcal{G}$, a map $f : U \to V$ is an isomorphism from $G$ to $H$ if and only if $f$ is an isomorphism from $\hat{G}$ to $\hat{H}$. 
\end{enumerate}
\end{restatable}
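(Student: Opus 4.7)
The plan is to verify the three equivalences by unpacking the definitions, using the observation that the primary labeling scheme $\Phi$ is automatically isomorphism-invariant (this is built into Definition \ref{Def:graphisomorphism}). Everything reduces to the componentwise description $\psi_G = (\phi_G, \phi_G')$ and the fact that an isomorphism $f$ from $G$ to $H$ already satisfies $\phi_H \circ f = \phi_G$.

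For (1) $\Leftrightarrow$ (2), first I would fix graphs $G,H \in \mathcal{G}$ and an arbitrary isomorphism $f$ from $G$ to $H$. Writing $\psi_H \circ f = (\phi_H \circ f, \phi_H' \circ f)$, the identity $\phi_H \circ f = \phi_G$ holds by definition of isomorphism, so $\psi_H \circ f = \psi_G$ is equivalent to $\phi_H' \circ f = \phi_G'$. Quantifying over all such $G,H,f$ shows (1) and (2) are equivalent.

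For (2) $\Leftrightarrow$ (3), I would separate the two directions of the biconditional in (3). The ``if $f$ is an isomorphism from $\hat G$ to $\hat H$ then $f$ is an isomorphism from $G$ to $H$'' direction holds \emph{unconditionally}, since the bijectivity and edge-weight conditions are identical in the augmented and unaugmented graphs, and $\psi_H \circ f = \psi_G$ implies $\phi_H \circ f = \phi_G$ by projecting onto the first coordinate. So (3) is equivalent to the single implication ``every isomorphism $f$ from $G$ to $H$ is also an isomorphism from $\hat G$ to $\hat H$.'' By the definition of isomorphism for the augmented graphs, this implication says exactly that every isomorphism $f$ from $G$ to $H$ satisfies $\psi_H \circ f = \psi_G$, which is precisely the statement that $\Psi$ is isomorphism-invariant, i.e., (2).

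This closes the cycle (1) $\Leftrightarrow$ (2) $\Leftrightarrow$ (3). No step presents a genuine obstacle; the only care required is to keep straight which direction of the biconditional in (3) is automatic and which one carries the content, and to remember that $\phi_H \circ f = \phi_G$ is part of the definition of an isomorphism of labeled graphs. The whole argument fits in a short chain of set-theoretic equivalences, and I would present it as two short paragraphs corresponding to the two equivalences above.
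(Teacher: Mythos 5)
Your proposal is correct and follows essentially the same route as the paper's proof: both arguments reduce everything to the definitions, using that the primary labeling scheme is isomorphism-invariant so that $\psi_H \circ f = \psi_G$ is equivalent to $\phi'_H \circ f = \phi'_G$, and that the direction ``isomorphism of $\hat{G}$ and $\hat{H}$ implies isomorphism of $G$ and $H$'' holds unconditionally. Your treatment of (2) $\Leftrightarrow$ (3) at the level of individual maps $f$ is in fact slightly more precise than the paper's wording, which phrases that step in terms of $G \cong H$ and $\hat{G} \cong \hat{H}$, but the underlying argument is the same.
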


\subsection{Detection and identification of graph isomorphisms}

Here we define what is meant by detection and identification of isomorphism for a graph family $\mathcal{G}$ 
with augmented labeling scheme $\Psi$. We begin by describing the cost function induced by the augmented labeling scheme. 
For graphs $G = (U, \alpha, \phi_G)$ and $H = (V, \beta, \phi_H)$ in $\mathcal{G}$  with augmented label functions $\psi_G$ and $\psi_H$, 
we define the \textit{label-based binary cost function} by
\begin{equation*}
c_\Psi(u,v) = \mathbb{I}( \psi_G(u) \neq \psi_H(v)) = \left\{ \begin{array}{ll}
1, & \text{ if } \,  \psi_G(u) \neq \psi_H(v) \\
0, & \text{ otherwise}.
\end{array}
\right.
\end{equation*}
In particular, $c_\Psi(u,v) = 0$ if and only if
$u$ and $v$ have identical primary and secondary labels.
Let $\rho_\Psi (G,H)$ denote the OGJ transport cost between $G$ and $H$ with cost function $c_\Psi$, 
and let $\mathcal{J}_\Psi^*(\alpha,\beta)$ be the associated family of optimal weight joinings. The proof of this proposition can be found in Section \ref{Sect:graphisomorphisms}. 

\begin{restatable}[]{proposition}{necessary}
\label{Prop:necessary}
Let $\mathcal{G}$ be a graph family with augmented labeling scheme $\Psi$.  If $\Psi$ is 
isomorphism-invariant, then for any $G$ and $H$ in $\mathcal{G}$, 
\[
G \cong H \Longrightarrow \rho_{\Psi}(G,H) = 0.
\]
\end{restatable}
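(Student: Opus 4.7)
The plan is to construct, from any isomorphism $f : U \to V$ between $G$ and $H$, an explicit weight joining $\gamma \in \mathcal{J}(\alpha,\beta)$ whose marginal $r_\gamma$ is supported on the graph of $f$, and then observe that isomorphism-invariance of $\Psi$ forces $c_\Psi$ to vanish on this support.

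More concretely, I would define the candidate weight joining by
\[
\gamma\bigl((u,v),(u',v')\bigr) \;=\; \alpha(u,u')\,\mathbb{I}(v=f(u))\,\mathbb{I}(v'=f(u')),
\]
so that $\gamma$ is a ``deterministic'' joining concentrated on pairs of the form $\bigl((u,f(u)),(u',f(u'))\bigr)$. Then I would verify the three defining properties of a weight function: non-negativity is immediate; symmetry follows because swapping $(u,v)\leftrightarrow(u',v')$ exchanges the two indicators while $\alpha(u,u')=\alpha(u',u)$; and normalization reduces to $\sum_{u,u'}\alpha(u,u')=1$ once the indicators collapse the sums over $v,v'$ using that $f$ is a bijection.

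Next I would compute the marginal, obtaining $r_\gamma(u,v)=p(u)\,\mathbb{I}(v=f(u))$, and check the marginal coupling condition. The first equation $\sum_v r_\gamma(u,v)=p(u)$ is immediate. For the second, $\sum_u r_\gamma(u,v)=p(f^{-1}(v))$, and I would use the isomorphism property $\alpha(u,u')=\beta(f(u),f(u'))$ to deduce $p(u)=q(f(u))$, which yields $\sum_u r_\gamma(u,v)=q(v)$. The transition coupling conditions are verified in the same spirit: for the first one,
\[
p(u)\sum_{\tilde v}\gamma\bigl((u,v),(u',\tilde v)\bigr) \;=\; p(u)\,\alpha(u,u')\,\mathbb{I}(v=f(u)) \;=\; \alpha(u,u')\,r_\gamma(u,v);
\]
for the second, the indicator $\mathbb{I}(v'=f(\tilde u))$ picks out $\tilde u=f^{-1}(v')$, and the isomorphism identity $\alpha(u,f^{-1}(v'))=\beta(f(u),v')=\beta(v,v')$ together with $p(u)=q(v)$ on the support delivers the required equality. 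So $\gamma \in \mathcal{J}(\alpha,\beta)$.

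Finally, I would evaluate the objective: $\langle c_\Psi, r_\gamma\rangle=\sum_u c_\Psi(u,f(u))\,p(u)$. Since $\Psi$ is isomorphism-invariant and $f$ is an isomorphism from $G$ to $H$, Definition \ref{Def:invariantiso} gives $\psi_H\circ f=\psi_G$, hence $c_\Psi(u,f(u))=\mathbb{I}(\psi_G(u)\neq\psi_H(f(u)))=0$ for every $u\in U$. Therefore $\langle c_\Psi,r_\gamma\rangle=0$, and since $\rho_\Psi(G,H)$ is a minimum of a non-negative objective, $\rho_\Psi(G,H)=0$. The only mildly delicate step is the bookkeeping in the second transition coupling identity, where one must invoke the isomorphism in the form $\alpha(u,f^{-1}(v'))=\beta(v,v')$; the rest is routine symbol manipulation driven by the bijectivity of $f$.
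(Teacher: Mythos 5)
Your proposal is correct and follows essentially the same route as the paper: the paper takes the bijective weight joining corresponding to the isomorphism $f$ (via its Proposition \ref{Prop:isomorphismbijective} machinery), notes $r_\gamma$ is supported on the graph of $f$, and uses isomorphism-invariance to conclude $\langle c_\Psi, r_\gamma\rangle = 0$, hence $\rho_\Psi(G,H)=0$. The only difference is that you verify the weight-joining conditions for $\gamma_f$ by hand rather than citing the paper's earlier propositions, which is a harmless (indeed slightly more self-contained) substitution.
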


In general, the reverse implication does not hold. 
For example, consider two unlabeled, unweighted graphs that are
not isomorphic but have the property that the vertices of both
graphs all have the same
discrete degree. If the discrete degree labeling function is used as the secondary labeling scheme, then 
the cost function $c_{\Psi}$ will be identically zero, and the 
OGJ transport cost will also be zero, despite the fact that the graphs are not isomorphic.

\begin{definition} \label{Def:detection} 
Let $\mathcal{G}$ be a graph family with augmented labeling scheme $\Psi$. We say that OGJ with cost $c_\Psi$ {\em detects isomorphisms} for $\mathcal{G}$ if for all $G, H \in \mathcal{G}$, 
    \begin{equation*}
    \rho_\Psi (G,H) = 0 \quad \iff \quad  G \cong H.
    \end{equation*}
\end{definition}

Our study of graph isomorphism begins with the study of bijective weight joinings.

\begin{definition} \label{Def:deterbijective}
Let $\alpha$ and $\beta$ be weight functions on $U$ and $V$ respectively. 
A joining $\gamma \in \mathcal{J}(\alpha,\beta)$ with marginal $r_{\gamma}$ is {\em bijective} if (i) for 
each $u \in U$, there is a unique $v \in V$ such that $r_{\gamma}(u,v)>0$, 
and (ii) for each $v \in V$, there is a unique $u \in U$ such that $r_{\gamma}(u,v)>0$. 
\end{definition}

If $\gamma \in \mathcal{J}(\alpha,\beta)$ is bijective, then one may define a bijection $f_{\gamma} : U \to V$ by letting $f_{\gamma}(u)$ 
be the unique $v \in V$ such that $r_{\gamma}(u,v) > 0$. 
We refer to $f_{\gamma}$ as the map induced by $\gamma$. Note that $f_{\gamma}$ preserves the weights of all edges.
Furthermore, if $\gamma$ has transport cost $\sum c_{\Phi}(u,v) r_{\gamma}(u,v) = 0$, 
then $f_{\gamma}$ necessarily also preserves the labels of all nodes. This is summarized in the following proposition. The proof of this proposition appears in Section \ref{Sect:deterministic}. 

\begin{restatable}[]{proposition}{bijectiveiso}
\label{Prop:bijectiveiso}
Let $G = (U,\alpha,\phi_G)$ and $H = (V,\beta,\phi_H)$ be two fully supported graphs. 
If $\rho_{\Phi}(G,H)=0$ and $\gamma \in \mathcal{J}^*_{\Phi}(\alpha,\beta)$ is a bijective weight joining of $\alpha$ and $\beta$, then the induced map $f_{\gamma}$ is a graph isomorphism between $G$ and $H$.
\end{restatable}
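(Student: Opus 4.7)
The plan is to verify the three conditions in Definition \ref{Def:graphisomorphism} for the induced map $f_\gamma$, using the marginal coupling condition, the transition coupling condition, and the symmetry of $\gamma$ as a weight function.

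First, I would establish bijectivity and label preservation. The map $f_\gamma : U \to V$ is well-defined because full support of $G$ gives $p(u) > 0$ for every $u \in U$, and the marginal coupling condition then forces $\sum_v r_\gamma(u,v) = p(u) > 0$, so at least one (and by the bijectivity of $\gamma$ exactly one) $v$ satisfies $r_\gamma(u,v) > 0$. The symmetric argument, using full support of $H$, shows $f_\gamma$ is surjective, hence a bijection. Summing $r_\gamma(u, f_\gamma(u))$ yields $r_\gamma(u, f_\gamma(u)) = p(u) = q(f_\gamma(u))$. Label preservation then follows because $\rho_\Phi(G,H) = \langle c_\Phi, r_\gamma \rangle = 0$ and $c_\Phi \geq 0$ force $r_\gamma(u,v) = 0$ whenever $\phi_G(u) \neq \phi_H(v)$; applied at $v = f_\gamma(u)$, this gives $\phi_G(u) = \phi_H(f_\gamma(u))$.

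The main work is verifying the edge-weight condition $\alpha(u,u') = \beta(f_\gamma(u), f_\gamma(u'))$. The key observation is that for a bijective $\gamma$, the weight $\gamma((u,v),(u',v'))$ is concentrated on pairs lying in the graph of $f_\gamma$ in \emph{both} coordinates. This uses the symmetry of $\gamma$: summing $\gamma((u,v),(u',v'))$ over $(u',v')$ gives $r_\gamma(u,v)$, while symmetry $\gamma((u,v),(u',v')) = \gamma((u',v'),(u,v))$ gives $\gamma((u,v),(u',v')) \leq r_\gamma(u',v')$. Therefore $\gamma((u,v),(u',v')) > 0$ implies both $r_\gamma(u,v) > 0$ and $r_\gamma(u',v') > 0$, forcing $v = f_\gamma(u)$ and $v' = f_\gamma(u')$.

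Using this, the sum $\sum_{\tilde v} \gamma((u, f_\gamma(u)), (u', \tilde v))$ reduces to the single term $\gamma((u, f_\gamma(u)),(u', f_\gamma(u')))$, and similarly for sums over $\tilde u$. Plugging $v = f_\gamma(u)$ into the first transition coupling condition and dividing by $p(u) > 0$ yields
\[
\gamma\bigl((u, f_\gamma(u)),(u', f_\gamma(u'))\bigr) = \alpha(u,u'),
\]
while plugging $v = f_\gamma(u)$ and $v' = f_\gamma(u')$ into the second transition coupling condition and dividing by $q(f_\gamma(u)) = p(u) > 0$ yields
\[
\gamma\bigl((u, f_\gamma(u)),(u', f_\gamma(u'))\bigr) = \beta(f_\gamma(u), f_\gamma(u')).
\]
Equating the two expressions completes the proof.

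The main obstacle is the edge-weight step, specifically the need to reduce the sums in the transition coupling conditions to a single term; the resolution uses symmetry of $\gamma$ to transport the support constraint from the first coordinate of $\gamma$ to the second, which is what makes full support of both $G$ and $H$ and the weight-function definition of graph joining interlock cleanly.
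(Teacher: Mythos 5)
Your proof is correct, and it follows the same overall skeleton as the paper's (bijection, then $r_\gamma(u,f_\gamma(u)) = p(u) = q(f_\gamma(u))$, then edge weights, then labels), but it handles the edge-weight step by a genuinely different and more self-contained route. The paper observes that a bijective joining is in particular deterministic, invokes Proposition \ref{Prop:factordeterministic} to conclude that $f_\gamma$ is a factor map satisfying $q(f(u))\,\alpha(u,u') = p(u)\,\beta(f(u),f(u'))$, and then divides by $p(u)=q(f(u))>0$; that proposition's proof is itself omitted and delegated to an external reference. You instead work directly with the two transition coupling conditions: the key move is your support localization, namely that $\gamma((u,v),(u',v'))>0$ forces $r_\gamma(u,v)>0$ \emph{and}, via the symmetry $\gamma((u,v),(u',v'))=\gamma((u',v'),(u,v))\le r_\gamma(u',v')$, also $r_\gamma(u',v')>0$, so both endpoints lie on the graph of $f_\gamma$. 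This collapses each of the sums $\sum_{\tilde v}\gamma((u,f_\gamma(u)),(u',\tilde v))$ and $\sum_{\tilde u}\gamma((u,f_\gamma(u)),(\tilde u,f_\gamma(u')))$ to the single term $\gamma((u,f_\gamma(u)),(u',f_\gamma(u')))$, yielding the two identities $\gamma((u,f_\gamma(u)),(u',f_\gamma(u')))=\alpha(u,u')$ and $=\beta(f_\gamma(u),f_\gamma(u'))$. What your route buys is independence from the factor-map machinery and, as a bonus, the explicit formula for $\gamma$ on the graph of $f_\gamma$ that the paper only records later in Proposition \ref{Prop:isomorphismbijective}; what the paper's route buys is brevity and reuse of a lemma that is needed elsewhere anyway.
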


Conversely, if $f : U \to V$ is a graph isomorphism from $G = (U, \alpha,\phi_G)$ to $H = (V,\beta,\phi_H)$, 
then there is a unique bijective weight joining $\gamma \in \mathcal{J}(\alpha,\beta)$ such that $f = f_{\gamma}$, and $\gamma$ has zero cost under $c_\Phi$; see Section \ref{Sect:deterministic}. 
Thus zero-cost bijective weight joinings are in one-to-one correspondence with graph isomorphisms.

 
\begin{definition} \label{Def:ident}
Let $\mathcal{G}$ be a graph family with augmented labeling scheme $\Psi$.  We say that OGJ with cost $c_\Psi$ \textit{identifies isomorphism} for $\mathcal{G}$ if for every pair of isomorphic graphs $G = (U,\alpha,\phi_G)$ and $H = (V,\beta,\phi_H)$ in $\mathcal{G}$, the extreme points of the OGJ solution set $\mathcal{J}^*_{\Psi}(\alpha,\beta)$ coincide with the zero-cost bijective weight joinings of $\alpha$ and $\beta$. 
\end{definition}

This property is closely related to the notion of \textit{convex exactness} in the study of convex relations of the graph isomorphism problem \cite{dym2018exact}; see Section \ref{Sect:relatedworks} for further discussion.

\section{Connections with graph isomorphism: Main results} \label{Sect:theoresults}
In this section we present our main results connecting the OGJ problem and the problems of detecting and identifying graph isomorphisms. The proofs of all results in this section appear in Section \ref{Sect:graphisomorphisms}. 

\subsection{Detection and identification with informative labeling schemes}
We say that a labeling scheme $\Psi = \{\psi_G : G \in \mathcal{G}\}$ is \textit{injective} if for every $G \in \mathcal{G}$ the label function $\psi_G$ is injective, that is, $\psi_G$ distinguishes the vertices of $G$. 

\begin{restatable}[]{proposition}{injectivesuff}
\label{Prop:injectivesuff}
Let $\mathcal{G}$ be a family of connected graphs with an isomorphism-invariant augmented labeling scheme $\Psi$. If $\Psi$ is injective, then OGJ with cost $c_\Psi$ detects and identifies isomorphism for $\mathcal{G}$. 
\end{restatable}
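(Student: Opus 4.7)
The plan is to use injectivity of $\Psi$ to force every optimal weight joining in $\mathcal{J}^*_\Psi(\alpha,\beta)$ to be bijective, after which I can invoke Proposition \ref{Prop:bijectiveiso} (applied to the augmented graphs $\hat{G}$ and $\hat{H}$) together with a short rigidity argument for bijective joinings. Connectedness enters only to guarantee that the marginals $p$ and $q$ are strictly positive, so that $G$ and $H$ are fully supported.

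For detection, the implication $G \cong H \Rightarrow \rho_\Psi(G,H) = 0$ is Proposition \ref{Prop:necessary}. For the converse, suppose $\rho_\Psi(G,H) = 0$ and fix any $\gamma \in \mathcal{J}^*_\Psi(\alpha,\beta)$. Since $c_\Psi \geq 0$ and $\langle c_\Psi, r_\gamma \rangle = 0$, the marginal $r_\gamma$ is supported on $S = \{(u,v) \in U \times V : \psi_G(u) = \psi_H(v)\}$. Injectivity of $\psi_G$ and $\psi_H$ makes $S$ the graph of a partial bijection: for each $u$ there is at most one $v$ with $\psi_H(v) = \psi_G(u)$, and vice versa. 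Connectedness gives $p(u), q(v) > 0$ at every vertex, so the marginal coupling conditions $\sum_v r_\gamma(u,v) = p(u)$ and $\sum_u r_\gamma(u,v) = q(v)$ force the partial bijection to be total, producing a bijection $f : U \to V$ with $r_\gamma(u, f(u)) = p(u)$. Hence $\gamma$ is a bijective weight joining, and applying Proposition \ref{Prop:bijectiveiso} to $\hat{G} = (U,\alpha,\psi_G)$ and $\hat{H} = (V,\beta,\psi_H)$ (whose primary label cost is exactly $c_\Psi$) yields an isomorphism $\hat{G} \cong \hat{H}$, which by Lemma \ref{Lem:secondaug} coincides with an isomorphism $G \cong H$.

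For identification, assume $G \cong H$, so $\rho_\Psi(G,H) = 0$. Then every $\gamma \in \mathcal{J}^*_\Psi(\alpha,\beta)$ is zero-cost, and by the argument just given is bijective; in particular every extreme point of $\mathcal{J}^*_\Psi(\alpha,\beta)$ is a zero-cost bijective joining. Conversely, let $\gamma$ be any zero-cost bijective joining with induced bijection $f$, and suppose $\gamma = \lambda \gamma_1 + (1-\lambda)\gamma_2$ with $\gamma_1, \gamma_2 \in \mathcal{J}^*_\Psi(\alpha,\beta)$ and $\lambda \in (0,1)$. Nonnegativity forces each $r_{\gamma_i}$ to be supported on $\{(u, f(u)) : u \in U\}$, and combined with the marginal condition this gives $r_{\gamma_1} = r_{\gamma_2} = r_\gamma$. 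The transition coupling condition evaluated at pairs $(u, f(u))$, together with the observation that $\gamma_i((u,f(u)),(u',\tilde{v}))$ can be nonzero only when $\tilde{v} = f(u')$, then pins down $\gamma_i((u,f(u)),(u',f(u'))) = \alpha(u,u')$ for every $u, u' \in U$, so $\gamma_1 = \gamma_2 = \gamma$ and $\gamma$ is an extreme point. The main obstacle is precisely this final rigidity step: showing that a bijective weight joining is completely determined by its induced bijection via the transition coupling condition. Everything else is a careful unpacking of how injectivity of $\Psi$, connectedness, and nonnegativity interact with the coupling constraints.
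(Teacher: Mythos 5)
Your proof is correct, but it takes a more direct route than the paper. The paper deduces Proposition \ref{Prop:injectivesuff} as a special case of Theorem \ref{Thm:weakinjectivesuff} (injectivity implies local injectivity plus magic symbols), and the underlying argument, Proposition \ref{Prop:sufficient}, is a path-tracing induction: it connects an arbitrary vertex to the magic vertex, lifts the path to the joining graph in two ways, and uses local injectivity of the projection maps (Lemma \ref{Lem:nographdiamond}) to force the two lifts to agree. You instead exploit global injectivity head-on: zero cost confines $\spp(r_\gamma)$ to the label-matching set $S$, which injectivity makes the graph of a partial bijection, and positivity of the marginals (the only place connectedness is used) makes it total. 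This bypasses the path argument entirely and even shows the result holds for fully supported, possibly disconnected graphs under global injectivity --- something the paper's route does not give, since it needs paths to the magic vertex. Your closing rigidity step (a bijective joining is determined by its induced bijection, hence extremal) reproves what the paper packages as Propositions \ref{Prop:isomorphismbijective} and \ref{Prop:bijective_extreme}; you could simply cite those. The trade-off is that the paper's heavier machinery covers the strictly weaker hypotheses of Theorem \ref{Thm:weakinjectivesuff}, whereas your argument is tailored to, and simpler under, the injectivity assumption.
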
 

We next investigate weaker conditions on $\Psi$ under which the same conclusions hold.   

\begin{definition} \label{Def:localmagic}
Let $G$ be graph with vertex set $U$ and an augmented label function $\psi_G$.
\begin{enumerate}

\item $\psi_G$ is {\it locally injective} if for each $u \in U$, the restriction of $\psi_G$ to the neighborhood $N_G(u)$ is injective. 

\item $\psi_G$ has a {\it magic symbol} if there is a vertex $u \in U$ with a unique label not shared by any other 
vertex, that is, $|\psi_G^{-1}(\psi_G(u))| = 1$.  In this case, $\psi_G(u)$ is called a {\it magic symbol}.

\end{enumerate}
An augmented labeling scheme $\Psi$ is locally injective for a family $\mathcal{G}$ if $\psi_G$ is locally injective for every 
$G \in \mathcal{G}$.  An augmented labeling scheme $\Psi$ has magic 
symbols if $\psi_G$ has a magic symbol for every 
$G \in \mathcal{G}$. 
\end{definition}

We note that the term \textit{magic symbol} appears in a related context in the field of symbolic dynamics; see \cite{lind2021introduction}. 
Now we are ready to state our next result.

\begin{restatable}[]{theorem}{weakinjectivesuff}
\label{Thm:weakinjectivesuff}
Let $\mathcal{G}$ be a family of connected graphs with an isomorphism-invariant augmented labeling scheme $\Psi$. 
If $\Psi$ is locally injective and has magic symbols, then OGJ with cost $c_\Psi$ 
detects and identifies isomorphism for $\mathcal{G}$. 
\end{restatable}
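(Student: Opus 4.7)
The plan is to combine an ergodic-decomposition reduction with a propagation argument driven by the magic symbols and local injectivity. Proposition \ref{Prop:necessary} already gives $G \cong H \Rightarrow \rho_\Psi(G,H) = 0$. For the converse, and for the structure of $\mathcal{J}^*_\Psi(\alpha,\beta)$, I would start with arbitrary $\gamma \in \mathcal{J}^*_\Psi(\alpha,\beta)$, note $\spp(r_\gamma) \subseteq \{(u,v) : \psi_G(u) = \psi_H(v)\}$ by zero cost, and recall from Section \ref{Sect:MCs} that $\gamma$ corresponds to a reversible Markovian coupling of the random walks on $G$ and $H$. Connectedness of $G$ and $H$ makes both walks irreducible, so the ergodic decomposition of the coupled chain on $U \times V$ produces, after normalization, weight joinings $\gamma_i \in \mathcal{J}^*_\Psi(\alpha,\beta)$ and weights $\lambda_i > 0$ with $\sum_i \lambda_i = 1$ and $\gamma = \sum_i \lambda_i \gamma_i$; each $\gamma_i$ inherits zero cost and the correct marginals, so it suffices to show that every ergodic zero-cost joining is bijective and arises from a graph isomorphism.

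For any $(u,v) \in \spp(r_\gamma)$, the transition-coupling condition applied to each neighbor $u' \in N_G(u)$, combined with local injectivity of $\psi_H$ at $v$ and the label-matching from zero cost, forces a unique $v' = \sigma_v(u') \in N_H(v)$ with $(u',v') \in \spp(r_\gamma)$ and $\psi_H(v') = \psi_G(u')$; the same reasoning with the dual coupling equation and local injectivity of $\psi_G$ shows $\sigma_v : N_G(u) \to N_H(v)$ is a label-preserving bijection. Substituting into both coupling equations gives the identity $\gamma((u,v),(u',\sigma_v(u'))) = \alpha(u,u')\,r_\gamma(u,v)/p(u) = \beta(v,\sigma_v(u'))\,r_\gamma(u,v)/q(v)$, so the coupled chain is deterministic in its second coordinate given the first-coordinate trajectory. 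Now fix an ergodic zero-cost $\gamma$, let $v_0$ be the magic vertex of $H$ with label $\ell_H$, and choose $u^* \in \psi_G^{-1}(\ell_H)$ with $(u^*, v_0) \in \spp(r_\gamma)$; this choice exists because $q(v_0) > 0$ and zero cost forces label matching. Because $v_0$ is the unique $\ell_H$-labeled vertex of $H$, every closed walk at $u^*$ in $G$ propagates the second coordinate back to $v_0$, so by connectedness of $G$ the propagation of the bijections $\sigma$ from $(u^*, v_0)$ is path-independent and defines a map $\pi : U \to V$ with $\pi(u^*) = v_0$.

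To finish I would show $\pi$ is a bijection, hence a graph isomorphism by the weight identity and label preservation. Running the symmetric construction starting from $(u_0, v^*)$ for $v^* \in \psi_H^{-1}(\ell)$ with $(u_0, v^*) \in \spp(r_\gamma)$, where $u_0$ is the magic vertex of $G$, and invoking magic of $G$ on the $V$-side yields a dual well-defined map $\tilde\pi : V \to U$. Ergodicity of $\gamma$ together with the deterministic transition rule forces $\pi$ and $\tilde\pi$ to be mutual inverses for matched seeds, so $\pi$ is a bijection, and the weight identity together with label preservation then shows $\pi$ is a graph isomorphism from $G$ to $H$. Consequently every ergodic zero-cost joining is bijective and arises from some $f \in \mathrm{Iso}(G,H)$, giving $\mathcal{J}^*_\Psi(\alpha,\beta) = \mathrm{conv}\{\gamma_f : f \in \mathrm{Iso}(G,H)\}$. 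Detection follows from nonemptiness of $\mathcal{J}^*_\Psi(\alpha,\beta)$, and identification follows because each $\gamma_f$ is uniquely determined by $f$ via the transition-coupling formula and is therefore an extreme point of $\mathcal{J}(\alpha,\beta)$, hence of $\mathcal{J}^*_\Psi(\alpha,\beta)$.

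The main obstacle will be the interaction of the two magic symbols. Well-definedness of $\pi$ uses only the magic of $H$, but injectivity of $\pi$ is not automatic: if $\ell_H$ is not unique in $G$, then several $\ell_H$-labeled vertices of $G$ could a priori all propagate to $v_0$. Reconciling the one-sided propagation from magic of $H$ with the symmetric one-sided propagation from magic of $G$ into a single bijection is precisely where the ergodic reduction in the first step is essential. Making this bootstrap rigorous — in particular establishing the matched-seed compatibility of $\pi$ and $\tilde\pi$ and ruling out ergodic zero-cost joinings with nontrivial holonomy at a magic vertex — is the technical heart of the proof.
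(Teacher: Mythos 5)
Your local mechanism is the same as the paper's: zero cost forces label agreement on $\spp(r_\gamma)$ (Lemma \ref{Lem:samelabel}), local injectivity of $\Psi$ then forces each supported pair $(u,v)$ and each edge $u\to u'$ of $G$ to have a unique continuation $(u',v')$ inside the joining (this is exactly Lemma \ref{Lem:nographdiamond}), and the magic symbol pins down the partner of the magic vertex. The genuine gap is at the decisive step: you never prove that an optimal joining is bijective. You establish well-definedness of the forward propagation map $\pi$ from the seed $(u^*,v_0)$, but injectivity of $\pi$, and more fundamentally the claim that \emph{every} supported pair lies on the graph of $\pi$, are deferred to an ``ergodicity plus matched seeds'' bootstrap that you explicitly concede is not carried out. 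That is where the theorem lives: without it you get determinism of $\gamma$ in neither direction, hence neither detection nor identification.

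Moreover, the missing step needs no ergodic decomposition, and the difficulties you flag dissolve once the magic/local-injectivity argument is applied symmetrically to an arbitrary optimal $\gamma$ (this is what the paper's Proposition \ref{Prop:sufficient} does). Given $(u,v),(u,v')\in\spp(r_\gamma)$, lift a $G$-path from $u$ to a supported partner $u_*$ of the magic vertex $v_*$ of $H$ along both pairs; zero cost plus uniqueness of the magic label forces both lifted paths to terminate at $(u_*,v_*)$, and walking backwards with local injectivity of $\pr_G$ gives $v=v'$. The same argument with the roles of $G$ and $H$ exchanged (using the magic symbol of $G$ and local injectivity of $\pr_H$) gives uniqueness in the other direction, so $\gamma$ is bijective; mutual inverseness of your $\pi$ and $\tilde\pi$ is then automatic, and no ``holonomy at the magic vertex'' issue arises, since any supported partner of $u^*$ carries the magic label of $H$ and therefore equals $v_0$ --- the observation you already made in your second paragraph. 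From bijectivity, Propositions \ref{Prop:bijectiveiso} and \ref{Prop:bijective_extreme} finish detection and identification exactly as you outline (the paper additionally observes that under these hypotheses $\mathcal{J}^*_\Psi(\alpha,\beta)$ is in fact a singleton). Your ergodic-decomposition reduction is correct but superfluous, and, contrary to your closing claim, it is not what makes the bijectivity argument work.
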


\subsection{Results with process-level labeling schemes} \label{Sect:ProcessLevel}

The results of the previous section rely on the power of the augmented labeling scheme to discriminate between vertices in some fashion.
In this section we change our perspective and consider the OGJ problem from the point of view of  random walks on graphs and Markovian couplings (as in Seccion \ref{Sect:MCs}). 
Using this perspective, we show that OGJ inherently takes into account information about the random walks at all times, despite the fact that the OGJ problem only explicitly refers to an expectation at time $0$. Combining the results of this section and the previous section, we obtain that OGJ can detect and identify isomorphisms for graphs with landmarks.

We begin with some definitions.
Let $G = (U,\alpha,\phi_G)$ be a graph with marginal function $p$ and any 
label function $\psi_G$. 
Let $P(u' | u) = \alpha(u,u') / p(u)$ be the transition matrix 
of the standard random walk on $G$ (see Section \ref{Sect:MCs}). 
For each vertex $u \in U$, let $X^u = X^u_0,X^u_1,\dots$ be the (non-stationary) Markov chain with initial state $X^u_0 =u$ and 
transition matrix $P$.  Finally, let $\psi_G^*$ be the function that 
assigns to each node $u \in U$ the process-level label
\[
\psi^*_G(u) \ = \ \text{Law}(\psi_G(X^u_0), \psi_G(X^u_1), \dots).
\]
In other words, $\psi^*_G(u)$ is the distribution of the process
of labels of $X^u$.
If $\Psi$ is a labeling scheme for $\mathcal{G}$,  then we let
$\Psi^*$ denote the corresponding process-level labeling scheme. 

It is clear that the process-level scheme $\Psi^*$ contains 
at least as much information as the original scheme $\Psi$, 
as the time-zero label of $\psi_G^*(u)$ is equal to $\psi_G(u)$. 
As Figure \ref{Fig:processlevel} illustrates, the process-level label function may contain more information.
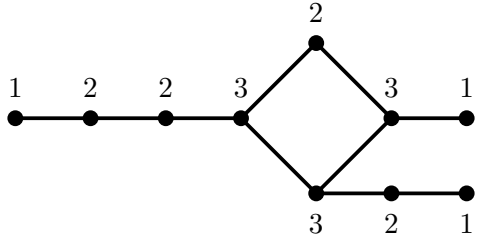
\begin{figure}[h!]
    \centering
    
    \begin{tikzpicture}

    \draw[line width=0.5mm] (-3,0) -- (-2,0) -- (-1,0) -- (0,0) -- (1,1) -- (2,0) -- (1,-1) -- (0,0);
    \draw[line width=0.5mm] (2,0) -- (3,0);
    \draw[line width=0.5mm] (1,-1) -- (2,-1) -- (3,-1);

    \node at (0,0) [circle,fill=black,inner sep=0pt,minimum size=6pt, color=black] {};
    \node at (2,0) [circle,fill=black,inner sep=0pt,minimum size=6pt, color=black] {};
    \node at (1,1) [circle,fill=black,inner sep=0pt,minimum size=6pt, color=black] {};
    \node at (1,-1) [circle,fill=black,inner sep=0pt,minimum size=6pt, color=black] {};
    
    \node at (-3,0.4) {1};
    \node at (-2,0.4) {2};
    \node at (-1,0.4) {2};
    \node at (0,0.4) {3};
    \node at (2,0.4) {3};
    \node at (3,0.4) {1};
    \node at (1,1.4) {2};
    \node at (1,-1.4) {3};
    \node at (2,-1.4) {2};
    \node at (3,-1.4) {1};

    \foreach \x in {-1, -2, -3, 3} {
      \foreach \y in {0} {
        \node at (\x,\y) [circle,fill=black,inner sep=0pt,minimum size=6pt, color=black] {};
      }
    }

    \foreach \x in {2, 3} {
      \foreach \y in {-1} {
        \node at (\x,\y) [circle,fill=black,inner sep=0pt,minimum size=6pt, color=black] {};
      }
    }
    
    \end{tikzpicture}

    \caption{A graph with vertices labeled by the discrete degree label function. For this example the discrete degree label function is not injective, while direct calculation shows that the process-level label function is injective.}
    \label{Fig:processlevel}
\end{figure}

The following result indicates that OGJ with the labeling scheme $\Psi$ contains the same information (at least regarding isomorphisms) as OGJ with the process-level labeling scheme $\Psi^*$. 
\begin{restatable}[]{theorem}{equivcost}
\label{Thm:equivcost}
If $G$ and $H$ are fully supported graphs with augmented labeling scheme $\Psi$, then $\rho_{\Psi}(G,H) > 0$ if and only if $\rho_{\Psi^*}(G,H) > 0$.
In addition, OGJ with cost $c_\Psi$ detects and identifies isomorphism for $\mathcal{G}$ if and only if OGJ with cost $c_{\Psi^*}$ detects and identifies isomorphism for $\mathcal{G}$.
\end{restatable}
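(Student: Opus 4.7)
The plan is to prove the stronger claim that for every weight joining $\gamma \in \mathcal{J}(\alpha,\beta)$, $\gamma$ has zero cost under $c_\Psi$ if and only if it has zero cost under $c_{\Psi^*}$. Taking the minimum over $\gamma$ gives the first assertion $\rho_\Psi(G,H) > 0 \iff \rho_{\Psi^*}(G,H) > 0$. For the second assertion, the relevant case is $G \cong H$, when $\rho_\Psi = \rho_{\Psi^*} = 0$ (by Proposition \ref{Prop:necessary} applied to either labeling, or directly by exhibiting a zero-cost bijective joining from an isomorphism); the optimal sets $\mathcal{J}^*_\Psi(\alpha,\beta)$ and $\mathcal{J}^*_{\Psi^*}(\alpha,\beta)$ are then the respective zero-cost sets and coincide by the stronger claim, and the same equivalence shows that the bijective zero-cost joinings are the same set under both costs; thus detection and identification transfer between the two.

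The easy direction of the stronger claim is immediate: if $\psi^*_G(u) = \psi^*_H(v)$, projecting to the time-zero marginal forces $\psi_G(u) = \psi_H(v)$, so $c_\Psi(u,v) \le c_{\Psi^*}(u,v)$ pointwise.

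For the nontrivial direction, fix $\gamma \in \mathcal{J}(\alpha,\beta)$ with $\langle c_\Psi, r_\gamma \rangle = 0$; equivalently, $\psi_G(u) = \psi_H(v)$ for every $(u,v) \in \spp(r_\gamma)$. Consider the reversible Markov chain $(\widetilde{X}_t, \widetilde{Y}_t)_{t \ge 0}$ on $U \times V$ with transition matrix $R((u',v')\mid(u,v)) = \gamma((u,v),(u',v'))/r_\gamma(u,v)$ from Section \ref{Sect:MCs}. Two observations drive the argument. First, $\spp(r_\gamma)$ is invariant under the dynamics: if $R((u',v')\mid(u,v)) > 0$ then $\gamma((u,v),(u',v')) > 0$, so by symmetry $\gamma((u',v'),(u,v)) > 0$ and hence $r_\gamma(u',v') \ge \gamma((u',v'),(u,v)) > 0$. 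Second, the transition coupling condition, after dividing by $r_\gamma(u,v)>0$, gives $\sum_{\tilde{v}} R((u',\tilde{v})\mid(u,v)) = P(u'\mid u)$ and the analogous identity for the $v$-marginal. A routine induction on $t$, combining the Markov property with these two observations, shows that conditional on $(\widetilde{X}_0,\widetilde{Y}_0) = (u,v)$ with $(u,v) \in \spp(r_\gamma)$, the marginal $(\widetilde{X}_t)_{t \ge 0}$ is distributed as the standard random walk $X^u$ on $G$, and $(\widetilde{Y}_t)_{t \ge 0}$ as $Y^v$ on $H$.

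Because the coupled chain stays in $\spp(r_\gamma)$ for all $t$ and labels agree there, we have $\psi_G(\widetilde{X}_t) = \psi_H(\widetilde{Y}_t)$ almost surely for every $t$, and therefore
\[
\text{Law}\bigl(\psi_G(X^u_0), \psi_G(X^u_1), \ldots\bigr) = \text{Law}\bigl(\psi_H(Y^v_0), \psi_H(Y^v_1), \ldots\bigr),
\]
i.e.\ $\psi^*_G(u) = \psi^*_H(v)$, so $c_{\Psi^*}(u,v) = 0$ for every $(u,v) \in \spp(r_\gamma)$, giving $\langle c_{\Psi^*}, r_\gamma \rangle = 0$. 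The main obstacle is this marginal identification step, where one must use the transition coupling condition and the invariance of $\spp(r_\gamma)$ to peel off one time step at a time; the full-support hypothesis enters by guaranteeing that $P(\cdot\mid u)$ and $Q(\cdot\mid v)$ are defined at every vertex, so that the process-level labels $\psi^*_G$ and $\psi^*_H$ are meaningful in the first place.
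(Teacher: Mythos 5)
Your proposal is correct and follows essentially the same route as the paper: the paper isolates your ``stronger claim'' as Proposition \ref{Prop:zeroexpectedcost} (zero cost under $c_\Psi$ iff zero cost under $c_{\Psi^*}$ for each fixed joining $\gamma$), proves the hard direction via the reversible Markovian coupling associated with $\gamma$ whose marginals are the standard random walks, and then derives detection and identification from the coincidence of the optimal sets exactly as you do. The only cosmetic difference is that the paper propagates label agreement to all times using stationarity of the coupled chain, whereas you use invariance of $\spp(r_\gamma)$ under the dynamics via the symmetry of $\gamma$; these are interchangeable.
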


Thus, OGJ with cost $c_\Psi$ distinguishes two graphs if and only 
if OGJ with cost $c_{\Psi^*}$ does. 
Although the OGJ problem with cost $c_{\Psi}$ makes reference only to the base-level labeling 
$\Psi$, it automatically incorporates the information in the process-level labeling $\Psi^*$. 

Theorem \ref{Thm:equivcost} enables us to extend the detection and identification results of the previous section to other graph families. 
As an example, we consider the family of graphs with landmarks \cite{khuller1996landmarks}, which have been used in applications of game theory, source localization, and embedding biological sequence data; see \cite{tillquist2023getting}. 

\begin{definition}
Let $G = (U,\alpha,\phi_G)$ be a graph with label space $\mathcal{L}$. Define $\dst_G : U \times \mathcal{L} \to [0,\infty]$ by setting
\begin{equation*}
\dst_G(u,a) = \inf \left\{ n \geq 0 : \exists \text{ a path of length $n$ in $G$ from $u$ to $\phi^{-1}_G(a)$} \right\}.
\end{equation*}
\end{definition}

\begin{definition} \label{landmarks}
Let $\mathcal{G}$ be a graph family with label space $\mathcal{L}$.  A subset $\mathcal{L}^* \subset \mathcal{L}$ is called 
a {\it set of landmarks} for $\mathcal{G}$ if $|\phi^{-1}_G(a)| \leq 1$ for all $a \in \mathcal{L}^*$ and all $G \in \mathcal{G}$. 
A set of landmarks $\mathcal{L}^*$ is {\it complete} for $\mathcal{G}$ if for each $G \in \mathcal{G}$, for each $u \neq u'$ in $U$, there exists $a \in \mathcal{L}^*$ such that $\dst_G(u,a) \neq \dst_G(u',a)$. 
\end{definition}

If $\mathcal{L}^*$ is a set of landmarks for $\mathcal{G}$, then the labels in $\mathcal{L}^*$ allow one to uniquely identify the vertices in $\phi_G^{-1}(\mathcal{L}^*)$ for each $G \in \mathcal{G}$. Moreover,  if $\mathcal{L}^*$ is a complete set of landmarks for $\mathcal{G}$, then all vertices in each $G \in \mathcal{G}$ can be uniquely identified (within $G$) by their distances to the vertices in $\phi_G^{-1}(\mathcal{L}^*)$. 
The following result is a consequence of Proposition \ref{Prop:injectivesuff} and Theorem \ref{Thm:equivcost}. 

\begin{restatable}[]{corollary}{landmarks}
\label{Cor:landmarks}
 Let $\mathcal{G}$ be a family of connected graphs with primary labeling scheme $\Phi$. If each graph in $\mathcal{G}$ has a complete set of landmarks, then OGJ with cost $c_\Phi$ detects and identifies isomorphism for $\mathcal{G}$.
\end{restatable}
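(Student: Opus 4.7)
The plan is to derive the corollary by combining Theorem~\ref{Thm:equivcost} with Proposition~\ref{Prop:injectivesuff}, applied to the process-level lift $\Phi^*$ of the primary labeling scheme. Viewing $\Phi$ as its own augmented labeling scheme (with trivial secondary), Theorem~\ref{Thm:equivcost} reduces the claim to showing that OGJ with cost $c_{\Phi^*}$ detects and identifies isomorphism for $\mathcal{G}$. For this, it suffices to verify the hypotheses of Proposition~\ref{Prop:injectivesuff} for $\Phi^*$: that $\Phi^*$ is isomorphism-invariant and that each $\phi_G^*$ is injective. Note that any connected graph with at least two vertices is automatically fully supported, and the single-vertex case is trivial, so Theorem~\ref{Thm:equivcost} applies.

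Isomorphism-invariance of $\Phi^*$ follows easily from that of $\Phi$. If $f : U \to V$ is an isomorphism from $G$ to $H$, then $f$ preserves edge weights and marginals, so the transition matrices of the standard random walks are conjugate under $f$, and $f(X^u)$ has the same law on $H$ as $X^{f(u)}$. Combined with $\phi_H \circ f = \phi_G$, this yields $\phi_H^*(f(u)) = \phi_G^*(u)$ for every $u \in U$.

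The main step is injectivity of each $\phi_G^*$. Let $\mathcal{L}^*$ be a complete set of landmarks for $\mathcal{G}$ and fix $G \in \mathcal{G}$. For a landmark $a \in \mathcal{L}^*$, the preimage $\phi_G^{-1}(a)$ is either empty or a single vertex $u_a$; in the latter case, $\{\phi_G(X^u_n) = a\} = \{X^u_n = u_a\}$, and since transition probabilities under the standard random walk are positive precisely along edges of positive weight, this event has positive probability iff $G$ contains a path of length $n$ from $u$ to $u_a$. Hence the smallest $n \geq 0$ with $\mathbb{P}(\phi_G(X^u_n) = a) > 0$ equals $\dst_G(u,a)$ (with $\infty$ if no such $n$ exists), and this quantity is determined by the distribution $\phi_G^*(u)$. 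Consequently, if $\phi_G^*(u) = \phi_G^*(u')$, then $\dst_G(u,a) = \dst_G(u',a)$ for every $a \in \mathcal{L}^*$, and completeness of $\mathcal{L}^*$ forces $u = u'$. Applying Proposition~\ref{Prop:injectivesuff} to $\Phi^*$ and then Theorem~\ref{Thm:equivcost} completes the proof. The key obstacle is precisely this injectivity argument: uniqueness of landmark labels is what lets one extract graph distances from the first-appearance times of landmark labels in the label process, and hence compare only single-time marginals of $\phi_G^*(u)$ to recover the distinguishing information supplied by the landmarks.
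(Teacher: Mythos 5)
Your proposal is correct and follows essentially the same route as the paper: reduce to the process-level scheme $\Phi^*$ via Theorem~\ref{Thm:equivcost}, prove injectivity of $\phi_G^*$ by observing that $\dst_G(u,a)$ is the first time the landmark label $a$ can appear in the label process of $X^u$, and then invoke Proposition~\ref{Prop:injectivesuff}. Your explicit verification of isomorphism-invariance of $\Phi^*$ is a small point the paper leaves implicit, but otherwise the arguments coincide.
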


\vskip.1in

\subsection{Isomorphism detection and identification for trees}
\noindent 
Recall that an automorphism of a graph $G$ is an isomorphism from $G$ to itself. Furthermore, the set of automorphisms together with the composition operation form the automorphism group of $G$. A graph is said to be asymmetric if its automorphism group is trivial, i.e., the group contains only one element (the identity automorphism). When the automorphism group is nontrivial, the graph is said to be symmetric. Asymmetry is a common assumption in existing work on isomorphism detection and identification; see Section \ref{Sect:relatedworks} for details.  
In this vein, we note that the hypotheses of Proposition \ref{Prop:injectivesuff} and Theorem \ref{Thm:weakinjectivesuff} imply that the graphs under consideration are asymmetric. However, our optimal transport-based approach is flexible enough to address certain families of symmetric graphs. As a first example, we consider the family of weighted trees.

\begin{restatable}[]{theorem}{trees}
\label{Thm:trees}
Let $\mathcal{G}$ be the family of finite, labeled, weighted trees with at least two 
vertices.  Let $\Psi$ be the augmented labeling scheme obtained from the primary 
labels in conjunction with the multiweight labeling. Then OGJ with cost $c_\Psi$ detects and identifies isomorphism for $\mathcal{G}$.
\end{restatable}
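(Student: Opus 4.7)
The plan is to leverage Theorem~\ref{Thm:equivcost} to replace the augmented scheme $\Psi$ with the process-level scheme $\Psi^*$, then exploit the fact that on a tree the law of the labeled random walk rooted at $u$ determines the rooted, weighted, labeled tree up to isomorphism. The central structural lemma I would establish is: for $G, H \in \mathcal{G}$ and vertices $u$ in $G$, $v$ in $H$, if $\psi^*_G(u) = \psi^*_H(v)$ then there is a rooted isomorphism $(G,u) \cong (H,v)$ preserving primary labels, multiweight labels, and edge weights.

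To prove this lemma I would proceed by strong induction on the number of vertices of $G$. The one-step distribution of the walk from $u$, combined with the multiweight label at $u$, reveals the multiset of (edge weight, primary label, multiweight label) triples of the neighbors of $u$; equality of laws then yields a type-preserving bijection between $N_G(u)$ and $N_H(v)$. For each matched pair of neighbors $(u', v')$, the law of the $G$-walk conditioned on $X_1 = u'$ can be used to extract the law of the walk restricted to the branch of $G$ rooted at $u'$ (which has strictly fewer vertices than $G$ by the tree property), and the inductive hypothesis applied branch-by-branch produces rooted sub-isomorphisms that glue through $u \leftrightarrow v$ into the desired rooted isomorphism.

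Given this lemma, detection follows quickly. The direction $G \cong H \Rightarrow \rho_\Psi(G,H)=0$ is Proposition~\ref{Prop:necessary}, since the multiweight scheme is isomorphism-invariant. Conversely, if $\rho_\Psi(G,H) = 0$ then Theorem~\ref{Thm:equivcost} gives $\rho_{\Psi^*}(G,H)=0$; picking any $\gamma \in \mathcal{J}^*_{\Psi^*}(\alpha,\beta)$ and any $u$ with $p(u)>0$, the marginal coupling condition supplies $v$ with $\psi^*_G(u) = \psi^*_H(v)$, and the structural lemma yields an isomorphism. For identification, assume $G \cong H$ and consider an extreme point $\gamma^* \in \mathcal{J}^*_\Psi(\alpha,\beta)$. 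Proposition~\ref{Prop:bijectiveiso} and the remarks following it already show that zero-cost bijective joinings correspond bijectively to isomorphisms and are extreme points of $\mathcal{J}(\alpha,\beta)$; the remaining content is that every extreme point of $\mathcal{J}^*_\Psi(\alpha,\beta)$ must be bijective, which I would prove by expressing any non-bijective $\gamma^* \in \mathcal{J}^*_\Psi(\alpha,\beta)$ as a nontrivial convex combination of zero-cost bijective joinings. The structural lemma partitions the support of $r_{\gamma^*}$ into equivalence classes indexed by rooted-isomorphism types, inside of which I would build a Birkhoff--von Neumann-style decomposition respecting the transition-coupling constraints.

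The main obstacle is this identification step. Unlike classical optimal transport, the transition-coupling condition links $\gamma$ at adjacent pairs of product vertices, so a naive permutation decomposition of the marginal $r_{\gamma^*}$ need not lift to a valid decomposition inside $\mathcal{J}(\alpha,\beta)$. I expect to overcome this by propagating the decomposition outward from a chosen root along the unique path structure of the product tree, using the rigidity supplied by the rooted-tree structural lemma at each step to guarantee that local bijective pairings extend consistently into global bijective joinings.
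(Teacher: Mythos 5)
Your route is genuinely different from the paper's, but in its present form it has two gaps, one in each half. For detection, everything rests on the pointwise structural lemma that $\psi^*_G(u)=\psi^*_H(v)$ forces a rooted isomorphism $(G,u)\cong(H,v)$. This is stronger than anything the paper supplies: Theorem \ref{Thm:equivcost} only equates the OGJ costs under $\Psi$ and $\Psi^*$ (a statement about joinings), not labels of individual vertex pairs, so the lemma must be proved from scratch, and your inductive sketch does not do so. After matching one-step types you ``condition on $X_1=u'$'' and invoke ``the law of the walk restricted to the branch rooted at $u'$''; neither is a functional of the law of the label process. When two neighbors of $u$ carry the same augmented label, the label process only lets you condition on the observed label, which produces a mixture over branches; and even for a single neighbor, what you observe after time $1$ is the labeled walk on all of $G$ (which returns through $u$ and wanders into other branches), not the walk on the branch. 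Disentangling these mixtures is exactly the nontrivial content of the lemma, and it is left unaddressed.

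The more serious gap is identification, which you yourself flag as ``the main obstacle'': you must show every extreme point of $\mathcal{J}^*_{\Psi}(\alpha,\beta)$ is bijective, you correctly observe that a Birkhoff--von Neumann decomposition of the marginal $r_{\gamma^*}$ need not respect the transition coupling condition, and at precisely that point the proposal replaces an argument with an intention (``propagating the decomposition outward from a chosen root\dots''). This is where essentially all of the paper's work lies: it inducts on tree size at the level of joinings, using the leaf-support lemma (Lemma \ref{Lem:leaves}), a restriction lemma showing the normalized restriction of a zero-cost joining to the product of the interiors is again a weight joining with $Z_{\alpha}(U_0)=Z_{\beta}(V_0)=Z_{\gamma}(U_0\times V_0)$ (Lemma \ref{Lem:restriction}), the construction of an auxiliary joining $\hat{\gamma}$ with $\hat{r}(u,f_0(u))=p(u)=q(f_0(u))$ so that the Birkhoff-based leaf-alignment lemma (Lemma \ref{Lemma:Atlantic}) applies, and finally the minimal-support characterization of extreme points (Proposition \ref{Prop:minimalextreme}) to force $\gamma=\gamma_f$. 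Note also that, by Propositions \ref{Prop:minimalextreme} and \ref{Prop:extremebijective}, it suffices to exhibit a single zero-cost bijective joining supported inside $\spp(\gamma^*)$; your plan to write an arbitrary optimal joining as a nontrivial convex combination of bijective ones is a strictly stronger statement and only raises the bar. Until the structural lemma is actually proved and the root-propagation step is turned into a verifiable construction respecting the transition coupling constraints, the proposal does not yet prove the theorem.
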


We note that there are several algorithms that have been shown to detect isomorphism among the family of (unweighted) trees, including the classical Weisfeiler-Leman test \cite{immerman1990describing} and the AHU algorithm \cite{aho1974design}, which can be modified to handle the weighted case. However, we are not aware of prior work addressing the identification of isomorphism (in the sense of Definition \ref{Def:ident}) or convex exactness for the family of trees.

The proof of Theorem \ref{Thm:trees}, which appears in Section \ref{Sect:graphisomorphisms}, proceeds by induction on the size of the trees involved. In the crucial inductive step, we show that if $G$ and $H$ are trees with $\rho_{\Psi}(G,H) = 0$, then any optimal graph joining must transport the leaves of $G$ to those of $H$ (and vice versa). Then we argue that we can remove the leaves from both graphs and apply our induction hypothesis to the resulting smaller trees and get an isomorphism of these subgraphs. Finally, we rely on Birkhoff's Theorem \cite{birkhoff1946tres} to extend the isomorphism of the subgraphs to an isomorphism of the leaves. 

\vskip.1in

\subsection{An extension principle via magic decompositions} \label{Sect:mgdecom}
Suppose that OGJ detects and identifies isomorphism for a graph family $\mathcal{G}$. 
Here we establish an extension principle showing that if the graphs in a family $\mathcal{F}$ can be created by ``gluing together" graphs from $\mathcal{G}$ in an
appropriate fashion, then OGJ detects and identifies isomorphism for $\mathcal{F}$ (see Theorem \ref{Thm:mgdecomposition} below). Prior to making this statement, we require several technical definitions that give a precise description of how the graphs in $\mathcal{G}$ may be glued together. The main idea is that one may take a disjoint union of graphs in $\mathcal{G}$ and then glue some of their leaves together to create a new graph in $\mathcal{F}$. However, the leaves cannot be glued together arbitrarily, as any graph could be obtained by gluing together finitely many graphs with only one edge. Thus, we also require that the labels of the new graph in $\mathcal{F}$ contain enough information that one may recover the graphs in $\mathcal{G}$ and their gluing data. Let us now state these conditions precisely.

We call a vertex $u$ of a graph $G = (U,\alpha,\phi_G)$ a \textit{leaf} if there is exactly one vertex $u' \in U$ such that $\alpha(u,u') > 0$, and moreover $u' \neq u$.  In this case we refer to $u'$ as the \textit{base} of the leaf $u$ and denote it by $b(u)$.  Let $L(G)$ denote the leaves of $G$.

Now let $G_i = (U_i, \alpha_i, \phi_{G_i})$, $i = 1,\ldots,k$, be a finite collection of connected graphs and let $M$ be a finite set. Let us first describe a gluing of the graphs $G_1,\dots,G_k$ along the set $M$.
We suppose that $M$ and $\{U_i\}_{i=1}^k$ are all pairwise disjoint. 
Further, suppose that for each $i = 1,\ldots,k$ we have identified a (possibly empty) subset $L_i \subset L(G_i)$ of the leaves of $G_i$ such that $U_i \setminus L_i \neq \varnothing$, and additionally we have a map $f_i : L_i \to M$ such that if $u \neq v \in L_i$ and $b(u) = b(v)$, then $f_i(u) \neq f_i(v)$; that is, 
distinct leaves with the same base must map to distinct elements of $M$. 
Let $U'_i = U_i \setminus L_i$ (which we assume is nonempty), and let $U = M \cup U'_1 \cup \dots \cup U'_k$. 
The set $U$ will serve as the vertex set of the new graph.
For each $i = 1,\ldots,k$ define the modified weight function $\alpha'_i : U \times U \to [0,\infty)$ by setting
\begin{equation*}
\alpha'_i(u,v) = \left\{ \begin{array}{ll}
                         \alpha_i(u,v), & \text{ if } u,v \in U'_i \\
                         \alpha_i(u,v'), & \text{ if } u = b(v') \text{ and } f_i(v') = v \\
                         \alpha_i(u',v), & \text{ if } v = b(u') \text{ and } f_i(u') = u \\
                         0, & \text{ otherwise}. 
                          \end{array}
                          \right.
\end{equation*}
Note that $\alpha'_i$ is well-defined by our restriction on the map $f_i$. Informally, the weight function $\alpha'_i$ is just the extension of $\alpha_i$ to the larger vertex set $U$, where we have replaced each leaf $u'$ in $L_i$ with its image $f_i(u')$ in $M$.

A graph $G = (U,\alpha,\phi_G)$ is a \textit{gluing} of the graphs $G_1,\ldots,G_k$ along the set $M$ if the following conditions hold.
\begin{enumerate}
\item There are sets $L_i \subset L(G_i)$ and functions $f_i : L_i \to M$ satisfying the assumptions given above.
\vskip.05in
\item The vertex set of $G$ is $U = M \cup U'_1 \cup \dots \cup U'_k$.
\vskip.05in
\item There are constants $\mu_1, \ldots, \mu_k  > 0$ and a function $\tau: U \times U \to [0,\infty)$ such that $\spp(\tau) \subset M \times M$ and
$\alpha = \tau + \sum_{i=1}^k \mu_i \alpha'_i$.
\end{enumerate}
In short, to construct the gluing $G$, we take a weighted disjoint union of $G_1,\dots,G_k$, replace the leaves $u'$ in $L_i$ by their images $f_i(u')$ in $M$ (which may amount to gluing some of these leaves together), and then we may add in edges between pairs of vertices in $M$ (with weights given by $\tau$). See Figure \ref{Fig:gluing} for an example. We note that if any $L_i$ is empty, then the resulting gluing is disconnected.

    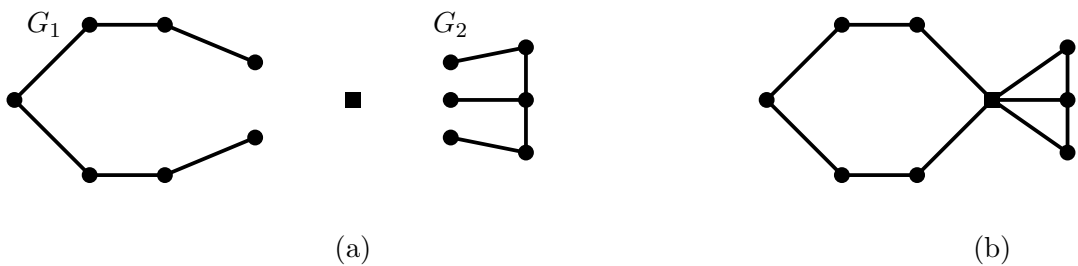
\begin{figure}[h!]
    \centering
    
    \begin{tikzpicture}

    \node at (-9,0) [circle,fill=black,inner sep=0pt,minimum size=6pt, color=black] {};
    \node at (-8,1) [circle,fill=black,inner sep=0pt,minimum size=6pt, color=black] {};
    \node at (-8,-1) [circle,fill=black,inner sep=0pt,minimum size=6pt, color=black] {};
    \node at (-5.8,0.5) [circle,fill=black,inner sep=0pt,minimum size=6pt, color=black] {};
    \node at (-5.8,-0.5) [circle,fill=black,inner sep=0pt,minimum size=6pt, color=black] {};
    
    \foreach \x in {-7} {
      \foreach \y in {-1, 1} {
        \node at (\x,\y) [circle,fill=black,inner sep=0pt,minimum size=6pt, color=black] {};
      }
    }

    \node at (-4.5,0) [rectangle,fill=black,inner sep=0pt,minimum size=6pt, color=black] {};
    

    \foreach \x in {-3.2} {
      \foreach \y in {-0.5, 0, 0.5} {
        \node at (\x,\y) [circle,fill=black,inner sep=0pt,minimum size=6pt, color=black] {};
      }
    }

    \foreach \x in {-2.2} {
      \foreach \y in {-0.7, 0, 0.7} {
        \node at (\x,\y) [circle,fill=black,inner sep=0pt,minimum size=6pt, color=black] {};
      }
    }

    \draw[line width=0.5mm] (-9,0) -- (-8,1) -- (-7,1) -- (-5.8,0.5) ;
    \draw[line width=0.5mm] (-9,0) -- (-8,-1) -- (-7,-1) -- (-5.8,-0.5) ;
    \draw[line width=0.5mm] (-3.2,0.5) -- (-2.2,0.7) -- (-2.2,0) -- (-2.2,-0.7) -- (-3.2,-0.5) ;
    \draw[line width=0.5mm] (-2.2,0) -- (-3.2,0) ;

    \node at (-8.6,1) {$G_1$};
    \node at (-3.2,1) {$G_2$};
    \node at (-4.5,-2) {\text{(a)}};

    \node at (1,0) [circle,fill=black,inner sep=0pt,minimum size=6pt, color=black] {};
    \node at (2,1) [circle,fill=black,inner sep=0pt,minimum size=6pt, color=black] {};
    \node at (2,-1) [circle,fill=black,inner sep=0pt,minimum size=6pt, color=black] {};
    \node at (4,0) [rectangle,fill=black,inner sep=0pt,minimum size=6pt, color=black] {};
    
    \foreach \x in {3} {
      \foreach \y in {-1, 1} {
        \node at (\x,\y) [circle,fill=black,inner sep=0pt,minimum size=6pt, color=black] {};
      }
    }

    \foreach \x in {5} {
      \foreach \y in {-0.7, 0, 0.7} {
        \node at (\x,\y) [circle,fill=black,inner sep=0pt,minimum size=6pt, color=black] {};
      }
    }

    \draw[line width=0.5mm] (1,0) -- (2,1) -- (3,1) -- (4,0) -- (3,-1) -- (2,-1) -- (1,0) ;
    \draw[line width=0.5mm] (4,0) -- (5,0.7) -- (5,0) -- (5,-0.7) -- (4,0) ;
    \draw[line width=0.5mm] (4,0) -- (5,0) ;

    \node at (4,-2) {\text{(b)}};

    \end{tikzpicture}

    \caption{An illustration of a gluing. (a) Two graphs $G_1$ and $G_2$, and the set $M$, which contains only a single element (pictured as a square). Here $f_1$ and $f_2$ map all the leaves of the two graphs to the unique element of $M$. (b) The gluing of $G_1$ and $G_2$ along $M$.}
    \label{Fig:gluing}
\end{figure}

Here we describe the specific conditions that the label functions must satisfy in order for our extension theorem to apply. These conditions guarantee that the labels associated with a graph $G$ in $\mathcal{F}$ contain sufficient information to recover the gluing data (i.e., the graphs $G_1,\dots,G_k$, the set $M$, the maps $f_1,\dots,f_k$, and the weights $\mu_1,\dots,\mu_k$) up to isomorphism.
Let $\mathcal{G}$ be a graph family with augmented $\mathcal{L}_{\mathcal{G}}$-labeling scheme $\Psi$. 
A graph family $\mathcal{F}$ with augmented $\mathcal{L}_{\mathcal{F}}$-labeling scheme $\widetilde{\Psi}$ 
has a \textit{$\mathcal{G}$-magic decomposition} if there exists $\mathcal{L}^* \subset \mathcal{L}_{\mathcal{F}}$ such that for every graph 
$G = (U,\alpha,\phi_G) \in \mathcal{F}$, there is a (possibly empty) set $M_G \subset U$ and nonempty connected graphs $\{G_i\}_{i=1}^k$ in $\mathcal{G}$ such that $G$ is a gluing of $\{G_i\}_{i=1}^k$ along $M_G$ and the augmented labeling scheme 
$\widetilde{\Psi}$ satisfies the following conditions:
\begin{enumerate}
\item for all $G \in \mathcal{F}$, we have $\widetilde{\psi}_G(M_G) \subset \mathcal{L}^*$ and $\widetilde{\psi}_G(U \setminus M_G) \cap \mathcal{L}^* = \emptyset$;
\item for all $G \in \mathcal{F}$, the restriction $\widetilde{\psi}_G|_{M_G}$ is injective;
\item there is a function $\eta : \mathcal{L}_{\mathcal{F}} \to \mathcal{L}_{\mathcal{G}}$ such that for all $G \in \mathcal{F}$, if  $u \in U'_i$, then $\psi_{G_i}(u) = \eta( \widetilde{\psi}_G(u))$;
\item there is a function $\eta' : \mathcal{L}_{\mathcal{F}} \times \mathcal{L}_{\mathcal{F}} \to \mathcal{L}_{\mathcal{G}}$ such that for all $G \in \mathcal{F}$, if $u \in L_i$ with $f_i(u) = m$ and $b(u) = u'$, then $\psi_{G_i}(u) = \eta'( \widetilde{\psi}_G(m), \widetilde{\psi}_G(u'))$; 
\item there is a function $\omega : \mathcal{L}_{\mathcal{F}} \to [0,1]$ such that for all $G \in \mathcal{F}$ and for all $u \in U'_i$, we have $\omega(\widetilde{\psi}_G(u)) = \mu_i$. 
\end{enumerate}
Condition (1) allows one to recognize $M_G$ from $\widetilde{\psi}_G$, and Condition (2) guarantees that any zero-cost joining of two graphs $G$ and $H$ in $\mathcal{F}$ will uniquely align the elements of $M_G$ and $M_H$. Condition (3) states that one can recover the augmented labeling of $u \in U'_i$ from its augmented labeling in $G$, and Condition (4) states that one can recover the augmented labeling of $u \in L_i$ from the labeling of its image $f_i(u)$ in $M_G$ and the labeling of its base $b(u)$. Finally, Condition (5) states that one can recover the weight $\mu_i$ associated to $G_i$ in the gluing. 

\begin{restatable}[]{theorem}{mgdecomposition}
\label{Thm:mgdecomposition}
Let $\mathcal{G}$ and $\mathcal{F}$ be families of fully supported graphs with isomorphism-invariant augmented labeling schemes $\Psi$ and $\widetilde{\Psi}$, respectively. Suppose that $\mathcal{F}$ has a $\mathcal{G}$-magic decomposition. If OGJ with cost $c_\Psi$ detects and identifies isomorphism for $\mathcal{G}$, then OGJ with cost $c_{\widetilde{\Psi}}$ detects and identifies isomorphism for $\mathcal{F}$.
\end{restatable}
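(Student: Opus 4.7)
My plan is to establish detection first and then reuse the same decomposition for identification. The backward direction $G\cong H\Rightarrow\rho_{\widetilde{\Psi}}(G,H)=0$ is immediate from Proposition \ref{Prop:necessary} since $\widetilde{\Psi}$ is isomorphism-invariant. For the forward direction, I would fix any optimal $\gamma \in \mathcal{J}^*_{\widetilde{\Psi}}(\alpha,\beta)$ with $\langle c_{\widetilde{\Psi}}, r_\gamma\rangle = 0$, so that $r_\gamma(u,v) > 0$ forces $\widetilde{\psi}_G(u) = \widetilde{\psi}_H(v)$. Write $G$ as a gluing of $\{G_i\}_{i=1}^k$ along $M_G$ with weights $\mu_i$ and leaf maps $f_i:L_i\to M_G$, and write $H$ as a gluing of $\{H_j\}_{j=1}^\ell$ along $M_H$ with weights $\nu_j$ and leaf maps $g_j:L_j\to M_H$. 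Condition (1) then forces $r_\gamma$-compatible pairs to satisfy $u\in M_G \iff v\in M_H$, and condition (2) combined with full support of both marginals forces the restriction of $r_\gamma$ to $M_G \times M_H$ to be supported on the graph of a label-preserving bijection $\pi : M_G \to M_H$.

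The heart of the argument is a decomposition step: I claim there is a bijection $i \mapsto j(i)$ between component indices and that, for each $i$, the restriction of $\gamma$ to the pairs originating from $G_i$ and $H_{j(i)}$ (with the leaves of $L_i$ and $L_{j(i)}$ reinserted via $f_i^{-1}$, $g_{j(i)}^{-1}$ and matched through $\pi$) forms a zero-cost element of $\mathcal{J}^*_\Psi(\alpha_i,\beta_{j(i)})$. To prove uniqueness of $j(i)$, I would apply the transition coupling condition iteratively together with the observation that in $G$ there are no edges between distinct $U'_i$, $U'_{i'}$ (bridges must pass through $M_G$, which is controlled by $\pi$): connectedness of $G_i$ then forces the image of $U'_i$ under $r_\gamma$ to lie in a single $U'_{j(i)}$. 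Conditions (3) and (4) translate the zero-cost constraint on $\widetilde{\psi}$ into the zero-cost constraint on $\psi$, and condition (5) applied to $u\in U'_i$ paired with its image in $U'_{j(i)}$ yields $\mu_i = \nu_{j(i)}$. Having produced the restricted joinings, I invoke the detection hypothesis for $\mathcal{G}$ to obtain isomorphisms $h_i : G_i \to H_{j(i)}$, and patch them with $\pi$ on $M_G$ to form an isomorphism $G\to H$; compatibility at the glued leaves follows from condition (4), which records the identity of each leaf's $M_G$-image in its augmented label.

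For identification, the easy direction (every zero-cost bijective joining of $\alpha$ and $\beta$ is an extreme point of $\mathcal{J}^*_{\widetilde{\Psi}}(\alpha,\beta)$) is routine linear-program geometry. For the converse, let $\gamma$ be any extreme point. Applying the decomposition above produces, for each $i$, an element of $\mathcal{J}^*_\Psi(\alpha_i,\beta_{j(i)})$; if this block restriction were a nontrivial convex combination of two distinct elements of $\mathcal{J}^*_\Psi(\alpha_i,\beta_{j(i)})$, reassembling the two perturbations with the other blocks, the fixed bijection $\pi$, and the fixed $\tau$-part would express $\gamma$ as a nontrivial convex combination inside $\mathcal{J}^*_{\widetilde{\Psi}}(\alpha,\beta)$, contradicting extremality. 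Hence every block is itself an extreme point of its component problem, and by the identification hypothesis for $\mathcal{G}$ it is a zero-cost bijective joining. Combining these block bijections with $\pi$ shows that $\gamma$ is itself bijective, as required.

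The main obstacle will be the decomposition step itself: one has to verify that restricting $\gamma$ block by block, and then \emph{undoing} the gluing by reintroducing the leaves of $L_i$ through $f_i^{-1}$ into positions dictated by $\pi$, truly yields a function satisfying both the marginal coupling and transition coupling conditions for $(\alpha_i,\beta_{j(i)})$. This is a careful accounting exercise at the interface between $U'_i$ and $M_G$, relying on conditions (1)--(5) to show that the $\gamma$-mass on leaf-base pairs inside $G_i$ is uniquely reconstructible from the $\gamma$-mass on $U'_i \cup M_G$, and symmetrically on the $H$-side. Once this bookkeeping is correct, the remaining steps, including the final assembly of the isomorphism and the extremality transfer, follow in a largely mechanical fashion.
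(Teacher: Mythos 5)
There is a genuine gap at the decomposition step, which you yourself flag as the heart of the argument. You claim that there is a bijection $i \mapsto j(i)$ such that the $r_{\gamma}$-image of $U'_i$ lies entirely in a single $V'_{j(i)}$, and you justify this by ``connectedness of $G_i$ plus the transition coupling condition.'' This claim is false for a general optimal zero-cost joining $\gamma$: nothing in the joining constraints prevents a single vertex $u \in U'_i$ from splitting its mass, i.e.\ having $r_{\gamma}(u,v)>0$ and $r_{\gamma}(u,v')>0$ with $v \in V'_j$, $v' \in V'_{j'}$, $j \neq j'$. A concrete counterexample already occurs for disjoint unions (the case $M_G=\emptyset$): let $G=H$ be a forest consisting of two isomorphic trees with identical labels; taking the component coupling $\pi$ to be the product measure and any zero-cost block joinings (cf.\ Remark \ref{Rem:conversedisjoint}) produces an optimal joining in which $U'_1$ is matched to both $V'_1$ and $V'_2$. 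Edge preservation only rules out $\gamma$-edges between $(u,v)$ and $(u',v')$ when $v,v'$ lie in different components with no connecting edge; it does not rule out mass splitting at the level of $r_{\gamma}$, so connectedness of $G_i$ gives you nothing here. Since both your detection argument (building the isomorphism component by component) and your identification argument (transferring extremality to the blocks of a supposed bijective block structure) are built on this decomposition, both break at this point; restricting to extreme points of the optimal set does not rescue the argument as written, because you give no mechanism forcing an extreme point to respect a one-to-one component matching.

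The paper's proof avoids exactly this trap. It forms, for \emph{every} pair $(i,j)$ with positive exchanged mass $\pi_{ij}>0$, a normalized block joining $\gamma_{ij}$ (restriction to $U'_i \times V'_j$ with the glued leaves reinserted), shows via conditions (1)--(4) that each such $\gamma_{ij}$ is a zero-cost weight joining of $\alpha_i$ and $\beta_j$, and hence obtains an isomorphism $G_i \cong H_j$ for every such pair -- without ever claiming the matching is unique. The bijection $\sigma$ between component indices is then extracted by a separate counting argument: condition (5) forces $\mu_i=\nu_j$ whenever $\pi_{ij}>0$, and a mass-comparison plus Hall's marriage theorem yields $\sigma$ with $\pi_{i\sigma(i)}>0$ for all $i$. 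Finally, identification is not obtained by pushing extremality down to blocks; instead one shows that every optimal $\gamma$ dominates (in support) the bijective zero-cost joining induced by the assembled isomorphism, and then invokes Proposition \ref{Prop:extremebijective} to conclude that all extreme points of $\mathcal{J}^*_{\widetilde{\Psi}}(\alpha,\beta)$ are bijective. To repair your proposal you would need to replace the uniqueness-of-$j(i)$ claim with an argument of this Hall-type flavor (or prove a genuinely new structural fact about extreme points of joinings of gluings), and reroute the identification step through Proposition \ref{Prop:extremebijective} rather than block-level extremality.
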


We remark that in principle Theorem \ref{Thm:mgdecomposition} can be applied in a recursive manner to construct graph families with hierarchical decompositions for which OGJ can detect and identify isomorphisms.

As a special case of magic decompositions (with each $M_G = \emptyset$), we get a result for disjoint unions. For a precise definition of disjoint unions in this context, see Section~\ref{Subsect:decomposeweights}.
\begin{restatable}{corollary}{DisjointUnions}
\label{Cor:DisjointUnions}
    Let $\mathcal{G}$ be a family of fully supported graphs with an isomorphism-invariant augmented labeling scheme $\Psi$, and let $\mathcal{F}$ be the family of all (finite) disjoint unions of graphs in $\mathcal{G}$ with augmented labeling scheme $\widetilde{\Psi}$ satisfying the magic decomposition conditions (3) and (5) above. If OGJ with cost $c_{\Psi}$ detects and identifies isomorphism for $\mathcal{G}$, then OGJ with cost $c_{\widetilde{\Psi}}$ detects and identifies isomorphism for $\mathcal{F}$.
\end{restatable}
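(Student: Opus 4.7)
The plan is to deduce Corollary \ref{Cor:DisjointUnions} directly from Theorem \ref{Thm:mgdecomposition} by verifying that the family $\mathcal{F}$ of disjoint unions, equipped with the labeling scheme $\widetilde{\Psi}$, admits a $\mathcal{G}$-magic decomposition in which the distinguished set $M_G$ is empty for every $G \in \mathcal{F}$. Once this is established, the conclusion is immediate.

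First, I would unpack the gluing definition in the degenerate case $M = \varnothing$. Since any map $f_i : L_i \to M$ with $M = \varnothing$ forces $L_i = \varnothing$, and since the weight function $\tau$ must be supported on $M \times M = \varnothing$, the condition $\alpha = \tau + \sum_{i=1}^{k} \mu_i \alpha_i'$ collapses to $\alpha = \sum_{i=1}^k \mu_i \alpha_i$, with $\mu_i > 0$ and (by normalization of $\alpha$) $\sum_i \mu_i = 1$. This is precisely the structure of a weighted disjoint union of the $G_i \in \mathcal{G}$, so every $G \in \mathcal{F}$ can be written as a gluing of its constituent components along an empty set $M_G = \varnothing$, with the gluing data $(G_i, \mu_i)_{i=1}^{k}$ determined by the connected components of $G$ and their associated weights.

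Next, I would take $\mathcal{L}^* = \varnothing$ and verify the five conditions of a magic decomposition. Conditions (1) and (2) are vacuous because $M_G = \varnothing$ and $\mathcal{L}^* = \varnothing$, so the requirements $\widetilde{\psi}_G(M_G) \subset \mathcal{L}^*$, $\widetilde{\psi}_G(U \setminus M_G) \cap \mathcal{L}^* = \varnothing$, and injectivity of $\widetilde{\psi}_G|_{M_G}$ are all trivially satisfied. Condition (4) is vacuous as well, since each $L_i$ is empty and there are no leaves to trace back through $\eta'$. Conditions (3) and (5) are precisely the two properties of $\widetilde{\Psi}$ that the corollary explicitly assumes. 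Thus $\mathcal{F}$ has a $\mathcal{G}$-magic decomposition.

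Finally, since $\mathcal{G}$ and $\mathcal{F}$ consist of fully supported graphs, $\Psi$ and $\widetilde{\Psi}$ are isomorphism-invariant, and OGJ with cost $c_\Psi$ detects and identifies isomorphism for $\mathcal{G}$ by hypothesis, all assumptions of Theorem \ref{Thm:mgdecomposition} are in place. Applying that theorem yields that OGJ with cost $c_{\widetilde{\Psi}}$ detects and identifies isomorphism for $\mathcal{F}$. The one subtlety I would be most careful about is confirming that the gluing framework as stated permits $M = \varnothing$ (so that the conditions $U_i \setminus L_i \neq \varnothing$ and the support condition on $\tau$ are consistent with $L_i = \varnothing$ and $\tau \equiv 0$); this is the only place where a reader might object, and I would note it explicitly before invoking the theorem.
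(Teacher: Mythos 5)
Your proof is correct and follows essentially the same route as the paper: choose $\mathcal{L}^* = \varnothing$ and $M_G = \varnothing$ for every $G \in \mathcal{F}$, observe that conditions (1), (2), and (4) of the magic decomposition become vacuous while (3) and (5) are assumed, and invoke Theorem \ref{Thm:mgdecomposition}. Your extra care in checking that the gluing framework degenerates correctly when $M = \varnothing$ (forcing $L_i = \varnothing$ and $\tau \equiv 0$) is a reasonable addition but not a departure from the paper's argument.
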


\begin{example} \label{Exa:forests}
    Let $\mathcal{F}$ be the family of weighted, labeled forests. The secondary label scheme $\Phi'$ is chosen such that the label of a vertex $u$ contains both the total weight of the tree containing $u$ and the multiweight of $u$. Let $\Psi$ be the associated augmented labeling scheme. Then $\mathcal{F}$ satisfies the hypotheses of Corollary \ref{Cor:DisjointUnions} with $\mathcal{G}$ as the family of trees. Then Theorems \ref{Thm:trees} and \ref{Thm:mgdecomposition} together yield that OGJ with cost $c_\Psi$ detects and identifies isomorphism for $\mathcal{F}$.
\end{example}

Thus, OGJ can detect and identify isomorphisms for the families of weighted trees and forests. Note that the families of trees and forests are well known to be detectable by the Weisfeiler-Leman test \cite{grohe2020recent} and the AHU algorithm \cite{aho1974design}. However, we are not aware of any results that guarantee the identification property (or convex exactness), even without weights.

As a simple example of a family of graphs with loops to which the extension principle can be applied, we consider the family of flower graphs.
\begin{example} \label{Exa:flowers}
    A flower is a graph having a single ``root" vertex $u_*$ and $k$ cycles ($k \ge 2$) attached to $u_*$, where the lengths of the cycles can be different. We note that any flower graph can be obtained as a gluing of paths of length $\ell \geq 3$ with $M_G = \{u_*\}$. See Figure \ref{Fig:flower}. Let $\mathcal{F}$ be the family of weighted, labeled flowers. As in the case of forests, we choose the secondary label scheme $\Phi'$ such that the label of a vertex $u$ contains both the total weight of the path containing $u$ and the multiweight of $u$. Let $\Psi$ be the associated augmented labeling scheme. It is straightfoward to check that $\mathcal{F}$ with $\Psi$ has a magic decomposition with the family of paths as the base family. Then 
    Theorems \ref{Thm:trees} and \ref{Thm:mgdecomposition} together yield that OGJ with cost $c_{\Psi}$ detects and identifies isomorphism for $\mathcal{F}$.

\begin{figure}[h!]
    \centering
    
    \begin{tikzpicture}

    \draw[line width=0.5mm] (0,0) -- (-1,1) -- (-2,0) -- (-1,-1) -- (0,0) -- (1,1) -- (2,1) -- (3,0) -- (2,-1) -- (1,-1) -- (2,-1) -- (1,-1) -- (0,0);

    \node at (0,0) [rectangle,fill=black,inner sep=0pt,minimum size=6pt, color=black] {};
    \node at (-2,0) [circle,fill=black,inner sep=0pt,minimum size=6pt, color=black] {};
    \node at (3,0) [circle,fill=black,inner sep=0pt,minimum size=6pt, color=black] {};
    
    \foreach \x in {-1, 1, 2} {
      \foreach \y in {-1, 1} {
        \node at (\x,\y) [circle,fill=black,inner sep=0pt,minimum size=6pt, color=black] {};
      }
    }
    
    \node at (0,-0.5) {$u_*$};
    \node at (0,-1.5) {\text{(a)}};

    \draw[line width=0.5mm] (7, 0.3) -- (6,1) -- (5,0) -- (6,-1) -- (7,-0.3);
    \draw[line width=0.5mm] (8.5,0.3) -- (9.5,1) -- (10.5,1) -- (11.5,0) -- (10.5,-1) -- (9.5,-1) -- (8.5,-0.3);

    \node at (5,0) [circle,fill=black,inner sep=0pt,minimum size=6pt, color=black] {};
    \node at (11.5,0) [circle,fill=black,inner sep=0pt,minimum size=6pt, color=black] {};
    \node at (7.75,0) [rectangle,fill=black,inner sep=0pt,minimum size=6pt, color=black] {};
    
    \foreach \x in {6, 9.5, 10.5} {
      \foreach \y in {-1, 1} {
        \node at (\x,\y) [circle,fill=black,inner sep=0pt,minimum size=6pt, color=black] {};
      }
    }

    \foreach \x in {7} {
      \foreach \y in {-0.3, 0.3} {
        \node at (\x,\y) [circle,fill=black,inner sep=0pt,minimum size=6pt, color=black] {};
      }
    }

    \foreach \x in {8.5} {
      \foreach \y in {-0.3, 0.3} {
        \node at (\x,\y) [circle,fill=black,inner sep=0pt,minimum size=6pt, color=black] {};
      }
    }
    
    \node at (7.75,-1.5) {\text{(b)}};
    \node at (7.75,-0.5) {$u_*$};
    \end{tikzpicture}

    \caption{(a) A flower graph with two cycles (of lengths four and six) attached to the base vertex $u_*$; (b) two path graphs that are glued along the set $M = \{u_*\}$ to create the flower in (a).}
    \label{Fig:flower}
\end{figure}
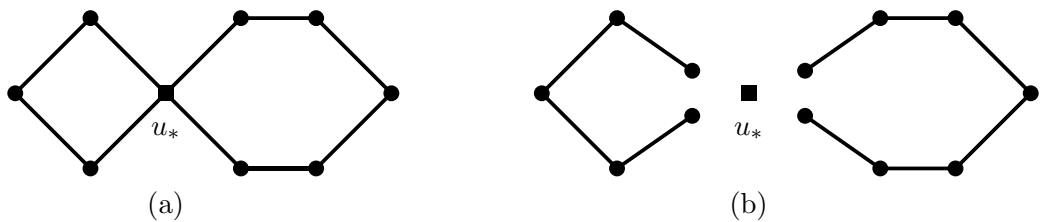
\end{example}
\section{Related Work} \label{Sect:relatedworks}

Optimal transport (OT) provides a mathematical framework for comparing probability distributions. 
Driven in part by advances in computing, ideas and techniques from OT have found use in a variety of fields, including
computer vision \cite{liu2020semantic, heitz2021sliced}, oceanography \cite{hyun2022ocean}, and cell biology \cite{tameling2021colocalization, cang2023screening, bunne2024optimal}. 
Recently, a number of authors have explored OT-based methods for comparing graphs \cite{chen2022weisfeiler, dong2020copt, maretic2019got, memoli2011gromov, vayer2020fused, yi2024alignment}. 
One branch of work in this direction makes use of the Gromov-Wasserstein (GW) distance and extensions for the purpose of graph comparison and alignment \cite{memoli2011gromov, peyre2016gromov, vayer2020fused}. These distances treat the graphs as metric spaces and aim to couple distributions over graph vertices so as to minimize the metric distortion. 
The paper \cite{rioux2024limit} establishes limit laws for the empirical GW distances across various settings and applies them to a problem involving hypothesis testing for isomorphisms in the context of random graph models. 

Another branch of related work uses OT in the context of Markov chains to compare and align graphs based on their dynamics \cite{Brugere2023DistancesFM, chen2022weisfeiler, o2022optimal, yi2024alignment}. 
As described above in Section \ref{Sect:MCs}, a graph $G$ with positive edge weights is naturally associated with a random walk on its vertices.  
For directed graphs the resulting Markov chain may not be reversible, in contrast to the situation studied here.
In \cite{Brugere2023DistancesFM, chen2022weisfeiler} the authors develop graph distance measures related to the classical Weisfeiler-Lehman (WL) isomorphism test \cite{grohe2020recent}.
These distance measures are based on consideration of the OT problem for the $k$-step distributions of the Markov chains on the given graphs;
connections with the present work are discussed in Section \ref{Sect:RelationWLtest} below.
In \cite{o2022optimal, yi2024alignment} the authors propose and develop a method
called NetOTC for comparing and aligning graphs by computing an optimal transition coupling of their random walks using a vertex-based cost function.  Like OGJ, NetOTC is based on graph couplings derived from random walks.  However,
NetOTC considers general directed graphs, i.e., no symmetry constraints 
are placed on the weight functions of the given graphs or their
transition couplings.  Absent symmetry constraints, the 
NetOTC problem is not a linear program, and general connections 
between NetOTC and graph isomorphism are not known at the present time. 

The graph isomorphism (GI) problem is to determine whether two given 
graphs are isomorphic, which in this work corresponds to the {\it detection} problem.  It is not known whether the GI problem is NP-complete or solvable in polynomial time; the best existing results establish 
that the GI problem can be solved in quasipolynomial time \cite{babai2016graph}. 
While much of the recent work on the GI problem is algebraic, involving the automorphism group of the graphs in question \cite{babai2016graph, grohe2020recent}, 
there is a substantial body of work that is more combinatorial in nature, with the AHU algorithm \cite{aho1974design} and the WL test (also known as the color refinement algorithm) being classical examples.  
The WL test is known to detect isomorphisms for certain families of graphs, including trees and forests \cite{immerman1990describing}, while the AHU algorithm applies only to trees and forests \cite{aho1974design, lindeberg2024isomorphism}. For detailed surveys on combinatorial approaches to the GI problem, 
see \cite{grohe2020recent, grohe2020graph}. 

The GI problem can be formulated as a combinatorial optimization problem 
involving the adjacency matrices $A$ and $B$ of the given graphs. 
The goal of the optimization problem is to minimize a cost, often the Frobenius norm, between $PA$ and $BP$ over all permutation matrices $P$. 
A common approach to the GI problem is to formulate and study convex 
relaxations in which the permutation matrices $P$ are replaced with doubly stochastic matrices or 
Markov kernels (as in \cite{patterson2021hausdorff}). 
Recent work \cite{aflalo2015convex, dym2018exact, fiori2015spectral}
provides theoretical guarantees establishing 
sufficient conditions under which the relaxed isomorphism problem can detect and identify graph isomorphism for certain graph families. 
As these results are similar in spirit to the identification results presented here, we describe them in further detail below. 

In \cite{aflalo2015convex} the authors show that the relaxed GI problem 
provides a method for detection and identification of isomorphism for the class of `friendly graphs.'  
A graph is friendly if its adjacency matrix has distinct eigenvalues and none of its eigenvectors 
is orthogonal to the all-ones vector. Note that friendly graphs are 
necessarily asymmetric (meaning that they have trivial automorphism groups). 
Subsequent work has extended these results to some non-friendly yet still asymmetric graphs \cite{fiori2015spectral}. 
Other work based on convex relaxation
establishes identification results for families that may contain
symmetric graphs \cite{dym2018exact}. 
Given two isomorphic graphs, the relaxed problem is said to be {\it convex exact} 
if the set of optimal solutions is equal to the convex hull of the permutation matrices corresponding to the common automorphism group of the graphs. 
Given a fixed set of vertices $U$ and a fixed group $\Gamma$ of permutations of $U$, \cite{dym2018exact} considers the set $\mathcal{W}(U, \Gamma)$ of adjacency matrices of weighted undirected graphs with vertex set $U$ and 
automorphism group $\Gamma$.
They establish that if all elements of $\Gamma$ are involutions and the action of $\Gamma$ on $U$ has an orbit of size $|\Gamma|$, then convex exactness holds for almost every adjacency matrix in $\mathcal{W}(U, \Gamma)$ (with respect to a modified version of Lebesgue measure on $\mathcal{W}(U, \Gamma)$).

In more recent work \cite{klus2025continuous}, the authors study the set of solutions to the relaxed problem considered in \cite{aflalo2015convex, fiori2015spectral, dym2018exact} and show in particular that if the adjacency matrices have repeated eigenvalues, then the dimension of the solution space increases, thereby complicating graph isomorphism testing.
In \cite{takapoui2016linear}, rather than using the Frobenius norm for the cost function as in \cite{aflalo2015convex, fiori2015spectral, dym2018exact}, the authors consider the problem of minimizing a linear cost over the set of doubly stochastic matrices $P$ satisfying $PA = BP$. It is shown that the resulting program can detect and uniquely identify isomorphism for friendly graphs.

\subsection{Relation with Weisfeiler-Lehman and NetOTC} 
\label{Sect:RelationWLtest}
In this section we discuss connections between OGJ, 
the Weisfeiler-Lehman (WL) test, the WL distance, 
and NetOTC. The proofs of the results in this section can be found in Section \ref{Sect:graphisomorphisms}.

The classical WL test (see \cite{grohe2020recent}) 
is an algorithm that takes two graphs as input and returns one of two possible outputs: either the graphs are not isomorphic, or the algorithm cannot determine whether the graphs are isomorphic. The more recent WL distance \cite{Brugere2023DistancesFM, chen2022weisfeiler} is a (generally non-metric) distance function
$d_{\WL}(G,H)$ on directed 
graphs that is defined via optimal non-stationary Markovian couplings 
of the random walks on $G$ and $H$ using a vertex-based cost.
The NetOTC method of \cite{yi2024alignment} gives rise to a (generally non-metric) 
distance function $d_{\OTC}(G,H)$ on directed graphs via optimal stationary Markovian 
couplings of the random walks on $G$ and $H$ using a vertex-based cost.
It is shown in \cite{Brugere2023DistancesFM} that
the WL distance is upper bounded by the NetOTC distance. Since reversible Markovian couplings are necessarily stationary, it is straightforward 
to see that the NetOTC distance is upper bounded by the OGJ cost. 

\begin{restatable}[]{proposition}{WLNetOTCOGJ}
\label{Prop:WLNetOTCOGJ}
	Let $G = (U,\alpha,\phi_G)$ and $H = (V,\beta,\phi_H)$ be undirected graphs. For any cost function $c : U \times V \to \mathbb{R}_{\ge 0}$, we have
	\begin{align*}
		d_{\WL}(G,H) \le d_{\OTC}(G,H) \le \rho_{c}(G,H).
	\end{align*}
\end{restatable}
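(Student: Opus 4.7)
The first inequality $d_{\WL}(G,H) \le d_{\OTC}(G,H)$ is established in \cite{Brugere2023DistancesFM}, so I will focus on the second inequality. The plan is to show that every weight joining of $(\alpha,\beta)$ induces a stationary Markovian coupling of the random walks on $G$ and $H$ that is feasible for the NetOTC problem, and whose associated cost equals the OGJ cost. Since the OTC problem minimizes over a strictly larger feasible set, the desired bound will follow.

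First, let $X$ and $Y$ be the standard reversible random walks on $G$ and $H$ determined by $(\alpha,p)$ and $(\beta,q)$ as described in Section \ref{Sect:MCs}. Given $\gamma \in \mathcal{J}(\alpha,\beta)$ with marginal $r_\gamma$, invoke the construction in Section \ref{Sect:MCs} to obtain the associated Markov chain $(\widetilde{X},\widetilde{Y})$ on $U\times V$ with transition kernel $R$ and stationary distribution $r_\gamma$. By the marginal coupling condition, $r_\gamma$ is a coupling of $p$ and $q$; by the transition coupling condition, whenever $r_\gamma(u,v) > 0$, the row $R(\cdot \mid (u,v))$ is a coupling of $P(\cdot \mid u)$ and $Q(\cdot \mid v)$. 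Consequently, $(\widetilde{X},\widetilde{Y})$ is a stationary Markovian coupling of $X$ and $Y$, hence feasible for the NetOTC problem.

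Next, I would observe that the OGJ objective value on $\gamma$ is
\[
\langle c, r_\gamma \rangle \;=\; \sum_{(u,v)\in U\times V} c(u,v)\, r_\gamma(u,v) \;=\; \mathbb{E}\, c(\widetilde{X}_0,\widetilde{Y}_0),
\]
which is precisely the cost functional minimized by NetOTC. Since $(\widetilde{X},\widetilde{Y})$ is admissible for NetOTC, one obtains $d_{\OTC}(G,H) \le \mathbb{E}\, c(\widetilde{X}_0,\widetilde{Y}_0) = \langle c, r_\gamma \rangle$. Taking the infimum over $\gamma \in \mathcal{J}(\alpha,\beta)$ gives $d_{\OTC}(G,H) \le \rho_c(G,H)$, completing the proof.

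The argument is essentially a containment-of-feasible-sets observation, so I do not anticipate a substantive obstacle. The only subtlety is ensuring the definitions align: OTC in \cite{yi2024alignment} is formulated for general (not necessarily reversible) stationary Markovian couplings of the walks, and I would make this alignment explicit at the outset so that the inclusion of feasible sets is unambiguous. In particular, the key point worth emphasizing is that the symmetry requirement defining $\mathcal{J}(\alpha,\beta)$ forces reversibility of $(\widetilde{X},\widetilde{Y})$, which is a stronger property than stationarity; hence the OGJ feasible set is a subset of the OTC feasible set, and the inequality may be strict in general.
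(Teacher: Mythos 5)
Your proposal is correct and matches the paper's argument: both establish the second inequality by observing that the reversible Markovian coupling induced by any weight joining is a feasible transition coupling for the NetOTC problem with the same cost, and both defer the first inequality to \cite{Brugere2023DistancesFM}. No issues.
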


As the name suggests, the WL distance is closely related to the WL test. 
Let $G = (U,\alpha,\phi)$ be an unweighted labeled graph, and let $\phi'(u) = (\deg(u),|U|)$ define a secondary labeling.
Fix $\delta \in (1/2,1)$, and let $\hat{G}_{\delta}$ be the weighted graph corresponding to the $\delta$-lazy random walk on $G$ with augmented label function $\psi = (\phi,\phi')$. 
It is shown in \cite[Proposition 3.3]{chen2022weisfeiler} that the WL test distinguishes unweighted labeled graphs $G$ and $H$ (i.e., determines correctly that they are not isomorphic) if and only if the WL distance between $\hat{G}_{\delta}$ and $\hat{H}_{\delta}$ with the label-based cost function is positive. 
In this sense, the WL distance has the same discriminating power as the WL test.
Combining this fact with Proposition \ref{Prop:WLNetOTCOGJ}, we obtain a similar result for OGJ. 

\begin{restatable}[]{corollary}{WLOGJ}
\label{Cor:WLOGJ}
Let $G$ and $H$ be unweighted labeled graphs. If the WL test determines that $G$ and $H$ are not isomorphic, then $\rho_{\Psi}(\hat{G}_{\delta},\hat{H}_{\delta})>0$. 
\end{restatable}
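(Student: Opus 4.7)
The plan is a short chain of two previously-established facts; no genuinely new content is required. First, I would invoke \cite[Proposition 3.3]{chen2022weisfeiler}, which states that for unweighted labeled graphs $G$ and $H$, the classical WL test certifies non-isomorphism if and only if $d_{\WL}(\hat{G}_{\delta},\hat{H}_{\delta})>0$ under the label-based cost derived from the augmented labels $\psi=(\phi,\phi')$ with $\phi'(u)=(\deg(u),|U|)$. Under the hypothesis of the corollary, this gives $d_{\WL}(\hat{G}_{\delta},\hat{H}_{\delta})>0$ directly.

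Second, note that $\hat{G}_{\delta}$ and $\hat{H}_{\delta}$ are weighted undirected graphs, since the $\delta$-lazy random walk on an unweighted graph is reversible with respect to its stationary distribution and therefore corresponds to a symmetric edge-weight function in the sense of Definition \ref{Def:weightfunctions}. Hence Proposition \ref{Prop:WLNetOTCOGJ} applies to $\hat{G}_{\delta}$ and $\hat{H}_{\delta}$ with cost $c=c_{\Psi}$, yielding
\[
d_{\WL}(\hat{G}_{\delta},\hat{H}_{\delta}) \;\le\; d_{\OTC}(\hat{G}_{\delta},\hat{H}_{\delta}) \;\le\; \rho_{\Psi}(\hat{G}_{\delta},\hat{H}_{\delta}).
\]
Chaining this with the strict positivity from the first step gives $\rho_{\Psi}(\hat{G}_{\delta},\hat{H}_{\delta})>0$, which is the desired conclusion.

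The only point I would double-check carefully, and the only possible obstacle, is that the cost function appearing in \cite[Proposition 3.3]{chen2022weisfeiler} really matches the label-based binary cost $c_{\Psi}$ built here from $\psi=(\phi,\phi')$. Since the paragraph preceding the corollary specifically sets up $\phi'(u)=(\deg(u),|U|)$ precisely to align with that reference, and since $c_{\Psi}$ is exactly the indicator cost on augmented label disagreement, this amounts to a notational verification rather than a substantive argument. Thus the corollary follows as an immediate consequence of piping Proposition \ref{Prop:WLNetOTCOGJ} into the cited equivalence.
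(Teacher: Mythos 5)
Your proposal is correct and matches the paper's own proof: both combine Proposition \ref{Prop:WLNetOTCOGJ} with the equivalence from \cite[Proposition 3.3]{chen2022weisfeiler} to chain $d_{\WL}(\hat{G}_{\delta},\hat{H}_{\delta})>0$ into $\rho_{\Psi}(\hat{G}_{\delta},\hat{H}_{\delta})>0$. The only difference is the order in which the two ingredients are stated, which is immaterial.
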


In this sense, OGJ has at least as much discriminating power as the WL 
test when restricted to the family of unweighted labeled graphs. 
The WL test can distinguish a random unweighted, unlabeled, 
and undirected graph on $n$ vertices from every other such graph on $n$ 
vertices with probability tending to one as $n$ tends to infinity \cite{babai1980random}.  Thus the same conclusion holds for OGJ. 
\section{Properties of weight joinings and graph joinings} \label{Sect:mainweight}

In this section, we describe the basic theoretical properties of weight joinings and graph joinings, beginning with properties of the set of weight joinings and some consequences of the marginal coupling and transition coupling conditions. In Section \ref{subsect:DisjointDecomposition}, we show that any weight joining of disconnected graphs can be decomposed into connected components corresponding to weight joinings of pairs of the connected components of the marginal graphs. Then in Section \ref{subsect:ExtremePoints} we characterize the extreme points of weight joinings in terms of their support sets.

\subsection{The set of weight joinings} \label{subsect:SetWeightJoinings}
Here we describe some basic properties of the set of weight joinings $\mathcal{J}(\alpha,\beta)$ for a given pair of weight functions $\alpha$ and $\beta$. Recall that a convex polyhedron is a subset of a Euclidean space that can be written as an intersection of a finite number of closed half-spaces. In the following result, suppose $\alpha$ and $\beta$ are weight functions on $U$ and $V$, respectively, where $|U| = m$ and $|V|=n$. We fix enumerations $U = \{u_1,\dots,u_m\}$ and $V = \{v_1,\dots,v_n\}$ and view each weight joining $\gamma$ as an element of $\R^{mn \times mn}$. 
\basicproperties*
\begin{proof}
    Let $\gamma^*$ be the product weight joining of $\alpha$ and $\beta$, i.e., $\gamma^* = \alpha \otimes \beta$. It is straightforward to verify that $\gamma^*$ is a weight function that satisfies both the marginal coupling and transition coupling conditions. Hence, $\mathcal{J}(\alpha, \beta)$ is nonempty. Because $\mathcal{J}(\alpha, \beta)$ is characterized by finitely many linear constraints over $\mathbb{R}^{mn \times mn}$, it is a convex polyhedron and is closed in $\mathbb{R}^{mn \times mn}$. Finally, for all $\gamma \in \mathcal{J}(\alpha, \beta)$, the non-negativity and normalization conditions for weight functions imply that $\gamma$ is a probability vector with index set $(U \times V)^2$, and therefore $\mathcal{J}(\alpha,\beta)$ is bounded. 
\end{proof}

\subsection{The marginal coupling and transition coupling conditions} \label{subsect:MarginalJoiningConditions}
To simplify notation, we use $G = (U, \alpha)$ to denote graphs when we do not need to refer to the label functions. The following proposition states that for connected graphs, the marginal coupling condition in the definition of weight joining is redundant. 
\begin{restatable}[]{proposition}{couplingredundant}
\label{Prop:couplingredundant}
    Suppose $G = (U, \alpha)$ and $H = (V, \beta)$ are two connected graphs. Then any weight function $\gamma$ on $U \times V$ that satisfies the transition coupling condition also satisfies the marginal coupling condition. 
\end{restatable}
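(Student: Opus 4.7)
The plan is to show that the candidate $U$-marginal of $r_\gamma$, namely $p'(u) := \sum_{v} r_\gamma(u,v)$, is a stationary probability distribution for the random walk transition matrix $P(u' \mid u) = \alpha(u,u')/p(u)$ on $G$, and then invoke uniqueness of the stationary distribution for irreducible finite Markov chains, together with the fact that $p$ itself is stationary for $P$, to conclude $p' = p$. A symmetric argument on $H$ then yields $q'(v) := \sum_u r_\gamma(u,v) = q(v)$.

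First I would dispense with the technicality that $P$ must be well-defined everywhere. Connectedness of $G$ forces $p$ to be strictly positive on $U$: if $|U| \geq 2$, every vertex must have at least one neighbor with positive $\alpha$-weight, while if $|U| = 1$, the normalization condition forces a self-loop of weight one. The same reasoning gives $q > 0$ on $V$.

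The main calculation is a direct manipulation of the transition coupling condition, which (since $p(u) > 0$) can be rewritten as
\[
\sum_{\tilde v \in V} \gamma((u,v),(u',\tilde v)) \;=\; P(u' \mid u)\, r_\gamma(u,v).
\]
Starting from $r_\gamma(u',v') = \sum_{u,v} \gamma((u,v),(u',v'))$, summing over $v' \in V$, and exchanging the orders of summation, I expect to obtain
\[
p'(u') \;=\; \sum_{u \in U} P(u' \mid u)\, p'(u),
\]
so that $p'$ is stationary for $P$. The normalization of $\gamma$ immediately gives $\sum_{u} p'(u) = 1$, so $p'$ is a probability distribution on $U$.

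The conclusion then follows by uniqueness. Connectedness of $G$ together with strict positivity of $p$ makes $P$ irreducible on $U$, hence it has a unique stationary probability distribution. The symmetry of $\alpha$ gives $\sum_u p(u) P(u' \mid u) = \sum_u \alpha(u,u') = p(u')$, so $p$ is also stationary, and thus $p' = p$. The analogous argument applied to the second transition coupling identity and the random walk on $H$ yields $q' = q$. I do not anticipate any substantive obstacle; the only delicate points are the preliminary verification that $p$ and $q$ are strictly positive (so that $P$ and $Q$ are unambiguously defined) and the careful bookkeeping when interchanging sums.
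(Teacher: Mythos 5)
Your proposal is correct and follows essentially the same route as the paper: the paper also reduces the claim to the statement that a transition coupling of the irreducible random walks on $G$ and $H$ is a coupling of those chains, which it obtains by citing an external result ([o2022optimal, Proposition 4]) whose content is exactly the stationarity-plus-uniqueness argument you spell out. Your version is simply self-contained, and the details you flag as delicate (strict positivity of $p$ and $q$ from connectedness, the interchange of sums showing $p'$ is $P$-stationary, and the symmetry computation showing $p$ is $P$-stationary) all go through as you describe.
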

\begin{proof}
    Let $\gamma$ be a weight function that satisfies the transition coupling condition. Let $(\widetilde{X},\widetilde{Y})$ be its associated reversible Markov chain. Let $X$ and $Y$ be the reversible Markov chains associated with $G$ and $H$, respectively. When $G$ and $H$ are connected, the reversible Markov chains $X$ and $Y$ are irreducible. As $\gamma$ satisfies the transition coupling condition, \cite[Proposition 4]{o2022optimal} gives that $(\widetilde{X}, \widetilde{Y})$ is a coupling of $X$ and $Y$. By considering the first marginals of these chains, we then see that $r_{\gamma}$ is a coupling of $p$ and $q$, and hence the marginal coupling condition is satisfied.  
\end{proof}

If two graphs are fully supported, then the following restatement of the transition coupling condition is useful. 
\begin{remark} \label{Rem:essential}
    Suppose $G = (U,\alpha)$ and $H = (V,\beta)$ are fully supported. Then $p(u) >0$ for all $u \in U$ and $q(v) > 0$ for all $v \in V$, and therefore the transition coupling conditions are equivalent to the following equations: for any $u,u'\in U$ and $v,v' \in V$,
    \begin{align*} 
        \displaystyle \sum_{\tilde{v} \in V} \gamma((u,v),(u',\tilde{v})) & = \dfrac{\alpha(u,u')}{p(u)} \, r_\gamma(u,v), \\
        \displaystyle \sum_{\tilde{u} \in U} \gamma((u,v),(\tilde{u},v')) & = \dfrac{\beta(v,v')}{q(v)} \, r_\gamma(u,v).
    \end{align*}
\end{remark}

The marginal coupling condition and the transition coupling condition together imply an {\it edge coupling condition}, described in the following proposition. 
\begin{restatable}[]{proposition}{realcoupling}
\label{Prop:realcoupling}
Let $\alpha$ and $\beta$ be weight functions defined on $U$ and $V$, respectively. Suppose $\gamma \in \mathcal{J}(\alpha,\beta)$. Then we have
    \begin{align}
        &\sum_{v,v' \in V} \gamma((u,{v}),(u',v')) = \alpha(u,u'), \quad \text{for all }u,u' \in U, \label{eqn:EdgeCoupling1} \\
        &\sum_{{u},{u'} \in U} \gamma(({u},v),({u'},v')) = \beta(v,v'), \quad \text{for all }v,v' \in V. \label{eqn:EdgeCoupling2}
    \end{align}
    Consequently, if $\gamma((u,v),(u',v')) > 0$, then $\alpha(u,u') > 0$ and $\beta(v,v') > 0$. We call this the {\it edge preservation property}.
\end{restatable}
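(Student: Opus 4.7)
The plan is to derive each of the two edge coupling identities by combining the transition coupling condition with the marginal coupling condition, handling the degenerate case where a marginal vanishes separately.

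First I would fix $u, u' \in U$ and analyze the sum $\sum_{v,v'} \gamma((u,v),(u',v'))$ according to whether $p(u) > 0$. In the generic case $p(u) > 0$, the first transition coupling equation gives
\[
\sum_{v' \in V} \gamma((u,v),(u',v')) \;=\; \frac{\alpha(u,u')}{p(u)}\, r_\gamma(u,v)
\]
for every $v \in V$. Summing over $v$ and applying the first marginal coupling identity $\sum_{v} r_\gamma(u,v) = p(u)$ yields $\alpha(u,u')$, which is precisely \eqref{eqn:EdgeCoupling1}. For the degenerate case $p(u) = 0$, the marginal coupling condition forces $r_\gamma(u,v) = 0$ for every $v$; since $r_\gamma(u,v)$ is itself a sum of non-negative terms $\gamma((u,v),(u'',v''))$, this forces $\gamma((u,v),(u',v')) = 0$ for all $v,v'$. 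Meanwhile $p(u) = \sum_{u''} \alpha(u,u'')$ together with non-negativity of $\alpha$ gives $\alpha(u,u') = 0$, so both sides of \eqref{eqn:EdgeCoupling1} vanish.

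The second identity \eqref{eqn:EdgeCoupling2} is proved by exactly the same argument with the roles of $(u,\alpha,p)$ and $(v,\beta,q)$ swapped, using the second transition coupling equation and the second marginal coupling identity.

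Finally, the edge preservation property is immediate: if $\gamma((u,v),(u',v')) > 0$, then since every term in the sum $\sum_{v,v'} \gamma((u,v),(u',v'))$ is non-negative and one particular term is strictly positive, the whole sum is positive, hence $\alpha(u,u') > 0$ by \eqref{eqn:EdgeCoupling1}; applying \eqref{eqn:EdgeCoupling2} in the same way yields $\beta(v,v') > 0$. There is no serious obstacle here — the only subtlety to address carefully is the case $p(u) = 0$ (and similarly $q(v) = 0$), where the transition coupling equation degenerates to $0 = 0$ and one must fall back on non-negativity together with the marginal coupling condition to conclude that $\gamma$ itself vanishes on the relevant slice.
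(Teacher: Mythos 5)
Your proof is correct and follows essentially the same route as the paper's: split on whether $p(u)>0$, divide the transition coupling condition by $p(u)$ in the positive case and sum over $v$ using the marginal coupling identity, handle the degenerate case via non-negativity, and deduce edge preservation directly from the two identities. No gaps.
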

\begin{proof}
First suppose that $p(u) = 0$. Then by definition of $p$ we must have $\alpha(u,u') = 0$ for all $u' \in U$, and by the marginal coupling condition, we have $r_{\gamma}(u,v) = 0$ for all $v \in V$. Thus both sides of (\ref{eqn:EdgeCoupling1}) are zero. Now suppose $p(u) > 0$. Then the transition coupling condition can be rewritten as in Remark \ref{Rem:essential}. Hence summing over $v \in V$ and using the maringal coupling condition gives
\begin{align*}
    \sum_{{v},{v'} \in V} \gamma((u,{v}),(u',{v'})) = \dfrac{\alpha(u,u')}{p(u)}\sum_{{v} \in V}r_{\gamma}(u,{v}) = \alpha(u,u').
\end{align*}
We have established Equation (\ref{eqn:EdgeCoupling1}). Equation (\ref{eqn:EdgeCoupling2}) is obtained analogously. 
\end{proof}

\subsection{Decomposition of weight joinings over disjoint unions} \label{Subsect:decomposeweights} \label{subsect:DisjointDecomposition}
We now consider disconnected graphs and study the structure of their weight joinings. Any disconnected graph can be expressed as a disjoint union of its connected components. Let us introduce some additional notation to describe disjoint unions of graphs. We assume that all graphs in this section are fully supported. We let $G=(U,\alpha)$ denote the graph $G$ with vertex set $U$ and weight function $\alpha$ when vertex labels are not taken into account. Suppose a graph $G = (U,\alpha)$ has $k$ connected components, and denote the vertex sets of these connected components by $U_i$, $i=1,\ldots,k$. For each component, we define a weighted undirected graph $G_i = (U_i,\alpha_i)$ as follows. Let $S_{G_i} = \sum_{u \in U_i} p(u)$. For all $u,u' \in U_i$, define ${\alpha}_i(u,u') = (S_{G_i})^{-1} \alpha (u,u')$. Since $U_i$ is a connected component of $G$, it is straightforward to verify that $\alpha_i$ is a weight function on $U_i$. We therefore write $G_i = (U_i,\alpha_i)$, and note that $G_i$ is a connected graph. Furthermore, observe that $\nu_G = (S_{{G}_1}, \ldots, S_{{G}_k} )$ is a probability distribution on the set $\{1,\dots,k\}$. Now, if $G$ has $k$ connected components, then we write $G = \bigsqcup_{i=1}^k G_i$. 

Suppose we have two disconnected graphs $G = \bigsqcup_{i=1}^k G_i$ and $H = \bigsqcup_{j=1}^l H_j$, where $G_i = (U_i, {\alpha}_i)$ and $H_j = (V_j, {\beta}_j)$ are the connected component graphs as described above. For $G_i = (U_i,{\alpha}_i)$, we use $p_i$ to denote its marginal function, defined as $p_i(u)=\sum_{u' \in U_i}{\alpha}_i(u,u')$, for $u \in U_i$. 
Similarly, for $H_j = (V_j,{\beta}_j)$, we let $q_j$ denote its marginal function. 

The following proposition provides a decomposition of any weight joining of a pair of disconnected graphs. In particular, it states that for two disconnected graphs $G = (U,\alpha)$ and $H = (V, \beta)$, we can express a weight joining $\gamma$ of $\alpha$ and $\beta$ as a linear combination of weight joinings of pairs of connected components. Moreover, the coefficients of this linear combination form a coupling $\pi$ of the weight distributions of the connected components of $G$ and $H$, denoted by $\nu_G$ and $\nu_H$ (as above). 
 
\begin{restatable}[]{proposition}{decomposeweights}
\label{Prop:decomposeweights}
Let $G = (U,\alpha)$ and $H = (V,\beta)$ be two fully supported graphs that can be disconnected. Suppose $G = \bigsqcup_{i=1}^k G_i$ and $H = \bigsqcup_{j=1}^l H_j$, with notation as above. 
Then for any weight joining $\gamma \in \mathcal{J}(\alpha, \beta)$, there is a coupling $\pi$ of $\nu_G$ and $\nu_H$ such that
    \begin{align*}
        \gamma = \sum_{i=1}^k \sum_{j=1}^l \pi_{ij} \cdot \gamma_{ij}, 
    \end{align*}
where each $\gamma_{ij}$ is a weight function on $U \times V$ such that $\spp(\gamma_{ij}) \subset (U_i \times V_j )^2$ and $\gamma_{ij}|_{(U_i \times V_j)^2}$ is a weight joining of ${\alpha}_i$ and ${\beta}_j$. 
\end{restatable}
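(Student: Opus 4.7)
The plan is to use the edge preservation property (Proposition \ref{Prop:realcoupling}) to show that $\gamma$ has block structure respecting the connected components, then to renormalize its restriction to each block to obtain the desired $\gamma_{ij}$. The coefficients $\pi_{ij}$ arise naturally as the total mass of $r_\gamma$ on each block.

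First, by edge preservation, $\gamma((u,v),(u',v')) > 0$ implies $\alpha(u,u') > 0$ and $\beta(v,v') > 0$, forcing $\spp(\gamma) \subset \bigcup_{i,j} (U_i \times V_j)^2$ since edges of $G$ and $H$ do not cross components. Setting $\pi_{ij} = \sum_{(u,v) \in U_i \times V_j} r_\gamma(u,v)$, the marginal coupling condition for $\gamma$ yields $\sum_j \pi_{ij} = \sum_{u \in U_i} p(u) = S_{G_i} = (\nu_G)_i$ and likewise $\sum_i \pi_{ij} = (\nu_H)_j$, so $\pi$ is a coupling of $\nu_G$ and $\nu_H$. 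When $\pi_{ij} > 0$, I would define $\gamma_{ij}$ to be $\pi_{ij}^{-1} \gamma$ on $(U_i \times V_j)^2$ and zero elsewhere; when $\pi_{ij} = 0$ (in which case $\gamma$ vanishes on the block, since each row-sum of $\gamma$ is a component of $r_\gamma$), any weight joining of $\alpha_i$ and $\beta_j$ may be used, with existence guaranteed by Proposition \ref{Prop:basicproperties}. The identity $\gamma = \sum_{i,j} \pi_{ij} \gamma_{ij}$ then holds block by block.

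The core of the argument is to verify that $\gamma_{ij}|_{(U_i \times V_j)^2}$ is a weight joining of $\alpha_i$ and $\beta_j$. Non-negativity and symmetry are immediate from $\gamma$. Normalization follows from the block structure: $\sum_{(u,v),(u',v') \in U_i \times V_j} \gamma = \sum_{(u,v) \in U_i \times V_j} r_\gamma(u,v) = \pi_{ij}$, so $\gamma_{ij}|_{(U_i\times V_j)^2}$ has total mass $1$. For the transition coupling condition, I would observe that for $u \in U_i$ and $v \in V_j$, edge preservation restricts the inner sum over $\tilde v$ to $V_j$, and using the scaling relations $p_i(u) = S_{G_i}^{-1} p(u)$ and $\alpha_i(u,u') = S_{G_i}^{-1} \alpha(u,u')$ together with the transition coupling of $\gamma$, a direct calculation yields
\begin{align*}
p_i(u) \sum_{\tilde v \in V_j} \gamma_{ij}((u,v),(u',\tilde v)) = \alpha_i(u,u') r_{ij}(u,v),
\end{align*}
where $r_{ij} = \pi_{ij}^{-1} r_\gamma|_{U_i \times V_j}$ is the marginal of $\gamma_{ij}|_{(U_i \times V_j)^2}$; the $V$-side condition follows by the symmetric argument.

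The step I expect to be the main obstacle is the marginal coupling condition $\sum_{v \in V_j} r_{ij}(u,v) = p_i(u)$, since there is no transparent reason why $r_\gamma$ should distribute across the $V$-components in a way proportional to $p_i$ within each $U_i$: the reduced chain on $U_i \times V_j$ may itself split into several communicating classes, so $r_\gamma|_{U_i \times V_j}$ need not factor as anything concrete. My plan is to sidestep this delicate direct verification by invoking Proposition \ref{Prop:couplingredundant}: since each $G_i$ and $H_j$ is connected by construction, the transition coupling condition just established automatically forces the marginal coupling condition, completing the proof.
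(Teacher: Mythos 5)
Your proposal is correct and follows essentially the same route as the paper's proof: restrict $\gamma$ to the blocks $(U_i\times V_j)^2$ (justified by edge preservation), normalize by the block mass $\pi_{ij}$, verify the transition coupling condition via the scaling $\alpha_i(u,u')/p_i(u)=\alpha(u,u')/p(u)$, and obtain the marginal coupling condition for free from Proposition \ref{Prop:couplingredundant} since each $G_i$ and $H_j$ is connected. The only (immaterial) differences are that you check the coupling property of $\pi$ using the marginal coupling condition rather than the edge coupling identity, and you allow an arbitrary joining on zero-mass blocks where the paper uses the product joining $\alpha_i\otimes\beta_j$.
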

\begin{proof}
    Let $\gamma$ be a weight joining of $\alpha$ and $\beta$. For $i = 1, \ldots, k$ and $j = 1, \ldots, l$, let 
    \begin{align*}
        \pi_{ij} = \sum_{(u,v),(u',v') \in U_i \times V_j} \gamma((u,v),(u',v')).
    \end{align*}
    If $\pi_{ij} = 0$, then let 
    \begin{align*}
        \gamma_{ij}((u,v),(u',v')) = \begin{cases}
            \alpha_i(u,u') \beta_j(v,v') &\text{if }(u,v), \ (u',v') \in U_i \times V_j, \\
            0 &\text{otherwise.}
        \end{cases}
    \end{align*}     
    Hence $\gamma_{ij}$ is a weight function and $\gamma_{ij}|_{(U_i \times V_j)^2} = \alpha_i \otimes \beta_j$, which is a weight joining of ${\alpha}_i$ and ${\beta}_j$. 
    Now suppose $\pi_{ij} > 0$. 
    For all $u,u' \in U$ and $v,v' \in V$, let
    \begin{align*}
        \gamma_{ij}((u,v),(u',v')) = \begin{cases}
            (\pi_{ij})^{-1} \gamma((u,v),(u',v')) &\text{if }(u,v), \ (u',v') \in U_i \times V_j, \\
            0 &\text{otherwise.}
        \end{cases}
    \end{align*}
    The non-negativity $\gamma_{ij}$ is clear, and the normalization property follows immediately from the definition of the scaling factor $(\pi_{ij})^{-1}$. The symmetry of $\gamma_{ij}$ is inherited from $\gamma$. Hence $\gamma_{ij}$ is a weight function. It is also clear from the definition that $\spp(\gamma_{ij}) \subset (U_i \times V_j)^2$. Let us now verify that $\gamma_{ij}|_{(U_i \times V_j)^2}$ is a weight joining of ${\alpha}_i$ and ${\beta}_j$.  Since $G_i$ and $H_j$ are connected, it remains only to show that $\gamma_{ij}$ satisfies the transition coupling conditions (by Proposition \ref{Prop:couplingredundant}). To this end, we first observe that for all $u,u' \in U_i$, we have $p(u)>0$ and $S_{G_i} >0$ since $G$ is fully supported, and then
    \begin{align} \label{Eqn:Puppy}
        \dfrac{\alpha(u,u')}{p(u)} = \dfrac{\alpha(u,u')/S_{G_i}}{p(u)/S_{G_i}} = \dfrac{\alpha_i(u,u')}{p_i(u)}.  
    \end{align}
 Let $r_{ij}$ be the marginal function of $\gamma_{ij}$. For any $(u,v) \in U_i \times V_j$, the definition of $\gamma_{ij}$ and the fact that $U_i$ and $V_j$ are connected components yields that 
    \begin{equation} \label{Eqn:Finch}
        r_{ij}(u,v) =  (\pi_{ij})^{-1} r_{\gamma}(u,v).
    \end{equation} By fact that $V_j$ is a connected component, the transition coupling condition for $\gamma$, and Equations (\ref{Eqn:Puppy}) and (\ref{Eqn:Finch}), for $u,u' \in U_i$ and $v \in V_j$, we obtain
    \begin{align*}
    		\sum_{v' \in V_j} \gamma_{ij}((u,v),(u',v')) &= (\pi_{ij})^{-1} \sum_{v' \in V} \gamma((u,v),(u',v')) \\
    		&=  (\pi_{ij})^{-1} \dfrac{\alpha(u,u')}{p(u)}r_{\gamma}(u,v) \\
    		&= \dfrac{{\alpha}_i(u,u')}{{p}_i(u)} r_{ij}(u,v).
    \end{align*}
    The remaining transition coupling condition relating $\gamma_{ij}$ and $\beta_j$ is established by an analogous argument.
	Hence $\gamma_{ij}|_{(U_i \times V_j)^2}$ is a weight joining of ${\alpha}_i$ and ${\beta}_j$. 
    By the definition of $\gamma_{ij}$ for all $i$ and $j$, we observe that
     \begin{align*}
        \gamma = \sum_{i=1}^k \sum_{j=1}^l \pi_{ij}\gamma_{ij}. 
    \end{align*}
    
    To prove that $\pi = (\pi_{ij})$ is a coupling of $\nu_G$ and $\nu_H$, note that for each $i = 1, \ldots, k$, we have
    \begin{align*}
        \sum_{j=1}^l \pi_{ij} = \sum_{j=1}^l \sum_{\substack{(u,v) \in U_i \times V_j \\ (u',v') \in U_i \times V_j}} \gamma((u,v),(u',v')) = \sum_{u,u' \in U_i} \alpha(u,u') = S_{G_i},
    \end{align*}
    where the second equality is a consequence of Proposition~\ref{Prop:realcoupling}.
    Thus $\nu_G$ is the first marginal of $\pi$. Similarly, $\nu_H$ is the second marginal of $\pi$, which completes the proof.
\end{proof}

We also note that if one has weight joinings for each pair of connected component graphs, then one can construct a weight joining of $G$ and $H$. 
\begin{remark} \label{Rem:conversedisjoint}
Under the hypotheses of Proposition \ref{Prop:decomposeweights}, let ${\gamma}_{ij} \in \mathcal{J}({\alpha_i},{\beta_j})$ for all $i = 1,\ldots, k$ and $j = 1,\ldots,l$. 
For any coupling $\pi$ of $\nu_G$ and $\nu_H$,  define the function $\gamma:(U\times V)^2\to \mathbb{R}$ by setting
$\gamma|_{(U_i \times V_j)^2} = \pi_{ij} \gamma_{ij}$
for all $i = 1,\ldots, k$, $j = 1,\ldots,l$, and $\gamma = 0$ elsewhere. Then $\gamma$ is a weight joining of $\alpha$ and $\beta$. We omit the straightforward proof of this fact. 
\end{remark}


\subsection{Extreme points of weight joinings} \label{subsect:ExtremePoints}
Recall that $\gamma$ is an extreme point of the convex set $\mathcal{J}(\alpha,\beta)$ if the following condition holds: whenever $\gamma = t \gamma_1 + (1-t)\gamma_2$ for some $\gamma_1, \gamma_2 \in \mathcal{J}(\alpha,\beta)$ and some $t \in (0,1)$, it necessarily follows that $\gamma = \gamma_1 = \gamma_2$. In the next proposition below we establish an equivalence between the extreme points of the set of weight joinings and those weight joinings with minimal support.

\begin{definition} \label{Def:sppwj} 
Let $\alpha$ and $\beta$ be weight functions on $U$ and $V$, respectively. We say a set $F \subset (U \times V)^2$ {\it supports a weight joining} if there exists a weight joining $\gamma \in \mathcal{J}(\alpha,\beta)$ such that $\spp(\gamma) = F$. Given a set $F$ that supports a weight joining of $\alpha$ and $\beta$, we say $F$ is {\it minimal} if no set $F'$ supporting a weight joining of $\alpha$ and $\beta$ is a proper subset of $F$. 
\end{definition}

The following proposition is essentially a restatement of standard results in linear optimization applied to our setting. Nonetheless, we provide a proof for completeness.
\begin{restatable}[]{proposition}{minimalextreme}
\label{Prop:minimalextreme}
    A weight joining $\gamma$ is an extreme point of $\mathcal{J}(\alpha,\beta)$ if and only if $\spp(\gamma)$ is minimal. Moreover, if $\gamma$ is extremal and $\spp(\gamma) = \spp(\gamma')$ for some $\gamma' \in \mathcal{J}(\alpha,\beta)$, then $\gamma = \gamma'$. 
\end{restatable}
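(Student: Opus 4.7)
The plan is to exploit the fact that $\mathcal{J}(\alpha,\beta)$ is the intersection of an affine subspace (cut out by the finitely many linear equalities of Definition \ref{Def:weightjoinings}) with the non-negative orthant of $\mathbb{R}^{(U\times V)^2}$. Any affine-tangent perturbation automatically preserves the equality constraints, so the only constraints that can become binding are the non-negativity constraints at coordinates outside $\spp(\gamma)$. The overall strategy is therefore to show that the extreme points are exactly the weight joinings whose set of active non-negativity constraints is maximal, which translates to $\spp(\gamma)$ being minimal.

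For the forward direction, I would argue contrapositively. Suppose $\spp(\gamma)$ is not minimal, so some $\gamma'\in\mathcal{J}(\alpha,\beta)$ has $\spp(\gamma')\subsetneq \spp(\gamma)$. Let $d=\gamma-\gamma'$; then $d$ is nonzero and supported on $\spp(\gamma)$. Since $\gamma$ is strictly positive on its support, one can pick $\varepsilon>0$ small enough that $\gamma\pm\varepsilon d\geq 0$. Both perturbed functions satisfy the linear equality constraints (inherited from $\gamma$ and $\gamma'$), so they lie in $\mathcal{J}(\alpha,\beta)$, and the identity $\gamma=\tfrac{1}{2}(\gamma+\varepsilon d)+\tfrac{1}{2}(\gamma-\varepsilon d)$ exhibits $\gamma$ as a nontrivial convex combination, contradicting extremality. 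The ``moreover'' uniqueness statement is handled by exactly the same perturbation with $d=\gamma'-\gamma$: if $\gamma$ is extreme and $d\neq 0$, we again contradict extremality, so $\gamma=\gamma'$.

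For the reverse direction I would again argue contrapositively. If $\gamma=t\gamma_1+(1-t)\gamma_2$ with $t\in(0,1)$ and $\gamma_1\neq\gamma_2$ in $\mathcal{J}(\alpha,\beta)$, then non-negativity forces $\spp(\gamma_1),\spp(\gamma_2)\subset\spp(\gamma)$. Let $d=\gamma_2-\gamma_1\neq 0$; since $\sum_{(x,y)}\gamma_i(x,y)=1$ for $i=1,2$, we have $\sum d=0$, so $d$ has entries of both signs. Slide $\gamma_1$ along $d$: set
\[
s^{*}=\min\bigl\{-\gamma_{1}(x,y)/d(x,y): d(x,y)<0\bigr\}>0,
\]
so that $\gamma_1+s^*d\geq 0$ with at least one new zero at the argmin coordinate (which lies in $\spp(\gamma_1)\subset\spp(\gamma)$ because $d<0$ there forces $\gamma_1>0$). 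Linear constraints are preserved, so $\gamma_1+s^*d\in\mathcal{J}(\alpha,\beta)$, but its support is a proper subset of $\spp(\gamma)$, violating minimality.

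The main obstacle is essentially bookkeeping: checking that at each stage the perturbed weight function still lies in $\mathcal{J}(\alpha,\beta)$. This hinges on the clean split of defining conditions into linear equalities (preserved along any affine direction) and pointwise non-negativities (the only obstruction to perturbation). Once this perspective is in place, the proof reduces to the classical polyhedral characterization of extreme points of $\{x\geq 0: Ax=b\}$ in terms of minimal supports.
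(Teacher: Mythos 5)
Your proof is correct. The forward direction and the ``moreover'' clause are essentially the paper's argument: you perturb $\gamma$ along the direction $d=\gamma-\gamma'$, which is exactly what the paper's Lemma~\ref{Lem:alpha} does (the paper uses the one-sided perturbation $(1+t)\gamma-t\gamma'$ and then writes $\gamma$ as a convex combination of the perturbed point and $\gamma'$; you use the symmetric perturbation $\gamma\pm\varepsilon d$ — a cosmetic difference). Where you genuinely diverge is the reverse direction. The paper decomposes $\gamma$ via the Krein--Milman theorem into a convex combination of extreme points, observes that minimality forces each of them to have support equal to $\spp(\gamma)$, and then reuses Lemma~\ref{Lem:alpha} to conclude each extreme point equals $\gamma$. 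You instead take a nontrivial decomposition $\gamma=t\gamma_1+(1-t)\gamma_2$ and slide $\gamma_1$ along $d=\gamma_2-\gamma_1$ until a non-negativity constraint becomes active, producing a joining whose support is a proper subset of $\spp(\gamma)$ and contradicting minimality directly; this works because $\sum d=0$ guarantees $d$ has a negative entry, the entries where $d<0$ lie in $\spp(\gamma_1)\subset\spp(\gamma)$, and the affine combination $(1-s^*)\gamma_1+s^*\gamma_2$ preserves all the equality constraints. Your route is the classical basic-feasible-solution argument from linear programming: it is more self-contained (no appeal to Krein--Milman or to the finiteness of the set of extreme points), while the paper's route leans on machinery it already has available and reuses its perturbation lemma twice. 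Both are valid; nothing is missing from your argument.
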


Before proving Proposition \ref{Prop:minimalextreme}, we first prove the following lemma. 
\begin{lemma}\label{Lem:alpha}
    Let $\alpha,\beta$ be weight functions. Let $\gamma,\gamma'\in \mathcal{J}(\alpha,\beta)$ with $\spp(\gamma') \subset \spp(\gamma)$. Then there exists a sufficiently small $t >0$ such that $\tilde{\gamma} = (1+t) \gamma - t \gamma'\in \mathcal{J}(\alpha,\beta)$.    
\end{lemma}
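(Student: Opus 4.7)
The plan is to exploit the fact that $\mathcal{J}(\alpha,\beta)$ is cut out from the space of weight functions by linear equality constraints (symmetry, normalization, marginal coupling, transition coupling) together with the nonnegativity inequalities. Since $\tilde\gamma = (1+t)\gamma - t\gamma'$ is an affine combination of $\gamma$ and $\gamma'$ (the coefficients $1+t$ and $-t$ sum to $1$), it automatically satisfies every linear equality that $\gamma$ and $\gamma'$ both satisfy. In particular, symmetry, normalization, and both coupling conditions will hold for $\tilde\gamma$ regardless of the value of $t$. So the only thing to verify is that nonnegativity $\tilde\gamma \ge 0$ can be arranged by choosing $t > 0$ small enough.

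To handle nonnegativity, I would split the index set $(U\times V)^2$ into two parts according to whether a quadruple lies in $\spp(\gamma)$ or not. If $((u,v),(u',v')) \notin \spp(\gamma)$, then by the hypothesis $\spp(\gamma') \subset \spp(\gamma)$ we also have $\gamma'((u,v),(u',v')) = 0$, so $\tilde\gamma((u,v),(u',v')) = 0 \ge 0$ for every $t$. If $((u,v),(u',v')) \in \spp(\gamma)$, then $\gamma((u,v),(u',v')) > 0$, and the required inequality
\[
(1+t)\gamma((u,v),(u',v')) - t\,\gamma'((u,v),(u',v')) \ge 0
\]
is automatic when $\gamma'((u,v),(u',v')) \le \gamma((u,v),(u',v'))$, and otherwise holds provided
\[
t \;\le\; \frac{\gamma((u,v),(u',v'))}{\gamma'((u,v),(u',v')) - \gamma((u,v),(u',v'))},
\]
which is a strictly positive bound. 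Since $\spp(\gamma)$ is a finite set, taking $t$ to be the minimum of these finitely many positive upper bounds (or any positive number, if no such constraints arise) yields $t > 0$ for which $\tilde\gamma \ge 0$ pointwise.

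No step of this argument presents a genuine obstacle; the content is simply that $\gamma$ lies in the relative interior of the face of $\mathcal{J}(\alpha,\beta)$ determined by $\spp(\gamma)$, and so one can perturb slightly past $\gamma$ in the direction of $\gamma'$ while remaining in the polytope. I would present the proof in two short paragraphs: one verifying that the linear constraints pass to any affine combination, and one deriving the explicit positive threshold on $t$ from the finite, strictly positive support of $\gamma$.
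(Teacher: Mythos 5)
Your proposal is correct and follows essentially the same route as the paper: both arguments observe that the equality constraints defining $\mathcal{J}(\alpha,\beta)$ pass to affine combinations, and both reduce the problem to choosing $t>0$ below a finite minimum of positive ratios so that nonnegativity holds (the paper uses the slightly more conservative threshold $t=\min \gamma/\gamma'$ over $\spp(\gamma')$, while you use the sharp bound $\gamma/(\gamma'-\gamma)$ where $\gamma'>\gamma$; either works). Your remark that the coefficients $1+t$ and $-t$ sum to $1$ is in fact the cleaner justification for why the normalization constraint $\sum\gamma=1$ is preserved.
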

\begin{proof}
    Let
    \begin{align*}
        t = \min_{((u,v),(u',v')) \in \spp(\gamma')} \dfrac{\gamma((u,v),(u',v'))}{\gamma'((u,v),(u',v'))}. 
    \end{align*}
    Since $\spp(\gamma')\subset \spp(\gamma)$, we have that $t$ is well-defined and positive. If $((u,v),(u',v')) \in \spp(\gamma')$, then by the minimality of $t$, we have
    $$\gamma((u,v),(u',v')) - t\gamma'((u,v),(u',v')) \ge 0. $$
    If $((u,v),(u',v')) \notin \spp(\gamma')$, then 
    $$\gamma((u,v),(u',v')) - t\gamma'((u,v),(u',v')) = \gamma((u,v),(u',v')) \ge 0. $$
    Thus $\tilde{\gamma} := (1+t) \gamma - t \gamma' = (\gamma - t\gamma') + t\gamma \ge 0$. Since the rest of the weight joining constraints (in Definition \ref{Def:weightjoinings}) are linear and $\tilde{\gamma}$ is a linear combination of the weight joinings $\gamma$ and $\gamma'$, it is straightforward to verify that $\tilde{\gamma}$ is a weight joining. 
\end{proof}

\begin{proof}[Proof of Proposition \ref{Prop:minimalextreme}]
Let $\gamma$ be an extreme point of $\mathcal{J}(\alpha,\beta)$. Suppose for contradiction that $\spp(\gamma)$ is not minimal. Then there exists ${\gamma'}$ such that $\spp(\gamma')\subsetneq \spp(\gamma)$. Note that $\gamma' \neq \gamma$. 
Then by Lemma \ref{Lem:alpha}, for a sufficiently small $t>0$, we have that $\tilde{\gamma}=(1+t)\gamma-t \gamma'$ belongs to $\mathcal{J}(\alpha,\beta)$. Consequently, $\gamma$ can be expressed as $\gamma = \frac{1}{1+t}\tilde{\gamma} + \frac{t }{1+t}\gamma',$ which contradicts the assumption that $\gamma$ is an extreme point of $\mathcal{J}(\alpha,\beta)$ (since $\gamma' \neq \gamma)$. Note that by this same argument, if $\spp(\gamma) = \spp(\gamma')$, then we must have $\gamma = \gamma'$, which establishes the second conclusion of the proposition.
    
Now suppose $\gamma \in \mathcal{J}(\alpha,\beta)$ is a weight joining such that $\spp(\gamma)$ is minimal. 
Since $\mathcal{J}(\alpha,\beta)$ is a compact, convex polytope, it has finitely many extreme points. By the Krein-Milman theorem, 
there exist extreme points $\gamma^{(1)},\dots,\gamma^{(k)}$ of $\mathcal{J}(\alpha,\beta)$ and $t_1,\dots,t_k >0$ with $\sum_{i=1}^k t_i = 1$ such that
\begin{equation*}
\gamma = \sum_{i=1}^k t_i \gamma^{(i)}.
\end{equation*}
By the non-negativity of each $\gamma^{(j)}$, the support of each $\gamma^{(j)}$ must be contained in that of $\gamma$. Specifically, for any $((u,v),(u',v'))\in \spp(\gamma^{(j)})$, it follows that $\gamma((u,v),(u',v'))>0$, and hence $$\spp(\gamma^{(j)})\subseteq \spp(\gamma) \text{ for all } j=1,\cdots,k. $$
Then by the minimality of $\spp(\gamma)$, we must have $\spp(\gamma^{(j)}) = \spp(\gamma)$ for all $j$. 

Now suppose for contradiction that $\gamma^{(j)} \neq \gamma$ for some $j$. By Lemma~\ref{Lem:alpha}, there exists a sufficiently small $\theta >0$ such that $\gamma^*=(1+\theta) \gamma^{(j)} - \theta \gamma  \in \mathcal{J}(\alpha,\beta)$. 
Then $\gamma^{(j)}$ can be written as a nontrivial convex combination $\gamma^{(j)} = \frac{1}{1+\theta}\gamma^*+\frac{\theta}{1+\theta}\gamma$, with $\gamma \neq \gamma^{(j)}$, which contradicts the assumption that $\gamma^{(j)}$ is an extreme point. 
Hence, $\gamma = \gamma^{(i)}$ for some $i$, and we conclude that $\gamma$ is an extreme point.
\end{proof}

As a consequence of the previous proposition, our next result states that an extremal graph joining of connected graphs must also be connected. Note that we only consider the part of the graph joining where each vertex has positive marginal degree.
\begin{restatable}[]{proposition}{preserveconnected}
\label{Prop:preserveconnected}
    If $G = (U, \alpha)$ and $H = (V,\beta)$ are connected graphs and $\gamma \in \mathcal{J}(\alpha,\beta)$ is extremal, then the graph $K = (\spp(r_{\gamma}),\gamma)$ is also connected.
\end{restatable}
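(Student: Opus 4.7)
The plan is to prove the contrapositive by contradiction, using the characterization of extreme points as minimal-support weight joinings from Proposition \ref{Prop:minimalextreme}. Suppose $K$ has more than one connected component, with vertex sets $W_1, \ldots, W_m$ ($m \geq 2$). For each $i$, let $\tilde{\gamma}_i$ be the restriction of $\gamma$ to pairs $((u,v),(u',v'))$ with both coordinates in $W_i$ (and zero elsewhere), and set $\lambda_i = \sum \tilde{\gamma}_i$. Because edges of $K$ do not cross between components, we obtain the decomposition $\gamma = \sum_{i=1}^m \tilde{\gamma}_i$ with $\sum_i \lambda_i = 1$ and each $\lambda_i > 0$ (since $W_i \subseteq \spp(r_\gamma)$ is nonempty).

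The central claim is that each rescaled restriction $\tilde{\gamma}_i/\lambda_i$ is itself a weight joining of $\alpha$ and $\beta$. Once this is in hand, $\spp(\tilde{\gamma}_1/\lambda_1) \subseteq W_1 \times W_1$ is a proper subset of $\spp(\gamma) = \bigcup_j \spp(\tilde{\gamma}_j)$ (as $m \geq 2$ and each $W_j$ is nonempty), contradicting minimality. Non-negativity, symmetry, and normalization of $\tilde{\gamma}_i/\lambda_i$ are immediate. Moreover, the transition coupling condition transfers directly from $\gamma$: for $(u,v) \in W_i$, the partial sum $\sum_{\tilde v} \tilde{\gamma}_i((u,v),(u',\tilde v))$ coincides with the full sum $\sum_{\tilde v} \gamma((u,v),(u',\tilde v))$, since any $\tilde v$ contributing a nonzero term forces $(u',\tilde v) \in W_i$ by the component structure.

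The crux is the marginal coupling condition, namely that $p_i(u) := \sum_{v:(u,v) \in W_i} r_\gamma(u,v) = \lambda_i p(u)$ for every $u \in U$, and the analogue on $V$. To establish this, sum the transition coupling identity $p(u) \sum_{\tilde v} \gamma((u,v),(u',\tilde v)) = \alpha(u,u') r_\gamma(u,v)$ over $v$ with $(u,v) \in W_i$. The component structure again collapses the $\tilde v$-sum to $\tilde v$ with $(u',\tilde v) \in W_i$, yielding $p(u) \lambda_i \tilde{\alpha}_i(u,u') = \alpha(u,u')\, p_i(u)$, where $\tilde{\alpha}_i$ denotes the edge marginal of $\tilde{\gamma}_i/\lambda_i$. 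Interchanging $u$ and $u'$ and invoking the symmetry of both $\tilde{\alpha}_i$ and $\alpha$ yields $p_i(u)/p(u) = p_i(u')/p(u')$ whenever $\alpha(u,u') > 0$. Connectedness of $G$ (with $p > 0$ on $U$ automatic when $|U| \geq 2$) forces the ratio $p_i/p$ to be constant on $U$, and summing pins the constant to $\lambda_i$. A symmetric argument using the second transition coupling condition and connectedness of $H$ gives $q_i(v) = \lambda_i q(v)$. The degenerate cases $|U|=1$ or $|V|=1$ reduce immediately to connectedness of $H$ or $G$, respectively. The main obstacle is precisely this symmetry-plus-connectedness step pinning the per-component marginals down to multiples of $p$ and $q$; once that is secured, everything else is bookkeeping.
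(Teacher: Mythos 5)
Your proof is correct and follows essentially the same strategy as the paper's: restrict $\gamma$ to one connected component of $K$, normalize, check that the result is a weight joining with strictly smaller support, and contradict the minimality of $\spp(\gamma)$ guaranteed by Proposition \ref{Prop:minimalextreme}. The only divergence is in the marginal coupling condition: the paper gets it for free from Proposition \ref{Prop:couplingredundant} (for connected $G$ and $H$, the transition coupling condition implies the marginal coupling condition), whereas you re-derive it by hand via the symmetry-plus-connectedness ratio argument pinning $p_i/p$ to the constant $\lambda_i$ --- which is valid, but is precisely the content of that earlier proposition, so citing it would shorten your argument considerably.
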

\begin{proof}
    Suppose for contradiction that the graph $K = (\spp(r_{\gamma}),\gamma)$ is not connected. 
    Then there exists two nonempty subsets $S_1,S_2 \subset U \times V$ such that 
    $S_1 \cap S_2 = \emptyset$, $S_1 \cup S_2 = U \times V$,  $\gamma((u,v),(u',v')) = 0$ for all $(u,v) \in S_1, (u',v') \in S_2$, and there exist pairs $(u,v) \in S_1$ and $(u',v') \in S_2$ such that $r_{\gamma}(u,v)>0$ and $r_{\gamma}(u',v')>0$. 
    Define the normalized restriction of $\gamma$ to $S_1$ by 
    \begin{align*}
        \gamma_1((u,v),(u',v')) = 
        \begin{cases} 
            (C^{-1})\gamma((u,v),(u',v')) &\text{if } (u,v),(u',v') \in S_1, \\
            0 &\text{otherwise,}
        \end{cases}
    \end{align*}
    where the normalization constant is given by
    \begin{align*}
        C = \sum_{(x,y),(x',y') \in S_1}\gamma((x,y),(x',y')) > 0.
    \end{align*}
    We now verify that $\gamma_1 \in \mathcal{J}(\alpha,\beta)$. For $(u,v) \in S_1$,
    \begin{align*}
        r_{\gamma_1} (u,v) = \frac{\sum_{(x',y')\in S_1}\gamma((u,v),(x',y'))}{C} 
        = \frac{r_{\gamma}(u,v)}{C},
    \end{align*}
    since $\gamma((u,v),(u',v')) = 0$ whenever $(u',v') \in S_2$. For $(u,v) \in S_2$, we have $\gamma_1((u,v),(u',v')) = 0$ for all $(u',v')$, and thus $r_{\gamma_1}(u,v) = 0$.
    We now verify the transition coupling condition. First suppose $(u,v) \in S_1$ and $u' \in U$. If $\alpha(u,u') = 0$, then the transition coupling condition is trivially satisfied. So suppose that $\alpha(u,u')>0$. Then the transition coupling condition for $\gamma$ gives that there exists $v' \in V$ such that $\gamma((u,v),(u',v')) >0$, and by the definition of $S_1$, any such $v'$ must satisfy $(u',v') \in S_1$. Then by the definition of $\gamma_1$, the previous sentence, the transition coupling condition for $\gamma$, and the previous display, we see that
    \begin{align*}
        \sum_{v' \in V} \gamma_1((u,v),(u',v')) 
        &= \dfrac{\sum_{v' \in V: (u',v') \in S_1} \gamma((u,v),(u',v'))}{C}  \\
        &= \dfrac{\sum_{v' \in V} \gamma((u,v),(u',v'))}{C}  \\
        &= \dfrac{\alpha(u,u')}{p(u)} \frac{r_{\gamma}(u,v)}{C} \\
        &= \dfrac{\alpha(u,u')}{p(u)} r_{\gamma_1}(u,v),
    \end{align*}
    which verifies the transition coupling condition for $\gamma_1$ for $(u,v)$ and $u'$.
    If $(u,v) \in S_2$, then both sides of the transition coupling condition equal zero, so the equality trivially holds.
    The other transition coupling condition can be verified analogously. 
    By Proposition \ref{Prop:couplingredundant}, since $G$ and $H$ are connected, the marginal coupling condition is implied by the transition coupling condition. 
    Thus, $\gamma_1 \in \mathcal{J}(\alpha, \beta)$.
    
    By assumption, $\gamma$ is an extreme point of $\mathcal{J}(\alpha, \beta)$, and by Proposition \ref{Prop:minimalextreme}, its support is minimal.
    However, by construction, $\spp(\gamma_1) \subsetneq \spp(\gamma)$, contradicting the minimality of $\spp(\gamma)$.
    Therefore, we conclude that $K$ must be connected.
\end{proof}

\begin{remark}
Note that when $\gamma$ is not extremal, $K = (\spp(r_{\gamma}),\gamma)$ may be disconnected.
\end{remark}

\section{Properties of the optimal graph joining problem} \label{Sect:propertiesOGJ}

In this section, we first establish basic properties of the OGJ problem, including the fact it is a linear program. We then define the OGJ distance on graphs with a common vertex set and verify that it satisfies the properties of a metric when the vertex cost $c$ does. Finally, we establish a stability result for OGJ to conclude this section. 

\subsection{OGJ as a linear program}
As mentioned in Section \ref{Sect:generaltheory}, the OGJ problem is indeed a linear program and always has solutions. The following proposition is an immediate consequence of Proposition \ref{Prop:basicproperties}. 

\linearOGJ*
\begin{proof}
    Since $\mathcal{J}(\alpha, \beta)$ is a convex polyhedron in $\mathbb{R}^{mn \times mn}$ and the objective function is linear, the OGJ problem is a linear program. Because $\mathcal{J}(\alpha, \beta)$ is nonempty and compact, the OGJ problem always has solutions. 
\end{proof}

The optimal solution set $\mathcal{J}^*(\alpha, \beta)$ is also a convex set by the convexity of $\mathcal{J}(\alpha, \beta)$ and the linearity of the objective function. The following remark about the extreme points of $\mathcal{J}(\alpha, \beta)$ and $\mathcal{J}^*(\alpha, \beta)$ is a direct consequence of Theorem 2.7 in \cite{bertsimas1997introduction}.
\begin{remark}\label{remark:extremeoftwo_sets}
If $\gamma$ is an extreme point of $\mathcal{J}(\alpha,\beta)$ and $\gamma\in\mathcal{J}^*(\alpha,\beta)$, then $\gamma$ is also an extreme point of $\mathcal{J}^*(\alpha,\beta)$. Conversely, if $\gamma$ is an extreme point of $\mathcal{J}^*(\alpha,\beta)$, then $\gamma$ is an extreme point of $\mathcal{J}(\alpha,\beta).$
\end{remark}

\subsection{The OGJ distance}
Our development is compatible with the metric theory for optimal transport. That is, one can define a Wasserstein-style distance on undirected graphs with a common vertex set using the OGJ problem. We call this distance the $\kappa$-$\OGJ$ distance. 

\begin{definition} \label{Def:Wassersteindis}
Let $c: U \times U \to \mathbb{R}$ be a metric on $U$. For $\kappa \ge 1$, the $\kappa$-$\OGJ$ distance between two graphs $G = (U,\alpha)$ and $H = (U, \beta)$ is defined by
$$\rho_{\kappa}(G,H) = \min_{\gamma \in \mathcal{J}(\alpha,\beta)} \left( \sum_{(u,u') \in U \times U} c(u,u')^{\kappa} \, r_{\gamma}(u,u') \right)^{1/\kappa}.$$
\end{definition}

In the special case when $\kappa = 1$, we note that the $1$-OGJ distance is equal to the OGJ transport cost. 
Next we prove that the $\kappa$-$\OGJ$ distance is indeed a metric on the family of graphs with the same vertex set $U$ when $c$ is a metric on $U$. 
\begin{restatable}[]{proposition}{distanceOGJ}
\label{Prop:distanceOGJ}
    Let $\mathcal{G}$ be the family of graphs on vertex set $U$. If the cost function $c$ is a metric on $U$, then the $\kappa$-$\OGJ$ distance $\rho_{\kappa}( \cdot, \cdot) : \mathcal{G} \times \mathcal{G} \to \R$ is a metric on $\mathcal{G}$.
\end{restatable}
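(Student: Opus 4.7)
The plan is to verify the four metric axioms in turn. Non-negativity is immediate since $c \geq 0$ and $r_\gamma \geq 0$. Symmetry follows from the observation that if $\gamma \in \mathcal{J}(\alpha,\beta)$, then the coordinate-swapped function $\tilde\gamma((v,u),(v',u')) := \gamma((u,v),(u',v'))$ is a weight joining in $\mathcal{J}(\beta,\alpha)$, and has the same $\kappa$-OGJ cost using the symmetry of $c$.

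For identity of indiscernibles, if $G = H$ then the function $\gamma((u,v),(u',v')) := \alpha(u,u')\,\mathbb{I}(u=v,\, u'=v')$ is routinely verified to be a weight joining of $\alpha$ with itself, with diagonal marginal $r_\gamma(u,v) = p(u)\mathbb{I}(u=v)$ and therefore zero cost. Conversely, if $\rho_\kappa(G,H) = 0$ then any optimal $\gamma$ must satisfy $r_\gamma(u,v) = 0$ for $u \neq v$, since $c(u,v)^\kappa > 0$ off the diagonal and all terms in the cost are non-negative. By non-negativity of $\gamma$ this in turn forces $\gamma$ to be supported on pairs $((u,u),(u',u'))$, and the edge coupling identity of Proposition \ref{Prop:realcoupling} gives $\alpha(u,u') = \gamma((u,u),(u',u')) = \beta(u,u')$, so $G = H$.

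The substantive task is the triangle inequality $\rho_\kappa(G_1,G_3) \leq \rho_\kappa(G_1,G_2) + \rho_\kappa(G_2,G_3)$, which I would establish by a gluing-of-joinings construction analogous to the classical gluing lemma of optimal transport. Given optimal $\gamma_{12} \in \mathcal{J}(\alpha_1,\alpha_2)$ and $\gamma_{23} \in \mathcal{J}(\alpha_2,\alpha_3)$, with marginals $r_{12}, r_{23}$ and middle marginal $p_2$ of $\alpha_2$, define a symmetric non-negative function on $(U^3)^2$ by
\begin{equation*}
\gamma_{123}((x,y,z),(x',y',z')) := \frac{\gamma_{12}((x,y),(x',y'))\,\gamma_{23}((y,z),(y',z'))}{\alpha_2(y,y')}
\end{equation*}
when $\alpha_2(y,y') > 0$, and zero otherwise; the edge preservation property of Proposition \ref{Prop:realcoupling} ensures the numerator vanishes wherever the denominator does, so the expression is well defined. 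Setting $\gamma_{13}((x,z),(x',z')) := \sum_{y,y'} \gamma_{123}((x,y,z),(x',y',z'))$, I would verify that $\gamma_{13} \in \mathcal{J}(\alpha_1,\alpha_3)$ with marginal $r_{\gamma_{13}}(x,z) = \sum_y r_{12}(x,y)r_{23}(y,z)/p_2(y)$. The cost bound then follows from the pointwise inequality $c(x,z) \leq c(x,y) + c(y,z)$ and Minkowski's inequality in $L^\kappa$ with respect to the measure $\mu(x,y,z) := r_{12}(x,y)r_{23}(y,z)/p_2(y)$ on $U^3$, whose $(x,y)$-marginal is $r_{12}$ and $(y,z)$-marginal is $r_{23}$, turning the two right-hand $L^\kappa$-norms into exactly $\rho_\kappa(G_1,G_2)$ and $\rho_\kappa(G_2,G_3)$.

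The main obstacle is verifying that $\gamma_{13}$ satisfies the two transition coupling identities of Definition \ref{Def:weightjoinings}. This requires carefully combining both transition coupling identities of $\gamma_{12}$ and $\gamma_{23}$ through the shared middle coordinate, and handling the degenerate cases $p_2(y) = 0$ and $\alpha_2(y,y') = 0$ by means of edge preservation and the marginal coupling condition.
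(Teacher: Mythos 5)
Your proposal is correct and follows essentially the same route as the paper: the diagonal joining and the edge-coupling identity of Proposition \ref{Prop:realcoupling} for the identity of indiscernibles, and for the triangle inequality the identical gluing construction $\gamma_{12}\,\gamma_{23}/\alpha_2$ (the paper's Gluing Lemma \ref{Lem:gluinglemma}) combined with the pointwise triangle inequality for $c$ and Minkowski's inequality. The remaining verifications you defer (that the glued function is a three-way weight joining and that $\gamma_{13}\in\mathcal{J}(\alpha_1,\alpha_3)$) are exactly the steps the paper also treats as routine, and your computed marginal $r_{12}(x,y)r_{23}(y,z)/p_2(y)$ is indeed the marginal of the glued joining.
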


In the proof of the metric property of the OGJ distance, which is given below, we follow the standard outline from optimal transport theory by first establishing a Gluing Lemma and then using it to establish the triangle inequality. In the rest of this subsection, let $\mathcal{G}$ be the family of graphs on vertex set $U$. Consider three graphs $G_1 = (U, \alpha_1)$, $G_2 = (U, \alpha_2)$, and $G_3 = (U, \alpha_3)$ in $\mathcal{G}$. Let $p_1$, $p_2$, and $p_3$ denote their marginal functions, respectively. We use $\mathcal{J}(\alpha_1, \alpha_2, \alpha_3)$ to denote the set of three-way weight joinings of $\alpha_1, \alpha_2, \alpha_3$, which are defined as weight functions on $(U \times U \times U)^2$ satisfying the three-way marginal coupling and transition coupling conditions (as in Definition \ref{Def:weightjoinings}). For example, for a weight function $\gamma : (U \times U \times U)^2 \to \mathbb{R}$, the marginal coupling and transition coupling conditions with respect to $\alpha_1$ are
\begin{align*}
    \sum_{y,z \in U} r_{\gamma}(x,y,z) &= p_1(x),  \text{ and }\\
    p_1(x) \sum_{y', z' \in U} \gamma((x,y,z),(x',y',z')) &= \alpha_1(x,x') \, r_{\gamma}(x,y,z),
\end{align*}
where $r_{\gamma}(x,y,z) = \sum_{x',y',z' \in U}\gamma((x,y,z),(x',y',z'))$.
The other constraints for $\alpha_2$ and $\alpha_3$ are analogous. For two arbitrary weight joinings $\gamma_{12} \in \mathcal{J}(\alpha_1, \alpha_2)$ and $\gamma_{23} \in \mathcal{J}(\alpha_2, \alpha_3)$, let $r_{12}$ and $r_{23}$ denote their marginal functions, respectively. 
\begin{lemma}[Gluing Lemma] \label{Lem:gluinglemma}
    For weight joinings $\gamma_{12} \in \mathcal{J}(\alpha_1, \alpha_2)$ and $\gamma_{23} \in \mathcal{J}(\alpha_2, \alpha_3)$, there exists a three-way weight joining $\gamma \in \mathcal{J}(\alpha_1, \alpha_2, \alpha_3)$ such that $\gamma_{12}$ and $\gamma_{23}$ are its marginals, i.e, 
        \begin{align*} 
            \sum_{z, z' \in U} \gamma((x,y,z),(x',y',z')) = \gamma_{12}((x,y),(x',y')), \\ 
            \sum_{x, x' \in U} \gamma((x,y,z),(x',y',z')) = \gamma_{23}((y,z),(y',z')). 
        \end{align*}
\end{lemma}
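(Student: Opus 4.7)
The plan is to construct $\gamma$ explicitly via a disintegration-style formula that glues $\gamma_{12}$ and $\gamma_{23}$ along their common marginal structure on $U \times U$ (the middle factor). Specifically, I would define
\[
\gamma((x,y,z),(x',y',z')) \;=\; \frac{\gamma_{12}((x,y),(x',y'))\,\gamma_{23}((y,z),(y',z'))}{\alpha_2(y,y')}
\]
whenever $\alpha_2(y,y') > 0$, and set $\gamma((x,y,z),(x',y',z')) = 0$ otherwise. The motivation comes from the Markov chain picture: conditional on the middle coordinate transition $(y \to y')$, the choices of $(x \to x')$ and $(z \to z')$ should be independent, and $\alpha_2(y,y')$ is the natural normalizing mass for that conditional. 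The convention $0/0 := 0$ is harmless, because the edge preservation property (Proposition~\ref{Prop:realcoupling}) guarantees that if $\alpha_2(y,y') = 0$ then both $\gamma_{12}((x,y),(x',y'))$ and $\gamma_{23}((y,z),(y',z'))$ vanish.

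First I would verify that $\gamma$ is a weight function. Nonnegativity is immediate. Symmetry follows from the symmetry of $\gamma_{12}$, $\gamma_{23}$, and $\alpha_2$ under simultaneous swap of source and target. Normalization will follow from the marginal check, which I do next. Second, I would check the two marginal identities required by the lemma, namely that projecting out $(z,z')$ gives $\gamma_{12}$ and projecting out $(x,x')$ gives $\gamma_{23}$. For the first, summing over $z,z'$ yields $\gamma_{12}((x,y),(x',y'))\cdot \alpha_2(y,y')^{-1} \sum_{z,z'} \gamma_{23}((y,z),(y',z'))$, and the inner sum equals $\alpha_2(y,y')$ by the edge coupling identity \eqref{eqn:EdgeCoupling2} applied to $\gamma_{23}$; the second projection is analogous. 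Normalization then follows by summing the first projection over $(x,y,x',y')$.

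Next I would compute the marginal $r_\gamma$ of $\gamma$ and verify the three-way marginal and transition coupling conditions. Using the transition coupling identities for $\gamma_{12}$ and $\gamma_{23}$ applied to the middle weight $\alpha_2$, a direct summation gives
\[
r_\gamma(x,y,z) \;=\; \frac{r_{12}(x,y)\,r_{23}(y,z)}{p_2(y)}
\]
when $p_2(y) > 0$ and $0$ otherwise. From this formula, each marginal coupling condition reduces (after one summation) to the corresponding marginal coupling condition for $\gamma_{12}$ or $\gamma_{23}$; the edge case $p_2(y)=0$ forces both $r_{12}(x,y) = 0$ and $r_{23}(y,z)=0$, so no pathology arises. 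For the transition coupling conditions, the outer, middle, and inner variants each follow by separating the sum across the two factors and invoking the corresponding transition coupling property of $\gamma_{12}$ or $\gamma_{23}$; the factor of $\alpha_2(y,y')^{-1}$ in the definition cancels exactly with the $\alpha_2(y,y')$ produced by the transition coupling identity on the \emph{other} factor.

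The main obstacle is bookkeeping around vanishing denominators: I need to ensure that whenever the formula involves $1/\alpha_2(y,y')$ or $1/p_2(y)$, the numerator also vanishes, so the convention $0/0 := 0$ is consistent. This is handled uniformly by the edge preservation property and the marginal coupling condition, both of which guarantee that the supports of $\gamma_{12}$ and $\gamma_{23}$ live over $\mathrm{supp}(\alpha_2)$ and $\mathrm{supp}(p_2)$, respectively. Once these support considerations are in place, the verifications reduce to short algebraic manipulations, and no optimization or convexity argument is required.
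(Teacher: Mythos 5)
Your construction is exactly the one in the paper: the gluing formula $\gamma = \gamma_{12}\,\gamma_{23}/\alpha_2$ on the support of $\alpha_2$ (and $0$ elsewhere), with the marginal identities verified via the edge coupling property of Proposition~\ref{Prop:realcoupling}. The paper leaves the verification that $\gamma \in \mathcal{J}(\alpha_1,\alpha_2,\alpha_3)$ as ``straightforward,'' and your outline of that verification (including the formula $r_\gamma(x,y,z) = r_{12}(x,y)\,r_{23}(y,z)/p_2(y)$ and the support bookkeeping for vanishing denominators) is correct.
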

\begin{proof}
    Define a function $\gamma : (U \times U \times U)^2 \to \mathbb{R}$ by setting
    \begin{align*}
        \gamma((x,y,z),(x',y',z')) = \left\{ \begin{array}{lll}
            \dfrac{\gamma_{12}((x,y),(x',y')) \, \gamma_{23}((y,z),(y',z'))}{\alpha_2(y,y')} &\text{ if } \alpha_2(y,y') \neq 0 \\
            0 &\text{ otherwise.}
        \end{array} \right.
    \end{align*}
     Then it is straightforward to show that $\gamma \in \mathcal{J}(\alpha_1,\alpha_2,\alpha_3)$. It remains to prove that $\gamma$ admits $\gamma_{12}$ and $\gamma_{23}$ as marginals. Indeed, if $\alpha_2(y,y') > 0$, then because $\gamma_{23} \in \mathcal{J}(\alpha_2,\alpha_3)$, Proposition \ref{Prop:realcoupling} yields that
    \begin{align*}
        \sum_{z, z' \in U} \gamma((x,y,z),(x',y',z')) &= \gamma_{12}((x,y),(x',y')) \sum_{z, z' \in U} \dfrac{\gamma_{23}((y,z),(y',z'))}{\alpha_2(y,y')} \\
        &= \gamma_{12}((x,y),(x',y')).
    \end{align*}
    If $\alpha_2(y,y') = 0$, then both sides of the coupling condition are zeros.
    Thus $\gamma_{12}$ is a marginal of $\gamma$. The fact that $\gamma_{23}$ is also a marginal of $\gamma$ is proved similarly.
\end{proof}

\begin{proof}[Proof of Proposition \ref{Prop:distanceOGJ}]
    One may easily show that $\rho_{\kappa}(\cdot,\cdot)$ is a nonnegative function and is symmetric on $\mathcal{G} \times \mathcal{G}$. Next, let us prove that $\rho_{\kappa}(G_1,G_2) = 0$ if and only if $\alpha_1 = \alpha_2$. Suppose $\alpha_1$ is identical to $\alpha_2$ on $U \times U$. Consider the following function $\gamma^* : (U \times U)^2 \to \mathbb{R}$
    \begin{align*}
        \gamma^* ((x,y),(x',y')) = 
        \left\{ \begin{array}{lll}
             \alpha_1(x,x') &\text{if} \; x=y,\text{ } x'=y'  \\
             0 &\text{otherwise}.
        \end{array}\right.
    \end{align*}
    Then for all $x,y \in U$, we have
    \begin{align*}
        r_{\gamma^*}(x,y) = \begin{cases}
            \sum_{x'\in U}\alpha(x,x') &\text{if } x=y,\\
            0 &\text{otherwise.}
        \end{cases}
    \end{align*}
    It is straightforward to check that $\gamma^* \in \mathcal{J}(\alpha_1,\alpha_2)$. 
    Since $c$ is a metric, we have $c(x,y) = 0$ whenever $x=y$. Thus $\langle c, r_{\gamma^*} \rangle = 0$, which implies $\rho_{\kappa}(G_1,G_2) = 0$. 
    
    Conversely, suppose $\rho_{\kappa}(G_1,G_2) = 0$. Let $\gamma$ be an optimal weight joining. Consider $u,u' \in U$ such that $\alpha_1(u,u') > 0$. By Proposition \ref{Prop:realcoupling}, there are some $v,v' \in U$ such that $\gamma((u,v),(u',v')) > 0$. Hence, $r_{\gamma}(u,v)>0$. Since $\rho_{\kappa}(G_1,G_2) = 0$, we must have $c(u,v)=0$. Hence $u = v$, since $c$ is a metric. By a similar argument for $(u',v')$, we obtain that $u' = v'$. Thus $v, v'$ are the unique vertices of $U$ such that $\gamma((u,v),(u',v')) > 0$. Then by Proposition \ref{Prop:realcoupling}, we have that
    $$\gamma((u,v),(u',v')) = \alpha_1(u,u').$$
    Likewise, we get 
    $$\gamma((u,v),(u',v')) = \alpha_2(v,v').$$
    Since $u=v$ and $u'=v'$, we obtain
    $$\alpha_1(u,u') = \alpha_2(v,v') = \alpha_2(u,u').$$
    Now consider $u,u' \in U$ such that $\alpha_1(u,u')=0$. If $\alpha_2(u,u') > 0$, then applying the same argument, we obtain $\alpha_1(u,u')>0$, a contradiction. In conclusion, $\alpha_1(u,u') = \alpha_2(u,u')$ for all $u,u' \in U$. 
    
    Finally, let us show that $\rho_{\kappa}(\cdot,\cdot)$ satisfies the triangle inequality, i.e.,  
    \begin{align}\label{triangleineq}
        \rho_{\kappa}(G_1,G_3) \le \rho_{\kappa}(G_1,G_2) +  \rho_{\kappa}(G_2,G_3). 
    \end{align}
    Indeed, let $\gamma_{12} \in \mathcal{J}(\alpha_1,\alpha_2)$ and $\gamma_{23} \in \mathcal{J}(\alpha_2,\alpha_3)$ be  optimal solutions of the OGJ problems on the right-hand side of \eqref{triangleineq}. By Lemma \ref{Lem:gluinglemma}, there exists $\gamma \in \mathcal{J}(\alpha_1,\alpha_2,\alpha_3)$ such that it admits $\gamma_{12}$ and $\gamma_{23}$ as marginals. Now, define $\gamma_{13} : (U \times U)^2 \to \mathbb{R}$ by letting 
    \begin{align*}
        \gamma_{13}((x,z),(x',z')) = \sum_{y,y' \in U} \gamma ((x,y,z),(x',y',z')). 
    \end{align*}
	It is straightforward to show that $\gamma_{13} \in \mathcal{J}(\alpha_1,\alpha_3)$.    
    The corresponding marginal function $r_{13}$ of $\gamma_{13}$ is given by
    \begin{align*}
    r_{13}(x,z) = \sum_{x',z' \in U} \sum_{y,y' \in U} \gamma ((x,y,z),(x',y',z')) = \sum_{y \in U} r_{\gamma}(x,y,z),
    \end{align*}
    for all $x,z\in U$. Hence $r_{\gamma}$ has $r_{13}$ as a marginal. Using this fact, the triangle inequality for $c$, Minkowski's inequality, the fact $\gamma$ has $\gamma_{12}$ and $\gamma_{23}$ as marginals, and the optimality of $\gamma_{12}$ and $\gamma_{23}$, we see that
    \begin{align*}
        \rho_{\kappa}^{\kappa}(G_1,G_3) &\le \sum_{x,z \in U} c(x,z)^{\kappa} \, r_{13}(x,z) \\
        &= \sum_{x,y,z \in U} c(x,z)^{\kappa} \, r_{\gamma}(x,y,z) \\
        &\le \sum_{x,y,z \in U} (c(x,y)+c(y,z))^{\kappa} \, r_{\gamma}(x,y,z) \\ 
        &\le \Big[ \Big( \sum_{x,y,z \in U} c(x,y)^{\kappa} \, r_{\gamma}(x,y,z) \Big)^{1/ \kappa} 
        + \Big( \sum_{x,y,z \in U} c(y,z)^{\kappa} \, r_{\gamma}(x,y,z) \Big)^{1/ \kappa}\Big]^{\kappa}  \\
        &= \Big[ \Big( \sum_{x,y \in U} c(x,y)^{\kappa} \, r_{12}(x,y) \Big)^{1/ \kappa} 
        + \Big( \sum_{y,z \in U} c(y,z)^{\kappa} \, r_{23}(y,z) \Big)^{1/ \kappa}\Big]^{\kappa}  \\
        &= \Big[\rho_{\kappa}(G_1,G_2) + \rho_{\kappa}(G_2,G_3)\Big]^{\kappa}.
    \end{align*}
Raising both sides of this inequality to the power $1/\kappa$ gives the result. 
\end{proof}

\subsection{Stability of OGJ} 
We now explore the stability of the OGJ problem. Before stating this property, we recall that for two complete metric spaces $\mathcal{X}$ and $\mathcal{Y}$, a set $\mathcal{V} \subset \mathcal{Y}$ is a neighborhood of a set $\mathcal{W} \subset \mathcal{Y}$ if $\mathcal{W} \subset \inte(\mathcal{V})$. Neighborhoods in $\mathcal{X}$ are defined analogously. A multifunction $F: \mathcal{X} \to 2^{\mathcal{Y}}$ is upper semicontinuous at a point $x_0 \in \mathcal{X}$ if for every neighborhood $\mathcal{V}_{\mathcal{Y}}$ of $F(x_0)$, there is some neighborhood $\mathcal{V}_{\mathcal{X}}$ of $x_0$ such that $F(\mathcal{V}_{\mathcal{X}}) \subset \mathcal{V}_{\mathcal{Y}}$. The function $F$ is upper semicontinuous if it is upper semicontinuous at every point. 

The following proposition ensures that the optimal cost function of OGJ is continuous and the optimal solution function of OGJ is upper semicontinuous. The result is based on Theorem 13 in \cite{o2022optimal}.
Consider two vertex sets $U = \{u_1,\dots,u_m\}$ and $V = \{v_1,\dots,v_n\}$. Let $\mathcal{W}(U)$ and $\mathcal{W}(V)$ be the set of all weight functions on $U$ and $V$, respectively. We endow $\mathcal{W}(U)$  with the subspace topology inherited by viewing $\mathcal{W}(U)$ as a subset of $\R^{m \times m}$, and similarly we endow $\mathcal{W}(V)$ with the subspace topology inherited from $\R^{n \times n}$. Let $\mathcal{Z} = \mathcal{W}(U) \times \mathcal{W}(V)$, and let 
$$\mathcal{S} = \{\gamma \in \mathcal{W}(U \times V) : \gamma \in \mathcal{J}(\alpha,\beta), \text{ for some } (\alpha,\beta) \in \mathcal{Z} \}.$$
We endow $\mathcal{S}$ with the subspace topology inherited from $\R^{mn \times mn}$. 

\begin{restatable}[]{proposition}{stabilityOGJ}
\label{Prop:stabilityOGJ}
    Let $c$ be a fixed cost function defined on $U \times V$. Then the following statements are true:
    \begin{enumerate}
        \item The optimal cost function $\rho_{c} : \mathcal{Z} \to \R$, defined by 
        $$\rho_c(\alpha, \beta) = \min\limits_{\gamma \in \mathcal{J}(\alpha, \beta)} \langle c, r_{\gamma} \rangle,$$ 
        is continuous.
        \item The optimal solution (multi)function $J^* : \mathcal{Z} \to 2^{\mathcal{S}}$, defined by 
        $$J^*(\alpha, \beta) = \argminunder_{\gamma \in \mathcal{J}(\alpha, \beta)} \langle c, r_{\gamma} \rangle,$$ 
        is upper semicontinuous.
    \end{enumerate}
\end{restatable}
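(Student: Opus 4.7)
My plan is to apply Berge's maximum theorem to the parameterized family of linear programs underlying the OGJ problem, following the blueprint established for Theorem 13 in \cite{o2022optimal} and adapting it to account for the symmetry constraint inherent to weight joinings. Since the objective $\gamma \mapsto \langle c, r_\gamma \rangle$ depends only on $\gamma$ and is linear (hence jointly continuous in $(\alpha,\beta,\gamma)$), both conclusions will follow once I show that the feasible-set multifunction $F : \mathcal{Z} \to 2^{\mathcal{S}}$ defined by $F(\alpha,\beta) = \mathcal{J}(\alpha,\beta)$ is continuous with nonempty, compact values. Nonemptiness and compactness are provided by Proposition~\ref{Prop:basicproperties}.

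For upper hemicontinuity of $F$, I would argue that the graph $\{(\alpha,\beta,\gamma) : \gamma \in \mathcal{J}(\alpha,\beta)\}$ is closed in $\mathcal{Z}\times\mathcal{S}$. The nonnegativity, symmetry, and normalization constraints on $\gamma$ are independent of $(\alpha,\beta)$ and closed; the marginal coupling and transition coupling conditions are (bi)linear in $(\gamma,\alpha,\beta)$, and so are also preserved under taking limits. Combined with the fact that $\mathcal{J}(\alpha,\beta)$ lies inside the compact set of probability vectors on $(U\times V)^2$, a standard sequential argument yields upper hemicontinuity.

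For lower hemicontinuity, given $(\alpha_n,\beta_n)\to(\alpha,\beta)$ in $\mathcal{Z}$ and any $\gamma\in\mathcal{J}(\alpha,\beta)$, I need to construct $\gamma_n\in\mathcal{J}(\alpha_n,\beta_n)$ with $\gamma_n\to\gamma$. The key observation is that the product weight joining $\alpha_n\otimes\beta_n$ always lies in $\mathcal{J}(\alpha_n,\beta_n)$ and depends continuously on $(\alpha_n,\beta_n)$, providing a ``safe'' base feasible point. Following the strategy of \cite[Thm.~13]{o2022optimal}, I would write the defining constraints of $\mathcal{J}(\alpha_n,\beta_n)$ as a linear system $A_n\gamma' = b_n$ with $\gamma'\geq 0$, where $A_n\to A$ and $b_n\to b$, and use the existence of the feasible point $\gamma$ at the limit together with a perturbation of the form $\gamma_n := (1-\varepsilon_n)\widetilde\gamma_n + \varepsilon_n(\alpha_n\otimes\beta_n)$ with $\varepsilon_n\downarrow 0$, where $\widetilde\gamma_n$ is a (possibly not nonnegative) solution to $A_n\widetilde\gamma_n=b_n$ chosen close to $\gamma$. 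A careful selection of $\varepsilon_n$ ensures nonnegativity of $\gamma_n$ and convergence $\gamma_n\to\gamma$. The symmetry constraint, being a closed linear condition compatible with both $\gamma$ and $\alpha_n\otimes\beta_n$, is automatically preserved by this construction.

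Once continuity of $F$ is in hand, Berge's maximum theorem delivers both conclusions simultaneously: the value function $\rho_c$ is continuous on $\mathcal{Z}$, and the argmin multifunction $J^*$ is upper hemicontinuous with nonempty, compact values, which matches the definition of upper semicontinuity recalled before the proposition. The main obstacle is the lower hemicontinuity step, since the transition coupling conditions are \emph{bilinear} in $(\gamma,\alpha,\beta)$ and the support structure of the marginals $p,q$ can change discontinuously as $(\alpha_n,\beta_n)\to(\alpha,\beta)$; the construction must be arranged so that nearly-degenerate constraints (where the relevant $p(u)$ or $q(v)$ is small) do not force $\gamma_n$ away from $\gamma$, while still respecting the symmetry requirement.
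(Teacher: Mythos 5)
Your overall architecture (treat the OGJ problem as a parameterized program, show the objective is jointly continuous and the feasible-set multifunction has closed graph and compact values) matches the paper up to that point, and those parts of your argument are correct. However, the paper does not use Berge's maximum theorem: it invokes Proposition 4.4 of \cite{bonnans2013perturbation}, whose hypotheses are closedness of $F$, inf-compactness of the level sets, and only a \emph{localized} feasibility condition near the solution set (for every neighborhood $\mathcal{V}_{\mathcal{Y}}$ of $J^*(x_0)$ there is a neighborhood $\mathcal{V}_{\mathcal{X}}$ of $x_0$ with $F(x)\cap\mathcal{V}_{\mathcal{Y}}\neq\emptyset$). Your route instead requires \emph{full lower hemicontinuity} of $(\alpha,\beta)\mapsto\mathcal{J}(\alpha,\beta)$, and this is exactly the step you leave as a sketch. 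That is a genuine gap, not a cosmetic one: lower hemicontinuity of solution sets of parameterized systems $\{A_n\gamma=b_n,\ \gamma\ge 0\}$ is false in general (consider $F(t)=\{x\ge 0: tx=0\}$, which collapses from $[0,\infty)$ to $\{0\}$ as $t$ leaves $0$), and the degeneracies that produce such collapses are present here — the transition coupling constraints become vacuous exactly where $p(u)=0$ or $\alpha(u,u')=0$ in the limit, so the rank of the constraint system can jump along the sequence. Any proof must therefore exploit the specific structure of weight joinings, and you do not supply that argument.

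Your proposed repair also has concrete holes. First, the existence of $\widetilde\gamma_n$ with $A_n\widetilde\gamma_n=b_n$ \emph{close to} $\gamma$ needs a uniform bound on the conditioning of the systems $A_n$, which fails precisely in the degenerate regimes above; without it, the nearest affine solution can jump away from $\gamma$. Second, the mixing step $\gamma_n=(1-\varepsilon_n)\widetilde\gamma_n+\varepsilon_n(\alpha_n\otimes\beta_n)$ restores nonnegativity only at coordinates where $\alpha_n(u,u')\beta_n(v,v')$ is bounded below; but the negative excursions of $\widetilde\gamma_n$ occur where $\gamma$ vanishes, which (by the edge preservation property) includes coordinates with $\alpha(u,u')\beta(v,v')=0$, and there $\varepsilon_n\alpha_n(u,u')\beta_n(v,v')$ need not dominate the negative part unless $\varepsilon_n$ stays bounded away from $0$, destroying convergence $\gamma_n\to\gamma$. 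To close the argument you would either have to prove lower hemicontinuity by a construction tailored to joinings (e.g., explicitly rerouting the small amounts of mass created or destroyed by vanishing edges and marginals), or switch to the paper's weaker hypothesis, which only asks for approximate feasibility near the optimal set rather than near every point of $\mathcal{J}(\alpha,\beta)$.
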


Our proof of this proposition relies on a general stability result for optimization problems (Proposition 4.4 in \cite{bonnans2013perturbation}). We first consider this general framework and recall some preliminaries. Consider the following optimization problem with the set of possible solutions $\mathcal{Y}$ and the set of parameters $\mathcal{X}$:  
\begin{equation}
\begin{aligned}\label{parameterizedproblem}
        &\text{minimize } f(y,x) \\
    &\text{subject to } y \in F(x),
\end{aligned}
\end{equation}
where $f(\cdot,x)$ is the objective function and $F: \mathcal{X} \to 2^{\mathcal{Y}}$ is the constraint function. Problem \eqref{parameterizedproblem} is a parameterized optimization problem, where each point $x \in \mathcal{X}$ serves as a parameter. In what follows we suppose that $\mathcal{X}$ and $\mathcal{Y}$ are two complete metric spaces. The multifunction $F$ is closed at a point $x \in \mathcal{X}$ if the following condition holds: for all sequences $\{x_k\}_{k=1}^{\infty}$ in $\mathcal{X}$ and $\{y_k\}_{k=1}^{\infty}$ in $\mathcal{Y}$ and for all points $y \in \mathcal{Y}$, if $\{x_k\}_{k=1}^{\infty}$ converges to $x$, $\{y_k\}_{k=1}^{\infty}$ converges to $y$, and $y_k \in F(x_k)$ for all $k$, then $y \in F(x)$. If $F$ is closed at every point $x \in \mathcal{X}$, then it is closed.


\begin{proof}[Proof of Proposition \ref{Prop:stabilityOGJ}]
    Let us first reformulate the OGJ problem as an instance of Problem \eqref{parameterizedproblem} and then show that the hypotheses of Proposition 4.4 in \cite{bonnans2013perturbation} are satisfied. Let $\mathcal{X} = \mathcal{Z}$, and let $\mathcal{Y} = \mathcal{S}$.
    Then we claim that $\mathcal{X}$ and $\mathcal{Y}$ are compact spaces in $\mathbb{R}^{m \times m} \times \mathbb{R}^{n  \times n}$ and $\mathbb{R}^{mn \times mn}$, respectively. Indeed, since $\mathcal{X}$ is a product of two compact sets, it is compact in $\mathbb{R}^{m \times m} \times \mathbb{R}^{n  \times n}$. The boundedness of $\mathcal{Y}$ follows directly from the normalization property of weight functions, so we only need to check that it is closed. Let $\{\gamma_k \}$ be a sequence in $\mathcal{Y}$ that converges to $\gamma$. For each $k \in \mathbb{N}$, we have $\gamma_k \in \mathcal{J}(\alpha_k,\beta_k)$, for some $(\alpha_k,\beta_k) \in \mathcal{X}$. As $\alpha_k$ and $\beta_k$ are the images of $\gamma_k$ under the coordinate projections, which are continuous, we obtain that the sequences $\{\alpha_k\}$ and $\{\beta_k\}$ converge. Moreover, since $\mathcal{X}$ is closed (with the product topology), $\{(\alpha_k,\beta_k)\}$ converges to some $(\alpha,\beta) \in \mathcal{X}$. By passing to the limit in the linear constraints in the definition of weight joinings (Definition \ref{Def:weightjoinings}), we see that $\gamma \in \mathcal{J}(\alpha,\beta)$. Therefore, $\mathcal{Y}$ is closed in $\mathbb{R}^{mn \times mn}$. 
    The objective function $f(\cdot,\cdot)$ is chosen as the map $(\gamma, (\alpha, \beta)) \mapsto \langle c, r_{\gamma} \rangle$. Additionally, we let the multifunction $F : \mathcal{X} \to 2^{\mathcal{Y}}$ be the constraint function for the problem; that is, we let $F(\alpha,\beta) = \mathcal{J}(\alpha,\beta)$, for all $(\alpha,\beta) \in \mathcal{X}$. 

    Now we show that the OGJ problem satisfies the hypotheses (i)-(iv) of Proposition 4.4 in \cite{bonnans2013perturbation}. Indeed, let $x_0 = (\alpha_0, \beta_0) \in \mathcal{X}$. First, observe that
    \begin{align}\label{linearbounded}
        f(\cdot,\cdot) \le |f(\cdot,\cdot)| \le \|c\|_{\infty}.
    \end{align}
    Due to the continuity of the inner product, $f$ is continuous. Next, suppose there are two sequences $\{(\alpha_k,\beta_k)\} \subset \mathcal{X}$ and $\{\gamma_k\} \subset \mathcal{Y}$ 
    and a point $\gamma \in \mathcal{Y}$ such that the sequence $\{(\alpha_k, \beta_k)\}$ converges to $(\alpha, \beta) \in \mathcal{X}$, the sequence $\{\gamma_k\}$ converges to $\gamma$, and for each $k$, we have $\gamma_k \in F(\alpha_k, \beta_k)$.
    By an argument similar to the one in the previous paragraph (showing that $\mathcal{Y}$ is closed),
    we see that $\gamma \in F(\alpha,\beta)$. Therefore $F$ is closed, and (ii) holds. To show (iii), from the observation \eqref{linearbounded}, we can choose $M = \|c\|_{\infty}$. Then the level set $\{y \in F(x) : f(y,x) \le M \}$ is always nonempty. In addition, the set $\mathcal{Y}$ is compact, and the level set above is a subset of $\mathcal{Y}$. Lastly, let $\mathcal{V}_{\mathcal{Y}}$ be a neighborhood of $\argmin_{y \in F(x_0)} f(y,x_0)$. Choose 
    $$\mathcal{V}_{\mathcal{X}} = \{ (\alpha, \beta) \in \mathcal{X} : \exists \gamma \in \mathcal{V}_{\mathcal{Y}}, \,  \gamma \in \mathcal{J}(\alpha, \beta) \}.$$
    Then, by construction, $x_0 \in \mathcal{V}_{\mathcal{X}}$, and $F(x) \cap \mathcal{V}_{\mathcal{Y}} \neq \emptyset$ for all $x \in \mathcal{V}_{\mathcal{X}}$. We have shown that all conditions of Proposition 4.4 in \cite{bonnans2013perturbation} are satisfied. Hence, $\rho_{c}$ is continuous, and $J^*$ is upper semicontinuous at every point $(\alpha, \beta)$.
\end{proof}

\section{Deterministic and bijective weight joinings} \label{Sect:deterministic}
As in standard optimal transport, a weight joining may split mass from one vertex in $G$ and transport it to multiple vertices in $H$. In this section, we first characterize those weight joinings that do \textit{not} split mass in this manner. We then relate such deterministic weight joinings to graph isomorphisms. Finally, we establish results concerning the relationship between bijective weight joinings and extreme points of weight joinings. 

\subsection{Relations with graph isomorphisms}
We first introduce weight joinings that do not split mass in one direction. 
\begin{definition} \label{Def:deterministicwj}
    Let $\alpha$ and $\beta$ be weight functions on $U$ and $V$, respectively. A weight joining $\gamma \in \mathcal{J}(\alpha,\beta)$ with marginal function $r_\gamma$ is {\em deterministic} from $U$ to $V$ if for each $u \in U$ there is a unique $v \in V$ such that $r_\gamma(u,v)> 0$.
\end{definition}
In other words, a weight joining $\gamma$ is deterministic if the mass of each vertex in the source graph is sent to only one vertex in the target graph. However, a vertex in the target graph may receive mass from multiple source vertices. 
Bijective weight joinings (as defined in Section \ref{Sect:theoresults}) are deterministic in both directions.

If $\gamma$ is a deterministic weight joining from $G$ to $H$, then we let $f_{\gamma}: U \to V$ denote the map induced by $\gamma$, which is the map such that $f_{\gamma}(u)$ is the unique $v \in V$ satisfying $r_{\gamma}(u,v) > 0$. Note that $f_{\gamma}$ is well-defined by the property of deterministic weight joinings. We refer to $f_{\gamma}$ as the map induced by the deterministic weight joining $\gamma$. 

Any map induced by a deterministic weight joining satisfies a characteristic property, which was first introduced in \cite{yi2024alignment} for weighted directed graphs. Here we provide the analogous definition in the setting of weighted undirected graphs.

\begin{definition} \label{Def:factormaps}
    Given two weighted undirected graphs $G = (U,\alpha)$ and $H = (V,\beta)$, a map $f : U \to V$ is a {\it factor map} if it is surjective and for all $v, v' \in V$ and $u \in f^{-1}(v)$, we have
    \begin{align*}
        q(v)\sum_{u' \in f^{-1}(v')} \alpha(u,u') = p(u) \, \beta(v,v'). 
    \end{align*}
\end{definition}
If $f : U \to V$ is a factor map, then we define a function $\gamma_{f} : (U \times V)^2 \to \mathbb{R}$ by letting
	\begin{align*} 
        \gamma_{f}((u,v),(u',v')) = \left\{\begin{array}{ll}
            \alpha(u,u') & \text{ if } v = f(u), \text{and } v' = f(u') \\
            0            & \text{ otherwise}.
        \end{array} \right.
	\end{align*}
    The following proposition makes precise connections between deterministic weight joinings and factor maps.
\begin{restatable}[]{proposition}{factordeterministic}
\label{Prop:factordeterministic}
    Let $G = (U,\alpha)$ and $H = (V,\beta)$ be two fully supported graphs. 
    \begin{enumerate}
    		\item If $f : U \to V$ is a factor map, then the function $\gamma_{f}$ is a deterministic weight joining from $U$ to $V$. 
        \item If $\gamma$ is a deterministic weight joining of $\alpha$ and $\beta$, then $f_{\gamma} : U \to V$ is a factor map. 
    \end{enumerate}
\end{restatable}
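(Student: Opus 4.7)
The plan is to unpack the definitions and verify the required conditions in each direction. For Part (1) I would show that $\gamma_f$ is a weight function, compute its marginal, and then verify the marginal coupling, transition coupling, and determinism conditions. For Part (2) I would extract the factor map identity directly from the transition coupling conditions of the given deterministic weight joining, with surjectivity following from full support.

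For Part (1), non-negativity of $\gamma_f$ is immediate; symmetry follows from the symmetry of $\alpha$ combined with the symmetric form of the indicators on $v = f(u)$ and $v' = f(u')$; and normalization follows because the total mass collapses to $\sum_{u,u'} \alpha(u,u') = 1$. A short calculation gives $r_{\gamma_f}(u,v) = p(u)\,\mathbb{I}(v = f(u))$, which immediately yields determinism (since $p(u) > 0$ by full support) and half of the marginal coupling condition. For the transition coupling condition, both sides vanish when $v \neq f(u)$; when $v = f(u)$, the $\alpha$-side equation reduces to $p(u)\alpha(u,u') = \alpha(u,u')\,p(u)$, while the $\beta$-side equation is precisely the factor map identity
\[
q(v)\sum_{\tilde{u} \in f^{-1}(v')} \alpha(u,\tilde{u}) \;=\; p(u)\,\beta(v,v').
\]
For the remaining marginal coupling requirement, $\sum_{u \in f^{-1}(v)} p(u) = q(v)$, the cleanest route is to invoke Proposition~\ref{Prop:couplingredundant} in the connected case; in general one derives $P(v)/q(v) = P(v')/q(v')$ whenever $\beta(v,v') > 0$ by summing the factor map condition over $u \in f^{-1}(v)$ and, with roles swapped, over $u' \in f^{-1}(v')$, and then invokes the uniqueness of the stationary distribution on each connected component of $H$ to conclude $P = q$.

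For Part (2), let $\gamma$ be a deterministic weight joining and let $f_\gamma$ be its induced map. Determinism and the marginal coupling condition together force $r_\gamma(u, f_\gamma(u)) = p(u)$, and $\gamma((u,v),(\tilde{u},v'))$ can be nonzero only when $v = f_\gamma(u)$ and $v' = f_\gamma(\tilde{u})$. Surjectivity of $f_\gamma$ is immediate: full support of $H$ gives $q(v) > 0$, and the marginal coupling condition then forces some $u$ to satisfy $r_\gamma(u,v) > 0$, so $f_\gamma(u) = v$. To recover the factor map identity, fix $u \in f_\gamma^{-1}(v)$ and $v' \in V$. The first transition coupling condition, together with determinism, evaluates $\gamma((u,v),(\tilde{u},f_\gamma(\tilde{u}))) = \alpha(u,\tilde{u})$ for every $\tilde{u} \in U$. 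Substituting this into the second transition coupling condition and using determinism to restrict the $\tilde{u}$-sum to $f_\gamma^{-1}(v')$ yields
\[
q(v)\sum_{\tilde{u} \in f_\gamma^{-1}(v')} \alpha(u,\tilde{u}) \;=\; \beta(v,v')\,p(u),
\]
which is exactly the factor map condition.

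The main obstacle throughout is the second marginal coupling check in Part (1) when the graphs may be disconnected; this is the only place where a nontrivial structural argument (via the irreducibility/stationary-distribution perspective) is required, and everything else amounts to careful bookkeeping with the definitions of weight joining and factor map.
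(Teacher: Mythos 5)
The paper itself gives no argument for this proposition --- it simply notes that the statement is a special case of Theorem 16 of \cite{yi2024alignment} and omits the proof --- so your direct verification is the natural thing to attempt, and most of it is correct. Part (2) is complete: determinism plus the symmetry of $\gamma$ confine its support to pairs $((u,f_\gamma(u)),(\tilde u,f_\gamma(\tilde u)))$, the first transition coupling condition (with $r_\gamma(u,f_\gamma(u))=p(u)>0$) gives $\gamma((u,f_\gamma(u)),(\tilde u,f_\gamma(\tilde u)))=\alpha(u,\tilde u)$, and substituting into the second condition yields exactly the factor-map identity; surjectivity from full support of $H$ is also fine. In Part (1), the weight-function checks, the computation $r_{\gamma_f}(u,v)=p(u)\,\mathbb{I}(v=f(u))$, determinism, the first marginal condition, and both transition coupling conditions are all correct, and in the connected case the appeal to Proposition \ref{Prop:couplingredundant} legitimately finishes the argument.

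The genuine gap is exactly at the step you flagged, and your proposed repair for the disconnected case does not work. Writing $P(v)=\sum_{u\in f^{-1}(v)}p(u)$, your summation argument correctly gives $P(v)/q(v)=P(v')/q(v')$ whenever $\beta(v,v')>0$, hence $P=c_C\,q$ on each connected component $C$ of $H$; but uniqueness of the stationary distribution on a component only determines $q|_C$ up to scale, so it cannot force $c_C=1$. In the connected case the constant is $1$ because both $P$ and $q$ have total mass one; componentwise there is no such normalization, and in fact the conclusion can fail under the paper's literal Definition \ref{Def:factormaps}: let $G$ and $H$ each consist of two disjoint single edges, with $\alpha$-weights $c,d$ and $\beta$-weights $a,b$ (so $2c+2d=2a+2b=1$) and $c\neq a$, and let $f$ match the edges up vertexwise. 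Then $f$ is surjective and satisfies the factor-map identity (every instance reduces to $q(v)\alpha(u,u')=p(u)\beta(v,v')$ with both sides equal to $ac$, $bd$, or $0$), yet $\sum_{u\in f^{-1}(v)}p(u)=c\neq a=q(v)$, so $\gamma_f$ violates the second marginal coupling condition and is not a weight joining. So Part (1) cannot be proved for general fully supported, possibly disconnected graphs from the stated definitions; you should either add a connectedness hypothesis and close the argument via Proposition \ref{Prop:couplingredundant}, or strengthen the factor-map hypothesis to include the pushforward condition $\sum_{u\in f^{-1}(v)}p(u)=q(v)$ (which is what the stationary-coupling framework behind Theorem 16 of \cite{yi2024alignment} effectively supplies) and observe that this is precisely the missing marginal identity.
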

As this proposition is a special case of Theorem 16 in \cite{yi2024alignment}, we omit its proof. 

It turns out that graph isomorphisms are closely related to factor maps. Specifically, if a factor map is injective, then it is a bijection that preserves the edge weights. However, additional considerations are required to ensure label preservation (and obtain an isomorphism of labeled graphs).
Recall that a labeled graph is given by $G = (U, \alpha, \phi_G)$, where $\phi_G$ is a vertex label function. Let $\mathcal{G}$ be a family of labeled graphs. Then $\Phi = \{\phi_G : G \in \mathcal{G} \}$ is the primary labeling scheme of $\mathcal{G}$. The primary label-based cost function is defined by 
$$c_{\Phi}(u,v) = \mathbb{I}(\phi_G(u) \neq \phi_H(v)).$$
We also recall that $\rho_{\Phi}(G,H)$ and $\mathcal{J}^*_{\Phi}(\alpha,\beta)$ are the OGJ transport cost and the set of optimal weight joinings of $G$ and $H$ with the cost function $c_{\Phi}$, respectively. 
\bijectiveiso*
\begin{proof}
    Let $f = f_{\gamma}$ be the map induced by $\gamma$. 
    First, since $\gamma$ is bijective, for each $v \in V$, there exists a unique $u \in U$ such that $r_{\gamma}(u,v) > 0$, and  therefore the induced map $f$ is injective. Also, since $\gamma$ is bijective, it is deterministic from $U$ to $V$. Hence $f$ is a factor map by Proposition \ref{Prop:factordeterministic}. Then $f$ is surjective, making it a bijection, and for all $u, u' \in U$, we have
    $$q(f(u)) \, \alpha (u,u') = p(u) \, \beta(f(u),f(u')).$$
   Furthermore, since $\gamma$ is bijective and $r_{\gamma}$ is a coupling of $p$ and $q$, we obtain that $p(u) = r_{\gamma}(u,f(u)) = q(f(u))$ for all $u \in U$. 
   Since $G$ is fully supported, $p(u)>0$ for all $u \in U$. Thus, by the previous display, we have $\alpha(u,u') = \beta(f(u),f(u'))$ for all $u,u' \in U$, i.e., $f$ preserves weights. To show that $f$ preserves the primary labels (given by $\Phi$), first observe that for every $u \in U$, since $G$ is fully supported, we have $p(u) > 0$. Hence, by the marginal coupling condition, we have $r_{\gamma}(u,f(u)) > 0$. Because $\rho_{\Phi}(G,H) = 0$, we must have $c_{\Phi}(u,f(u)) = 0$, and thus $\phi_G(u) = \phi_H(f(u))$ (by definition of the cost function $c_{\Phi}$). In conclusion, $f$ is a graph isomorphism between $G$ and $H$. 
\end{proof}

The following proposition provides the converse: if $f$ is a graph isomorphism, then there is a unique bijective weight joining such that its induced map coincides with $f$, and it has zero OGJ transport cost.  Combining this result and Proposition \ref{Prop:bijectiveiso}, these zero-cost bijective weight joinings are in one-to-one correspondence with graph isomorphisms. 
\begin{restatable}[]{proposition}{isomorphismbijective}
\label{Prop:isomorphismbijective}
Let $G = (U,\alpha,\phi_G)$ and $H = (V,\beta,\phi_H)$ be fully supported graphs. If $f : U \to V$ is a graph isomorphism between $G$ and $H$, then there is a unique bijective weight joining $\gamma \in \mathcal{J}(\alpha,\beta)$ such that $f = f_{\gamma}$, and $\gamma$ has zero cost under $c_{\Phi}$. 
\end{restatable}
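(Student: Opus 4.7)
The plan is to exhibit the joining explicitly and verify the required properties using the machinery already developed in Section~\ref{Sect:deterministic}. Specifically, I would set $\gamma = \gamma_f$, that is,
\begin{equation*}
\gamma((u,v),(u',v')) = \begin{cases} \alpha(u,u') & \text{if } v = f(u) \text{ and } v' = f(u'), \\ 0 & \text{otherwise.} \end{cases}
\end{equation*}
Non-negativity and symmetry of $\gamma$ are immediate from the corresponding properties of $\alpha$ and the symmetry of the defining condition under swapping $(u,v) \leftrightarrow (u',v')$. Normalization follows because $f \times f$ bijects $U \times U$ with $V \times V$, so $\sum \gamma = \sum_{u,u'} \alpha(u,u') = 1$.

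Next I would check the coupling conditions. A direct calculation gives $r_\gamma(u,v) = p(u)\,\mathbb{I}(v = f(u))$, so $\sum_v r_\gamma(u,v) = p(u)$ trivially, while $\sum_u r_\gamma(u,v) = p(f^{-1}(v)) = q(v)$; the last equality uses that $f$ is an isomorphism: $q(v) = \sum_{v'}\beta(v,v') = \sum_{u'}\alpha(f^{-1}(v),u') = p(f^{-1}(v))$. The transition coupling condition is trivial when $v \neq f(u)$ (both sides vanish), and when $v = f(u)$ it reduces, after using $\alpha(u,u') = \beta(f(u),f(u'))$, to the identity $p(u)\,\alpha(u,u') = \alpha(u,u')\,p(u)$. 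Alternatively, one can check directly that $f$ is a factor map (using that $f$ is a bijection and preserves $\alpha$) and then invoke Proposition~\ref{Prop:factordeterministic}. Bijectivity of $\gamma$ is then immediate from the formula for $r_\gamma$ together with full support of $G$ (which forces $p(u) > 0$ for every $u$, and symmetrically $q(v) > 0$ for every $v$), and by construction $f_\gamma = f$. The zero-cost property reads $\langle c_\Phi, r_\gamma\rangle = \sum_u c_\Phi(u,f(u))\,p(u) = 0$, which holds since $\phi_G = \phi_H \circ f$.

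The remaining step, uniqueness, is the main substantive point. Let $\gamma'$ be another bijective weight joining with $f_{\gamma'} = f$. By the definition of the induced map together with bijectivity, $r_{\gamma'}(u,v) > 0$ precisely when $v = f(u)$, and the marginal coupling condition then forces $r_{\gamma'}(u,f(u)) = p(u)$. Non-negativity of $\gamma'$ applied through both of the identities $r_{\gamma'}(u,v) = \sum_{(u',v')}\gamma'((u,v),(u',v'))$ and $r_{\gamma'}(u',v') = \sum_{(u,v)}\gamma'((u,v),(u',v'))$ forces every entry $\gamma'((u,v),(u',v'))$ with $v \neq f(u)$ or $v' \neq f(u')$ to vanish. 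The transition coupling condition then pins down the remaining entries:
\begin{equation*}
p(u)\,\gamma'((u,f(u)),(u',f(u'))) \;=\; \alpha(u,u')\,r_{\gamma'}(u,f(u)) \;=\; \alpha(u,u')\,p(u),
\end{equation*}
and dividing by $p(u) > 0$ yields $\gamma' = \gamma$. The only place that requires care is the ``zero outside the graph of $f \times f$'' step, which must use non-negativity in both coordinate directions; everything else is direct computation.
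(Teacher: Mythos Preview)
Your proof is correct and follows essentially the same approach as the paper: set $\gamma = \gamma_f$, verify it is a bijective weight joining (the paper does this by noting $f$ is a factor map and invoking Proposition~\ref{Prop:factordeterministic}, which you mention as an alternative), compute $r_\gamma$ to confirm bijectivity and zero cost, and for uniqueness argue that any bijective $\gamma'$ with $f_{\gamma'}=f$ must have exactly the same entries. Your uniqueness argument is spelled out more carefully than the paper's (which simply asserts the form of $\gamma'$), but the content is the same.
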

\begin{proof}    
   Since $f$ is a graph isomorphism, $f$ is a factor map. Let $\gamma = \gamma_f$. Hence, $\gamma$ is a (deterministic) weight joining from $U$ to $V$ by Proposition \ref{Prop:factordeterministic}. Next, let us show that $\gamma$ is bijective. First, by the definition of $\gamma_f$, we observe that the marginal function of $\gamma$ must take the following form:
    \begin{align*}
        r_{\gamma}(u,v) = \left\{\begin{array}{lll}
        p(u) &\text{ if } v = f(u) \\
        0 &\text{ otherwise.}
        \end{array} \right.
    \end{align*}
Using this expression for $r_{\gamma}$, the fact that $f$ is a bijection,  and the fact that $G$ and $H$ are fully supported, we obtain that $\gamma$ is bijective and $f_{\gamma} = f$. Since $f$ preserves labels with $\Phi$, we obtain $\langle c_{\Phi}, r_{\gamma} \rangle = 0$. Therefore, the OGJ transport cost of $\gamma$ is zero. Finally, note that any bijective weight joining $\gamma'$ corresponding to the same $f$ must have the following form:
    $$\gamma'((u,v),(u',v')) = 
        \begin{cases}
            \alpha(u,u') &\text{ if } v = f(u), v' = f(u'),\\
             0 &\text{ otherwise.}
        \end{cases}$$
Hence, $\gamma$ is the unique bijective weight joining with these properties. 
\end{proof}

These results suggest that the OGJ problem is closely related to the graph isomorphism problem. See Section \ref{Sect:graphisomorphisms} for further statements in this direction. 
\subsection{Bijective weight joinings and extreme points}
In this subsection, we explore the connection between bijective weight joinings and extreme points. Later we use these results to establish that OGJ identifies isomorphism for some graph families.

The following proposition states that any bijective weight joining $\gamma \in \mathcal{J}^*(\alpha,\beta)$ must be an extreme point of $\mathcal{J}^*(\alpha,\beta)$. Since a bijective weight joining corresponds to a graph isomorphism, this insight enables us to identify graph isomorphisms by examining the extreme points of the feasible set $\mathcal{J}^*(\alpha,\beta)$. Note that there is no reference to the labeling scheme in the next two propositions, so these results hold for a general vertex cost function.  
\begin{restatable}[]{proposition}{bijective_extreme}
\label{Prop:bijective_extreme}
    Let $G = (U,\alpha,\phi_G)$ and $H = (V,\beta,\phi_H)$ be two fully supported graphs. If $\gamma\in \mathcal{J}^*(\alpha,\beta)$ is a bijective weight joining, then $\gamma$ must be an extreme point of $\mathcal{J}^*(\alpha,\beta).$ 
\end{restatable}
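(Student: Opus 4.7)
My plan is to reduce the claim to two earlier results: Proposition \ref{Prop:minimalextreme} (extreme points of $\mathcal{J}(\alpha,\beta)$ are precisely the weight joinings with minimal support) and Remark \ref{remark:extremeoftwo_sets} (any extreme point of $\mathcal{J}(\alpha,\beta)$ that lies in $\mathcal{J}^*(\alpha,\beta)$ is automatically an extreme point of $\mathcal{J}^*(\alpha,\beta)$). Thus it suffices to show that a bijective weight joining is already extremal in the larger polytope $\mathcal{J}(\alpha,\beta)$, i.e., that its support is minimal.

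First I would pin down the support of $\gamma$ exactly. Write $f = f_\gamma$, so that $r_\gamma(u,v) > 0$ iff $v = f(u)$. Since $\gamma((u,v),(u',v')) \le \min\{r_\gamma(u,v), r_\gamma(u',v')\}$, the support of $\gamma$ is contained in $\{((u,f(u)),(u',f(u'))) : u,u' \in U\}$. Applying the edge coupling identity \eqref{eqn:EdgeCoupling1} from Proposition \ref{Prop:realcoupling}, the entire sum $\sum_{v,v'}\gamma((u,v),(u',v')) = \alpha(u,u')$ collapses to the single term at $(v,v') = (f(u),f(u'))$, giving
\[
\gamma\bigl((u,f(u)),(u',f(u'))\bigr) = \alpha(u,u') \quad \text{for all } u,u' \in U.
\]
Hence $\spp(\gamma) = \{((u,f(u)),(u',f(u'))) : \alpha(u,u') > 0\}$.

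Next I would show minimality of this support. Suppose $\gamma' \in \mathcal{J}(\alpha,\beta)$ with $\spp(\gamma') \subseteq \spp(\gamma)$. Then $\gamma'$ too is supported on pairs of the form $((u,f(u)),(u',f(u')))$. Applying \eqref{eqn:EdgeCoupling1} to $\gamma'$ and using that only a single term can be nonzero in the sum, we obtain
\[
\gamma'\bigl((u,f(u)),(u',f(u'))\bigr) = \alpha(u,u') = \gamma\bigl((u,f(u)),(u',f(u'))\bigr),
\]
for every $(u,u')$, while $\gamma'$ vanishes off of $\spp(\gamma)$. Therefore $\gamma' = \gamma$, so no proper subset of $\spp(\gamma)$ supports a weight joining of $\alpha$ and $\beta$. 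By Proposition \ref{Prop:minimalextreme}, $\gamma$ is an extreme point of $\mathcal{J}(\alpha,\beta)$.

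Finally, since $\gamma \in \mathcal{J}^*(\alpha,\beta)$ by hypothesis, Remark \ref{remark:extremeoftwo_sets} yields that $\gamma$ is also an extreme point of $\mathcal{J}^*(\alpha,\beta)$, completing the proof. I do not anticipate a real obstacle here; the only nontrivial step is recognizing that the bijectivity of $\gamma$ forces every edge-coupling sum to collapse to a single term, which rigidly determines any competitor $\gamma'$ on the support of $\gamma$.
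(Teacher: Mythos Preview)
Your proof is correct, but it takes a somewhat different route from the paper's. The paper argues directly from the definition of extreme point in $\mathcal{J}^*(\alpha,\beta)$: if $\gamma = t\gamma' + (1-t)\tilde{\gamma}$ with $\gamma',\tilde{\gamma}\in\mathcal{J}^*(\alpha,\beta)$, then the marginal identity $r_\gamma = t r_{\gamma'} + (1-t) r_{\tilde{\gamma}}$ and nonnegativity force $r_{\gamma'}$ (and $r_{\tilde\gamma}$) to be supported on the graph of $f_\gamma$; full support then gives $r_{\gamma'}(u,f_\gamma(u))=p(u)>0$, so $\gamma'$ is itself bijective with the same induced map, and the uniqueness clause of Proposition~\ref{Prop:isomorphismbijective} yields $\gamma'=\gamma$.

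Your argument instead establishes the stronger fact that $\gamma$ is already extremal in the full polytope $\mathcal{J}(\alpha,\beta)$, by showing its support is minimal (Proposition~\ref{Prop:minimalextreme}) and then descending via Remark~\ref{remark:extremeoftwo_sets}. In doing so you rederive, via the edge-coupling identity, the explicit formula $\gamma((u,f(u)),(u',f(u')))=\alpha(u,u')$ that the paper obtains inside the proof of Proposition~\ref{Prop:isomorphismbijective}. The trade-off: the paper's proof is slightly more self-contained (it never invokes Proposition~\ref{Prop:minimalextreme} or Remark~\ref{remark:extremeoftwo_sets}), while yours yields the bonus conclusion that bijective weight joinings are extreme in $\mathcal{J}(\alpha,\beta)$ itself, not merely in the optimal face.
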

\begin{proof}
    Let $\gamma \in \mathcal{J}^*(\alpha,\beta)$ be bijective, and suppose that there exists $\gamma', \tilde{\gamma} \in \mathcal{J}^*(\alpha,\beta)$ and $t \in (0,1)$ such that $$\gamma = t\gamma'+(1-t)\tilde{\gamma}.$$
    Let $f_{\gamma}:U\to V$ denote the bijective map induced by $\gamma$. Also, let $r_{\gamma}, r_{\gamma'}, r_{\tilde{\gamma}}$ denote the respective marginal functions, and note that $r_{\gamma} =  t r_{\gamma'} + (1-t) r_{\tilde{\gamma}}$. Since $\gamma$ is bijective with induced map $f_{\gamma}$, we have $r_{\gamma}(u,v) = 0$ for all $v \neq f_{\gamma}(u)$.  Thus we see that $r_{\gamma'}(u,v)=0$ for all $v\neq f_{\gamma}(u)$. Moreover, since $G$ is fully supported, we must have $$\sum_{v \in V} r_{\gamma'}(u,v) = p(u)>0,$$ and hence $r_{\gamma'}(u,f_{\gamma}(u)) = p(u) > 0$. Therefore $\gamma'$ is also a bijective weight joining with the same induced map $f_{\gamma}$. 
    As $\gamma$ is the unique bijective weight joining with induced map $f_{\gamma}$ (by Proposition \ref{Prop:isomorphismbijective}), we must have $\gamma' = \gamma$.
    An analogous argument shows that $\tilde{\gamma} = \gamma$. 
    Thus we conclude that  $\gamma$ is an extreme point of $\mathcal{J}^*(\alpha,\beta)$.
    \end{proof}

It is natural to ask about the converse direction. We provide a characterization of when an extreme point of the optimal weight joinings is bijective. 

\begin{restatable}[]{proposition}{extremebijective}
\label{Prop:extremebijective}
    Let $\alpha$ and $\beta$ be two weight functions. Suppose that for any $\gamma \in \mathcal{J}^*(\alpha,\beta)$, we have $\langle c, r_{\gamma} \rangle = 0$.
    Then the following two statements are equivalent:
    \begin{itemize}
        \item[(i)] For every weight joining $\gamma$ in $\mathcal{J}^*(\alpha,\beta)$, there exists a bijective weight joining $\gamma'$ in $\mathcal{J}^*(\alpha, \beta)$ such that $\spp(\gamma') \subset \spp(\gamma)$.
        \item[(ii)] Every extreme point of $\mathcal{J}^*(\alpha,\beta)$ is bijective.
    \end{itemize}
\end{restatable}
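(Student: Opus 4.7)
The plan is to establish the two implications separately, leaning heavily on the support-based characterization of extreme points from Proposition~\ref{Prop:minimalextreme} together with Remark~\ref{remark:extremeoftwo_sets} relating extreme points of $\mathcal{J}^*(\alpha,\beta)$ to those of $\mathcal{J}(\alpha,\beta)$.

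For the implication (i) $\Rightarrow$ (ii), let $\gamma$ be an extreme point of $\mathcal{J}^*(\alpha,\beta)$. By Remark~\ref{remark:extremeoftwo_sets}, $\gamma$ is also an extreme point of the larger polytope $\mathcal{J}(\alpha,\beta)$. Proposition~\ref{Prop:minimalextreme} then tells me that $\spp(\gamma)$ is minimal among supports of weight joinings in $\mathcal{J}(\alpha,\beta)$. Applying (i) to $\gamma$, I obtain a bijective weight joining $\gamma' \in \mathcal{J}^*(\alpha,\beta) \subset \mathcal{J}(\alpha,\beta)$ with $\spp(\gamma') \subset \spp(\gamma)$. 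Minimality of $\spp(\gamma)$ forces $\spp(\gamma') = \spp(\gamma)$, and then the second assertion of Proposition~\ref{Prop:minimalextreme} yields $\gamma = \gamma'$. Hence $\gamma$ is bijective.

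For (ii) $\Rightarrow$ (i), let $\gamma \in \mathcal{J}^*(\alpha,\beta)$ be arbitrary. Since $\mathcal{J}^*(\alpha,\beta)$ is a nonempty, compact, convex polytope (as the optimal face of the linear program defining $\mathcal{J}(\alpha,\beta)$), the Krein--Milman theorem allows me to write
\[
\gamma \,=\, \sum_{i=1}^{k} t_i\, \gamma^{(i)},
\]
where each $\gamma^{(i)}$ is an extreme point of $\mathcal{J}^*(\alpha,\beta)$ and $t_i > 0$ with $\sum_i t_i = 1$. By hypothesis (ii), each $\gamma^{(i)}$ is bijective. Since all $\gamma^{(i)}$ are nonnegative and each $t_i > 0$, we automatically have $\spp(\gamma^{(i)}) \subset \spp(\gamma)$ for every $i$. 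Setting $\gamma' = \gamma^{(1)}$ therefore produces the bijective weight joining in $\mathcal{J}^*(\alpha,\beta)$ required by (i).

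There is no real obstacle here, since both directions reduce to the support-minimality characterization of extreme points once Remark~\ref{remark:extremeoftwo_sets} is in hand. The only subtlety worth double-checking is that the hypothesis $\langle c, r_\gamma\rangle = 0$ for all $\gamma \in \mathcal{J}^*(\alpha,\beta)$ need not be invoked in either direction of the equivalence itself; it is the contextual hypothesis guaranteeing that $\mathcal{J}^*(\alpha,\beta)$ is the relevant face and ensuring that bijective weight joinings are potentially available inside $\mathcal{J}^*(\alpha,\beta)$, which is the setting in which the statement will later be applied.
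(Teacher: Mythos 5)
Your proof is correct and follows essentially the same route as the paper: Remark~\ref{remark:extremeoftwo_sets} plus the support-minimality characterization of Proposition~\ref{Prop:minimalextreme} for (i)~$\Rightarrow$~(ii), and the convex-combination-of-extreme-points argument for (ii)~$\Rightarrow$~(i). The only (harmless) difference is that you conclude $\gamma=\gamma'$ directly from the uniqueness clause of Proposition~\ref{Prop:minimalextreme}, whereas the paper writes out $\spp(\gamma)=\spp(\gamma')$ explicitly and reads off bijectivity from the support; your side observation that the zero-cost hypothesis is never invoked is also accurate.
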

\begin{proof}
Suppose statement (i) holds, and let $\gamma$ be an extreme point of $\mathcal{J}^*(\alpha,\beta)$. By assumption, there exists a bijective weight joining $\gamma' \in \mathcal{J}^*(\alpha, \beta)$ such that $\spp(\gamma') \subseteq \spp(\gamma)$. Since $\gamma$ is an extreme point of $\mathcal{J}^*(\alpha,\beta)$, by Remark \ref{remark:extremeoftwo_sets}, $\gamma$ is an extreme point of $\mathcal{J}(\alpha,\beta)$. By Proposition \ref{Prop:minimalextreme}, $\spp(\gamma)$ is minimal. Thus, we conclude that $\spp(\gamma) = \spp(\gamma')$. 
Let $f$ be the bijective map associated with $\gamma'$. Then
$$\gamma^{\prime}\left((u, v),\left(u^{\prime}, v^{\prime}\right)\right)= \begin{cases}\alpha\left(u, u^{\prime}\right) & \text { if } v=f_{\gamma^{\prime}}(u), \text { and } v^{\prime}=f_{\gamma^{\prime}}\left(u^{\prime}\right) \\ 0 & \text { otherwise. }\end{cases}$$
Thus,
$$\spp(\gamma)=\spp\left(\gamma^{\prime}\right)=\Big\{\big( (u, f_{\gamma^{\prime}}(u)),(u^{\prime}, f_{\gamma^{\prime}}(u^{\prime})) \big): \alpha\left(u, u^{\prime}\right)>0\Big\}.$$
Using this expression for the support of $\gamma$, one may readily check that $\gamma$ is bijective, which establishes statement (ii).

Now suppose statement (ii) holds, i.e., every extreme point of $\mathcal{J}^*(\alpha,\beta)$ is bijective.  Since $\mathcal{J}^*(\alpha,\beta)$ is a convex polytope, any $\gamma\in\mathcal{J}^*(\alpha,\beta)$ can be expressed as a convex combination of bijective weight joinings. Therefore there exists at least one bijective weight joining $\gamma' \in \mathcal{J}^*(\alpha,\beta)$ such that $\spp(\gamma') \subset \spp(\gamma)$.
\end{proof}
\section{Optimal graph joining and graph isomorphism} \label{Sect:graphisomorphisms}

This section is dedicated to the proofs of results connecting OGJ to graph isomorphism detection and identification.
\subsection{Labeling schemes}
We begin by proving the elementary lemma about equivalence between secondary labeling schemes and augmented labeling schemes. Recall that for a graph family $\mathcal{G}$, we denote its primary, secondary, and augmented labeling schemes by $\Phi$, $\Phi'$, and $\Psi$, respectively. Also, recall that for a graph $G = (U,\alpha,\phi_G)$ with augmented labeling $\psi_G$, we let $\hat{G}$ be the same graph as $G$ except that $\psi_G$ plays the role of primary labeling, i.e., $\hat{G} = (U,\alpha,\psi_G)$.
\secondaug*
\begin{proof} 
The implication (2) implies (1) is immediate from the definitions.
    If the secondary labeling scheme is isomorphism-invariant, then so is the augmented labeling scheme, since the primary labeling scheme is isomorphism-invariant by definition. Hence (1) implies (2).
    
     Now suppose (2) holds.  If $G \cong H$, then the isomorphism invariance of the augmented labeling scheme gives that $\hat{G} \cong \hat{H}$. Conversely, we always have that $\hat{G} \cong \hat{H}$ implies $G \cong H$. 
     Thus (2) implies (3).
     
     Finally, suppose (3) holds. If $f$ is an isomorphism from $G$ to $H$, then by (3), $f$ is an isomorphism from $\hat{G}$ to $\hat{H}$, which gives that the augmented labeling scheme is invariant under $f$. Hence (2) holds, which completes the proof.
\end{proof}

\subsection{The necessary condition for isomorphic graphs}
The proposition below states that if the labeling scheme $\Psi$ is isomorphism-invariant and two graphs are isomorphic, then the OGJ transport cost (with vertex cost function $c_{\Psi}$) between them is zero. 
\necessary*
\begin{proof}
    Suppose $G \cong H$, and let $f : U \to V$ be a graph isomorphism from $G$ to $H$. Let $\gamma$ be the bijective weight joining corresponding to $f$. Note that the marginal function $r_{\gamma}$ is supported on the graph of $f$, and as $\Psi$ is isomorphism-invariant, we have that $c_{\Psi}(u,f(u)) = 0$ for all $u \in U$. Hence $\langle c_{\Psi}, r_{\gamma} \rangle = 0$.
    Then, since $\rho_{\Psi}$ is non-negative and $\rho_{\Psi}(G,H) \leq \langle c_{\Psi}, r_{\gamma'} \rangle$ for all $\gamma' \in \mathcal{J}(\alpha,\beta)$, we have $$0 \leq \rho_{\Psi}(G,H) \leq \langle c_{\Psi}, r_{\gamma} \rangle = 0,$$ 
    which completes the proof.
\end{proof}

\subsection{Results with informative labeling schemes} 
In this subsection, our goal is to prove our two main results about labeling schemes that are sufficiently informative to distinguish two graphs, which we recall below. 
\injectivesuff*
\weakinjectivesuff*
Observe that if $\Psi$ is injective, then it is immediate from the definitions that $\Psi$ is locally injective and has magic symbols. Hence, Proposition \ref{Prop:injectivesuff} is a special case of Theorem \ref{Thm:weakinjectivesuff}, and it suffices to prove the latter. Before proving Theorem \ref{Thm:weakinjectivesuff}, we first establish some preliminary results.

In what follows, we use $(\mathcal{G},\Psi)$ to denote a graph family $\mathcal{G}$ with an augmented labeling scheme $\Psi$. The following lemma states that if the pair $(u,v)$ has positive weight for a zero-cost weight joining, then $u$ and $v$ must share the same label. 
\begin{restatable}[]{lemma}{samelabel}
\label{Lem:samelabel}
    Let $G$ and $H$ be two graphs in $(\mathcal{G},\Psi)$. Suppose $\rho_{\Psi}(G,H) = 0$, and let $\gamma$ be an optimal weight joining. Then for every $(u,v) \in \spp(r_{\gamma})$, we have $\psi_G(u) = \psi_H(v)$. 
\end{restatable}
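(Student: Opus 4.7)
The plan is to argue directly from the definitions, since this lemma is essentially a bookkeeping statement about when a non-negative sum vanishes. First I would unpack the hypothesis $\rho_\Psi(G,H) = 0$ and the optimality of $\gamma$: by Definition \ref{Def:OGJproblem}, optimality of $\gamma$ with vanishing OGJ cost means exactly that
\[
\langle c_\Psi, r_\gamma \rangle \;=\; \sum_{(u,v) \in U \times V} c_\Psi(u,v)\, r_\gamma(u,v) \;=\; 0.
\]

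Next I would observe that each term in this sum is non-negative, because $c_\Psi$ takes values in $\{0,1\}$ and $r_\gamma$ is non-negative (as the marginal of a weight function). Hence every individual term must vanish. In particular, for any $(u,v)$ with $r_\gamma(u,v) > 0$, that is, for any $(u,v) \in \spp(r_\gamma)$, we must have $c_\Psi(u,v) = 0$.

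Finally, I would invoke the definition of the label-based binary cost function: $c_\Psi(u,v) = \mathbb{I}(\psi_G(u) \neq \psi_H(v))$, so $c_\Psi(u,v) = 0$ is equivalent to $\psi_G(u) = \psi_H(v)$. Combining these two observations gives the desired conclusion.

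There is no real obstacle here; the only thing to be careful about is confirming that $r_\gamma$ is indeed non-negative (which follows from the non-negativity clause in Definition \ref{Def:weightfunctions} applied to $\gamma$, since marginals of non-negative functions are non-negative) and that the inner product representation of the OGJ cost coincides with the sum written above, both of which are immediate from the definitions.
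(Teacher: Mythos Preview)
Your proposal is correct and follows essentially the same approach as the paper's proof: both argue that $\langle c_\Psi, r_\gamma\rangle = 0$ together with non-negativity of the summands forces $c_\Psi(u,v) = 0$ whenever $r_\gamma(u,v) > 0$, and then read off the conclusion from the definition of $c_\Psi$. The paper's version is more terse, but the logic is identical.
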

\begin{proof}
    Let $(u,v) \in \spp(r_{\gamma})$. Since $r_{\gamma}(u,v) > 0$ and $\langle c_{\Psi}, r_{\gamma} \rangle = 0$, we must have $c_{\Psi}(u,v)=0$, and hence  $\psi_G(u) = \psi_H(v)$.
\end{proof}

Next, we introduce some ingredients required for our proof. Consider a graph $G = (U, \alpha, \phi_G)$. 
Recall that a finite sequence $u_0 u_1 \ldots u_l$ is a (finite) path in $G$ if each $u_k$ is in $U$ and we have $\alpha(u_k,u_{k+1}) > 0$ for all $k = 0, \ldots, l-1$. Given two graphs $G = (U, \alpha, \phi_G)$ and $H = (V, \beta, \phi_H)$ in $(\mathcal{G},\Psi)$, we define the projection map $\pr_G : U \times V \to U$ by setting $\pr_G(u,v)=u$ for all $(u,v) \in U \times V$. The projection map $\pr_H : U \times V \to V$ is defined analogously. 
Finally, recall that for a vertex $u$ in a graph $G = (U,\alpha,\phi_G)$, we let $N_G(u) = \{u' \in U : \alpha(u,u')  > 0 \}$ denote the neighborhood of $u$ in $G$. 
\begin{definition} \label{Def:localinjectivegraphdiamond}
    Consider two graphs $G$ and $H$ in $(\mathcal{G},\Psi)$, and let $K = (U \times V, \gamma, \phi_G \otimes \phi_H)$ be a graph joining of $G$ and $H$. We say that $\pr_G$ (resp.~$\pr_H$) is {\it locally injective} on $K$ if for any $(u,v) \in \spp(r_\gamma)$, the restriction of $\pr_G$  (resp.~$\pr_H$) to $N_K(u,v)$ is an injection. 
\end{definition}

The next lemma provides a connection between local injectivity of augmented label functions and projection maps. 
\begin{restatable}[]{lemma}{nographdiamond}
\label{Lem:nographdiamond}
    Consider two graphs $G$ and $H$ in $(\mathcal{G},\Psi)$. Suppose $\rho_{\Psi}(G,H) = 0$ and the labeling scheme $\Psi$ is locally injective. Then for any optimal graph joining $K$, the maps $\pr_G$ and $\pr_H$ are locally injective on $K$.
\end{restatable}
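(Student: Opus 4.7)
The plan is to argue by contradiction. Suppose $\pr_G$ fails to be locally injective on $K$. Then there exist $(u,v) \in \spp(r_\gamma)$ and two distinct elements $(u',v'), (u',v'') \in N_K(u,v)$ sharing the same first coordinate $u'$ with $v' \neq v''$. By definition of $N_K$, this means $\gamma((u,v),(u',v')) > 0$ and $\gamma((u,v),(u',v'')) > 0$, and applying the edge preservation property (Proposition~\ref{Prop:realcoupling}) yields $\beta(v,v') > 0$ and $\beta(v,v'') > 0$, so that both $v'$ and $v''$ lie in $N_H(v)$.

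Next I would extract label information about $v'$ and $v''$ via Lemma~\ref{Lem:samelabel}. Since $\gamma$ is a weight function (hence symmetric), the facts $\gamma((u,v),(u',v')) > 0$ and $\gamma((u,v),(u',v'')) > 0$ give $r_\gamma(u',v') > 0$ and $r_\gamma(u',v'') > 0$, so $(u',v'), (u',v'') \in \spp(r_\gamma)$. Lemma~\ref{Lem:samelabel} then yields
\[
\psi_H(v') \ = \ \psi_G(u') \ = \ \psi_H(v'').
\]

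Finally I would invoke local injectivity of $\Psi$: since $\psi_H|_{N_H(v)}$ is injective by hypothesis and $v', v'' \in N_H(v)$, the equality $\psi_H(v') = \psi_H(v'')$ forces $v' = v''$, contradicting our choice. Therefore $\pr_G$ is locally injective on $K$. The argument for $\pr_H$ is entirely symmetric, swapping the roles of $G$ and $H$ and using the second marginal $\alpha(u, \cdot)$ in place of $\beta(v, \cdot)$.

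There is no real obstacle here; the lemma is essentially a translation between the local injectivity of the label function on graph neighborhoods and the local injectivity of the projection maps on joining neighborhoods, and the bridge is provided in one step each by the edge preservation property and by Lemma~\ref{Lem:samelabel}.
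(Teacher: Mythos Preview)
Your proof is correct and follows essentially the same approach as the paper: argue by contradiction, use edge preservation to place $v',v''$ in $N_H(v)$, use Lemma~\ref{Lem:samelabel} to equate their $\psi_H$-labels via $\psi_G(u')$, and contradict local injectivity of $\psi_H$. The only cosmetic difference is that the paper cites the transition coupling condition directly rather than Proposition~\ref{Prop:realcoupling} to obtain $v',v'' \in N_H(v)$, but these amount to the same thing.
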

\begin{proof}
Let $\gamma$ be the optimal weight joining associated with $K$, with marginal function $r_{\gamma}$. Suppose for contradiction that $\pr_G$ is not locally injective. Then there exists some $(u_0,v_0)$ in $\spp(r_{\gamma})$ with $(u_1,v_1), (u_1,v'_1) \in N_K(u_0,v_0)$ and $v_1 \neq v'_1$. 
Since $\rho_{\Psi}(G,H) = 0$ and we have both $r_{\gamma}(u_1,v_1) > 0$ and $r_{\gamma}(u_1,v'_1) > 0$, Lemma \ref{Lem:samelabel} gives that $\psi_H(v_1) = \psi_G(u_1) = \psi_H(v'_1)$. In addition, by the transition coupling condition, we have that both $v_1$  and $v'_1$ are in $N_H(v_0)$. Combining these two observations, we obtain a contradiction to the local injectivity assumption of $\psi_H$. Hence, $\pr_G$ is locally injective. Interchanging the roles of $G$ and $H$ gives the conclusion for $\pr_H$. 
\end{proof}

\begin{restatable}[]{proposition}{sufficient}
\label{Prop:sufficient} 
    Let $\mathcal{G}$ be a family of connected graphs with augmented labeling scheme $\Psi$.
    Suppose $\Psi$ is locally injective and has magic symbols. 
    For any two graphs $G$ and $H$ in $\mathcal{G}$, if $\rho_{\Psi}(G,H)=0$, then $G \cong H$, and furthermore there is a unique optimal weight joining, which is bijective.
\end{restatable}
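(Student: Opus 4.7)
The plan is to show that every extreme point of $\mathcal{J}^*_{\Psi}(\alpha,\beta)$ is the bijective weight joining induced by a single isomorphism $G \to H$, and then invoke the Krein--Milman theorem to conclude that the entire optimal set is a singleton. Throughout, let $\gamma^*$ be an extreme point of $\mathcal{J}^*_{\Psi}(\alpha,\beta)$; by Remark \ref{remark:extremeoftwo_sets} it is also an extreme point of $\mathcal{J}(\alpha,\beta)$, so by Proposition \ref{Prop:preserveconnected} the graph $K^* = (\spp(r_{\gamma^*}), \gamma^*)$ is connected.

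The heart of the argument is to show that the projections $\pr_G : K^* \to G$ and $\pr_H : K^* \to H$ are graph coverings whose fibers have size one. Lemma \ref{Lem:nographdiamond} shows that $\pr_G$ is injective on the $K^*$-neighborhood of each $(u,v) \in \spp(r_{\gamma^*})$, while the transition coupling condition (together with edge preservation from Proposition \ref{Prop:realcoupling}) supplies the matching surjectivity onto $N_G(u)$. Hence $\pr_G$ is a locally bijective graph homomorphism from a connected graph to a connected graph, and a standard lift argument gives that the fiber size $k_G = |\pr_G^{-1}(u)|$ is constant in $u$; likewise $\pr_H$ has constant fiber size $k_H$. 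To collapse these fibers to singletons I use the magic symbols. Let $u^* \in U$ be the unique vertex of $G$ with magic label $a$. Any $(u^*, v) \in K^*$ satisfies $\psi_H(v) = a$ by Lemma \ref{Lem:samelabel}, and for any such $v$, every $(u, v) \in K^*$ must have $\psi_G(u) = a$, which forces $u = u^*$ by magicness. Hence $\pr_H^{-1}(v) = \{(u^*, v)\}$ for each such $v$, giving $k_H = 1$. A symmetric argument using the magic vertex of $H$ yields $k_G = 1$, so $\spp(r_{\gamma^*})$ is the graph of a bijection $f : U \to V$. Thus $\gamma^*$ is bijective, and since $c_{\Phi} \le c_{\Psi}$ implies $\rho_{\Phi}(G,H) = 0$, Proposition \ref{Prop:bijectiveiso} shows that $f_{\gamma^*}$ is a graph isomorphism from $G$ to $H$ (and preserves the augmented labels by Lemma \ref{Lem:samelabel}).

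For the uniqueness assertion I would first show that $\hat G$ is asymmetric: any automorphism of $\hat G$ fixes $u^*$, and then induction on graph distance from $u^*$, using local injectivity of $\psi_G$ on each neighborhood together with connectedness of $G$, forces the automorphism to be the identity. Consequently there is a unique isomorphism between $\hat G$ and $\hat H$, and by Proposition \ref{Prop:isomorphismbijective} a unique zero-cost bijective weight joining $\gamma_f$. The previous paragraph shows that every extreme point of $\mathcal{J}^*_{\Psi}(\alpha,\beta)$ coincides with $\gamma_f$, so by Krein--Milman, $\mathcal{J}^*_{\Psi}(\alpha,\beta) = \{\gamma_f\}$. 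The main obstacle I anticipate is verifying cleanly in the weighted setting that $\pr_G$ and $\pr_H$ are coverings with constant fiber sizes, relying on both the local bijectivity derived above and the connectedness of $K^*$; the subsequent magic-symbol pinching that drops the fibers to singletons is short but depends essentially on the availability of magic symbols in both $G$ and $H$ combined with Lemma \ref{Lem:samelabel}.
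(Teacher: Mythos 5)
Your proposal is correct, but it is organized differently from the paper's proof, so a comparison is worth recording. The paper works with an \emph{arbitrary} optimal joining $\gamma$ (not just an extreme point): it fixes the magic vertex $v_*$ of $H$, and for any $u$ with two partners $v,v'$ it lifts a path from $u$ to $u_*$ twice through $K$, notes that the magic symbol forces both lifts to terminate at the same vertex $(u_*,v_*)$, and then propagates backward along the paths using the local injectivity of $\pr_G$ (Lemma \ref{Lem:nographdiamond}) to conclude $v=v'$. Your constant-fiber covering argument rests on exactly the same two ingredients (local injectivity of the projections plus the transition-coupling surjectivity onto $N_G(u)$) but packages them statically rather than via path-lifting; both are sound, and your version does not actually need the connectedness of $K^*$ from Proposition \ref{Prop:preserveconnected}, only connectedness of $G$ and $H$. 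The more substantive divergence is the uniqueness step. Because the paper establishes bijectivity for \emph{every} optimal joining, it gets uniqueness essentially for free: bijective joinings are extreme points (Proposition \ref{Prop:bijective_extreme}), so the convex set $\mathcal{J}^*_{\Psi}(\alpha,\beta)$ consists entirely of extreme points and must be a singleton. Your route, having established bijectivity only for extreme points, needs the extra observation that $\hat G$ is asymmetric (magic vertex fixed, then induction on distance using local injectivity of $\psi_G$), whence the isomorphism and its associated bijective joining are unique, and Krein--Milman finishes. That asymmetry argument is correct and is of some independent interest --- the paper itself remarks in Section \ref{Sect:theoresults} that the hypotheses here force asymmetry --- but it is an avoidable detour; extending your bijectivity argument to all optimal joinings (your covering argument never actually uses extremality except through Proposition \ref{Prop:preserveconnected}, which, as noted, is dispensable) would let you drop it.
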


\begin{proof} 
Let $\gamma$ be an optimal weight joining and $K$ be its corresponding graph joining. 
Let $v_* \in V_j$ be a magic vertex in $V$, i.e., there is a label $w \in \mathcal{L}$ such that $v_* \in V$ is the unique vertex in $V$ such that $\psi_H(v_*) = w$. 
Since $H$ is fully supported, we have $q(v_*)>0$, and then since $r_{\gamma}$ is a coupling of $p$ and $q$, there is some $u_* \in U$ satisfying $r_{\gamma}(u_*,v_*)>0$. 
Hence, by Lemma \ref{Lem:samelabel}, we have $\psi_G(u_*) = \psi_H(v_*) = w$.

Now let us show that $\gamma$ is bijective. Let $u \in U$ be arbitrary, and suppose $(u,v), (u,v') \in \spp(r_{\gamma})$. 
Our aim is to show $v=v'$. Indeed, because $G$ is connected, there exists a path $u_0 \dots u_k$ in $G$ that starts at $u_0 = u$ and ends at $u_k = u_*$. Since $r_{\gamma}(u,v) > 0$, by the transition coupling condition, there exists a path $v_0 \ldots v_k$ in $H$ such that $v_0 = v$, and $\Gamma = (u_0,v_0) (u_1,v_1) \ldots (u_k,v_k)$ is a path in $K$. 
Similarly, there is a path $v'_0 \ldots v'_k$ in $H$ satisfying $v'_0 = v'$, and $\Gamma' = (u_0,v'_0) (u_1,v'_1) \ldots (u_k,v'_k)$ is a path in $K$. Since $u_k = u_*$, we have $r_{\gamma}(u_*,v_k) > 0$ and $r_{\gamma}(u_*,v'_k) > 0$. By Lemma \ref{Lem:samelabel}, we obtain $\psi_H(v_k) = \psi_G(u_*) = \psi_H(v'_k)$ 
Since $v_*$ is the unique vertex in $V$ whose image is $w = \psi_G(u_*)$, we must have $v_k =  v_* = v'_k$. Thus, $\Gamma$ and $\Gamma'$ are two paths that end at the same vertex $(u_*,v_*)$ in $K$. 

Next, note that $(u_{k-1},v_{k-1}) (u_*,v_*)$ and $(u_{k-1},v'_{k-1}) (u_*,v_*)$ are two paths in $K$, and 
$$\pr_G(u_{k-1},v_{k-1}) = \pr_G(u_{k-1},v'_{k-1}) = u_{k-1}.$$
By assumption, $\Psi$ is locally injective, and hence, by Lemma \ref{Lem:nographdiamond}, the map $\pr_G$ is also locally injective. Thus, we obtain $v_{k-1} =  v'_{k-1}$. Arguing inductively for decreasing $i \in \{0,\dots,k-1\}$, we get that $v_i = v'_i$ for all $i = 0,\dots,k-1$. 
Since $v_0 = v$ and $v'_0 = v'$, this gives $v=v'$. Since $u \in U$ was arbitrary, we have shown that for each $u \in U$, there is a unique $v \in V$ such that $r_{\gamma}(u,v) >0$. Interchanging the roles of $G$ and $H$, we also obtain that for all $v \in V$, there is a unique $u \in U$ such that $r_{\gamma}(u,v) >0$. Thus, we conclude that $\gamma$ is a bijective weight joining. 
By Proposition \ref{Prop:bijectiveiso}, there is a graph isomorphism $f$ from $G$ to $H$. 

Since $\gamma \in \mathcal{J}^*_{\Psi}(\alpha,\beta)$ was arbitrary, we have shown that every optimal weight joining is bijective. By Proposition \ref{Prop:bijective_extreme}, bijective weight joinings are always extreme points. Thus, every optimal weight joining is an extreme point. Since $\mathcal{J}^*_{\Psi}(\alpha,\beta)$ is convex and consists solely of optimal weight joinings, it follows that all elements of $\mathcal{J}^*_{\Psi}(\alpha,\beta)$ are extreme points. Since $\mathcal{J}^*_{\Psi}(\alpha,\beta)$ is nonempty, it must be a singleton. Therefore, we conclude that there is a \textit{unique} optimal weight joining, and it is bijective.
\end{proof}

\begin{proof}[Proof of Theorem \ref{Thm:weakinjectivesuff}]
	By Proposition \ref{Prop:necessary}, we obtain one direction of detection:
	$$G \cong H \Longrightarrow \rho_{\Psi}(G,H) = 0.$$
	The converse direction is given by Proposition \ref{Prop:sufficient}. Also, if $G$ and $H$ are isomorphic graphs in $\mathcal{G}$, then Proposition \ref{Prop:necessary} gives $\rho_{\Psi}(G,H) = 0$, and then by Proposition \ref{Prop:sufficient}, the optimal solution set $\mathcal{J}^*_{\Psi}(\alpha,\beta)$ contains exactly one element, which is a zero-cost bijective weight joining. Thus, we obtain the identification result. 
\end{proof}

\subsection{Results with process-level labeling schemes}
In this subsection we prove the results regarding process-level labeling schemes. Recall from Section \ref{Sect:ProcessLevel} that $\Psi^*$ denotes the process-level labeling scheme.
\equivcost*

Before proving this theorem, we first establish the following proposition. 
\begin{restatable}[]{proposition}{zeroexpectedcost}
\label{Prop:zeroexpectedcost}
    Let $G = (U, \alpha, \phi_G)$ and $H = (V, \beta, \phi_H)$ be fully supported weighted graphs with augmented labeling scheme $\Psi$.  
    For $\gamma \in \mathcal{J}(\alpha,\beta)$, we have $\langle c_{\Psi},r_{\gamma} \rangle = 0$ if and only if $\langle c_{\Psi^*},r_{\gamma} \rangle = 0$. 
\end{restatable}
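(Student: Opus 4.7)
The plan is to reformulate both cost conditions in terms of the support $\spp(r_\gamma)$ and then exploit the associated reversible Markovian coupling $(\widetilde{X}, \widetilde{Y})$ from Section \ref{Sect:MCs}. Since costs are non-negative, $\langle c_\Psi, r_\gamma \rangle = 0$ is equivalent to $\psi_G(u) = \psi_H(v)$ for every $(u,v) \in \spp(r_\gamma)$, and likewise $\langle c_{\Psi^*}, r_\gamma \rangle = 0$ is equivalent to $\psi_G^*(u) = \psi_H^*(v)$ for every $(u,v) \in \spp(r_\gamma)$. One direction is immediate: the time-zero marginal of $\psi_G^*(u)$ is $\psi_G(u)$, so $\psi_G^*(u) = \psi_H^*(v)$ forces $\psi_G(u) = \psi_H(v)$.

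For the converse, fix $(u,v) \in \spp(r_\gamma)$ and assume $\psi_G = \psi_H$ on $\spp(r_\gamma)$. The first step is to show that $\spp(r_\gamma)$ is invariant under the transitions $R$ of the coupled chain: if $\gamma((u',v'),(u'',v'')) > 0$, then by symmetry of $\gamma$ we have $r_\gamma(u'',v'') \geq \gamma((u'',v''),(u',v')) = \gamma((u',v'),(u'',v'')) > 0$. Hence $(\widetilde{X}, \widetilde{Y})$ started from $(u,v)$ almost surely remains in $\spp(r_\gamma)$ for all time. The second step is to verify that under $\mathbb{P}(\,\cdot \mid (\widetilde{X}_0, \widetilde{Y}_0) = (u,v))$, the marginal process $\widetilde{X}$ has the law of $X^u$ (and similarly $\widetilde{Y}$ has the law of $Y^v$). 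This uses the transition coupling condition in the form of Remark \ref{Rem:essential}: for $(u',v') \in \spp(r_\gamma)$,
\[
\mathbb{P}(\widetilde{X}_{n+1} = u'' \mid \widetilde{X}_n = u', \widetilde{Y}_n = v') \ = \ \frac{\sum_{v''} \gamma((u',v'),(u'',v''))}{r_\gamma(u',v')} \ = \ \frac{\alpha(u',u'')}{p(u')} \ = \ P(u'' \mid u'),
\]
which is independent of $v'$, so $\widetilde{X}$ evolves as the Markov chain with transitions $P$ irrespective of its $\widetilde{Y}$-coordinate.

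Combining the two steps, the coupled trajectory started at $(u,v)$ lies in $\spp(r_\gamma)$ at every time, and hence $\psi_G(\widetilde{X}_n) = \psi_H(\widetilde{Y}_n)$ almost surely for all $n \geq 0$ by the zero-cost hypothesis. Consequently the label processes $(\psi_G(X^u_n))_{n \geq 0}$ and $(\psi_H(Y^v_n))_{n \geq 0}$ are coupled to be pathwise equal, and therefore have the same distribution; that is, $\psi_G^*(u) = \psi_H^*(v)$. The main obstacle is the second step: one must justify that conditioning on the initial state $(u,v)$ really does produce $X^u$ as the process-level marginal of $\widetilde{X}$, not merely a process that agrees with $X^u$ at one-step transitions. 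This requires combining the Markov property of $(\widetilde{X}, \widetilde{Y})$ with the $v'$-independence of the one-step transition of $\widetilde{X}$ guaranteed by the transition coupling condition, iterated across all times.
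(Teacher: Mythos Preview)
Your proposal is correct and follows essentially the same approach as the paper: both use the reversible Markovian coupling $(\widetilde{X},\widetilde{Y})$ associated with $\gamma$ to show that the label processes $\psi_G(X^u)$ and $\psi_H(Y^v)$ can be coupled pathwise, hence have the same law. The only cosmetic difference is how the ``labels agree at all times'' step is justified: the paper invokes stationarity of $(\widetilde{X},\widetilde{Y})$ (since $r_\gamma$ is the stationary distribution, $c_\Psi(\widetilde{X}_k,\widetilde{Y}_k)=0$ a.s.\ for all $k$), whereas you argue directly via invariance of $\spp(r_\gamma)$ under the transition kernel $R$ using the symmetry of $\gamma$; these are equivalent observations, and your version is arguably more explicit about what is being used.
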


\begin{proof}
Suppose $\langle c_{\Psi},r_{\gamma} \rangle = 0$. Let $\gamma$ be the optimal solution of this OGJ problem, and $(\widetilde{X},\widetilde{Y})$ be its associated reversible Markov chain as in Section \ref{Sect:MCs}. Since $\langle c_{\Psi}, r_{\gamma} \rangle = 0$ and $r_{\gamma}$ is the distribution of $(\widetilde{X}_0,\widetilde{Y}_0)$, we get that $c_{\Psi}(\widetilde{X}_0,\widetilde{Y}_0) = 0$ a.s. Then, since $(\widetilde{X},\widetilde{Y})$ is stationary, for all $k \geq 0$, we see that $c_{\Psi}(\widetilde{X}_k,\widetilde{Y}_k) = 0$ a.s. Thus, for all $k \geq 0$, we have $\psi_G(\widetilde{X}_k)= \psi_H(\widetilde{Y}_k)$ a.s., which means $\psi_G(\widetilde{X})= \psi_H(\widetilde{Y})$ a.s. Now let $(u,v) \in U \times V$ such that $r_{\gamma}(u,v) > 0$. Since $\psi_G(\widetilde{X})= \psi_H(\widetilde{Y})$ a.s., we get
\begin{align*}
    \sum_{(u',v') \in U \times V} \mathbb{P}(\psi_G(\widetilde{X}) \neq \psi_H(\widetilde{Y}) \mid \widetilde{X}_0 = u', \widetilde{Y}_0 = v' ) r_{\gamma}(u',v') = 0.
\end{align*}
As the right-hand side is zero and $r_{\gamma}(u,v) > 0$, we must have
\begin{align*}
    \mathbb{P}(\psi_G(\widetilde{X}) \neq \psi_H(\widetilde{Y}) \mid \widetilde{X}_0 = u, \widetilde{Y}_0 = v)=0.
\end{align*}
But this means $\psi_G(\widetilde{X}^u) = \psi_H(\widetilde{Y}^v)$ a.s. On the other hand, since $(\widetilde{X},\widetilde{Y})$ is a coupling of $X$ and $Y$, we have $\widetilde{X}^u \overset{d}{=} X^u$ and $\widetilde{Y}^v \overset{d}{=} Y^v$. Thus,
\begin{align*}
    \psi_G(X^u) \overset{d}{=} \psi_G(\widetilde{X}^u) \overset{d}{=} \psi_H(\widetilde{Y}^v) \overset{d}{=} \psi_H(Y^v),
\end{align*}
which means $\psi_G^*(u) = \psi_H^*(v)$. This gives that $c_{\Psi^*}(u,v)=0$. Hence, for any $(u,v) \in U \times V$ satisfying $r_{\gamma}(u,v) > 0$, we have $c_{\Psi^*}(u,v)=0$. Therefore, $\langle c_{\Psi^*},r_{\gamma} \rangle = 0$. 

Conversely, assume that $\langle c_{\Psi^*},r_{\gamma} \rangle = 0$. Let $\gamma$ be the optimal solution of this OGJ problem. 
Let $(u,v) \in U \times V$ such that $r_{\gamma}(u,v) > 0$. Since $\langle c_{\Psi^*}, r_{\gamma} \rangle = 0$, we see that $\psi_G(X^u)$ is equal in distribution to $\psi_H(Y^v)$, and hence $\psi_G(X_0^u)$ is equal in distribution to $\psi_H(Y_0^v)$. As $X_0^u = u$ and $Y_0^v = v$, 
we get $\psi_G(u) = \psi_H(v)$. Thus, we obtain $\langle c_{\Psi},r_{\gamma} \rangle = 0$.
\end{proof}

\begin{proof}[Proof of Theorem \ref{Thm:equivcost}]
    For any $\gamma \in J(\alpha,\beta)$, by Proposition \ref{Prop:zeroexpectedcost}, we have $\langle c_{\Psi},r_{\gamma} \rangle = 0$ if and only if $\langle c_{\Psi^*},r_{\gamma} \rangle = 0$. Thus, $\rho_{\Psi}(G,H)=0$ if and only if $\rho_{\Psi^*}(G,H)=0$, which proves the first sentence of the theorem, as well as fact that OGJ with cost $c_{\Psi}$ detects isomorphism if and only if OGJ with cost $c_{\Psi^*}$ detects isomorphism. Furthermore, when these costs are zero, Proposition \ref{Prop:zeroexpectedcost} gives that $\mathcal{J}^*_{\Psi}(\alpha,\beta) = \mathcal{J}^*_{\Psi^*}(\alpha,\beta)$. Hence the extreme points of $\mathcal{J}^*_{\Psi}(\alpha,\beta)$ coincide with the extreme points of $\mathcal{J}^*_{\Psi^*}(\alpha,\beta)$. Thus, the extreme points of $\mathcal{J}^*_{\Psi}(\alpha,\beta)$ are bijective weight joinings with $c_{\Psi}$-zero cost if and only if the extreme points of $\mathcal{J}^*_{\Psi^*}(\alpha,\beta)$ are bijective weight joinings with $c_{\Psi^*}$-zero cost. This proves the identification part of the theorem. 
\end{proof}

Next, we show that OGJ can detect and identify isomorphism for graphs with complete sets of landmarks. 
Recall that the definitions for graphs with landmarks are given in Definition \ref{landmarks}.
\landmarks*
\begin{proof}
Let $\Phi^*$ be the process-level labeling scheme. First we show that $\Phi^*$ is injective. To that end, let $G \in \mathcal{G}$, and let $u \neq u'$ be in $U$. Since $G$ has a complete set of landmarks, there is label $a \in \mathcal{L}^*$ such that $\dst_G(u, a) \neq \dst_G(u', a)$. 
Without loss of generality, assume that $\dst_G(u, a) < \dst_G(u', a)$ and let $m = \dst_G(u, a)$. If $\{X_k\}_{k=0}^{\infty}$ is the random walk on $G$, then $\mathbb{P}(\phi_G(X_m) = a \mid X_0 = u) >0$, whereas $\mathbb{P}(\phi_G(X_m) = a \mid X_0 = u') = 0$. Hence $\phi_G(X^u)$ and $\phi_G(X^{u'})$ are not equal in distribution, and thus $\Phi^*$ is injective.
Since $\Phi^*$ is injective, by Proposition \ref{Prop:injectivesuff}, OGJ with cost $c_{\Phi^*}$ detects and identifies isomorphism for $\mathcal{G}$, and then by Theorem \ref{Thm:equivcost}, we conclude that OGJ with cost $c_{\Phi}$ detects and identifies isomorphism for $\mathcal{G}$.
\end{proof}

\subsection{Trees}
In this section we provide proofs for our results concerning detection and identification of isomorphism for weighted, labeled trees. 
Recall the statement of the main theorem for trees.

\trees*
Before proving this theorem, we require some preliminary definitions and results.

\begin{definition} \label{Def:restrictionweight}
Let $\alpha$ be a weight function on $U$ with marginal function $p$. Suppose $U_0 \subset U$. First, we define
\begin{equation*}
Z_{\alpha}(U_0) = \sum_{u,u' \in U_0} \alpha(u,u'),
\end{equation*}
and for $u \in U$, we define
\begin{equation*}
    Z_{\alpha}(u,U_0) = \sum_{u' \in U_0} \alpha(u,u').
\end{equation*}
If $Z_{\alpha}(U_0)>0$, then we define the \textit{restriction} of $\alpha$ to $U_0$ to be $\alpha_0 : (U_0)^2 \to [0,1]$, where
\begin{equation*}
\alpha_0(u,u') = \frac{\alpha(u,u')}{Z_{\alpha}(U_0)}.
\end{equation*}
\end{definition}

\begin{remark} \label{Rem:resweight}
The restriction $\alpha_0$ defined in Definition \ref{Def:restrictionweight} is a weight function on $U_0$. 
\end{remark}

\begin{lemma} \label{Lem:restriction}
Let $G = (U,\alpha,\phi_G)$ and $H = (V,\beta,\phi_H)$ be fully supported graphs, and let $U_0 \subset U$ and $V_0 \subset V$ be nonempty. Let $\alpha_0$ and $\beta_0$ be the restrictions of $\alpha$ and $\beta$ to $U_0$ and $V_0$, respectively. Further suppose that $\gamma \in \mathcal{J}(\alpha,\beta)$ satisfies $\spp(r_{\gamma}) \subset (U_0 \times V_0) \cup ((U \setminus U_0) \times (V \setminus V_0))$. Then $Z_{\alpha}(U_0) = Z_{\beta}(V_0) = Z_{\gamma}(U_0 \times V_0)$ and the restriction of $\gamma$ to $U_0 \times V_0$ is a weight joining of $\alpha_0$ and $\beta_0$. 
\end{lemma}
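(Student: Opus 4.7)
The plan is to exploit a single consequence of the hypothesis on $\spp(r_\gamma)$: since $\gamma((u,v),(u',v'))>0$ forces both $r_\gamma(u,v)>0$ and, by symmetry of $\gamma$, $r_\gamma(u',v')>0$, both endpoints of any positively-weighted ``edge'' of $\gamma$ must lie in $(U_0 \times V_0) \cup ((U \setminus U_0) \times (V \setminus V_0))$. In particular, if $u,u' \in U_0$ and $\gamma((u,v),(u',v'))>0$, then necessarily $v, v' \in V_0$; similarly with the roles of $U$ and $V$ interchanged. This single observation drives the entire argument.

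To establish $Z_\alpha(U_0) = Z_\gamma(U_0 \times V_0)$, I would invoke the edge preservation property (Proposition~\ref{Prop:realcoupling}): for $u, u' \in U_0$,
\[
\alpha(u,u') \;=\; \sum_{v,v' \in V} \gamma((u,v),(u',v')) \;=\; \sum_{v,v' \in V_0} \gamma((u,v),(u',v')),
\]
where the second equality is by the observation above. Summing over $u, u' \in U_0$ yields the claim, and the parallel computation with $\beta$ gives $Z_\beta(V_0) = Z_\gamma(U_0 \times V_0)$. Since $\alpha_0$ and $\beta_0$ are assumed to be defined, this common value is positive, so
\[
\gamma_0((u,v),(u',v')) \;:=\; \gamma((u,v),(u',v'))/Z_\gamma(U_0 \times V_0)
\]
on $(U_0 \times V_0)^2$ is a well-defined candidate for the restricted joining.

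It then remains to check that $\gamma_0 \in \mathcal{J}(\alpha_0, \beta_0)$. Non-negativity, symmetry, and normalization are immediate from the $Z$-equalities just established. For the marginal coupling condition, fix $u \in U_0$ and write
\[
\sum_{v \in V_0} r_{\gamma_0}(u,v) \;=\; \frac{1}{Z_\alpha(U_0)} \sum_{u' \in U_0} \sum_{v,v' \in V_0} \gamma((u,v),(u',v')) \;=\; \frac{Z_\alpha(u, U_0)}{Z_\alpha(U_0)} \;=\; p_0(u),
\]
where again we replace the sum over $V_0$ by a sum over $V$ via the key observation and then apply edge preservation. For the transition coupling condition, fix $u, u' \in U_0$ and $v \in V_0$; the same support argument gives $\sum_{\tilde v \in V_0} \gamma((u,v),(u',\tilde v)) = \sum_{\tilde v \in V} \gamma((u,v),(u',\tilde v))$, which by the transition coupling condition for $\gamma$ (valid since $p(u)>0$ by full support) equals $\alpha(u,u') r_\gamma(u,v)/p(u)$. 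A parallel computation gives
\[
r_{\gamma_0}(u,v) \;=\; \frac{Z_\alpha(u, U_0)\, r_\gamma(u,v)}{Z_\alpha(U_0)\, p(u)},
\]
after which the required identity $p_0(u) \sum_{\tilde v \in V_0} \gamma_0((u,v),(u',\tilde v)) = \alpha_0(u,u') r_{\gamma_0}(u,v)$ reduces to algebra in the various $Z$'s. The transition coupling condition with $\beta_0$ follows from the symmetric argument.

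I expect no real obstacle — the proof is essentially bookkeeping. The only subtle point, which recurs at each step, is the freedom to replace sums over $V_0$ (resp.\ $U_0$) by sums over all of $V$ (resp.\ $U$); this is permitted precisely because positivity of $\gamma$ on an edge forces \emph{both} endpoints into $\spp(r_\gamma)$, and the support hypothesis is itself phrased as a partition-compatible condition.
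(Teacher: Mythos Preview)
Your proposal is correct and follows essentially the same approach as the paper's proof: both pivot on the observation that the support hypothesis forces any $\gamma$-edge with one coordinate in $U_0$ (resp.\ $V_0$) to have the other in $V_0$ (resp.\ $U_0$), then use the edge coupling property (Proposition~\ref{Prop:realcoupling}) to establish the $Z$-equalities, and finally verify the marginal and transition coupling conditions via the same ratio identity $r_{\gamma_0}(u,v) = Z_\alpha(u,U_0)\, r_\gamma(u,v)/\bigl(Z\, p(u)\bigr)$. Your justification that $Z>0$ (because $\alpha_0,\beta_0$ are assumed defined) is arguably cleaner than the paper's, which appeals to nonemptiness and full support.
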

\begin{proof}
Suppose the hypotheses of the lemma. Let us first show that 
$$Z_{\alpha}(U_0) = Z_{\beta}(V_0) = Z_{\gamma}(U_0 \times V_0).$$
To begin, we note that by the assumption on $\spp(r_{\gamma})$ and the edge coupling property \eqref{eqn:EdgeCoupling1}, we have
\begin{align*}
Z_{\gamma}(U_0 \times V_0) 
& = \sum_{u,u' \in U_0} \sum_{v,v' \in V_0}  \gamma((u,v),(u',v'))  \\
& = \sum_{u,u' \in U_0} \sum_{v,v' \in V}  \gamma((u,v),(u',v'))  \\
& = \sum_{u,u' \in U_0} \alpha(u,u') \\
& = Z_{\alpha}(U_0).
\end{align*}
Similarly, we have $Z_{\gamma}(U_0 \times V_0) = Z_{\beta}(V_0)$.

Now let $Z$ be the common value of $Z_{\gamma}(U_0,V_0)$, $Z_{\alpha}(U_0)$, and $Z_{\beta}(V_0)$. Note that $Z >0$, since $U_0$ and $V_0$ are nonempty and $G$ and $H$ are fully supported.
For notation, let $\gamma_0$ denote the restriction of $\gamma$ to $U_0\times V_0$ (which is well-defined, since $Z>0$), and let $r_0$ be its associated marginal function. 
Let $p_0$ and $q_0$ denote the marginal functions for the restrictions $\alpha_0$ and $\beta_0$, respectively. 
Recall from Definition \ref{Def:restrictionweight} that for $u \in U_0$ and $v \in V_0$, we have the notation
\begin{align*}
Z_{\gamma}((u,v),U_0 \times V_0) & = \sum_{(u',v') \in U_0 \times V_0} \gamma((u,v),(u',v')) \\
Z_{\alpha}(u,U_0) & = \sum_{u' \in U_0} \alpha(u,u') \\
Z_{\beta}(v,V_0) & = \sum_{v' \in V_0} \beta(v,v').
\end{align*}
It is immediate from these definitions that we have $r_0(u,v) = Z_{\gamma}((u,v),U_0 \times V_0)/Z$, $p_0(u) = Z_{\alpha}(u,U_0)/Z$, and $q_0(v) = Z_{\beta}(v,V_0)/Z$. Additionally, if $r_{\gamma}(u,v)>0$, then by the assumption on $\spp(r_{\gamma})$ and the transition coupling condition, we have
\begin{equation}
\begin{aligned}
\frac{Z_{\gamma}((u,v),U_0 \times V_0)}{r_{\gamma}(u,v)} & = \frac{1}{r_{\gamma}(u,v)} \sum_{(u',v') \in U_0 \times V_0} \gamma((u,v),(u',v')) \\
& = \sum_{u' \in U_0} \frac{1}{r_{\gamma}(u,v)}  \sum_{v' \in V_0} \gamma((u,v),(u',v')) \\
&  = \sum_{u' \in U_0} \frac{1}{r_{\gamma}(u,v)}  \sum_{v' \in V} \gamma((u,v),(u',v')) \\
& = \sum_{u' \in U_0} \frac{\alpha(u,u')}{p(u)} \\
& = \frac{Z_{\alpha}(u,U_0)}{p(u)},
\label{ratio_equation}
\end{aligned}
\end{equation}
Similarly, we get
\begin{equation*}
\frac{Z_{\gamma}((u,v),U_0 \times V_0)}{r_{\gamma}(u,v)} = \frac{Z_{\beta}(v,V_0)}{q(v)}.
\end{equation*}

By Remark \ref{Rem:resweight}, $\gamma_0$ is a weight function on $U_0 \times V_0$. Next let us show that $\gamma_0$ satisfies the marginal coupling condition and the transition coupling condition. For the marginal coupling condition, for any $u \in U_0$, by the condition on the support of $r_{\gamma}$ and the edge coupling property \eqref{eqn:EdgeCoupling1}, we have
\begin{align*}
\sum_{v \in V_0} r_0(u,v) & = \frac{1}{Z} \sum_{v \in V_0} \sum_{(u',v') \in U_0 \times V_0} \gamma((u,v),(u',v')) \\
& =  \frac{1}{Z} \sum_{u' \in U_0} \sum_{v,v' \in V_0} \gamma((u,v),(u',v')) \\
& = \frac{1}{Z} \sum_{u' \in U_0} \sum_{v,v' \in V} \gamma((u,v),(u',v')) \\
& = \frac{1}{Z} \sum_{u' \in U_0} \alpha(u,u') \\
& = p_0(u).
\end{align*}
This establishes the marginal coupling condition for $r_0$ with respect to $p_0$. The marginal coupling condition with respect to $q_0$ is verified analogously.

Let us now show that $\gamma_0$ satisfies the transition coupling condition for $\alpha_0$ and $\beta_0$. Let $(u,v) \in U_0 \times V_0$. If $r_0(u,v) = 0$, then the condition is trivially satisfied. Suppose $(u,v) \in U_0 \times V_0$ satisfies $r_0(u,v) >0$. Then there exists $(u',v') \in U_0 \times V_0$ such that $\gamma((u,v),(u',v'))>0$. Hence $r_{\gamma}(u,v)>0$, $\alpha(u,u')>0$ and $\beta(v,v') >0$, which then gives that $p_0(u) >0$ and $q_0(v)>0$. Let $u' \in U_0$. Then by \eqref{ratio_equation}, the transition coupling condition for $\gamma$, and the facts $p_0(u) = Z_{\alpha}(u,U_0)/Z$ and $\alpha_0(u,u') = \alpha(u,u')/Z$, we obtain that 
\begin{align*}
\frac{1}{r_0(u,v)} \sum_{v' \in V_0} \gamma_0((u,v),(u',v')) & = \frac{r_{\gamma}(u,v)}{r_0(u,v)} \cdot \frac{1}{r_{\gamma}(u,v)} \sum_{v' \in V_0} \frac{\gamma((u,v),(u',v'))}{Z} \\
& = \frac{ r_{\gamma}(u,v)}{Z_{\gamma}((u,v),U_0 \times V_0)/Z} \cdot \frac{1}{Z} \cdot \frac{1}{r_{\gamma}(u,v)} \sum_{v' \in V} \gamma((u,v),(u',v'))\\
& = \frac{p(u)}{Z_{\alpha}(u,U_0)} \cdot \frac{\alpha(u,u')}{p(u)} \\
& = \frac{\alpha(u,u')}{Z_{\alpha}(u,U_0)}\\
& = \frac{\alpha_0(u,u')}{p_0(u)}.
\end{align*}
This establishes the transition coupling condition with respect to $\alpha_0$.
The proof of the transition coupling condition with respect to $\beta_0$ is analogous.
\end{proof}

For two graphs $G$ and $H$, recall that $L(G)$ and $L(H)$ denote the sets of leaves of $G$ and $H$, respectively. 
\begin{lemma} \label{Lem:leaves}
Let $G = (U,\alpha,\phi_G)$ and $H = (V,\beta,\phi_H)$ be two graphs with secondary labeling given by the multiweight function and augmented labeling scheme denoted by $\Psi$. Suppose $\gamma \in \mathcal{J}(\alpha,\beta)$ has zero cost with respect to $c_{\Psi}$. Then 
\begin{equation} \label{Eqn:Asheville}
    \spp(r_{\gamma}) \subset (L(G) \times L(H)) \cup ((U \setminus L(G)) \times (V \setminus L(H))).
\end{equation}
\end{lemma}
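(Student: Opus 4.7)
The plan is to read off the degree of each vertex directly from its multiweight label, and then use the absence of self-loops in trees to turn ``has exactly one neighbor'' into ``is a leaf.''

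Fix any pair $(u,v) \in \spp(r_\gamma)$. Since $r_\gamma(u,v) > 0$ and $\gamma$ has zero cost under $c_{\Psi}$, Lemma \ref{Lem:samelabel} gives $\psi_G(u) = \psi_H(v)$. By construction, the secondary component of $\psi_G(u)$ is the multiweight label $\{\!\{\alpha(u,u') : u' \in N_G(u)\}\!\}$, and similarly for $\psi_H(v)$, so
$$\{\!\{\alpha(u,u') : u' \in N_G(u)\}\!\} \ = \ \{\!\{\beta(v,v') : v' \in N_H(v)\}\!\}.$$
Comparing the cardinalities of these two multisets yields $|N_G(u)| = |N_H(v)|$.

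Now suppose $u \in L(G)$. Then $|N_G(u)| = 1$, so $|N_H(v)| = 1$ as well; let $v'$ denote the unique element of $N_H(v)$. Because $H$ is a tree, it contains no self-loops, so $v' \neq v$, which by definition means $v \in L(H)$. Interchanging the roles of $G$ and $H$ gives the converse implication. Taken together, these two implications show that every element of $\spp(r_\gamma)$ lies either in $L(G) \times L(H)$ or in $(U \setminus L(G)) \times (V \setminus L(H))$, which is precisely the claimed inclusion.

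No serious obstacle is anticipated: once Lemma \ref{Lem:samelabel} produces agreement of augmented labels, the rest is a bookkeeping comparison of neighborhoods. The one point that requires care is the promotion of ``$v$ has exactly one neighbor'' to ``$v \in L(H)$,'' which depends on the absence of self-loops in the trees under consideration; this is the sole place where the tree hypothesis of the surrounding subsection is used.
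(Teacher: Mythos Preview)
Your proof is correct and follows essentially the same approach as the paper: both invoke Lemma~\ref{Lem:samelabel} to obtain $\psi_G(u)=\psi_H(v)$ and then observe that being a leaf is determined by the multiweight label. You are slightly more explicit than the paper in flagging that the absence of self-loops (guaranteed here by the ambient tree hypothesis) is what lets ``exactly one neighbor'' become ``is a leaf''; the paper's proof compresses this into the parenthetical remark that leaves ``are precisely the vertices incident to exactly one non-zero edge.''
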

\begin{proof}
    Since $\gamma$ has cost zero, Lemma \ref{Lem:samelabel} gives us that if $r_{\gamma}(u,v)>0$, then $\psi_G(u) = \psi_H(v)$. Since the secondary labeling is the multiweight function and leaves are recognizable from the multiweight function (as they are precisely the vertices incident to exactly one non-zero edge), if $\psi_G(u) = \psi_H(v)$, then $u \in L(G)$ if and only if $v \in L(H)$. Combining the previous two sentences, we get \eqref{Eqn:Asheville}. 
\end{proof}

The following lemma gives conditions under which a weight joining yields a bijective alignment of the leaves attached to a vertex in $G$ with the leaves attached to a vertex in $H$. The proof relies on Birkhoff's Theorem \cite{birkhoff1946tres}.
\begin{lemma} \label{Lemma:Atlantic}
Let $G = (U,\alpha,\phi_G)$ and $H = (V,\beta,\phi_H)$ be two graphs with secondary labeling given by the multiweight function and augmented labeling scheme denoted by $\Psi$. Suppose $\gamma \in \mathcal{J}(\alpha,\beta)$ has zero cost with respect to $c_{\Psi}$ and there exists $u_0$ in  $U$ and $v_0$ in $V$ satisfying $r_{\gamma}(u_0,v_0) = p(u_0) = q(v_0)$ and $N_G(u_0) \cap L(G) \neq \varnothing$. Then there is a bijection $\sigma : N_G(u_0) \cap L(G) \to N_H(v_0) \cap L(H)$ such that $\gamma((u_0,v_0),(u,\sigma(u))) >0$ for all $u \in N_G(u_0) \cap L(G)$.
\end{lemma}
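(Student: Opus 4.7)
The plan is to exploit the hypothesis $r_\gamma(u_0, v_0) = p(u_0) = q(v_0)$ to extract a doubly stochastic matrix from $\gamma$ restricted to pairs of leaves adjacent to $u_0$ and $v_0$, and then invoke Birkhoff's theorem. Since $N_G(u_0) \cap L(G) \neq \varnothing$ forces $p(u_0) > 0$, the normalized conditional $R[(u,v)] := \gamma((u_0,v_0),(u,v))/r_\gamma(u_0,v_0)$ is well-defined. Using $r_\gamma(u_0,v_0) = p(u_0) = q(v_0)$ in the transition coupling condition, the row sums and column sums of $R$ become $\sum_v R[(u,v)] = \alpha(u_0,u)/p(u_0)$ for each $u \in U$, and $\sum_u R[(u,v)] = \beta(v_0,v)/p(u_0)$ for each $v \in V$.

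Next I would localize the support of $R$ to $A \times B$, where $A = N_G(u_0) \cap L(G)$ and $B = N_H(v_0) \cap L(H)$. If $u \in A$ and $R[(u,v)] > 0$, then the transition coupling condition gives $\beta(v_0,v) > 0$, so $v \in N_H(v_0)$; moreover $r_\gamma(u,v) \geq \gamma((u_0,v_0),(u,v)) > 0$, so Lemma~\ref{Lem:samelabel} yields $\psi_H(v) = \psi_G(u)$. Since the multiweight label of a leaf is a singleton multiset, this label forces $v$ to be a leaf, hence $v \in B$. The symmetric argument handles the columns indexed by $B$. I would then refine by leaf label: for each leaf label $\ell = (c, \{\!\{w\}\!\})$ appearing in $A$, let $A_\ell \subset A$ and $B_\ell \subset B$ be the corresponding fibers. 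Lemma~\ref{Lem:samelabel} again makes $R|_{A \times B}$ block diagonal across the $\ell$, and inside each block every row sum and every column sum equals $w/p(u_0)$. Equating total mass row-wise with total mass column-wise yields $|A_\ell| = |B_\ell|$, so the scaled block $(p(u_0)/w) R|_{A_\ell \times B_\ell}$ is a doubly stochastic matrix on the finite set $A_\ell \times B_\ell$.

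With this doubly stochastic structure in hand, I would invoke Birkhoff's theorem to express each block as a convex combination of permutation matrices with strictly positive coefficients, then pick a bijection $\sigma_\ell : A_\ell \to B_\ell$ whose permutation matrix appears in the support of this combination, and assemble the $\sigma_\ell$ into a single bijection $\sigma : A \to B$. By construction $R[(u, \sigma(u))] > 0$, hence $\gamma((u_0,v_0),(u,\sigma(u))) > 0$ for every $u \in A$, which is the desired conclusion. The main subtlety I expect lies in the ``leaf-goes-to-leaf'' step, as this is the place where the multiweight labeling hypothesis is truly indispensable: it is what makes leaves recognizable from $\psi_G$ alone and prevents a leaf at $u_0$ from being joined to a non-leaf neighbor of $v_0$. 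Once that step is in place, the rest reduces to linear-algebraic bookkeeping and a direct application of Birkhoff.
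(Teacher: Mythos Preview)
Your proposal is correct and follows essentially the same route as the paper: both arguments use the transition coupling condition together with $r_\gamma(u_0,v_0)=p(u_0)=q(v_0)$ to extract a coupling on leaf-neighbors, block-diagonalize by augmented label, observe that within each block the leaf edge weights are constant (making the block a rescaled doubly stochastic matrix with $|A_\ell|=|B_\ell|$), and then apply Birkhoff's theorem block by block to assemble $\sigma$. The only point you leave slightly implicit is that every label occurring in $B$ also occurs in $A$ (so that the $\sigma_\ell$ assemble to a surjection onto $B$), but this follows immediately from the positive column sums and your label-matching step.
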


\begin{proof}
Let $I_G(u_0) = N_G(u_0) \cap L(G)$ and $I_H(v_0) = N_H(v_0) \cap L(H)$. 
Let $w \in \mathcal{L}$ be a label such that $\psi_G^{-1}(w) \cap I_G(u_0) \neq \emptyset$. Let $A_w = \psi_G^{-1}(w) \cap I_G(u_0)$ and $B_w = \psi_H^{-1}(w) \cap I_H(v_0)$. 
Next let $\mu$ be the probability distribution on $A_w$ given by $\mu(u) = \alpha(u_0,u)/Z$, where $Z = \sum_{u' \in A_w} \alpha(u_0,u')$, and let $\nu$ be the probability distribution on $B_w$ given by $\nu(v) = \beta(v_0,v)/Z'$, where $Z' = \sum_{v' \in B_w} \beta(v_0,v')$. 
Now let $\pi$ be the distribution on $A_w \times B_w$ given by $\pi(u,v) = \gamma((u_0,v_0),(u,v))/Z''$, where $Z'' = \sum_{(u',v') \in A_w \times B_w} \gamma((u_0,v_0),(u',v'))$. 

First, we claim that $Z = Z' = Z''$. We begin by showing that when $\gamma((u_0,v_0),(u,v)) > 0$, we have $u \in A_w$ if and only if $v \in B_w$. Indeed, suppose $u \in A_w$. Since $\gamma((u_0,v_0),(u,v)) > 0$, we have $r_{\gamma}(u,v) > 0$. By Lemma \ref{Lem:samelabel}, $\psi_G(u) = \psi_H(v)$. Thus, $\psi_H(v) = \psi_G(u) = w$. Because $\gamma((u_0,v_0),(u,v)) > 0$, we have $v \in N_H(v_0)$ by the edge preservation property. By Lemma \ref{Lem:leaves}, we get $v \in L(H)$. Combining these three facts, we get $v \in B_w$. Reversing the role of $A_w$ and $B_w$ gives the converse direction. Now, we shall show $Z = Z''$. 
Indeed, by the fact that if $\gamma((u_0,v_0),(u,v))>0$, then $u\in A_w$ if and only if $v\in B_w$, the transition coupling condition, and the hypothesis that $r_{\gamma}(u_0,v_0) = p(u_0)$, we get
\begin{align*}
\sum_{(u,v) \in A_w \times B_w} \gamma((u_0,v_0),(u,v)) & = r_{\gamma}(u_0,v_0) \sum_{u \in A_w} \frac{1}{r_{\gamma}(u_0,v_0)} \sum_{v \in B_w}  \gamma((u_0,v_0),(u,v)) \\
& = r_{\gamma}(u_0,v_0) \sum_{u \in A_w} \frac{1}{p(u_0)} \alpha(u_0,u) \\
& =  \sum_{u \in A_w} \alpha(u_0,u).
\end{align*} 
This shows that $Z'' = Z$. 
Similarly, we get $Z'' = Z'$. 

Next note that $\pi$ is a coupling of $\mu$ and $\nu$ by the transition coupling condition (again using the hypothesis that $r_{\gamma}(u_0,v_0) = p(u_0) = q(v_0)$). By definition of $A_w$, we have $\psi_G(u) = w$ for all $u \in A_w$, and similarly, $\psi_H(v) = w$ for all $v \in B_w$. Since the multiweight function at any leaf $u \in I_G(u_0)$ consists of only one non-zero value ($\alpha(u,u_0)$), we conclude that there is a single value $a>0$ (determined only by $w$) such that for all $u \in A_w$ and $v \in B_w$, we have $\alpha(u,u_0) = a = \beta(v,v_0)$. Then by symmetry and the fact that $Z = Z'$, we get that $\mu$ and $\nu$ are identical uniform distributions, and $|A_w| = |B_w|$.   
Then by Birkhoff's Theorem \cite{birkhoff1946tres}, there is a bijection $\sigma_w : A_w \to B_w$ such that $\pi(u,\sigma_w(u))>0$ for all $u \in A_w$, and hence $\gamma((u_0,v_0),(u,\sigma_w(u))) >0$ for all $u \in A_w$. Next observe that $\{A_w : w \in \mathcal{L}, \, \psi_G^{-1}(w) \cap I_G(u_0) \neq \emptyset\}$ forms a partition of $I_G(u_0)$ and $\{B_w : w \in \mathcal{L}, \,  \psi_H^{-1}(w) \cap I_H(v_0) \neq \emptyset\}$ forms a partition of $I_H(v_0)$, and furthermore $\psi_G^{-1}(w) \cap I_G(u_0) \neq \emptyset$ if and only if $\psi_H^{-1}(w) \cap I_H(v_0) \neq \emptyset$ (since $\gamma$ has zero cost).  Thus, there is a bijection $\sigma : I_G(u_0) \to I_H(v_0)$, defined by $\sigma(u) = \sigma_w(u)$ for $u \in A_w$, and we note that $\gamma((u_0,v_0),(u,\sigma(u))) >0$ for all $u \in I_G(u_0)$, as desired.
\end{proof}

The proof of the following proposition contains the main argument in our proof of Theorem \ref{Thm:trees}.

\begin{restatable}[]{proposition}{treesufficient}
\label{Prop:treesufficient}
    Under the hypotheses of Theorem \ref{Thm:trees}, let $G = (U, \alpha, \phi_G)$ and $H = (V, \beta, \phi_H)$ be two trees in $\mathcal{G}$ such that $\rho_{\Psi}(G,H) = 0$. Then every extreme point of $\mathcal{J}^*_{\Psi}(\alpha, \beta)$ is bijective.
\end{restatable}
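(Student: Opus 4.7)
The plan is to proceed by strong induction on $|U| + |V|$, following the outline given just before the proposition statement. The base case is $|U| = |V| = 2$: each graph is a single edge, the feasible set $\mathcal{J}(\alpha, \beta)$ is a one-parameter family whose two extreme points are precisely the two bijective matchings of endpoints, and the claim follows via Remark \ref{remark:extremeoftwo_sets}. A mass-conservation argument using Lemma \ref{Lem:leaves} and the edge-coupling property (Proposition \ref{Prop:realcoupling}) rules out the mismatched case $|U| = 2 < |V|$ when $\rho_\Psi(G, H) = 0$, so this base case suffices.

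For the inductive step, fix an extreme point $\gamma$ of $\mathcal{J}^*_\Psi(\alpha, \beta)$. By Remark \ref{remark:extremeoftwo_sets}, $\gamma$ is extremal in $\mathcal{J}(\alpha, \beta)$, hence its support graph is connected (Proposition \ref{Prop:preserveconnected}). Lemma \ref{Lem:leaves} gives the decomposition $\spp(r_\gamma) \subset (L(G) \times L(H)) \cup (U_0 \times V_0)$ with $U_0 := U \setminus L(G)$ and $V_0 := V \setminus L(H)$. By Lemma \ref{Lem:restriction}, the normalized restriction $\gamma_0$ of $\gamma$ to $(U_0 \times V_0)^2$ is a weight joining of the restricted weight functions $\alpha_0$ and $\beta_0$ on the strictly smaller trees $G_0 = (U_0, \alpha_0)$ and $H_0 = (V_0, \beta_0)$. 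I equip $G_0, H_0$ with an enriched augmented labeling $\Psi_0$ in which each $u \in U_0$ is assigned $\bigl(\psi_G(u), \{\!\{\alpha(u, u') : u' \in N_G(u) \cap L(G)\}\!\}, \{\!\{\alpha_0(u, u') : u' \in N_{G_0}(u)\}\!\}\bigr)$, and analogously for $H$. This labeling is isomorphism-invariant and contains the multiweight on $G_0$, so Lemma \ref{Lem:leaves} applies again at the next level. Lemma \ref{Lem:samelabel} yields $\langle c_{\Psi_0}, r_{\gamma_0}\rangle = 0$, so $\gamma_0 \in \mathcal{J}^*_{\Psi_0}(\alpha_0, \beta_0)$.

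I next argue that $\gamma_0$ is extremal in $\mathcal{J}^*_{\Psi_0}(\alpha_0, \beta_0)$: any nontrivial convex decomposition $\gamma_0 = t\gamma_0' + (1 - t)\gamma_0''$ can be lifted, by rescaling the leaf-block of $\gamma$ to respect the transition coupling condition, to a nontrivial convex decomposition of $\gamma$ in $\mathcal{J}^*_\Psi(\alpha, \beta)$, contradicting the extremality of $\gamma$. Applying the inductive hypothesis (in the natural generalization permitting any isomorphism-invariant labeling containing a multiweight component), $\gamma_0$ is bijective, inducing a bijection $f_0 : U_0 \to V_0$. Bijectivity of $\gamma_0$ combined with Lemma \ref{Lem:leaves} (no $\gamma$-mass at $u_0 \in U_0$ reaches $L(H)$) yields $r_\gamma(u_0, f_0(u_0)) = p(u_0) = q(f_0(u_0))$ for every $u_0 \in U_0$.

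Finally, for each such $u_0$ with leaf neighbors, Lemma \ref{Lemma:Atlantic} produces a bijection between the leaf neighbors of $u_0$ in $G$ and of $f_0(u_0)$ in $H$, with each matched pair $(u, \sigma_{u_0}(u))$ carrying positive $\gamma$-mass. Assembling $f_0$ together with these leaf bijections defines a global bijection $f : U \to V$, and the associated bijective weight joining $\gamma_f$ from Proposition \ref{Prop:isomorphismbijective} has $\spp(\gamma_f) \subset \spp(\gamma)$. Minimality of support for extreme points (Proposition \ref{Prop:minimalextreme}) then forces $\gamma = \gamma_f$, so $\gamma$ is bijective. The main obstacle is the extremality-lifting step: constructing, from a putative nontrivial decomposition of $\gamma_0$, a corresponding decomposition of $\gamma$ while preserving the transition coupling condition across leaf edges; the leaf bijections furnished by Lemma \ref{Lemma:Atlantic} are the key bookkeeping device for carrying out this lift.
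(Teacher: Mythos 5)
Your skeleton matches the paper's (induction on tree size, split the support into leaf pairs and interior pairs via Lemma \ref{Lem:leaves}, restrict to the interiors, apply the induction hypothesis, reattach the leaves with Lemma \ref{Lemma:Atlantic}, and conclude by minimality of support), but the step you dispose of in one sentence is precisely the hard part, and it is not justified. You claim that Lemma \ref{Lem:samelabel} yields $\langle c_{\Psi_0}, r_{\gamma_0}\rangle = 0$ for your enriched labeling. Lemma \ref{Lem:samelabel} only gives $\psi_G(u) = \psi_H(v)$ on $\spp(r_\gamma)$, i.e.\ equality of the primary labels and of the \emph{full} multiweights in $G$ and $H$. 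Equality of the full multisets $\{\!\{\alpha(u,u') : u' \in N_G(u)\}\!\}$ and $\{\!\{\beta(v,v') : v' \in N_H(v)\}\!\}$ does not split into equality of the leaf-edge part and the interior part, which is exactly what your second label component (and, after renormalizing by $Z_\alpha(U_0)$, your third component, the multiweight in $G_0$) requires. Showing that the multiset of leaf-edge weights at $u$ matches that at $v$ whenever $(u,v)$ carries mass is the heart of the paper's inductive step: it is proved by passing to the process-level labeling (Proposition \ref{Prop:zeroexpectedcost} gives $\langle c_{\Psi^*},r_\gamma\rangle = 0$, and equality of the label-process laws forces the leaf structure at $u$ and $v$ to agree), combined with $Z_\alpha(U_0) = Z_\beta(V_0)$ from Lemma \ref{Lem:restriction}. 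Without this (or an equivalent argument via the transition coupling conditions) your inductive step is vacuous at its key point.

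Two further problems. First, your extremality-lifting of $\gamma_0$ is only sketched, and the proposed bookkeeping via Lemma \ref{Lemma:Atlantic} is circular at that stage: the lemma requires $r_{\gamma}(u_0,v_0) = p(u_0) = q(v_0)$, which you only obtain \emph{after} $\gamma_0$ is known to be bijective. The paper avoids needing extremality of $\gamma_0$ altogether: it feeds the induction hypothesis through Proposition \ref{Prop:extremebijective} to extract a bijective optimal joining $\gamma_1$ with $\spp(\gamma_1)\subset\spp(\gamma_0)$, then constructs a modified joining $\hat{\gamma}$ of $\alpha$ and $\beta$ (interior block given by a rescaled $\gamma_1$, leaf blocks of $\gamma$ reattached along the graph of the induced map $f_0$), verifies $\hat{r}(u,f_0(u)) = p(u) = q(f_0(u))$ so that Lemma \ref{Lemma:Atlantic} applies to $\hat{\gamma}$, and transfers positivity back to $\gamma$ before invoking Proposition \ref{Prop:minimalextreme}. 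You would either have to carry out your lift in full (checking the transition coupling conditions of both pieces at interior and leaf pairs) or follow this route. Second, your induction breaks when $G$ is a star: then $U_0$ is a single vertex, $Z_\alpha(U_0)=0$, the restriction $\alpha_0$ is undefined, and $G_0$ is not a tree with at least two vertices, so neither Lemma \ref{Lem:restriction} nor the induction hypothesis applies; the paper treats this with a separate case (its Case 1), which your proposal omits.
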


The outline of the proof of this proposition is as follows. We argue inductively on the size of the tree. In the induction step, we remove the leaves and consider the restrictions of the weight joining to the resulting subtrees. We then apply the induction hypothesis to get a graph isomorphism on these subtrees, and finally we extend this isomorphism to include the leaves.

\begin{proof}
We proceed by induction on the maximum number of nodes in the tree. For notation, we let $\max(|U|,|V|) = n$.

For the base case, suppose $n =2$. As we consider only trees with at least two nodes, we have $|U| = |V| = 2$. Let $U = \{u_0,u_1\}$, and note that we must have $\alpha(u_0,u_1) = 1/2$. Similarly, let $V = \{v_0,v_1\}$,  and note that $\beta(v_0,v_1)=1/2$. Now let $\gamma$ be an extreme optimal weight joining, so in particular we have $\langle c_{\Psi},r_{\gamma} \rangle = 0$. In this simple case the direct product graph $G \otimes H$ consists of two disjoint edges. Since $\gamma$ is extremal, Proposition \ref{Prop:preserveconnected} gives that the graph joining associated with $\gamma$ must be connected, and hence $\gamma$ must be supported on exactly one of the two edges in $G \otimes H$. Thus there exists a permutation $\sigma : \{0,1\} \to \{0,1\}$ such that $r_{\gamma}(u_i,v) >0$ if and only if $v = v_{\sigma(i)}$. 
Since $\langle c_{\Psi},r_{\gamma} \rangle = 0$, by Lemma \ref{Lem:samelabel}, we obtain $\psi_G(u_i) = \psi_H(v_{\sigma(i)})$. Let $f : U \to V$ be defined by $f(u_i) = v_{\sigma(i)}$. Observe that $f$ is a graph isomorphism from $G$ to $H$. 
Let $\gamma_f$ be the bijective weight joining corresponding to $f$. By the definition of $\sigma(i)$, we see that $\spp(\gamma_f) \subset \spp(\gamma)$. Since $\gamma$ is extreme, we conclude that $\gamma_f = \gamma$ by Proposition \ref{Prop:minimalextreme}, and therefore $\gamma$ is bijective.

Now suppose for induction that there exists $n \geq 2$ such that for all trees $G$ and $H$ with at most $n$ vertices, if $\rho_{\Psi}(G,H)=0$, then every extreme point of $\mathcal{J}^*_{\Psi}(\alpha,\beta)$ is bijective. To establish this statement for trees with at most $n+1$ vertices, let $G$ and $H$ be trees such that $\max(|U|,|V|)= n+1$, and suppose $\rho_{\Psi}(G,H)=0$. We assume without loss of generality (by reversing the roles of $G$ and $H$ if necessary) that $|U| = n+1$. Let $\gamma$ be an extreme optimal weight joining. It remains to show that $\gamma$ is bijective.

Let us now delete the leaves and consider the restrictions of the weight joining to the interiors of the trees. 
Let $U_0 = (U \setminus L(G))$ and $V_0 = (V \setminus L(H))$. Since $G$ is a tree with $n+1 \geq 3$ vertices, we have that $U_0$ is nonempty. By Lemma \ref{Lem:leaves}, we get
\begin{align}\label{Charlotte}
    \spp(r_{\gamma}) \subset (L(G) \times L(H)) \cup (U_0 \times V_0).
\end{align}
Due to \eqref{Charlotte}, $V_0$ is nonempty. 
Now we distinguish two cases: either $|U_0| = 1$ (in which case $G$ is a star graph) or $|U_0| \geq 2$ (in which case the restriction graph $G_0  = (U_0,\alpha_0,\phi_{G_0})$ is a non-trivial tree with at most $n-1$ vertices).  
 
\vspace{2mm}
\noindent
\textit{Case 1.} Suppose $|U_0| = 1$. Let $U_0 = \{u_0\}$, and note that $U = \{u_0\} \cup L(G)$. We claim that $|V_0| = 1$.  Suppose for contradiction that there exists $v \neq v'$ in $V_0$. Since $H$ is a tree, there exists a path $\Gamma = v_0 \dots v_k$ in $H$ such that $v_0 = v$, $v_k = v'$, and $v_i \in V_0$ for all $i$ (i.e., the path does not have to traverse any leaves). Since $v \neq v'$, we have $\beta(v_0, v_1)>0$. By Lemma \ref{Lem:leaves} and the fact that $U_0 = \{u_0\}$, if $r_{\gamma}(u,v_i) >0$, then $u = u_0$. Since $\beta(v_0,v_1)>0$, we must have $\gamma((u_0,v_0),(u_0,v_1)) >0$. Then by Proposition~\ref{Prop:realcoupling}, we must have $\alpha(u_0,u_0) >0$, which is a contradiction, since $G$ is a tree (so in particular it has no self-loops). Hence $|V_0| = 1$. Let $V_0 = \{v_0\}$. 

Since $U_0 = \{u_0\}$ and $V_0 = \{v_0\}$, the marginal coupling condition and Lemma \ref{Lem:leaves}  yield that $r_{\gamma}(u_0,v_0) = p(u_0) = q(v_0)$. 
We also have $N_G(u_0)\cap L(G) \neq \emptyset$, as $G$ is connected and $U\setminus \{u_0\} = L(G)$.
Then by Lemma \ref{Lemma:Atlantic}, there exists a bijection $\sigma : L(G) \to L(H)$ such that $\gamma((u_0,v_0),(u,\sigma(u))>0$ for all $u \in L(G)$. Now define the map $f : U \to V$ by setting $f(u_0) = v_0$ and $f(u) = \sigma(u)$ for all $u \in L(G)$. We claim that $f$ is a graph isomorphism from $G$ to $H$. 
First, note that $f$ is a bijection from $U$ to $V$ (since $\sigma$ is a bijection). Next, since $G$ is a star graph (a tree with exactly one non-leaf vertex), every edge with non-zero weight connects $u_0$ to some $u \in L(G)$. Suppose $\alpha(u_0,u) >0$. Then $\gamma((u_0,v_0),(u,f(u))) >0$ (by definition of $f$),  and therefore $\psi_G(u) = \psi_H(f(u))$, which gives that $\alpha(u_0,u) = \beta(v_0,f(u))$. 
Hence $\beta(f(u_0),f(u)) = \alpha(u_0,u)$. The same argument with the roles of $G$ and $H$ reversed gives that if $\beta(v_0,f(u)) >0$, then $\alpha(u_0,u) = \beta(f(u_0),f(u))$. 
Finally, note that for any $u \in U$, we have $r_{\gamma}(u,f(u)) >0$, and therefore, by Lemma \ref{Lem:samelabel}, $\psi_G(u) = \psi_H(f(u))$, which gives that $\phi_G(u) = \phi_H(f(u))$. We now conclude that $f$ is a graph isomorphism  from $G$ to $H$. 
Furthermore, the associated bijective weight joining $\gamma_f$ satisfies $\spp(\gamma_f) \subset \spp(\gamma)$. Since $\gamma$ is extremal, by Proposition~\ref{Prop:minimalextreme}, $\gamma$ has minimal support, and thus $\spp(\gamma_f) = \spp(\gamma)$. Hence $\gamma = \gamma_f$, and therefore $\gamma$ is bijective.

\vspace{2mm}
\noindent
\textit{Case 2.} Suppose $|U_0| \geq 2$. We first claim that $|V_0| \geq 2$. If not, then $|V_0| =1$ (we've already established that it is nonempty), and then by repeating the argument in Case 1 with the roles of $G$ and $H$ reversed, we get that $|U_0|=1$, which contradicts our assumption that $|U_0| \geq 2$. 

Now let $\alpha_0$ and $\beta_0$ be the restrictions of $\alpha$ and $\beta$ to $U_0$ and $V_0$, respectively. Note that the restriction graphs $G_0 = (U_0,\alpha_0,\phi_{G_0})$ and $H_0 = (V_0,\beta_0,\phi_{H_0})$ are non-trivial trees, and they each have at most $n-1$ vertices (since $L(G)$ and $L(H)$ are non-empty). By Lemma \ref{Lem:leaves}, the assumption on $\spp(r_{\gamma})$ of Lemma \ref{Lem:restriction} is satisfied. Hence, by Lemma \ref{Lem:restriction}, we get that the restriction of $\gamma$ to $U_0 \times V_0$, which we denote by $\gamma_0$, is a weight joining of $\alpha_0$ and $\beta_0$. Let $r_0$ denote the marginal function of $\gamma_0$. Also, by Lemma \ref{Lem:restriction}, we obtain $Z_{\alpha}(U_0) = Z_{\beta}(V_0) = Z_{\gamma}(U_0 \times V_0)$. 

Next let us show that for all $(u,v) \in \spp(r_{0})$, the multiweight of $u$ in $G_0$ is equal to the multiweight of $v$ in $H_0$.
Let $(u,v) \in \spp(r_{0})$. 
By the definition of restriction, we get $r_{\gamma}(u,v) >0$. 
By Proposition~\ref{Prop:zeroexpectedcost}, $\langle c_{\Psi}, r_{\gamma} \rangle = 0$ further yields $\langle c_{\Psi^*}, r_{\gamma} \rangle = 0$, which implies $\psi^*_G(u) = \psi^*_H(v)$.
Let $X$ and $Y$ denote the reversible Markov chains associated with $G$ and $H$, respectively.
Suppose that $u$ is adjacent to at least one leaf, and let $a$ be the weight of the edge connecting $u$ to such a leaf. Then the multiweight of $X_1^u$ equals $\{\!\{a\}\!\}$ with positive probability. 
Since $\psi^*_G(u)=\psi^*_H(v)$, the event that the multiweight of $Y_1^v$ equals $\{\!\{a\}\!\}$ has the same probability as the event that the multiweight of $X_1^u$ equals $\{\!\{a\}\!\}$. Thus, $v$ is also adjacent to a leaf, and the number of leaves attached to $v$ by edges of weight $a$ equals the number of leaves attached to $u$ by edges of weight $a$. As this is true for all edge weights $a$, we see that the multisets of edge weights of leaves adjacent to $u$ and to $v$ are in bijection.
Removing all leaves and renormalizing via $Z_{\alpha}(U_0) = Z_{\beta}(V_0)$ gives that  the multiweight of $u$ in $G_0$ is equal to the multiweight of $v$ in $H_0$, as we claimed.

Next let us use the induction hypothesis to obtain a graph isomorphism on the interiors of the trees. By the previous paragraph, we observe that the $\langle c_{\Psi}, r_{0} \rangle = 0$, and hence $\rho_{\Psi}(G_0,H_0) = 0$. 
Then  by the induction hypothesis and Proposition~\ref{Prop:extremebijective}, there is a bijective weight joining $\gamma_1 \in \mathcal{J}(\alpha_0,\beta_0)$ such that $\spp(\gamma_1) \subset \spp(\gamma_0)$.  Let $f_0 : U_0 \to V_0$ be the graph isomorphism induced by $\gamma_1$ from $G_0$ to $H_0$. 

Now let us extend $\gamma_1$ to a weight joining of $\alpha$ and $\beta$. 
Recall that $L(G) = U \setminus U_0$ and $L(H) = V \setminus V_0$. For notation, let $r_1$ denote the marginal function of $\gamma_1$. 
Observe that for all $u \in U_0$, we have $r_1(u,f_0(u)) = p_0(u) = q_0(f_0(u))$, since $\gamma_1$ is bijective. 
Let $Z$ be the common value of $Z_{\alpha}(U_0)$, $Z_{\beta}(V_0)$, and $Z_{\gamma}(U_0 \times V_0)$.
By the definition of restrictions, for all $u \in U_0$, we also get 
\begin{equation*}
\frac{Z_{\alpha}(u,U_0)}{Z} = p_0(u) = q_0(f_0(u)) = \frac{Z_{\beta}(f_0(u),V_0)}{Z}, 
\end{equation*}
which gives that $Z_{\alpha}(u,U_0) = Z_{\beta}(f_0(u),V_0)$. Since $\psi_G(u) = \psi_H(f_0(u))$, we also have that $p(u) = q(f_0(u))$, and together these facts yield that $Z_{\alpha}(u,L(G)) = Z_{\beta}(f_0(v),L(H))$ for all $u \in U_0$.

Now we define $\hat{\gamma} : (U \times V)^2 \to [0,1]$ as follows. For $((u,v),(u',v')) \in (U_0 \times V_0)^2$ we set
\begin{equation*}
\hat{\gamma}((u,v),(u',v')) =   Z \cdot \gamma_1((u,v),(u',v')).
\end{equation*}
For $u \in U_0$ and $(u',v') \in L(G) \times L(H)$, we let 
\begin{equation*}
\hat{\gamma}((u,f_0(u)),(u',v')) = Z_{\alpha}(u,L(G)) \cdot \frac{\gamma((u,f_0(u)),(u',v'))}{Z_{\gamma}((u,f_0(u)),L(G) \times L(H))},
\end{equation*}
and for symmetry, we also set
\begin{equation*}
\hat{\gamma}((u',v'), (u,f_0(u))) = Z_{\alpha}(u,L(G)) \cdot \frac{\gamma((u,f_0(u)),(u',v'))}{Z_{\gamma}((u,f_0(u)),L(G) \times L(H))},
\end{equation*}
with the convention that $\frac{0}{0} = 0$.
Finally, all other values of $\hat{\gamma}$ are defined to be zero. We now denote $\hat{r} (u,v) = \sum_{(u',v')\in U\times V}\hat{\gamma}((u,v),(u',v'))$ for all $(u,v)\in U\times V$.

\noindent
{\it Claim 1.}
For all pairs $(u,f_0(u))$ with $u \in U_0$, we have $\hat{r}(u_0,f_0(u)) = p(u_0) = q(f_0(u))$, and  $\hat{\gamma}$ is an optimal weight joining of $\alpha$ and $\beta$ with zero OGJ cost under $c_{\Psi}$.
\begin{proof}[Proof of Claim 1]
By definition, we have that $\hat{\gamma}$ is non-negative and symmetric. For the normalization condition, we note that
\begin{align*}
\sum_{(u,v),(u',v') \in (U \times V)} \hat{\gamma}((u,v),(u',v')) & = Z \sum_{(u,v),(u',v') \in (U_0 \times V_0)} \gamma_1((u,v),(u',v')) \\
& \quad + 2 \sum_{u \in U_0}  \sum_{(u',v') \in L(G) \times L(H)} \frac{Z_{\alpha}(u,L(G)) \gamma((u,f_0(u)),(u',v'))}{Z_{\gamma}((u,f_0(u)),L(G) \times L(H))} \\
& = Z + 2 \sum_{u \in U_0} Z_{\alpha}(u,L(G)) \\
& = \sum_{u \in U_0} Z_{\alpha}(u,U_0) + 2 \cdot Z_{\alpha}(u,L(G)) \\
& = \sum_{u,u' \in U} \alpha(u,u') \\
& = 1.
\end{align*}

Note that $\spp(\hat{r}) \subset \{(u,f_0(u)) : u \in U_0\} \cup L(G) \times L(H)$. Also, for $u \in U_0$, we have
\begin{align*}
\hat{r}(u,f_0(u)) & = Z r_1(u,f_0(u)) + \sum_{(u',v') \in L(G) \times L(H)} \frac{ Z_{\alpha}(u,L(G)) \gamma((u,f_0(u)),(u',v'))}{Z_{\gamma}((u,f_0(u)),L(G)\times L(H))} \\
& = Z_{\alpha}(u,U_0) + Z_{\alpha}(u,L(G)) \\
& = p(u),
\end{align*}
and similarly $\hat{r}(u,f_0(u)) = q(f_0(u))$. 

Since $G$ and $H$ are connected, Proposition~\ref{Prop:couplingredundant} gives that the transition coupling condition implies the marginal coupling condition. Let us now verify the transition coupling condition. Let $(u,v) \in U \times V$. If $\hat{r}(u,v) = 0$, then the transition coupling conditions at $(u,v)$ are trivially satisfied, so we suppose that $\hat{r}(u,v) >0$. First suppose $u \in U_0$, and thus $v = f_0(u) \in V_0$. Then for $u' \in U_0$, by the transition coupling condition for $\gamma_1$ and the facts $r_1(u,f_0(u)) = p_0(u)$ and $\hat{r}(u,f_0(u)) = p(u)$, we have
\begin{align*}
\frac{1}{\hat{r}(u,f_0(u))} \sum_{v' \in V} \hat{\gamma}((u,f_0(u)),(u',v')) & = \frac{1}{\hat{r}(u,f_0(u))} \sum_{v' \in V_0} \hat{\gamma}((u,f_0(u)),(u',v')) \\
& =  \frac{1}{\hat{r}(u,f_0(u))} \sum_{v' \in V_0} Z \gamma_1((u,f_0(u)),(u',v')) \\
& = \frac{Z r_1(u,f_0(u))}{\hat{r}(u,f_0(u)) r_1(u,f_0(u))}  \sum_{v' \in V_0} \gamma_1((u,f_0(u)),(u',v')) \\
& = \frac{Z r_1(u,f_0(u))}{\hat{r}(u,f_0(u))} \cdot \frac{\alpha_0(u,u')}{p_0(u)} \\
& =  \frac{\alpha(u,u')}{p(u)}.
\end{align*}

Next observe that the $\gamma$-transition probability from $(u,f_0(u))$ to $L(G) \times L(H)$ is equal to the $\alpha$-transition probability from $u$ to $L(G)$ (since $\gamma$ satisfies the transition coupling condition and $\spp(r_{\gamma}) \subset (U_0 \times V_0) \cup (L(G) \times L(H))$), and hence $Z_{\alpha}(u,L(G)) = Z_{\gamma}((u,f_0(u)),L(G) \times L(H))$. Using this fact, the transition coupling condition $\gamma$, and the fact that $\hat{r}(u,f_0(u)) = p(u)$, we see that for $u' \in L(G)$, we have
\begin{align*}
   \frac{1}{\hat{r}(u,f_0(u))}&   \sum_{v' \in V} \hat{\gamma}((u,f_0(u)),(u',v')) \\
= \text{ } & \text{ } \frac{1}{\hat{r}(u,f_0(u))} \sum_{v' \in L(H)} \hat{\gamma}((u,f_0(u)),(u',v')) \\
= \text{ } & \text{ } \frac{Z_{\alpha}(u,L(G)) r_{\gamma}(u,f_0(u))}{\hat{r}(u,f_0(u)) Z_{\gamma}((u,f_0(u)),L(G) \times L(H)) } \cdot \frac{1}{r_{\gamma}(u,f_0(u))} \sum_{v' \in L(H)} \gamma((u,f_0(u)),(u',v'))  \\
= \text{ } & \text{ } \frac{Z_{\alpha}(u,L(G)) r_{\gamma}(u,f_0(u))}{p(u) Z_{\gamma}((u,f_0(u)),L(G) \times L(H))} \cdot \frac{\alpha(u,u')}{p(u)} \\
= \text{ } & \text{ } \frac{\alpha(u,u')}{p(u)}.
\end{align*}
We have now shown that $\hat{\gamma}$ satisfies the transition coupling condition at $(u,v) \in U_0 \times V_0$ with respect to $\alpha$. The analogous argument shows that $\hat{\gamma}$ also satisfies the transition coupling condition at $(u,v) \in U_0 \times V_0$ with respect to $\beta$.

Next suppose that $u \in L(G)$ with $\hat{r}(u,v) >0$, and hence $v \in L(H)$. Then there is a unique $u_0 \in U_0$ such that $\alpha(u_0,u) >0$ and a unique $v_0 \in V_0$ such that $\beta(v_0,v) >0$. By the edge preservation property for $\gamma$, the unique $\gamma$-edge connected to $(u,v)$ is $((u,v),(u_0,v_0))$. Then by the definition of $\hat{\gamma}$, we have that $((u,v),(u_0,v_0))$ is the unique $\hat{\gamma}$-edge connected to $(u,v)$. Hence the $\hat{\gamma}$ transition distribution at $(u,v)$ is the trivial coupling of the (trivial) $\alpha$- and $\beta$-transition distributions at $u$ and $v$, respectively, which shows that $\hat{\gamma}$ satisfies the transition coupling condition at $(u,v)$. 

Finally, by the definition of $\hat{\gamma}$, we see that $\spp(\hat{r}) \subset  \spp(r_{\gamma})$. 
Since $\gamma$ has zero OGJ cost under $c_{\Psi}$, we have $\psi_G(u) = \psi_H(v)$ for all $(u,v) \in \spp(r_{\gamma})$, and therefore $\psi_G(u) = \psi_H(v)$ for all $(u,v) \in \spp(\hat{r})$. Hence  $\hat{\gamma}$ has zero OGJ cost under $c_{\Psi}$. 
This concludes the proof of Claim 1. 
\end{proof}

We return to the proof of the proposition.
Fix a pair $(u_0,v_0) \in \spp(\hat{r})$ such that $N_G(u_0) \cap L(G) \neq \emptyset$. By Lemma \ref{Lem:leaves}, we have that $N_H(v_0) \cap L(H) \neq \emptyset$. Claim 1 gives that the hypotheses of Lemma \ref{Lemma:Atlantic} are satisfied for $\hat{\gamma}$. Then by Lemma \ref{Lemma:Atlantic}, there is a bijection $\sigma_{u_0,v_0} : N_G(u_0) \cap L(G) \to N_H(v_0) \cap L(H)$ such that $\hat{\gamma}((u_0,v_0),(u,\sigma_{u_0,v_0}(u)))>0$ for all $u \in N_G(u_0) \cap L(G)$. By definition of $\hat{\gamma}$, we get that $\gamma((u_0,v_0),(u,\sigma_{u_0,v_0}(u))) >0$ for all $u \in N_G(u_0) \cap L(G)$.

Now we extend the map $f_0 : U_0 \to V_0$ to all of $U$ to get an isomorphism. We define a map $f : U \to V$ as follows. For $u \in U_0$, let $f(u) = f_0(u)$. For a leaf $u' \in L(G)$, there exists a unique vertex $u \in U_0$ such that $u' \in N_G(u) \cap L(G)$, and we let $f(u') = \sigma_{u,f_0(u)}(u')$. 
We claim that $f$ is a graph isomorphism from $G$ to $H$. First, note that $f : U \to V$ is bijective, since $f_0 : U_0 \to V_0$ is bijective and each map $\sigma_{u,f_0(u)}$ is bijective. 
Also, we note that $f$ preserves all edge weights and labels within $U_0$, since $f_0$ is a graph isomorphism.
To see that $f$ preserves edges weights and labels at the leaves of $G$, let $u \in U_0$ and $u' \in N_G(u) \cap L(G)$. Then $\gamma((u,f(u)),(u',f(u'))) >0$ (by our choice of $\sigma_{u,f(u)}$), and hence $\psi_G(u') = \psi_H(f(u'))$. Since the primary labels are included in $\psi_G$ and $\psi_H$, we get that $\phi_G(u') = \phi_H(f(u'))$. 
Since the multiweight function is included in $\psi_G$ and $\psi_H$ and $u'$ and $f(u')$ are leaves, we get $\alpha(u,u') = \beta(f(u),f(u'))$. Thus we have that $f$ is a graph isomorphism. Furthermore, by construction of $f$ we have that $\gamma((u,f(u)),(u',f(u'))) >0$ whenever $\alpha(u,u')>0$. 
Let $\gamma'$ be the bijective weight joining associated with $f$, and note that by the previous sentence, we have $\spp(\gamma') \subset \spp(\gamma)$. Since $\gamma$ is extremal, we get that $\gamma = \gamma'$ (by Proposition \ref{Prop:minimalextreme}), and hence $\gamma$ is bijective.
This concludes the proof of Proposition \ref{Prop:treesufficient}.
\end{proof}

\begin{proof}[Proof of Theorem \ref{Thm:trees}]
	Since the multiweight labeling scheme is isomorphism-invariant, Proposition \ref{Prop:necessary} gives that
	$$G \cong H \Longrightarrow \rho_{\Psi}(G,H) = 0.$$
	To show the reverse implication, suppose $\rho_{\Psi}(G,H) = 0$ and let $\gamma$ be an extreme point of $\mathcal{J}^*_{\Psi}(\alpha, \beta)$. By Proposition \ref{Prop:treesufficient}, $\gamma$ is bijective. Thus, by Proposition \ref{Prop:bijectiveiso}, the induced map $f_{\gamma}$ is a graph isomorphism between two graphs. Hence we obtain the detection part. Now, let $G = (U,\alpha,\phi_G)$ and $H = (V,\beta,\phi_H)$ be two isomorphic trees. By Proposition \ref{Prop:necessary}, $\rho_{\Psi}(G,H)=0$. By Proposition \ref{Prop:treesufficient} and Proposition \ref{Prop:bijective_extreme}, the extreme points of $\mathcal{J}_{\Psi}^*(\alpha,\beta)$ coincide with zero-cost bijective weight joinings of $\alpha$ and $\beta$, which gives the identification part. 
\end{proof}

\subsection{Magic decompositions}

Now we move on to our extension principle via magic decompositions. Recall that the definitions and notation necessary for this result are given in Section \ref{Sect:theoresults}. First we recall the statement of the main result.

\mgdecomposition*
Before proving the main theorem, we first establish the following result. 
\begin{restatable}[]{proposition}{mgsufficient}
\label{Prop:mgsufficient}
    Under the hypotheses of Theorem \ref{Thm:mgdecomposition}, suppose that for every $G = (U, \alpha, \phi_G)$ and $H = (V, \beta, \phi_H)$ in $\mathcal{G}$, if $\rho_{\Psi}(G,H) = 0$, then every extreme point of $\mathcal{J}^*_{\Psi}(\alpha, \beta)$ is bijective. Then for every $G = (U, \alpha, \phi_G)$ and $H = (V, \beta, \phi_H)$ in $\mathcal{F}$, if $\rho_{\widetilde{\Psi}}(G,H) = 0$, then every extreme point of $\mathcal{J}^*_{\widetilde{\Psi}}(\alpha, \beta)$ is bijective.
\end{restatable}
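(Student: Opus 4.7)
The plan is to use the magic decomposition to reduce the claim to the hypothesis on $\mathcal{G}$. Write $G$ as a gluing of connected graphs $\{G_i = (U_i,\alpha_i,\phi_{G_i})\}_{i=1}^k$ in $\mathcal{G}$ along $M_G$ with gluing maps $f_i : L_i \to M_G$ and weights $\mu_i^G$, and write $H$ analogously as a gluing of $\{H_j\}_{j=1}^l$ along $M_H$ with maps $g_j : L_j^H \to M_H$ and weights $\mu_j^H$. Let $\gamma$ be an extreme point of $\mathcal{J}^*_{\widetilde{\Psi}}(\alpha,\beta)$. I would first establish that $\gamma$ induces a label-preserving bijection $\sigma : M_G \to M_H$: Lemma \ref{Lem:samelabel} gives $\widetilde{\psi}_G(u) = \widetilde{\psi}_H(v)$ for all $(u,v) \in \spp(r_\gamma)$, so condition (1) of the magic decomposition forces $\spp(r_\gamma)\subset(M_G \times M_H) \cup ((U\setminus M_G)\times(V\setminus M_H))$, and condition (2) forces the restriction of $r_\gamma$ to $M_G \times M_H$ to be supported on the graph of a unique label-preserving bijection $\sigma$. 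By condition (5), any $u\in U'_i$ can be joined only to $v\in V'_j$ with $\mu_j^H=\mu_i^G$, and by condition (3) the base-family labels match: $\psi_{G_i}(u)=\psi_{H_j}(v)$.

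The heart of the proof is the construction, for each pair $(i,j)$ with $\mu_i^G=\mu_j^H$ on which $\gamma$ carries positive mass, of a joining $\gamma_{ij}\in\mathcal{J}(\alpha_i,\beta_j)$ with zero cost under $c_\Psi$. On the interior region $(U'_i\times V'_j)^2$, the joining $\gamma_{ij}$ is built from the restriction of $\gamma$, rescaled by the total $\gamma$-mass on that region and by the factor $\mu_i^G$. For a leaf $u\in L_i$ with base $b(u)\in U'_i$ and image $f_i(u)=m\in M_G$, the relevant $\gamma$-edges connecting $(b(u),\,\cdot\,)$ to $(m,\sigma(m))$ are translated into $\gamma_{ij}$-edges incident to $u$, where the image of $u$ in $H_j$ is determined by a leaf in $g_j^{-1}(\sigma(m))\cap L_j^H$. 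Verification that $\gamma_{ij}$ is a weight joining relies on the transition coupling condition for $\gamma$ and the explicit form $\alpha=\tau^G+\sum_i\mu_i^G\alpha'_i$, while zero cost under $c_\Psi$ follows from condition (3) at interior vertices and condition (4) at leaves.

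Since $\rho_\Psi(G_i,H_j)\le\langle c_\Psi, r_{\gamma_{ij}}\rangle=0$, the hypothesis on $\mathcal{G}$ gives that every extreme point of $\mathcal{J}^*_\Psi(\alpha_i,\beta_j)$ is bijective, so by Proposition \ref{Prop:extremebijective} there is a bijective joining $\gamma'_{ij}\in\mathcal{J}^*_\Psi(\alpha_i,\beta_j)$ with $\spp(\gamma'_{ij})\subset\spp(\gamma_{ij})$, which yields an isomorphism $f_{ij}:G_i\to H_j$. Assembling $\sigma$ together with the bijections $\{f_{ij}\}$ over active pairs produces a label- and weight-preserving bijection $f:U\to V$ whose associated bijective weight joining $\gamma_f\in\mathcal{J}(\alpha,\beta)$ satisfies $\spp(\gamma_f)\subset\spp(\gamma)$ by construction. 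Since $\gamma$ is extreme in $\mathcal{J}(\alpha,\beta)$, it has minimal support by Proposition \ref{Prop:minimalextreme}, so $\gamma=\gamma_f$ and $\gamma$ is bijective.

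The main obstacle will be the explicit construction and verification of the sub-joinings $\gamma_{ij}$: one must carefully reassemble the mass that $\gamma$ distributes around each $m\in M_G$ (which may receive leaves from several different $G_i$'s with distinct weights) into legitimate individual joinings of $G_i$ and $H_j$, and check the transition coupling condition at the reinstated leaves despite their having been collapsed into $M_G$ in the gluing. The bookkeeping between the $\gamma$-mass on each pair, the total weight $\mu_i^G$ of the component, and the branches at a single point of $M_G$ is precisely where the magic-decomposition conditions (3)--(5) earn their keep. A further subtlety is ensuring that the active pairs $(i,j)$ give a well-defined matching of the two index sets; this should follow from the bijectivity of $\sigma$, the weight-matching forced by condition (5), and the extremality of $\gamma$ via Proposition \ref{Prop:preserveconnected}.
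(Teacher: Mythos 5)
Your proposal follows essentially the same architecture as the paper's proof: use conditions (1) and (2) with Lemma \ref{Lem:samelabel} to show $\gamma$ is bijective from $M_G$ to $M_H$ and that its marginal support splits as $(M_G \times M_H) \cup ((U\setminus M_G)\times(V\setminus M_H))$; un-glue the leaves to manufacture zero-cost joinings $\gamma_{ij} \in \mathcal{J}(\alpha_i,\beta_j)$ for the active component pairs; invoke the hypothesis on $\mathcal{G}$ together with Proposition \ref{Prop:extremebijective} to extract isomorphisms $f_{ij}: G_i \to H_j$; assemble a global isomorphism $f$ and conclude by minimality of support (Proposition \ref{Prop:minimalextreme}) that $\gamma$ equals the bijective joining $\gamma_f$. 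The construction of $\gamma_{ij}$ and the zero-cost verification via conditions (3) and (4) match the paper's Claim 2.

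There is one genuine gap, and it is exactly at the step you flag as a ``further subtlety'': showing that the active pairs $(i,j)$ admit a system of distinct representatives, i.e.\ a bijection $\sigma:\{1,\dots,k\}\to\{1,\dots,\ell\}$ with each pair $(i,\sigma(i))$ active, so that the maps $f_{i\sigma(i)}$ can be assembled into a single bijection $f:U\to V$. Your suggested justification --- bijectivity on $M_G$, condition (5), and ``extremality of $\gamma$ via Proposition \ref{Prop:preserveconnected}'' --- does not work: Proposition \ref{Prop:preserveconnected} requires the marginal graphs to be connected, which graphs in $\mathcal{F}$ need not be (indeed Corollary \ref{Cor:DisjointUnions} takes $M_G=\emptyset$ and $G$ disconnected), and even when it applies, connectedness of the support graph of $\gamma$ does not prevent a single $U'_i$ from carrying mass to several distinct $V'_j$. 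Extremality also cannot be used up front here, since a priori an extreme point could have support meeting many component pairs. The paper closes this gap with a Hall's marriage theorem argument: for any set $I$ of indices, the marginal coupling condition plus the bijectivity of $\gamma$ on $M_G\times M_H$ gives $\sum_{i\in I} S_{G_i} \le \sum_{j\in N(I)} S_{H_j}$, and condition (5) forces $S_{G_i}=S_{H_j}$ whenever $(i,j)$ is active, which together yield $|I|\le |N(I)|$ and hence a perfect matching (after repeating the argument in the other direction to get $k=\ell$). You would need to supply this, or an equivalent counting argument, for the assembly step to go through.
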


\begin{proof}
Let $G = (U, \alpha, \phi_G)$ and $H = (V, \beta, \phi_H)$ be two graphs in $\mathcal{F}$ such that $\rho_{\widetilde{\Psi}}(G,H)=0$. 
Let $\gamma \in \mathcal{J}^*_{\widetilde{\Psi}}(\alpha, \beta)$. 
Since $\mathcal{F}$ has a $\mathcal{G}$-magic decomposition, we may decompose $G$ and $H$ as gluings of $\{ G_i \}_{i=1}^k$ and $\{ H_j \}_{j=1}^{\ell}$ along $M_G$ and $M_H$, respectively, where $G_i \in \mathcal{G}$ for $i \in \{1,\dots,k\}$ and $H_j \in \mathcal{G}$ for $j \in \{1,\dots,\ell\}$.
Here and throughout this proof, we use the notation for gluings established in Section \ref{Sect:mgdecom}. In particular, we let $G_i = (U_i,\alpha_i,\phi_{G_i})$ and $H_j = (V_j,\beta_j,\phi_{H_j})$. We organize the remainder of the proof using a series of claims.

\vspace{1mm}
\noindent
\textit{Claim 1.} Then $\spp(r_{\gamma}) \subset (M_G \times M_H) \cup \bigl((U \setminus M_G) \times (V \setminus M_H) \bigr)$, and $\gamma$ is bijective from $M_G$ to $M_H$, \textit{i.e.},
\begin{itemize}
    \item $\forall u \in M_G$, $\exists! v \in V$ such that $r_{\gamma}(u,v) > 0$, and furthermore $v \in M_H$;
    \item $\forall v \in M_H$, $\exists! u \in U$ such that $r_{\gamma}(u,v) > 0$, and furthermore $u \in M_G$.
\end{itemize}
\begin{proof}
Suppose $(u,v) \in \spp(r_{\gamma})$. Then by Lemma \ref{Lem:samelabel}, we get $\widetilde{\psi}_G(u) = \widetilde{\psi}_H(v)$. By property (1) in the definition of magic decompositions, we have $\widetilde{\psi}_G(u) \in \mathcal{L}^*$ if and only if $u \in M_G$, and similarly $\widetilde{\psi}_H(v)\in \mathcal{L}^*$ if and only if $v\in M_H$. Thus, since $\widetilde{\psi}_G(u) = \widetilde{\psi}_H(v)$, we have $u \in M_G$ if and only if $v \in M_H$, which establishes the first statement of the claim.

Now suppose $u \in M_G$. Since $G$ is fully supported, we have $p(u) >0$. Hence, as $r_{\gamma}$ is a coupling of $p$ and $q$, there exists $v \in V$ such that $r_{\gamma}(u,v)>0$. 
Suppose that there exist vertices $v,v' \in V$ such that $r_{\gamma}(u,v) >0$ and $r_{\gamma}(u,v')>0$. Then $\widetilde{\psi}_H(v) = \widetilde{\psi}_G(u) = \widetilde{\psi}_H(v')$. Since $u \in M_G$, the first part of the claim gives that $v,v' \in M_H$. 
By property (2) in the definition of magic decomposition, $\widetilde{\psi}_H$ is injective on $M_H$, and therefore we must have $v = v'$. We conclude that for all $u \in M_G$, there exists a unique $v \in V$ such that  $r_{\gamma}(u,v) > 0$, and furthermore $v \in M_H$. An analogous argument for $v \in M_H$ finishes the proof of the claim.
\end{proof}

Note that by the first part of Claim 1, if $U = M_G$, then we must have $V = M_H$. Next observe that if $U = M_G$ and $V = M_H$, then by the second part of Claim 1, every $\gamma \in \mathcal{J}^*_{\widetilde{\Psi}}(\alpha,\beta)$ is bijective, and hence every extreme point of $\mathcal{J}^*_{\widetilde{\Psi}}(\alpha, \beta)$ is bijective. 

Now we assume that $U \setminus M_G$ is nonempty (and hence $V \setminus M_H$ is also nonempty). Fix $i \in \{1,\dots,k\}$ and $j \in \{1,\dots,\ell\}$. Let $L_i = f_i^{-1}(M_G)$ and $K_j = g_j^{-1}(M_H)$ (which are the subsets of leaves of $G_i$ and $H_j$ that are glued to $M_G$ and $M_H$, respectively). Let $U_i' = U_i \setminus L_i$ and $V'_j = V_j \setminus K_j$. We define a function $\hat{\gamma}_{ij} : (U_i \times V_j)^2 \to \mathbb{R}$ as follows. First, if $(u,v), (u',v') \in U'_i \times V'_j$, then we let $\hat{\gamma}_{ij}((u,v),(u',v')) = \gamma((u,v),(u',v'))$. Next if $(u,v) \in U'_i \times V'_j$ and $(u',v') \in L_i \times K_j$ satisfy $u = b(u')$, $f_i(u') = u''$, $v =  b(v')$ and $g_j(v') = v''$, then we let $\hat{\gamma}_{ij}((u,v),(u',v')) = \gamma((u,v),(u'',v''))$ and $\hat{\gamma}_{ij}((u',v'),(u,v)) = \gamma((u'',v''),(u,v))$.
All other values of $\hat{\gamma}_{ij}$ are defined to be zero. Next let
\begin{align*}
    \pi_{ij} = \sum_{(u,v),(u',v') \in U_i \times V_j} \hat{\gamma}_{ij}((u,v),(u',v')).
\end{align*}
Notice that $\pi_{ij} >0$ if and only if there exists $(u,v) \in U'_i \times V'_j$ such that $r_{\gamma}(u,v) >0$. 
For all $i,j$ such that $\pi_{ij}>0$, we let $\gamma_{ij} = (\pi_{ij})^{-1} \hat{\gamma}_{ij}$, and we note that $\gamma_{ij}$ is a weight function by construction. Let $r_{ij}$ denote the marginal function associated with $\gamma_{ij}$. 

\vspace{2mm}
\noindent
\textit{Claim 2.} If $\pi_{ij} > 0$, then $\gamma_{ij}$ is a weight joining of $\alpha_i$ and $\beta_j$ with $\langle c_{\Psi}, r_{{ij}} \rangle = 0$.
\begin{proof}
Suppose $\pi_{ij}>0$. 
As $\gamma_{ij}$ is a weight function and the graphs $G_i$ and $H_j$ are connected (by the definition of gluings), if we show that $\gamma_{ij}$ satisfies the transition coupling condition, then $\gamma_{ij}$ also satisfies the marginal coupling condition (by Proposition \ref{Prop:couplingredundant}). Let us now show that the transition coupling condition is satisfied. 

First, note that $\spp(r_{ij}) \subset (L_i \times K_j) \cup (U'_i \times V'_j)$ by definitions of $\hat{\gamma}_{ij}$ and $\gamma_{ij}$.
Let $(u,v) \in U_i \times V_j$. The transition coupling conditions are trivially satisfied at $(u,v)$ if $r_{ij}(u,v) = 0$, so we suppose $r_{ij}(u,v) >0$. Next, suppose that $(u,v) \in L_i \times K_j$. As $u$ is a leaf in $U_i$ and $v$ is a leaf in $V_j$, the transition distribution at $(u,v)$ is supported on a single edge, which gives the trivial transition coupling of the single-edge transition distributions at $u$ and $v$. Hence the transition coupling condition is satisfied at $(u,v)$. Now suppose that $(u,v) \in U'_i \times V'_j$. Since $\gamma_{ij}((u,v),\cdot)$ is equal to a scalar multiple of $\gamma((u,v),\cdot)$ and $\gamma$ satisfies the transition coupling condition at $(u,v)$, we conclude that $\gamma_{ij}$ satisfies the transition coupling condition at $(u,v)$. Thus we conclude that $\gamma_{ij}$ is a weight joining of $\alpha_i$ and $\beta_j$. 

Now let $(u,v) \in U_i \times V_j$ with $r_{ij}(u,v) >0$. First suppose $(u,v) \in L_i \times K_j$. By definition of $\gamma_{ij}$, we have that $r_{\gamma}(f_i(u),g_j(v))>0$. 
Since $\gamma$ has zero OGJ cost, by Lemma \ref{Lem:samelabel}, we get $\widetilde{\psi}_G(f_i(u)) = \widetilde{\psi}_H(g_j(v))$. Hence, by property (4) of magic decompositions and the fact $(u,v) \in L_i \times K_j$, we obtain $\psi_{G_i}(u) = \psi_{H_j}(v)$. 
Now suppose $(u,v) \in U'_i \times V'_j$. Arguing similarly, we see that $r_{\gamma}(u,v)>0$, and $\widetilde{\psi}_G(u) = \widetilde{\psi}_H(v)$. Then by property (3) of magic decompositions, we obtain that $\psi_{G_i}(u) = \psi_{H_j}(v)$.
We conclude that $\langle c_{\Psi}, r_{ij} \rangle = 0$. 
\end{proof}

Consider $i$ and $j$ such that $\pi_{ij}>0$. By Claim 2, we have that $\rho_{\Psi}(G_i,H_j) = 0$ and  $\gamma_{ij} \in \mathcal{J}^*_{\Psi}(\alpha_i, \beta_j)$. Then by our hypothesis about OGJ on the family $\mathcal{G}$, every extreme point of $\mathcal{J}^*_{\Psi}(\alpha_i,\beta_j)$ is bijective. Hence, by Proposition \ref{Prop:extremebijective}, there exists a bijective weight joining $\gamma_{ij}^*$ such that $\spp(\gamma_{ij}^*) \subset \spp(\gamma_{ij})$. Let $f_{ij} : U_i \to V_j$ be the induced map associated with $\gamma_{ij}^*$, and note that by Proposition \ref{Prop:bijectiveiso}, $f_{ij}$ is a graph isomorphism from $G_i$ to $H_j$. Additionally, since $\spp(\gamma_{ij}^*) \subset \spp(\gamma_{ij})$, if $\alpha_i(u,u') >0$, then $\gamma_{ij}((u, f_{ij}(u)),((u', f_{ij}(u')))>0$.

%

\vspace{2mm}
\noindent
\textit{Claim 3.} There is a bijection $\sigma: \{1,\ldots,k\} \to \{1,\ldots,\ell\}$ such that $\pi_{i\sigma(i)} > 0$ for every $i \in \{1,\dots,k\}$.
\begin{proof}
First, note that for each $i \in \{1,\ldots,k\}$, there is some $j \in \{1,\ldots,\ell\}$ such that $\pi_{ij} > 0$. Indeed, let $i \in \{1, \ldots, k\}$. Pick $(u,u') \in U'_i \times U_i$ such that $\alpha_i(u,u') > 0$. Since $\gamma$ is a weight joining of $\alpha$ and $\beta$ and is bijective from $M_G$ to $M_H$ (by Claim 1), by Proposition \ref{Prop:realcoupling} there is some $(v,v') \in (V \setminus M_H) \times V$ such that $\gamma((u,v),(u',v')) > 0$. Hence, there is some $j \in \{1,\ldots,l\}$ such that $v \in V'_j$. Therefore $(u,v) \in U'_i \times V'_j$ and $r_{\gamma}(u,v)>0$, which gives that $\pi_{ij}>0$.

Now, we shall use Hall's marriage theorem to construct the bijection $\sigma$. We construct a bipartite graph as follows. Let $\mathcal{U} = \{U'_1,\ldots,U'_k\}$, and let $\mathcal{V} = \{V'_1,\ldots,V'_{\ell}\}$. We let $\mathcal{U} \cup \mathcal{V}$ be the vertex set of our bipartite graph. We draw an edge from $U'_i$ to $V'_j$ whenever $\pi_{ij} >0$. 
By the previous paragraph, the edge set is nonempty. Let $\mathcal{S} = \{U'_i \}_{i \in I}$ be an arbitrary subset of $\mathcal{U}$, and let $N(\mathcal{S}) = \{V'_j\}_{j \in J}$ denote its neighborhood in the bipartite graph. Let $S_{G_i} = \sum_{u \in U'_i} p(u)$ and $S_{H_j} = \sum_{v \in V'_j} q(v)$. Then by the marginal coupling condition, the bijective property of $\gamma$ from $M_G$ to $M_H$, and the definition of $N(\mathcal{S})$, we obtain  
\begin{align*}
    \sum_{i \in I} S_{G_i} &= \sum_{i \in I} \sum_{u \in U'_i} \sum_{v \in V} r_{\gamma}(u,v) = \sum_{i \in I} \sum_{u \in U'_i} \sum_{j \in J} \sum_{v \in V'_j} r_{\gamma}(u,v)  \\
    &\le \sum_{j \in J} \sum_{v \in V'_j} \sum_{u \in U} r_{\gamma}(u,v) = \sum_{j \in J} S_{H_j}. 
\end{align*}
Suppose for contradiction that  $|J| < |I|$. Note that if $\pi_{ij} > 0$, then property (5) of magic decompositions 
gives that $\mu_i = \nu_j$, which then yields $S_{G_i} = S_{H_j}$. 
Since $|J| < |I|$, there is an injective map $\tau : J \to I$ such that $\tau(J) \subsetneq I$ and for each $j \in J$, we have $\pi_{\tau(j) j} >0$. Hence $\mu_{\tau(j)} = \nu_j$ for all $j \in J$. Using these facts, we see that
\begin{equation*}
 \sum_{i \in I} S_{G_i} > \sum_{i \in \tau(J)} S_{G_i} = \sum_{j \in J} S_{G_{\tau(j)}} = \sum_{j \in J} S_{H_j},
\end{equation*}
which contradicts the previous display. Thus, $|I| \le |J|$. By Hall's Marriage Theorem, there exists an injection $\sigma: \{1,\ldots,k\} \to \{1,\ldots,\ell\}$ satisfying $\pi_{i \sigma(i)} > 0$ for all $i \in \{1,\dots,k\}$.  
By repeating the above argument with the roles of $G_i$ and $H_j$ reversed, there is also an injective map $\sigma' : \{1,\dots,\ell\} \to \{1,\dots,k\}$, which shows that $k = \ell$.
Thus, the injective map $\sigma : \{1,\dots,k\} \to \{1,\dots,\ell\}$ is in fact bijective, which completes the proof of the claim. 
\end{proof}

\vspace{1mm}
\noindent
\textit{Claim 4.} There is a graph isomorphism $f: U \to V$ from $G$ to $H$ such that for all $u,u' \in U$ with $\alpha(u,u') > 0$, we have $\gamma((u,f(u)),(u',f(u'))) > 0$. 
\begin{proof}
Indeed, by Claim 3, for each $i \in \{1,\dots,k\}$, we have that $\pi_{i\sigma(i)} > 0$. Recall that if $\pi_{ij}>0$, then we have defined a graph isomorphism $f_{ij}$ from $G_i$ to $H_j$. Hence, for each $i \in \{1,\dots,k\}$, we have a graph isomorphism from $G_i$ to $H_{\sigma(i)}$ given by  $f_{i\sigma(i)}$. Now we define $f : U \to V$ as follows. For all $i \in \{1,\dots,k\}$, we let $f|_{U'_i} = f_{i\sigma(i)}$. Since $\gamma$ is bijective from $M_G$ to $M_H$ (by Claim 1), we may define a bijection $g : M_G \to M_H$ by the condition that $r_{\gamma}(u,g(u)) >0$. Then we let $f|_{M_G} = g$. This defines a bijection $f: U \to V$. Note that for all $u \in U$, we have $r_{\gamma}(u,f(u))>0$, which gives that $\widetilde{\psi}_G(u) = \widetilde{\psi}_H(f(u))$ by Lemma \ref{Lem:samelabel}. Therefore $f$ preserves the primary vertex labels (\textit{i.e.}, $\phi_G(u) = \phi_H(f(u))$ for all $u \in U$). 

Now consider an edge in $G$, \textit{i.e.}, let $u,u' \in U$ with $\alpha(u,u') > 0$. After accounting for the symmetry of $\alpha$, we note that there are three types of edges in $G$: (i) $u,u' \in U'_i$ for some $i \in \{1,\dots,k\}$, (ii) $(u,u') \in U'_i \times M_G$ for some $i \in \{1,\dots,k\}$, and (iii) $(u,u') \in M_G \times M_G$. First consider case (i). Then $\alpha_i(u,u') = \beta_{\sigma(i)}(f_{i\sigma(i)}(u),f_{i\sigma(i)}(u'))$, since $f_{i\sigma(i)}$ is a graph isomorphism from $G_i$ to $H_{\sigma(i)}$. Then by definition of $f$ and property (5) of magic decomposition, we see that $\alpha(u,u') = \beta(f(u),f(u'))$. Now consider case (ii), and let $j = \sigma(i)$. Since $u' \in M_G$, there is some $u'' \in L_i$ such that the gluing map $f_i$ satisfies $f_i(u'') = u'$. Let $v'' = f_{ij}(u'')$. Then by definition of $\gamma_{ij}$, we have $\gamma((u,f(u)),(u',g_j(v''))) = \pi_{ij} \gamma_{ij}((u,f(u)),(u'',v''))$. Since $f(u) = f_{ij}(u)$, $v'' = f_{ij}(u'')$, and $f_{ij}$ satisfies $\gamma_{ij}((u,f_{ij}(u)),(u'',f_{ij}(u''))) > 0$, we obtain $\gamma((u,f(u)),(u',g_j(v'')))>0$. Since $u' \in M_G$ and $\gamma$ is bijective from $M_G$ to $M_H$, we conclude that $g_j(v'') = g(u') = f(u')$. Furthermore, by property (5) of magic decomposition and the fact $f_{ij}$ is a graph isomorphism, we obtain that $\alpha(u,u') = \beta(f(u),f(u'))$. Finally, consider case (iii). By Proposition \ref{Prop:realcoupling}, there exists $(v,v') \in V \times V$ such that $\gamma((u,v),(u',v'))>0$. Since $\gamma$ is bijective from $M_G$ to $M_H$, we conclude that $v = f(u)$ and $v' = f(u')$. Again using Proposition \ref{Prop:realcoupling}, we must have that $\alpha(u,u') = \gamma((u,v),(u',v')) = \beta(v,v') = \beta(f(u),f(u'))$. 
This concludes the proof of the claim.
\end{proof}

Let us now finish the proof of the proposition.
Let $\gamma'$ be the bijective weight joining corresponding to $f$ (given by Claim 4), and note that $\spp(\gamma') \subset \spp(\gamma)$. Then by Proposition \ref{Prop:extremebijective}, every extreme point of $\mathcal{J}^*_{\Psi}(\alpha,\beta)$ is bijective. This concludes the proof of Proposition \ref{Prop:mgsufficient}.
\end{proof}

\begin{proof}[Proof of Theorem \ref{Thm:mgdecomposition}]
	Let $G, H$ be two graphs in $\mathcal{F}$. The direction 
	$$G \cong H \Longrightarrow \rho_{\Psi}(G,H) = 0$$
	follows from Proposition \ref{Prop:necessary}. Let $\gamma$ be an extreme point of $\mathcal{J}^*(\alpha, \beta)$. By the hypotheses and Proposition \ref{Prop:mgsufficient}, $\gamma$ is bijective. Thus, by Proposition \ref{Prop:bijectiveiso}, the induced map $f_{\gamma}$ is a graph isomorphism between $G$ and $H$. This establishes the detection part. Now suppose OGJ with cost $c_{\Psi}$ identifies isomorphism for $\mathcal{G}$. Thus, we obtain for every $G = (U, \alpha, \phi_G)$ and $H = (V, \beta, \phi_H)$ in $\mathcal{G}$, if $\rho_{\Psi}(G,H) = 0$, then every extreme point of $\mathcal{J}^*_{\Psi}(\alpha, \beta)$ is bijective.
    Let $G = (U,\alpha,\phi_G)$ and $H = (V,\beta,\phi_H)$ be two isomorphic graphs in $\mathcal{F}$. By Proposition \ref{Prop:necessary}, $\rho_{\widetilde{\Psi}}(G,H)=0$. Then by Proposition \ref{Prop:mgsufficient} and Proposition \ref{Prop:bijective_extreme}, the extreme points of $\mathcal{J}_{\widetilde{\Psi}}^*(\alpha,\beta)$ coincide with zero-cost bijective weight joinings of $\alpha$ and $\beta$, which gives the identification part.  
\end{proof}

\DisjointUnions*
\begin{proof}
    Since $\mathcal{F}$ consists of finite disjoint unions of graphs in $\mathcal{G}$, we can choose $\mathcal{L}^* = \emptyset$. Then for each $G \in \mathcal{F}$, we can choose $M_G = \emptyset$. Because the augmented labeling scheme $\widetilde{\Psi}$ satisfies the magic decomposition conditions (3) and (5) and $M_G = \emptyset$, $\mathcal{F}$ has a $\mathcal{G}$-magic decomposition. By Theorem \ref{Thm:mgdecomposition}, we obtain the conclusion. 
\end{proof}

\subsection{Relation to Weisfeiler-Leman test}  
Lastly, we establish the relationship between OGJ and the Weisfeiler-Leman test. 
Recall that $d_{\OTC}$ and $d_{\WL}$ are the NetOTC distance and the WL distance introduced in Section \ref{Sect:RelationWLtest}, respectively. Recall the statement of the following proposition. 
\WLNetOTCOGJ*
\begin{proof}
	Let $X$ and $Y$ be the associated reversible Markov chains on $G$ and $H$, respectively. Let $(\widetilde{X},\widetilde{Y})$ be any reversible Markov chain associated with a graph joining of $G$ and $H$. Then $(\widetilde{X},\widetilde{Y})$ is also a transition coupling (see Definition 3 in \cite{o2022optimal}) of $X$ and $Y$. Thus, we obtain $d_{\OTC}(G,H) \le \rho_c(G,H)$. On the other hand, by Proposition 6 in \cite{Brugere2023DistancesFM}, we have $d_{\WL}(G,H) \le d_{\OTC}(G,H)$. Combining two inequalities, we obtain the result. 
\end{proof}
Next we show that if the WL test can distinguish two unweighted labeled graphs $G$ and $H$, then OGJ can also distinguish them with the labeling scheme $\Psi$. Let $G = (U,\alpha,\phi)$ be an unweighted labeled graph. Recall that $\phi' : U \to \mathbb{Z}^2$ is defined by $\phi'(u) = (\deg(u),|U|)$. For $\delta \in (1/2,1)$, $\hat{G}_{\delta}$ is the weighted graph corresponding to the $\delta$-lazy random walk on $G$ with augmented label function $\psi = (\phi,\phi')$. 
\WLOGJ*
\begin{proof}
Firstly, by Proposition \ref{Prop:WLNetOTCOGJ}, we have
\begin{align} \label{Ineq}
	d_{\WL}(\hat{G}_{\delta},\hat{H}_{\delta}) \le d_{\OTC}(\hat{G}_{\delta},\hat{H}_{\delta}) \le \rho_{\Psi}(\hat{G}_{\delta},\hat{H}_{\delta}). 
\end{align}
Next, Proposition 3.3 in \cite{chen2022weisfeiler} gives us that the WL test determines $G$ and $H$ are not isomorphic if and only if $d_{\WL}(\hat{G}_{\delta},\hat{H}_{\delta}) > 0$. By \eqref{Ineq}, if $d_{\WL}(\hat{G}_{\delta},\hat{H}_{\delta})>0$ then $\rho_{\Psi}(\hat{G}_{\delta},\hat{H}_{\delta})>0$. Putting these two statements together, we conclude that if the WL test determines that $G$ and $H$ are not isomorphic, then $\rho_{\Psi}(\hat{G}_{\delta},\hat{H}_{\delta})>0$.
\end{proof}


\bibliographystyle{plain}      
\bibliography{refs}

@book{levin2017markov,
  title={Markov chains and mixing times},
  author={Levin, David A and Peres, Yuval},
  volume={107},
  year={2017},
  publisher={American Mathematical Soc.}
}

@book{lind2021introduction,
  title={An {I}ntroduction to {S}ymbolic {D}ynamics and {C}oding},
  author={Lind, Douglas and Marcus, Brian},
  year={2021},
  publisher={Cambridge University Press}
}

@article{o2022optimal,
  title={Optimal transport for stationary {M}arkov chains via policy iteration},
  author={O'Connor, Kevin and McGoff, Kevin and Nobel, Andrew B},
  journal={Journal of Machine Learning Research},
  volume={23},
  number={45},
  pages={1--52},
  year={2022}
}

@article{yi2024alignment,
  title={Alignment and comparison of directed networks via transition couplings of random walks},
  author={Yi, Bongsoo and O'Connor, Kevin and McGoff, Kevin and Nobel, Andrew B},
  journal={Journal of the Royal Statistical Society Series B: Statistical Methodology},
  pages={qkae085},
  year={2024},
  publisher={Oxford University Press UK}
}

@book{bonnans2013perturbation,
  title={Perturbation analysis of optimization problems},
  author={Bonnans, J Fr{\'e}d{\'e}ric and Shapiro, Alexander},
  year={2013},
  publisher={Springer Science \& Business Media}
}

@article{furstenberg1967disjointness,
  title={Disjointness in ergodic theory, minimal sets, and a problem in {D}iophantine approximation},
  author={Furstenberg, Harry},
  journal={Mathematical systems theory},
  volume={1},
  number={1},
  pages={1--49},
  year={1967},
  publisher={Springer-Verlag New York}
}

@article{de2005introduction,
  title={An introduction to joinings in ergodic theory},
  author={De La Rue, Thierry},
  journal={arXiv preprint math/0507429},
  year={2005}
}

@book{glasner2003ergodic,
  title={Ergodic theory via joinings},
  author={Glasner, Eli},
  number={101},
  year={2003},
  publisher={American Mathematical Soc.}
}

@book{bertsimas1997introduction,
  title={Introduction to linear optimization},
  author={Bertsimas, Dimitris and Tsitsiklis, John N},
  volume={6},
  year={1997},
  publisher={Athena scientific Belmont, MA}
}

@inproceedings{maretic2019got,
  title={{GOT: {A}n {O}ptimal Transport framework for Graph comparison}},
  author={Maretic, Hermina Petric and Gheche, Mireille EL and Chierchia, Giovanni and Frossard, Pascal},
  booktitle={Advances in Neural Information Processing Systems},
  year={2019}
}

@article{vayer2020fused,
  title={Fused {G}romov-{W}asserstein distance for structured objects},
  author={Vayer, Titouan and Chapel, Laetitia and Flamary, R{\'e}mi and Tavenard, Romain and Courty, Nicolas},
  journal={Algorithms},
  volume={13},
  number={9},
  pages={212}, 
  year={2020},
  publisher={Multidisciplinary Digital Publishing Institute}
}

@inproceedings{dong2020copt,
  title={{COPT}: Coordinated Optimal Transport on Graphs},
  author={Dong, Yihe and Sawin, Will},
  booktitle={Advances in Neural Information Processing Systems},
  year={2020}
}

@article{memoli2011gromov,
  title={{G}romov--{W}asserstein distances and the metric approach to object matching},
  author={M{\'e}moli, Facundo},
  journal={Foundations of Computational Mathematics},
  volume={11},
  number={4},
  pages={417--487},
  year={2011},
  publisher={Springer}
}

@inproceedings{peyre2016gromov,
  title={{G}romov-{W}asserstein averaging of kernel and distance matrices},
  author={Peyr{\'e}, Gabriel and Cuturi, Marco and Solomon, Justin},
  booktitle={International Conference on Machine Learning},
  pages={2664--2672},
  year={2016},
  organization={PMLR}
}

@article{rioux2024limit,
  title={Limit {L}aws for {G}romov-{W}asserstein {A}lignment with {A}pplications to {T}esting {G}raph {I}somorphisms},
  author={Rioux, Gabriel and Goldfeld, Ziv and Kato, Kengo},
  journal={arXiv preprint arXiv:2410.18006},
  year={2024}
}

@InProceedings{chen2022weisfeiler,
  title = 	 {Weisfeiler-{L}ehman {M}eets {G}romov-{W}asserstein},
  author =       {Chen, Samantha and Lim, Sunhyuk and Memoli, Facundo and Wan, Zhengchao and Wang, Yusu},
  booktitle = 	 {Proceedings of the 39th International Conference on Machine Learning},
  pages = 	 {3371--3416},
  year = 	 {2022},
  series = 	 {Proceedings of Machine Learning Research},
  publisher =    {PMLR}
}

@inproceedings{Brugere2023DistancesFM,
  title={Distances for {Markov} chains, and their differentiation},
  author={Brug{\`e}re, Tristan and Wan, Zhengchao and Wang, Yusu},
  booktitle={International Conference on Algorithmic Learning Theory},
  pages={282--336},
  year={2024},
  organization={PMLR}
}

@inproceedings{babai2016graph,
  title={Graph isomorphism in quasipolynomial time},
  author={Babai, L{\'a}szl{\'o}},
  booktitle={Proceedings of the forty-eighth annual ACM symposium on Theory of Computing},
  pages={684--697},
  year={2016}
}

@article{babai1980random,
  title={Random graph isomorphism},
  author={Babai, L{\'a}szl{\'o} and Erdos, Paul and Selkow, Stanley M},
  journal={SIAM Journal on computing},
  volume={9},
  number={3},
  pages={628--635},
  year={1980},
  publisher={SIAM}
}

@article{aho1974design,
  title={The design and analysis of computer algorithms},
  author={Aho, Alfred V and Hopcroft, John E and Ullman, Jeffrey D},
  journal={Addison-Wesley Series in Computer Science and Information Processing},
  year={1974}
}

@article{grohe2020recent,
  title={Recent advances on the graph isomorphism problem},
  author={Grohe, Martin and Neuen, Daniel},
  journal={arXiv preprint arXiv:2011.01366},
  year={2020}
}

@article{grohe2020graph,
  title={The graph isomorphism problem},
  author={Grohe, Martin and Schweitzer, Pascal},
  journal={Communications of the ACM},
  volume={63},
  number={11},
  pages={128--134},
  year={2020},
  publisher={ACM New York, NY, USA}
}

@article{aflalo2015convex,
  title={On convex relaxation of graph isomorphism},
  author={Aflalo, Yonathan and Bronstein, Alexander and Kimmel, Ron},
  journal={Proceedings of the National Academy of Sciences},
  volume={112},
  number={10},
  pages={2942--2947},
  year={2015},
  publisher={National Academy of Sciences}
}

@article{patterson2021hausdorff,
  title={{H}ausdorff and {W}asserstein metrics on graphs and other structured data},
  author={Patterson, Evan},
  journal={Information and Inference: A Journal of the IMA},
  volume={10},
  number={4},
  pages={1209--1249},
  year={2021},
  publisher={Oxford University Press}
}

@article{khuller1996landmarks,
  title={Landmarks in graphs},
  author={Khuller, Samir and Raghavachari, Balaji and Rosenfeld, Azriel},
  journal={Discrete applied mathematics},
  volume={70},
  number={3},
  pages={217--229},
  year={1996},
  publisher={Elsevier}
}

@article{fiori2015spectral,
  title={On spectral properties for graph matching and graph isomorphism problems},
  author={Fiori, Marcelo and Sapiro, Guillermo},
  journal={Information and Inference: A Journal of the IMA},
  volume={4},
  number={1},
  pages={63--76},
  year={2015},
  publisher={Oxford University Press}
}

@article{dym2018exact,
  title={Exact recovery with symmetries for the doubly stochastic relaxation},
  author={Dym, Nadav},
  journal={SIAM Journal on Applied Algebra and Geometry},
  volume={2},
  number={3},
  pages={462--488},
  year={2018},
  publisher={SIAM}
}

@article{takapoui2016linear,
  title={Linear programming heuristics for the graph isomorphism problem},
  author={Takapoui, Reza and Boyd, Stephen},
  journal={arXiv preprint arXiv:1611.00711},
  year={2016}
}

@article{tillquist2023getting,
  title={Getting the lay of the land in discrete space: A survey of metric dimension and its applications},
  author={Tillquist, Richard C and Frongillo, Rafael M and Lladser, Manuel E},
  journal={SIAM Review},
  volume={65},
  number={4},
  pages={919--962},
  year={2023},
  publisher={SIAM}
}

@article{klus2025continuous,
  title={Continuous optimization methods for the graph isomorphism problem},
  author={Klus, Stefan and Gel{\ss}, Patrick},
  journal={Information and Inference: A Journal of the IMA},
  volume={14},
  number={2},
  pages={iaaf011},
  year={2025},
  publisher={Oxford University Press}
}

@article{birkhoff1946tres,
  title={Tres observaciones sobre el algebra lineal},
  author={Birkhoff, Garrett},
  journal={Univ. Nac. Tucuman, Ser. A},
  volume={5},
  pages={147--154},
  year={1946}
}

@article{cang2023screening,
  title={Screening cell--cell communication in spatial transcriptomics via collective optimal transport},
  author={Cang, Zixuan and Zhao, Yanxiang and Almet, Axel A and Stabell, Adam and Ramos, Raul and Plikus, Maksim V and Atwood, Scott X and Nie, Qing},
  journal={Nature methods},
  volume={20},
  number={2},
  pages={218--228},
  year={2023},
  publisher={Nature Publishing Group US New York}
}

@article{hyun2022ocean,
  title={Ocean mover's distance: using optimal transport for analysing oceanographic data},
  author={Hyun, Sangwon and Mishra, Aditya and Follett, Christopher L and Jonsson, Bror and Kulk, Gemma and Forget, Gael and Racault, Marie-Fanny and Jackson, Thomas and Dutkiewicz, Stephanie and M{\"u}ller, Christian L and others},
  journal={Proceedings of the Royal Society A},
  volume={478},
  number={2262},
  pages={20210875},
  year={2022},
  publisher={The Royal Society}
}

@article{tameling2021colocalization,
  title={Colocalization for super-resolution microscopy via optimal transport},
  author={Tameling, Carla and Stoldt, Stefan and Stephan, Till and Naas, Julia and Jakobs, Stefan and Munk, Axel},
  journal={Nature computational science},
  volume={1},
  number={3},
  pages={199--211},
  year={2021},
  publisher={Nature Publishing Group US New York}
}

@article{bunne2024optimal,
  title={Optimal transport for single-cell and spatial omics},
  author={Bunne, Charlotte and Schiebinger, Geoffrey and Krause, Andreas and Regev, Aviv and Cuturi, Marco},
  journal={Nature Reviews Methods Primers},
  volume={4},
  number={1},
  pages={58},
  year={2024},
  publisher={Nature Publishing Group UK London}
}

@article{lindeberg2024isomorphism,
  title={Isomorphism {T}esting of {R}ooted {T}rees in {L}inear {T}ime},
  author={Lindeberg, Anna},
  journal={arXiv preprint arXiv:2401.07636},
  year={2024}
}

@incollection{immerman1990describing,
  title={Describing graphs: {A} first-order approach to graph canonization},
  author={Immerman, Neil and Lander, Eric},
  booktitle={Complexity Theory Retrospective: In Honor of Juris Hartmanis on the Occasion of His Sixtieth Birthday, July 5, 1988},
  pages={59--81},
  year={1990},
  publisher={Springer}
}

@inproceedings{liu2020semantic,
  title={Semantic correspondence as an optimal transport problem},
  author={Liu, Yanbin and Zhu, Linchao and Yamada, Makoto and Yang, Yi},
  booktitle={Proceedings of the IEEE/CVF Conference on Computer Vision and Pattern Recognition},
  pages={4463--4472},
  year={2020}
}

@inproceedings{heitz2021sliced,
  title={A sliced {W}asserstein loss for neural texture synthesis},
  author={Heitz, Eric and Vanhoey, Kenneth and Chambon, Thomas and Belcour, Laurent},
  booktitle={Proceedings of the IEEE/CVF Conference on Computer Vision and Pattern Recognition},
  pages={9412--9420},
  year={2021}
}

\footnotesize
\vskip 0.5cm

{\sc Phuong N. Ho\`ang, Department of Mathematics and Statistics, UNC Charlotte}

{\it Email address}: \href{mailto:phoang3@charlotte.edu}{phoang3@charlotte.edu} 
\vskip 0.2cm

{\sc Kevin McGoff, Department of Mathematics and Statistics, UNC Charlotte}

{\it Email address}: \href{mailto:kmcgoff1@charlotte.edu}{kmcgoff1@charlotte.edu} 
\vskip 0.2cm

{\sc Andrew B. Nobel, Department of Statistics and Operations Research, UNC Chapel Hill}

{\it Email address}: \href{mailto:nobel@email.unc.edu}{nobel@email.unc.edu} 
\vskip 0.2cm

{\sc Yang Xiang, Department of Statistics and Operations Research, UNC Chapel Hill}

{\it Email address}: \href{mailto:xya@unc.edu}{xya@unc.edu} 
\vskip 0.2cm

{\sc Bongsoo Yi, Google}

{\it Email address}: \href{mailto:bongsooyi0426@gmail.com}{bongsooyi0426@gmail.com} 


\end{document}